\newcommand{\addappendix}{%
  \section*{\appendixname}
  \counterwithin*{figure}{section}
  \stepcounter{section}
  \renewcommand{\thesection}{A}
  \renewcommand{\thefigure}{\thesection.\arabic{figure}}
}
\newcommand{\be}{\begin{equation}}
\newcommand{\ee}{\end{equation}}
\theoremstyle{plain}
\newtheorem{theorem}{Theorem}[section]
\newtheorem{proposition}[theorem]{Proposition}
\newtheorem{corollary}[theorem]{Corollary}
\newtheorem{lemma}[theorem]{Lemma}
\newtheorem{definition}[theorem]{Definition}
\newtheorem{remark}[theorem]{Remark}
\def\y{\lvert y\rvert^a}
\newcommand{\Gammab}{\mathbf{\Gamma}}
\newcommand{\R}{\mathbb{R}}
\newcommand{\N}{\mathbb{N}}
\newcommand{\vf}{\varphi}
\newcommand{\eps}{\varepsilon}
\newcommand{\HH}{\mathcal{H}}
\newcommand{\loc}{{{\text{loc}}}}
\newcommand{\bea}{\begin{equation*}\begin{aligned}}
		\newcommand{\eea}{\end{aligned}\end{equation*}}
\newcommand{\res}{\mathop{\hbox{\vrule height 7pt width .5pt depth 0pt
			\vrule height .5pt width 6pt depth 0pt}}\nolimits}
\newcommand{\ssubset}{\subset\joinrel\subset}
\DeclareMathOperator{\dist}{dist}
\DeclareMathOperator{\Tr}{Tr}
\DeclareMathOperator{\osc}{osc}
\DeclareMathOperator{\diver}{div}
\DeclareMathOperator{\sgn}{sgn}
\crefname{subsection}{subsection}{subsections}
\numberwithin{equation}{section}
\title[Generic regularity of free boundaries in the obstacle problem for the fractional Laplacian]{Generic regularity of free boundaries in the obstacle problem \\for the fractional Laplacian}
\keywords{Free boundaries, generic regularity, obstacle problems, fractional Laplacian, epiperimetric inequality}
\subjclass[2010]{35R35, 47G20}
\author[M. Carducci]{Matteo Carducci}\thanks{}
\address {Matteo Carducci \newline \indent
	Classe di Scienze, Scuola Normale Superiore \newline \indent
	Piazza dei Cavalieri 7, 56126 Pisa, Italy}
\email{\href{mailto:matteo.carducci@sns.it}{matteo.carducci@sns.it}}
\author[R. Colombo]{Roberto Colombo}\thanks{}
\address {Roberto Colombo \newline \indent
	EPFL SB MATH\newline \indent
	Institute of Mathematics, Station 8, CH-1015 Lausanne, Switzerland}
\email{\href{mailto:roberto.colombo@epfl.ch}{roberto.colombo@epfl.ch}}
\pgfplotsset{compat=1.18}
\begin{document}

\newgeometry{margin=2cm}
\begin{abstract} 
We establish generic regularity results of free boundaries for solutions of the obstacle problem for the fractional Laplacian $(-\Delta)^s$. 
We prove that, for almost every obstacle, the free boundary contains only regular points up to dimension $3$, for every $s\in(0,1)$. To do so, we extend some results on the fine structure of the free boundary to the case $s\in (0,1)$ and general non-zero obstacle, including a blow-up analysis at points with frequency $2m+2s$, and we prove new explicit uniform frequency gaps for solutions of the fractional obstacle problem.
\end{abstract}
\maketitle
\tableofcontents

\section{Introduction}\label{section-intro}
We consider solutions $v:\R^n\to \R$ of the fractional
obstacle problem 
 \be\label{def:soluzione-frac}
 \left\{
\begin{array}{rclll}
		(-\Delta)^s v&=&0 & \quad\mbox{in } \R^n\setminus \{v=\vf\},\\
		(-\Delta)^s v &\ge&0 & \quad\mbox{in } \R^n,\\
        v&\ge &\vf & \quad\mbox{in } \R^n,\\
            v(x)&\rightarrow& 0 & \quad\mbox{as } |x|\rightarrow \infty,
	\end{array}
 \right.
 \ee 
 where, for every $s\in(0,1)$, the operator $(-\Delta)^s$ is the fractional Laplacian, defined as $$ (-\Delta)^sv(x) :=c_{n,s}\,\mbox{P.V.}\int_{\R^n}\frac{v(x)-v(z)}{\lvert x-z\rvert^{n+2s}}\,dz
 $$ and $c_{n,s}>0$ 
 is a normalization constant. The obstacle $\vf:\R^{n}\to \R$ is such that \be\label{e:hypo-phi}\vf\in C^{k,\gamma}(\R^n)\quad \mbox{for some } k\ge 2,\, \gamma \in (0,1) \qquad\mbox{and}\qquad \{\varphi>0\}\ssubset \R^{n}.\ee

Problem \eqref{def:soluzione-frac} in the case $s=1/2$ is locally equivalent to the thin obstacle problem, first studied by Signorini in \cite{sig33, sig59} in connection with linear elasticity.
In the general case $s\in (0,1)$, it finds applications in several other contexts, such as the optimal stopping problem, interacting particle systems, and the study of semipermeable membranes; see \cite{dl76,m76,ct04,s15,cdm16}. 
The obstacle problem for the fractional Laplacian \eqref{def:soluzione-frac} was intensely studied in the past years by the mathematical community; see
\cite{caf79,ac04,sil07,acs08,css08,gp09,kps15,ds16,gps16,fs16,gpps17,fs18,csv20,fs22,sy22:7/2,sy22,sy23,fj21,fr21,fs24,car24,cv24}. The reader may also consult the surveys \cite{dasa18,survey} and the books \cite{psu12,frro24}.

The existence and uniqueness of a solution of problem \eqref{def:soluzione-frac} can be proved by minimizing
the fractional Dirichlet energy in the class of functions which live above the obstacle.
 Regarding the regularity of a solution $v$, 
 defining the contact set as $$\Lambda(v):=\{x\in\R^n:v(x)=\vf(x)\},$$ we see directly from \eqref{def:soluzione-frac} and \eqref{e:hypo-phi}, that $v$ is regular in both the interiors of $\Lambda(v)$ and of its complementary $\R^{n}\setminus \Lambda(v)$. 
 Less obvious is the behavior of the solution across the free boundary $$\Gamma(v):=\partial\Lambda(v).$$
 In \cite{ac04,acs08,css08} the authors proved the optimal $C^{1,s}_{\loc}$-regularity of the solution, as well as the following splitting of the free boundary into regular and degenerate points: 
$$\Gamma(v)=\text{Reg}(v)\cup \text{Deg}(v).$$ Here, the sets of regular and degenerate points are given by
\begin{gather*}
    \text{Reg}(v):=\left\{x\in \Gamma(v):0<cr^{1+s}\le \sup_{B_r(x)}(v-\vf)\le Cr^{1+s}\quad\mbox{for every 
}r\in(0,r_0)\right\},\\
    \text{Deg}(v):=\left\{x\in \Gamma(v):0\le \sup_{B_r(x)}(v-\vf)\le Cr^{2}\quad\mbox{for every } r\in(0,r_0)\right\}.
\end{gather*}
The set $\text{Reg}(v)$ is an open $C^{1,\alpha}$-manifold of dimension $(n-1)$ inside the free boundary. In addition, if the obstacle is $C^\infty$, the set $\text{Reg}(v)$ is a $C^\infty$-manifold (see \cite{kps15,ds16,jn17,krs19}). It is also known that $\text{Deg}(v)$ can be non-empty, and even of the same dimension as the whole free boundary (see \cite{gp09, fr21}).

In recent years, a growing interest has concerned the question of \textit{generic regularity}, which, roughly speaking, refers to the presence of singularities in the free boundary being somewhat rare. Investigations in this direction have also found very recently application in the study of measure minimizers of interaction energies with confining potentials (see \cite{rs24} and the forthcoming \cite{cf24}).
In the context of the classical obstacle problem ($s=1$), the generic regularity of the free boundary was proved in \cite{mon03} in dimension $2$ and then in the celebrated \cite{frs20} up to dimension $4$. The same result was obtained for problem \eqref{def:soluzione-frac} in \cite{fr21} 
in dimension $1$, for every $s\in (0,1)$. Later, in \cite{ft23}, in the case $s=1/2$ and zero obstacle, generic regularity was proved up to dimension $3$.

\subsection{The main result}
In this paper we improve the generic regularity result of \cite{fr21} and extend the one in \cite{ft23} to the case $s\in (0,1)$ and for a general obstacle. 
Similarly to the aforementioned works (\cite{mon03,frs20,fr21,ft23}), we use a measure theoretic notion of genericity (in the sense of \textit{prevalence}, see \cite{hsy92,yor05}),
that has also been used to investigate similar questions in other free boundary problems (see \cite{cms23a,cms23b,fy23}). More precisely, we say that a property holds for \lq\lq almost every'' solution to the fractional obstacle problem if, for any family of solutions 
$v=v(x,t):\R^{n}\times[0,1]\to\R$ with obstacle $\vf-t$,  for $t\in[0,1]$, 
 \be\label{def:soluzione-frac-famiglia}
     \left\{
\begin{array}{rclll}
    (-\Delta)^s v(\cdot,t)&=&0 &\quad \mbox{in } \R^n\setminus \{v(\cdot,t)=\vf-t\},\\
		(-\Delta)^s v(\cdot,t)&\ge&0 & \quad\mbox{in } \R^n,\\
        v(\cdot,t)&\ge& \vf-t & \quad\mbox{in } \R^n,\\
        v(x,t)&\rightarrow& 0& \quad\mbox{as } |x|\to+\infty,
 \end{array}
 \right.
 \ee 
then, the property holds for almost every $t\in [0,1]$.
In this context, our main result is the following.
\begin{theorem}\label{thm:mainthm} Let $v:\R^{n}\times[0,1]\to \R$ be a family of solutions of \eqref{def:soluzione-frac-famiglia}, with obstacle $\varphi$ satisfying \eqref{e:hypo-phi}, with $k=4$ and $\gamma\in(a^-,1]$, where $a:=1-2s$. Then, there is $\alpha>0$ depending only on $n$, $s$ and $\gamma$ such that, 
for almost every $t\in [0,1]$, we have:
\begin{itemize}
\smallskip
    \item if $n\le 3$, $\text{Deg}(v(\cdot,t))=\emptyset$;
\smallskip
    \item if $n\ge4$, $\text{dim}_{\HH}(\text{Deg}(v(\cdot,t)))\le n-3-\alpha$.
\end{itemize}

\end{theorem}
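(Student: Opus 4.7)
The plan is to adapt the strategy of \cite{frs20} for the classical obstacle problem and its fractional analogue \cite{ft23}, combined with the new technical inputs announced in the abstract: the blow-up analysis at points of frequency $2m+2s$ and the explicit uniform frequency gaps. First, I decompose the degenerate set by the frequency $\lambda$ of blow-ups at each point,
\[
\text{Deg}(v(\cdot,t))= \bigsqcup_{\lambda\ge 2}\Gamma_\lambda(v(\cdot,t)),
\]
where $\lambda$ ranges in the admissible set $\{2m,\,2m+2s:m\ge 1\}$. By Federer-type dimension reduction combined with the refined blow-up analysis, the strata with $\lambda\ge 2+2s$ contribute a set of Hausdorff dimension at most $n-3-\alpha$, and the explicit frequency gap ensures that only finitely many values of $\lambda$ appear below any fixed threshold.

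The core of the argument is the top stratum $\Gamma_2(v(\cdot,t))$. At $x_0\in\Gamma_2(v(\cdot,t))$, blow-ups are of the form $\tfrac{1}{2}y^T A_{x_0}(t) y$ with $A_{x_0}(t)\ge 0$. A Monneau-type monotonicity, using $\vf\in C^{4,\gamma}$ with $\gamma>a^-$ to absorb the $(-\Delta)^s\vf$-error plus the fourth-order Taylor remainder of $\vf$, yields a uniform second-order expansion
\[
v(x_0+y,t)-\vf(x_0+y)+t = \tfrac{1}{2}y^T A_{x_0}(t) y + o(|y|^2), \qquad y\to 0,
\]
locally uniformly in $x_0\in\Gamma_2(v(\cdot,t))$. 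A Whitney-type extension then produces a $C^{1,\alpha}$ cover of $\Gamma_2(v(\cdot,t))$, stratified further by $\mathrm{rank}(A_{x_0}(t))$.

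Next, I use the monotonicity in the parameter $t$ to carry out a space-time dimension reduction on
\[
\mathcal{E}_2:=\{(x,t)\in\R^n\times[0,1]:x\in\Gamma_2(v(\cdot,t))\}.
\]
As $t$ increases, the obstacle $\vf-t$ decreases and the contact set $\Lambda(v(\cdot,t))$ evolves monotonically. Combined with the second-order expansion, this forces the following transversality: a point $x_0$ of frequency exactly $2$ at time $t_0$ can persist in $\Gamma_2(v(\cdot,t))$ only for a set of $t$'s of controlled dimension, quantified by the non-degeneracy of $A_{x_0}(t)$. A careful count through the Whitney parameterization, with a finer stratification by $\mathrm{rank}(A_{x_0}(t))$, then gives $\text{dim}_{\HH}(\mathcal{E}_2)\le n-2-\alpha$, and Marstrand slicing yields $\text{dim}_{\HH}(\Gamma_2(v(\cdot,t)))\le n-3-\alpha$ for a.e. $t\in[0,1]$.

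Assembling the bounds on all strata produces $\text{dim}_{\HH}(\text{Deg}(v(\cdot,t)))\le n-3-\alpha$ for a.e. $t$; for $n\le 3$ the estimated dimension is strictly negative, forcing $\text{Deg}(v(\cdot,t))=\emptyset$. I expect the main obstacle to lie in the blow-up and Whitney analysis in the non-zero, $C^{4,\gamma}$-obstacle, fractional setting: ensuring that the sharp frequency gaps and the classification at frequency $2m+2s$ survive with non-zero obstacle, and that the error terms in Monneau's monotonicity are controlled uniformly across the family $t\in[0,1]$, is precisely where the new contributions announced in the abstract become indispensable.
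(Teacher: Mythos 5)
Your overall philosophy (frequency decomposition, dimension reduction, exploiting monotonicity in $t$) matches the paper's, which reduces to the extended problem via \cref{lemma:link_thm_main} and proves \cref{thm:main_esteso} by splitting $\mathbf{Deg}$ as in \eqref{splitting-degenerate-set} and applying the GMT \cref{lemma:gmt}. However, there are genuine gaps. First, your frequency decomposition is wrong as stated: the admissible frequencies are not only $\{2m,2m+2s\}$ — the set $\mathcal{A}_{1,s}$ also contains $2m+1+s$ (cf.\ \cref{prop:classification-solution-dim2}), and for $n+1\ge 3$ there exist further, unclassified frequencies $\mathcal{A}_{n,s}^*$. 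The corresponding stratum $\Gammab_*$ is precisely where the new frequency gaps of \cref{teo:gaps} are needed (to push its points above frequency $3+\sigma$, resp.\ $2+2s+\sigma$, and hence obtain a cleaning rate strictly better than $2$); your use of the gaps merely to claim finiteness of frequencies below a threshold misses their actual role. Relatedly, the claim that all strata with $\lambda\ge 2+2s$ have dimension $\le n-3-\alpha$ by ``Federer-type dimension reduction'' cannot be right for a fixed $t$: dimension reduction only gives $n-1$ (and gives nothing better than $n$ for $\Gammab_{\ge 4+\gamma}$, where the truncated frequency prevents blow-up analysis); the improvement to $n-3-\alpha$ is generic in $t$ and requires the quantitative cleaning estimates of \cref{prop:cleaning-(2+2s)} and \cref{prop:other-cleaning-results}, which you do not supply for these strata.

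Second, for the top stratum $\Gamma_2$ your ``transversality/persistence'' step — the assertion that $\dim_{\HH}(\mathcal E_2)\le n-2-\alpha$ in space-time, followed by Marstrand slicing — is exactly the hard part and is left unproved. The mechanism in the paper is different and quantitative: a barrier/cleaning estimate showing that $X\in\Gamma_2(u(\cdot,t_0))$ implies $B_\rho(X)\cap\{t>t_0+|Y-X|^{\beta}\}$ contains no quadratic free boundary points, fed into \cref{lemma:gmt}. Crucially, the cleaning exponent available for general quadratic points (\cref{prop:cleaning}) is only $2$ (or $(4-2s)/(1+s)<2$), which combined with $\dim_{\HH}\Gammab_2\le n-1$ yields only $n-3$ — not enough to conclude emptiness for $n=3$. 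Beating the exponent $2$ requires the second blow-up analysis of \cref{second-blowup-quadratic-points}: splitting $\Gamma_2$ into ordinary points (cleaning $3$, resp.\ $4-2s$, via \cref{prop:gamma-ordinary}) and anomalous points (lower-dimensional, $\le n-2$), with the additional dichotomy for $s>1/2$ where the second blow-up may solve the very thin obstacle problem \eqref{def:very-thin-obstacle-problem}. None of this appears in your outline, and without it the argument does not close even for the quadratic stratum.
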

When the obstacle is $C^\infty$, by the results in \cite{jn17,krs19}, we get the following corollary (again, in the sense of prevalence).
\begin{corollary}\label{cor:main}
    Let $v:\R^{n}\times [0,1]\to \R$ be a family of solutions of \eqref{def:soluzione-frac-famiglia}, with obstacle $\vf \in C^{\infty}(\R^{n})$ satisfying $\{\vf >0\}\ssubset \R^{n}$. Then, if $n\le 3$, the free boundary $\Gamma(v(\cdot, t))$ is a $C^{\infty}$-manifold of dimension $n-1$, for almost every $t\in[0,1]$. 
\end{corollary}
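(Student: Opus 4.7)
Since the corollary is stated right after the main theorem, the natural plan is to derive it as a direct consequence of \Cref{thm:mainthm} combined with the higher-regularity results of \cite{jn17,krs19}, which upgrade the regular part of the free boundary from $C^{1,\alpha}$ to $C^{\infty}$ when the obstacle is smooth.

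First, I would verify that the smoothness assumption $\varphi\in C^\infty(\R^n)$ together with $\{\varphi>0\}\ssubset \R^n$ suffices to apply \Cref{thm:mainthm}. A priori, the theorem requires the stronger global bound $\varphi\in C^{4,\gamma}(\R^n)$ for some $\gamma\in(a^-,1]$. However, since $\{\varphi>0\}$ is compactly contained in $\R^n$, one can multiply $\varphi$ by a smooth cut-off $\eta\in C^\infty_c(\R^n)$ that equals $1$ on a neighborhood of $\{\varphi\ge 0\}$: this does not alter the free boundary of the family $v(\cdot,t)$, since the contact set $\{v(\cdot,t)=\varphi-t\}$ is contained in $\{\varphi-t\ge 0\}$ and hence in the region where $\eta\equiv 1$ (at least for all sufficiently small $t$, and for larger $t$ the contact set is empty). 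After such a reduction $\varphi\in C^\infty_c(\R^n)\subset C^{4,\gamma}(\R^n)$ for every $\gamma\in(0,1)$, and any choice of $\gamma\in(a^-,1]$ (which is nonempty since $a^-<1$) makes the hypotheses of \Cref{thm:mainthm} satisfied.

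Second, applying \Cref{thm:mainthm} in dimension $n\le 3$ gives $\text{Deg}(v(\cdot,t))=\emptyset$ for almost every $t\in[0,1]$. Using the decomposition $\Gamma(v(\cdot,t))=\text{Reg}(v(\cdot,t))\cup \text{Deg}(v(\cdot,t))$ recalled in the introduction, this yields $\Gamma(v(\cdot,t))=\text{Reg}(v(\cdot,t))$ for almost every $t$. Finally, the higher-regularity results of \cite{jn17,krs19} assert that, for a $C^\infty$ obstacle, the regular set is a $C^\infty$-manifold of dimension $n-1$. Combining the two statements, $\Gamma(v(\cdot,t))$ itself is a $C^\infty$-manifold of dimension $n-1$ for almost every $t\in[0,1]$.

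All the substantive content is carried by \Cref{thm:mainthm}; the only point requiring a moment of thought is the reduction from a smooth obstacle with compactly supported positive part to a globally $C^{4,\gamma}$ obstacle, handled by the cut-off argument above. The rest is a matter of assembling previously established pieces.
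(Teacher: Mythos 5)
Your argument is correct and coincides with the paper's (which states the corollary as an immediate consequence of \cref{thm:mainthm} together with \cite{jn17,krs19}, without a separate proof): $n\le 3$ gives $\text{Deg}(v(\cdot,t))=\emptyset$ for a.e.\ $t$, hence $\Gamma(v(\cdot,t))=\text{Reg}(v(\cdot,t))$, which is a $C^\infty$-manifold by the higher-regularity results. Your cut-off remark reducing a merely $C^\infty$ obstacle to one satisfying the global bound \eqref{e:hypo-phi} is a sensible extra precaution (justified since $v(\cdot,t)\ge 0$ forces the contact set into $\{\vf\ge t\}\ssubset\{\vf>0\}$ for $t>0$), but it does not change the substance of the argument.
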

\cref{thm:mainthm} and \cref{cor:main} provide an improvement of \cite[Theorem 1.5]{fr21} for every $s\in (0,1)$. 
\subsection{Generic regularity for the extended problem} In order to study the non-local problem \eqref{def:soluzione-frac}, we will reduce to a local problem in $\R^{n+1}$ via the celebrated Caffarelli-Silvestre extension procedure (see \cite{cs07}). Using the notation $X=(x,y)\in \R^{n}\times \R$ for a point in $\R^{n+1}$, and calling $a:=1-2s \in (-1,1),$ we consider the degenerate/singular elliptic operator
$$L_{a}u:=\diver_{x,y}(|y|^{a}\nabla_{x,y}u),\quad u:\R^{n+1}\to \R.$$
Given a bounded function $v:\R^{n}\rightarrow \mathbb{R}$, we consider its standard even $L_a$-harmonic extension $u:\R^{n+1}\to \R$, satisfying 
\begin{equation}\label{eq:La-harmonic-extension}
    \left\{
    \begin{array}{rclll}
        L_{a}u&=&0&\quad \mbox{in } \R^{n+1}\setminus \left(\R^{n}\times \left\{0\right\}\right),\\
        u(\cdot,0)&=& v&\quad \mbox{in } \R^{n},\\
        u(x,y)&=&u(x,-y)&\quad \mbox{for } (x,y)\in\R^{n+1},
    \end{array}
    \right.
\end{equation}
which is obtained by convolution as $u(\cdot,y)=P_{a}(\cdot,y)*v$ with the corresponding Poisson kernel (see \cite{cs07,frro24})
\begin{equation}\label{def:Poissonkernel}
    P_{a}(x,y):= C_{n,a}\frac{|y|^{1-2a}}{(|x|^{2}+|y|^{2})^{\frac{n+1-a}{2}}}.
\end{equation}
Using such an extension, one gets the following local formula (in one extra dimension) for the non-local operator $(-\Delta)^{s}$ in $\R^{n}$:
\begin{equation}\label{formula:representation_fractional_lap_extension}
    (-\Delta)^{s}v(x)= -\lim_{y\downarrow 0}y^{a}\partial_{y}u(x,y).
\end{equation}
In particular, if $B_{1}$ is the unit ball in $\R^{n+1}$ and $B'_{1}:= B_{1}\cap \{y=0\}$, given $v:\R^{n}\to \R$ a solution to \eqref{def:soluzione-frac} for some obstacle $\varphi$ satisfying \eqref{e:hypo-phi}, its extension $u$ is a solution of
\be \label{def:soluzione-estesa}
\left\{
\begin{array}{rclll}
            -L_a u&=&0 &\quad \mbox{in } B_{1}\setminus \left(\{u(\cdot,0)=\vf\}\times \{0\}\right),\\
            -L_a u&\ge&0 &\quad \mbox{in } B_{1},\\
		u(x,0)&\ge& \varphi(x) & \quad\mbox{for }(x,0)\in B'_{1},\\
		u(x,y)&=& u(x,-y) &\quad\mbox{for } (x,y) \in B_{1}.\\
\end{array}
\right.
\ee
This fact is easily obtained noticing that \eqref{eq:La-harmonic-extension} implies
\begin{equation}\label{formula:y-derivative-on-thin-space}
         L_{a}u = 2\left(\lim_{y \downarrow 0}y^{a}\partial_{y}u(\cdot,y)\right)\mathcal{H}^{n}\res B_{1}'.
\end{equation}
Problem \eqref{def:soluzione-estesa} is called the \textit{thin obstacle problem} for the operator $L_{a}$, since now the obstacle lives in a \lq\lq thin'' domain $\{y=0\}$, of codimension $1$.
When we consider a solution $u:B_{1}\to \R$ of the extended problem \eqref{def:soluzione-estesa}, we denote the contact set and the free boundary as
$$\Lambda(u):= \left\{u(\cdot,0)=\varphi\right\}\times \left\{0\right\},\qquad \Gamma(u):= \partial'\Gamma(u),$$
where $\partial'$ stands for the boundary in the relative topology of $\R^{n}\times \left\{0\right\}$.

Thanks to the extension procedure we just described, we have reduced to proving a generic regularity result for the solutions of the local extended problem \eqref{def:soluzione-estesa}. 
For the notion of genericity of \eqref{def:soluzione-estesa}, as done in \cite{fr21, ft23}, we consider a monotone increasing family $u:B_{1}\times [0,1]\rightarrow \mathbb{R}$ such that $u(\cdot,t)$ solves \eqref{def:soluzione-estesa} for every $t\in [0,1]$ and the following conditions hold:
    \begin{equation}\label{def:soluzione-famiglia}
        \left\{
        \begin{array}{rclll}
            \lVert u(\cdot,t)\rVert_{C^{2s}(B_{1})}&\le& 1,\\
            u(\cdot,t')-u(\cdot,t)&\ge& 0&\quad \mbox{in } B_{1},\\
            u(\cdot,t')-u(\cdot,t)&\ge& t'-t&\quad \mbox{in } \partial B_{1} \cap \left\{|y|\ge 1/2\right\},
        \end{array}
        \right. \qquad \mbox{for every } \ -1\le t<t'\le 1.
    \end{equation}
In the extended framework we will prove the following theorem.
\begin{theorem}\label{thm:main_esteso}
    Let $u:B_{1}\times [0,1]\to \R$ be a family of solutions of \eqref{def:soluzione-estesa}, \eqref{def:soluzione-famiglia}, with obstacle $\varphi$ satisfying \eqref{e:hypo-phi}, with $k= 4$ and $\gamma \in(a^-,1]$. Then, there is $\alpha>0$ depending only on $n$, $s$ and $\gamma$ such that, for almost every $t\in [0,1]$, we have:
\begin{itemize}
\smallskip
    \item if $n\le 3$, $\text{Deg}(u(\cdot,t))=\emptyset$;
\smallskip
    \item if $n\ge4$, $\text{dim}_{\HH}(\text{Deg}(u(\cdot,t)))\le n-3-\alpha$.
\end{itemize}
\end{theorem}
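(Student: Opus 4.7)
The strategy mirrors the program introduced in \cite{frs20} for the classical obstacle problem and adapted in \cite{fr21,ft23} to the fractional setting, with the $L_a$-extension providing the local framework. The plan is to stratify the degenerate set by the Almgren frequency, bound the Hausdorff dimension of each stratum, and then exploit the monotone family structure \eqref{def:soluzione-famiglia} via a Fubini-type argument to rule out all but a null set of parameters $t$.

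First, at each free boundary point $x_0\in\Gamma(u(\cdot,t))$ one defines the Almgren-Garofalo frequency $N(0^+,u(\cdot,t),x_0)$. The regular set corresponds to frequency $1+s$, and the degenerate set decomposes as $\text{Deg}(u(\cdot,t))=\bigcup_\lambda \text{Deg}_\lambda(u(\cdot,t))$ over the admissible degenerate values $\lambda\in\{2,\,2+2s,\,4,\,4+2s,\dots\}$. For each such $\lambda$, one carries out a blow-up analysis to classify the tangent profiles, using the $C^{4,\gamma}$-regularity of $\varphi$ (with $\gamma>a^-$) to subtract a Taylor-like polynomial of the obstacle and reduce to the zero-obstacle blow-up. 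Combined with Weiss-type monotonicity and an epiperimetric inequality, this yields, via a Naber-Valtorta covering, a Hausdorff codimension estimate $\dim_\HH(\text{Deg}_\lambda(u(\cdot,t)))\le n-1-k_\lambda$ for explicit $k_\lambda\ge 2$, with $k_\lambda\ge 2+\alpha$ whenever $\lambda\neq 2$.

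Second, one uses the monotone family structure to handle the critical frequency-$2$ stratum. By the third line of \eqref{def:soluzione-famiglia}, the difference $u(\cdot,t')-u(\cdot,t)$ is a nonnegative $L_a$-supersolution bounded from below by $t'-t$ on $\partial B_1\cap\{|y|\ge 1/2\}$, so a barrier/boundary-Harnack argument shows it is strictly positive on the thin space. Combined with the blow-up analysis at degenerate points, this provides a quantity strictly monotone in $t$ associated to each degenerate point, forcing the set of parameters for which $\text{Deg}_\lambda(u(\cdot,t))$ is non-empty to be controlled in measure: in the $n\le 3$ case, codimension $k_\lambda\ge 3$ implies the full degenerate set is empty for a.e.\ $t$; in the $n\ge 4$ case one gets a codimension-$(3+\alpha)$ estimate. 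This is the standard prevalence-type Fubini argument of \cite[Sec.~5]{ft23} and \cite{frs20}.

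The main obstacle — and what requires the new material in the paper — is the frequency-$(2m+2s)$ stratum for general $s\in(0,1)$ and non-zero obstacle. Unlike the setting $s=1/2$, $\varphi\equiv 0$ of \cite{ft23}, no explicit series expansion is available, so one must establish: (i) an extended blow-up analysis at $(2m+2s)$-points with a quantitative error in terms of the $C^{k,\gamma}$-norm of $\varphi$; (ii) an epiperimetric inequality at these points; and (iii) an explicit uniform frequency gap ensuring $N\in[\lambda-\eta,\lambda+\eta]$ forces $N=\lambda$, so that strata do not accumulate. These three ingredients — announced in the abstract — are precisely what upgrades the codimension bound past the critical value $3$ at the $(2+2s)$-stratum and produces the $\alpha>0$ in the statement. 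Once they are in place, the measure-theoretic argument carries over essentially verbatim from \cite{ft23} and yields \cref{thm:main_esteso}.
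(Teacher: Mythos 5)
Your overall architecture (stratify by frequency, bound the dimension of each stratum, use the monotone family structure to clean, conclude via the Fubini/GMT lemma of \cite{frs20}) is the correct one, but as written the proposal has gaps at the two places where the actual work happens. First, the stratification over $\lambda\in\{2,2+2s,4,4+2s,\dots\}$ is not exhaustive: for $n\ge2$ the set of admissible frequencies $\mathcal{A}_{n,s}$ is not classified, so $\text{Deg}(u(\cdot,t))$ also contains points with frequency $2m+1+s$, points with frequency $\ge k+\gamma$, and points whose frequency lies outside $\mathcal{A}_{1,s}$ entirely (the set $\Gammab_{*}$ in \eqref{splitting-degenerate-set}). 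Handling $\Gammab_{*}$ is exactly where the new frequency gaps of \cref{teo:gaps} are needed: for $s>1/2$ one must know $\mathcal{A}_{n,s}\cap(2,2+2s+\sigma)=\emptyset$ to obtain a cleaning exponent strictly larger than $2$ for this stratum. Second, the static codimension bounds you invoke ($\dim_{\HH}(\text{Deg}_{\lambda})\le n-1-k_{\lambda}$ with $k_{\lambda}\ge2$, via a Naber--Valtorta covering) are neither available nor what the argument uses: the actual dimension bounds are only $n-1$ for most strata (and $n$ for $\Gammab_{\ge 4+\gamma}$), see \cref{prop:dimension-reduction-ge2e*} and \cref{prop:gamma-anomalous}, and the codimension $3+\alpha$ is \emph{generic}, obtained by pairing each dimension bound $\alpha$ with a cleaning exponent $\beta$ in \cref{lemma:gmt}, which yields $\dim_{\HH}(E_{t})\le\alpha-\beta$ for a.e.\ $t$.

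The most serious gap is at the frequency-$2$ stratum. The barrier/boundary-Harnack argument you describe is precisely what produces the cleaning exponents $2$ (resp.\ $(4-2s)/(1+s)$) of \cref{prop:cleaning}; combined with $\dim_{\HH}\Gammab_{2}\le n-1$ this gives generic dimension $n-3$, which is neither emptiness for $n=3$ nor $n-3-\alpha$ for $n\ge4$. Getting past this threshold requires the second blow-up analysis at quadratic points (\cref{second-blowup-quadratic-points}): a new truncated Almgren monotonicity formula for $\widetilde u-p$ in the presence of a non-zero obstacle, the resulting splitting into ordinary points $\Gammab_{2}^{\rm o}$ (which clean at rate $3$, resp.\ $4-2s$, by \cref{prop:gamma-ordinary}) and anomalous points $\Gammab_{2}^{\rm a}$ (which have dimension $\le n-2$), and, when $s>1/2$, the dichotomy in which the second blow-up may solve the very thin obstacle problem \eqref{def:very-thin-obstacle-problem}. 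None of this appears in your proposal, so the critical stratum is not actually handled.
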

In particular, when $s=1/2$ and the obstacle is zero, we recover the main result from \cite[Theorem 1.2]{ft23}.
We also notice that when the obstacle $\vf$ is $C^\infty$, \cref{thm:main_esteso} and \cite{jn17,krs19} give the generic smoothness of the free boundary up to dimension $3$.

We point out that, although the conclusions of \cref{thm:mainthm} and \cref{thm:main_esteso} are the same for all $s\in (0,1)$, a separate analysis of the two cases $s\le1/2$ and $s>1/2$ is required in several steps of their proof (see \Cref{outline} for more details).   
\subsection{Fine structure of the free boundary and points with frequency $\boldsymbol{2m+2s}$}\label{subsec:2m+2sIntro}
As it is well-known (see \cite{acs08,css08,gr19}), for a solution $u:B_{1}\to \R$ of \eqref{def:soluzione-estesa}, points $X_{0}\in \Lambda(u)$ in the contact set can be classified according to the value of their \textit{frequency}, which gives the asymptotic rate of detachment of $u$ from the obstacle $\vf$ around $X_{0}$, whenever strictly less than the regularity of $\vf$. More precisely, the frequency at $X_{0}$ corresponds to the homogeneity of blow-ups, i.e.~global solutions of \eqref{def:soluzione-estesa} with zero obstacle obtained as the limit of a sequence of rescalings of $\widetilde u^{X_0}$, a suitable extension of $u(\cdot, 0)-\vf$ to $\R^{n+1}$ (see \eqref{def-normalization-solution}). We refer the reader to \cref{section-prelim}, and in particular to \cref{generalized-Almgren} for the precise statements.
An important role is then played by the set of admissible frequencies in dimension $n+1$, namely
\begin{equation}\label{def:admissible-frequencies}
    \mathcal{A}_{n,s}:= \{\lambda \in \R: \text{there is a non-zero $\lambda$-homogeneous solution of \eqref{def:soluzione-estesa} with zero obstacle}\}.
\end{equation}
Dimension $2$ homogeneous solutions are explicitly characterized (see e.g.~\cite[Proposition A.1]{fs18}), and in fact it is known that \be\label{dim1}\mathcal{A}_{1,s}=\{2m\}_{m\in\N}\cup\{2m+2s\}_{m\in\N}\cup\{2m+1+s\}_{m\in\N}\subset \mathcal{A}_{n,s}.\ee

One of the main lines of investigation in the study of the thin obstacle problem \eqref{def:soluzione-estesa} regards the analysis of the contact set $\Lambda_{\lambda}(u)$ and the free boundary $\Gamma_{\lambda}(u)$ corresponding to admissible frequencies $\lambda\in\mathcal{A}_{n,s}$. 
The following are some of the main known results in this direction. 
\begin{itemize}
\item At points with frequency $1+s$, the blow-up is unique and coincides, up to a rotation, with a positive multiple of an even $L_{a}$-harmonic extension of $(x_1)_+^{1+s}$ to $\R^{n+1}$. Moreover, the rate of convergence to the blow-up is polynomial. As a consequence, $\Gamma_{1+s}(u)=\Lambda_{1+s}(u)=\text{Reg}(u)$ is an open $C^{1,\alpha}$-manifold of dimension $(n-1)$ inside the free boundary. We refer to \cite{acs08,css08,gps16,fs16,gpps17,csv20,car24}. 
\smallskip
\item At points with frequency $2m$, the blow-up is unique and it is a $2m$-homogeneous $L_{a}$-harmonic polynomial, non-negative on the thin space. The rate of convergence to the blow-up is logarithmic. As a consequence, $\Gamma_{2m}(u)=\Lambda_{2m}(u)$ is contained in the union of at most countably many $C^{1,\log}$-manifolds of dimension less than or equal to $n-1$.
We refer to \cite{gp09,gr19,csv20,car24}.
\smallskip
\item For $s=1/2$, at points with frequency $2m+1$, the blow-up is unique and it is a $(2m+1)$-homogeneous polynomial, which vanishes on the thin space. The rate of convergence to the blow-up is polynomial. As a consequence, $\Lambda_{2m}(u)$ is contained in the union of at most countably many $C^{1,\alpha}$-manifolds of dimension less than or equal to $n-1$. We refer to \cite{fs18,frs20,survey,sy23,cv24}.
\smallskip
\item For other interesting results concerning the rectifiability of the free boundary and points with frequencies different from $1+s, 2m$ and $2m+2s$, we refer to \cite{fs18,csv21,fs22,sy22:7/2,fs24}. 
\end{itemize}
\medskip

As a complementary result of this paper, we extend to the general case $s\in (0,1)$ the study of points with frequency $2m+2s$. We perform this analysis following the strategy employed in the very recent \cite{cv24}, which is based on the proof and application of an epiperimetric inequality for the Weiss energy corresponding to the frequency $2m+2s$ (see \cref{thm:epi}). Besides its own interest, a careful blow-up analysis at such points (when $m=1$) is in fact needed in the proof of \cref{thm:main_esteso}. 

First of all, we show that the set
\be\label{eq:def-p-2m+2s}\begin{aligned}
 \mathcal{P}_{2m+2s}:=\Big\{p: \ & L_a p=0\ \mbox{ in } \{y\neq 0\},\ -L_a p\ge0\ \mbox{ in } \R^{n+1},
 \nabla p\cdot X=(2m+2s)p,\\& p\equiv 0\ \mbox{ on } \{y=0\},\ p(x,y)=p(x,-y), \  p(x,y)=|y|^{2s} q(x,y), \ q\mbox{ polynomial}\Big\}.
\end{aligned}\ee coincides with the class of all $(2m+2s)$-homogeneous functions in $\R^{n+1}$ solving globally the extended problem (\ref{def:soluzione-estesa}) with zero obstacle.
Then, we prove the uniqueness of blow-up limits with a polynomial rate of convergence at points with frequency $2m+2s$.
\begin{theorem}[Uniqueness of the blow-up and rate of convergence at $(2m+2s)$-frequency points]\label{prop:rate} Let $u$ be a solution of \eqref{def:soluzione-estesa}, with obstacle $\vf$ satisfying \eqref{e:hypo-phi}.
Suppose that $0 \in\Lambda_{2m+2s}(u)$ with $2m+2s< k+\gamma$, and let $\widetilde{u}:=\widetilde{u}^{0}$ as in \eqref{def-normalization-solution}. 
Then, there exist $r_{0}>0$ and a non-zero $p\in\mathcal P_{2m+2s}$ such that $$\|\widetilde{u}-p\|_{L^\infty(B_r)}\le Cr^{2m+2s+\alpha}\quad\mbox{for every } r\in(0,r_{0}),$$ where $C>0$ and $\alpha \in (0,1)$ depend on $n$, $s$, $m$, $\vf$, $k$, $\gamma$ and $\|u\|_{L^\infty(B_1)}$.
\end{theorem}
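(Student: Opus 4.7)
The plan is to follow the epiperimetric strategy of \cite{cv24}, suitably adapted to the operator $L_a$ with $a=1-2s\in(-1,1)$. The central ingredient is the epiperimetric inequality at frequency $2m+2s$ (\cref{thm:epi}), which I combine with a Weiss-type monotonicity formula and a Monneau-type monotonicity formula, both holding up to perturbative errors coming from the Taylor remainder of $\vf$. Throughout, I work with $\widetilde u=\widetilde u^{0}$ from \eqref{def-normalization-solution}; the hypothesis $2m+2s<k+\gamma$ ensures that $\widetilde u$ solves the extended thin obstacle problem with zero obstacle up to an error of size $O(r^{k+\gamma})$ on the scale $r$, hence of strictly higher order than the sought rate $r^{2m+2s+\alpha}$.

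Denote $u_r(X):=\widetilde u(rX)/r^{2m+2s}$ and let $W(r):=W_{2m+2s}(\widetilde u,r)$ be the Weiss energy at frequency $2m+2s$. Since $0\in\Lambda_{2m+2s}(\widetilde u)$, the Almgren formula (\cref{generalized-Almgren}) guarantees that every limit of the family $\{u_r\}_{r\to 0^+}$ is a $(2m+2s)$-homogeneous global solution to the zero-obstacle thin problem, which by the characterization proved earlier in the paper lies in $\mathcal P_{2m+2s}$. The Weiss monotonicity (with obstacle error) then yields $W(r)\to W_\infty:=W_{2m+2s}(p,1)$ as $r\to 0^+$ for any such blow-up $p$, with a controlled error $|W(r)-W_\infty|\ge -Cr^\beta$ for some $\beta>0$. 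Letting $c_r$ be the $(2m+2s)$-homogeneous extension of $u_r|_{\partial B_1}$, the epiperimetric inequality produces a competitor $z_r$ with $W(z_r)-W_\infty\le(1-\kappa)(W(c_r)-W_\infty)$. Testing $z_r$ against the variational formulation of \eqref{def:soluzione-estesa} and combining with the identity relating $W(c_r)-W(r)$ to $rW'(r)$ leads to the differential inequality
$$W'(r)\ge \frac{c_0}{r}\bigl(W(r)-W_\infty\bigr)-Cr^{\beta-1},$$
from which a standard Gronwall argument gives the decay $0\le W(r)-W_\infty\le Cr^\alpha$ for some $\alpha\in(0,1)$ depending on the epiperimetric constant and on $\beta$.

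The passage from energy decay to uniqueness and to the rate of convergence is done via the Monneau-type formula at frequency $2m+2s$. For any fixed $p\in\mathcal P_{2m+2s}$, the quantity
$$M_p(r):=\frac{1}{r^{n+a+2(2m+2s)}}\int_{\partial B_r}|y|^a(\widetilde u-p)^2$$
satisfies $M_p'(r)\ge -C(W(r)-W_\infty)/r-Cr^{\beta-1}$. Integrating from $0$ to $r$ and using the $r^\alpha$-decay of $W-W_\infty$, one concludes that $\{u_r|_{\partial B_1}\}$ is Cauchy in $L^2(\partial B_1,|y|^a)$ as $r\to 0^+$, with rate $r^{\alpha/2}$. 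This simultaneously provides uniqueness of the blow-up $p\in\mathcal P_{2m+2s}$ and the bound $\|\widetilde u-p\|_{L^2(\partial B_r,|y|^a)}\le Cr^{2m+2s+\alpha/2}$. Upgrading this to the pointwise estimate in the statement is standard: the difference $\widetilde u-p$ satisfies an $L_a$-type equation with obstacle-driven right-hand side $O(r^{k+\gamma})$, and classical interior estimates for $L_a$ (together with the optimal $C^{1,s}$ regularity of $\widetilde u$ near the thin space) convert the averaged decay on spheres into the sup-norm decay in balls, possibly reducing $\alpha$ slightly.

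The main obstacle, which the paper addresses in a dedicated section, is the epiperimetric inequality \cref{thm:epi} itself, whose proof requires an explicit competitor construction adapted to the class $\mathcal P_{2m+2s}$ and to the operator $L_a$ for general $a\in(-1,1)$; once this is in place the present theorem follows from the now-standard epiperimetric scheme outlined above. The secondary technical point, which is responsible for the assumption $2m+2s<k+\gamma$, is the careful bookkeeping of the Taylor remainder of $\vf$ in the Weiss and Monneau formulas, so that all error terms remain strictly subleading with respect to the homogeneity $2m+2s$.
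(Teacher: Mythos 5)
Your overall scheme (epiperimetric inequality $\Rightarrow$ geometric decay of the Weiss energy $\Rightarrow$ Cauchy estimate for the rescalings on $\partial B_1$ $\Rightarrow$ $L^\infty$ upgrade) is the same as the paper's, and the bookkeeping of the obstacle error of size $O(r^{k+\gamma})$ is handled correctly in spirit. Your use of a Monneau-type formula in place of the paper's dyadic argument based on \eqref{eq:weiss-argomento-diadico} is an acceptable variant of the same step.

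There is, however, one genuine gap: you apply \cref{thm:epi} to the traces $u_r|_{\partial B_1}$ as if it were unconditional, but the epiperimetric inequality proved in the paper is \emph{conditional}: it requires the hypothesis \eqref{eq:quasizero}, namely that the trace vanishes on the set $\{T[p]\ge\delta\}\cap\partial B_1'$ for some $p\in\mathcal P_{2m+2s}$. This is a statement about the contact set of the solution at scale $r$ being large in a quantified sense, and it is not automatic; it is exactly what fails for generic admissible traces and what makes the inequality provable at all (the unconstrained epiperimetric inequality at frequency $2m+2s$ is not available). The paper devotes \cref{lemma:every-rescaled-fundamental} to verifying this hypothesis at every sufficiently small scale: first a barrier argument (using the structure $p=-|y|^{2s}(p_0+y^2p_1)$ with $p_0\ge 0$ from \cref{prop:zero-thin-space-(2m+2s)} and the comparison function $|X-Z|^2-(\tfrac{n}{1+a}+2)y^2+c$) shows that closeness of $\widetilde u_\rho$ to $p$ forces $\widetilde u_\rho=0$ on $\{T[p]\ge\delta\}$ down to scale $1/3$, and then a continuation argument propagates the condition to all $r\in(0,1)$ by showing that if it held on $(r_0,1)$ then the resulting Weiss decay keeps $\widetilde u_{\rho,r_0}$ close enough to $p$ to rerun the barrier argument below $r_0$. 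Without this step your differential inequality for $W(r)$ cannot be derived on any interval, so the Gronwall argument has nothing to start from. (Two smaller points: for a $(2m+2s)$-homogeneous global solution one has $W_\infty=0$, and you should also justify that the limit $p$ is non-zero, which the paper does by the argument of \cite[Lemma 7.2]{csv20}.)
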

As a consequence of \cref{prop:rate}, we get the stratification of the contact set $\Lambda_{2m+2s}(u)$. This is obtained with standard arguments based on the combination of Whitney's extension theorem with the implicit function theorem. 
\begin{corollary}[Stratification of $\Lambda_{2m+2s}(u)$]\label{cor:stratification}
Let $u$ be a solution of \eqref{def:soluzione-estesa}, with obstacle $\vf$ satisfying \eqref{e:hypo-phi}, and suppose that $2m+2s< k+\gamma$.
Then, the set $\Lambda_{2m+2s}(u)$ is contained in the union of at most countably
many manifolds of class $C^{1,\alpha}$. More precisely
$$\Lambda_{2m+2s}(u)=\overunderset{n-1}{j=1}{\bigcup}\Lambda^{j}_{2m+2s}(u),$$
where $\Lambda^{j}_{2m+2s}(u)$ is locally contained in a $j$-dimensional manifold of class $C^{1,\alpha}$, for every $j\in \{0,\ldots, n-1\}$.
\end{corollary}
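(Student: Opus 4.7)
The plan is to follow the classical stratification scheme: the rate estimate from \cref{prop:rate} provides a well-defined, Hölder-continuous "Taylor polynomial" map on $\Lambda_{2m+2s}(u)$, Whitney's extension theorem globalizes it to a regular function on $\R^n$, and the implicit function theorem then slices $\Lambda_{2m+2s}(u)$ into $C^{1,\alpha}$-pieces of the correct dimension. I define the strata via the invariance space of the blow-up. For every $X_0\in\Lambda_{2m+2s}(u)$, \cref{prop:rate} gives a unique non-zero $p_{X_0}\in\mathcal{P}_{2m+2s}$, and writing $p_{X_0}(x,y)=|y|^{2s}q_{X_0}(x,y)$ with $q_{X_0}$ a non-trivial $2m$-homogeneous polynomial, I set
\[
V(X_0):=\{e\in\R^n:\partial_e q_{X_0}\equiv 0\},\qquad j(X_0):=\dim V(X_0)\in\{0,\dots,n-1\},
\]
and $\Lambda^{j}_{2m+2s}(u):=\{X_0:j(X_0)=j\}$. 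Since $q_{X_0}$ is non-trivial and $2m$-homogeneous with $m\ge 1$, we have $j(X_0)\le n-1$.

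Next, I would use the polynomial rate estimate to prove that the map $X_0\mapsto p_{X_0}$ is locally Hölder continuous on $\Lambda_{2m+2s}(u)$: for $X_0,X_1$ in a fixed compact subset and $r=|X_0-X_1|$, comparing the two approximations $\widetilde u^{X_0}\approx p_{X_0}$ and $\widetilde u^{X_1}\approx p_{X_1}$ on $B_{Cr}$ and using homogeneity of $p_{X_i}$ yields
\[
\|p_{X_0}-p_{X_1}\|_{L^\infty(B_1)}\le C|X_0-X_1|^\alpha,
\]
together with analogous control on the corresponding polynomials $q_{X_0}$. Setting $P_{X_0}(X):=p_{X_0}(X-X_0)$ one then checks the Whitney compatibility conditions: the rate of \cref{prop:rate} gives $|u(X)-P_{X_0}(X)|\le C|X-X_0|^{2m+2s+\alpha}$, and the Hölder bound above propagates to the usual consistency inequalities for $P_{X_0}$ and $P_{X_1}$ and their derivatives up to order $2m$ along the thin space. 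An application of Whitney's extension theorem on the thin space produces a $C^{2m,\alpha}$-function $F:\R^n\to\R$ whose $2m$-th order Taylor polynomial at $X_0\in\Lambda_{2m+2s}(u)$ coincides with $q_{X_0}(\cdot,0)$.

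To conclude, I would apply the implicit function theorem stratum by stratum. For $X_0\in\Lambda^j_{2m+2s}(u)$, I choose $n-j$ directions $e_{j+1},\dots,e_n\in\R^n$ that span a complement to $V(X_0)$. By construction these correspond to non-vanishing derivatives $\partial^{2m}_{e_i}F(X_0)\neq 0$ of $F$ at $X_0$, while the Taylor polynomial of $F$ at every $X\in\Lambda^j_{2m+2s}(u)$ is by definition invariant under $V(X)$. Writing out the conditions that select $\Lambda^j_{2m+2s}(u)$ as a system of $n-j$ equations involving derivatives of order $2m-1$ of $F$, the choice of the $e_i$'s makes the Jacobian of this system non-degenerate at $X_0$; the implicit function theorem then exhibits a neighborhood of $X_0$ in which $\Lambda^j_{2m+2s}(u)$ is contained in a $j$-dimensional $C^{1,\alpha}$-manifold. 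Covering each $\Lambda^j_{2m+2s}(u)$ by countably many such neighborhoods completes the proof, and summing over $j\in\{0,\dots,n-1\}$ yields the stratification. The delicate point, although largely bookkeeping, is the correct handling of the $|y|^{2s}$ factor in the Whitney step: since $p_{X_0}$ is not smooth across the thin space, the extension and implicit function arguments must be carried out for the polynomial $q_{X_0}(\cdot,0)$ on $\R^n$ rather than for $p_{X_0}$ on $\R^{n+1}$, exploiting that $V(X_0)$ only involves directions tangent to the thin space (where the contact set lives).
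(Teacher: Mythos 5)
Your proposal follows essentially the same route as the paper: define the strata by the dimension of the invariance space of the $2m$-homogeneous trace polynomial $q_{X_0}(\cdot,0)=-T[p^{X_0}]$ extracted from the blow-up, use the polynomial rate of \cref{prop:rate} to get local H\"older continuity of $X_0\mapsto q_{X_0}$, and then combine Whitney's extension theorem on the thin space with the implicit function theorem applied to the $(2m-1)$-th order derivatives of the extension. This matches the paper's (very terse) argument, including the key observation that the extension must be performed for the polynomial on $\R^n$ rather than for the non-smooth profile $|y|^{2s}q_{X_0}$ on $\R^{n+1}$.
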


\subsection{ Explicit frequency gaps}
One of the most challenging open problems in the study of the thin obstacle problem \eqref{def:soluzione-estesa} is the characterization of the set of admissible frequencies $\mathcal{A}_{n,s}$ from \eqref{def:admissible-frequencies}, for $n+1\ge3$. Clearly, those appearing in \eqref{dim1} are admissible for every $n$, however, a complete description of the set of \lq\lq other frequencies''
\begin{equation*}\label{freq-adm-star}
\mathcal{A}_{n,s}^*:=\mathcal{A}_{n,s}\setminus\mathcal{A}_{1,s}
\end{equation*}
is still missing.
A well-known fact, coming from the classification of convex blow-ups, is that $\mathcal{A}_{n,s}^*\subset(2,+\infty)$ (see e.g. \cite{css08}). Further interesting partial results were obtained in the few past years in ruling out the presence of other admissible frequencies around points of $\mathcal{A}_{1,s}$. 
More precisely, in \cite{csv20,sy22}, for $s=1/2$, and then in \cite{car24} for $s\in(0,1)$, the authors proved that even frequencies $2m$ are isolated in $\mathcal{A}_{n,s}$. The analogue result at odd frequencies $2m+1$ was derived in \cite{sy22,cv24}, for $s=1/2$. 
In the very recent \cite{fs24-gap}, the first \lq\lq thick'' gap was shown; precisely, for $s=1/2$, there are no admissible frequencies belonging to intervals of the form $(2m,2m+1)$, with $m\in\N$. In addition, as it is shown in the forthcoming \cite{fs24-2d}, the set of \lq\lq other frequencies'' is not empty as soon as $n+1\ge 3$.

The generic regularity for the fractional obstacle problem up to dimension $3$ requires as a crucial step to know that $\mathcal{A}_{n,s}\cap (2,1+2s+\sigma)=\emptyset,$ for some small $\sigma>0$, when $s>1/2$. This motivated us to investigate in this direction, showing several new frequency gaps for all $s\in (0,1)$:
\begin{enumerate}
    \item \label{1} we derive a small gap around $(2m+2s)$-frequencies, obtained via epiperimetric inequalities (see \cref{prop:gap-odd}), which extends the corresponding result in \cite{sy22, cv24} for $s=1/2$;
    \smallskip
    \item \label{2} we establish a gap between $\N+2s$ and $\N$ relying on domain monotonicity of eigenvalues (see \cref{prop:explicit-frequency-gap}). Together with the one proved in \cite{fs24-gap}, this is the first explicit gap in fractional obstacle problem which is uniform in the dimension and in the values of the frequencies;
    \smallskip
    \item \label{3} we show that, when $s\neq 1/2$, the frequencies $2m+1$ and $2m+1+2s$ and nearby are not admissible (see \cref{prop:snot=1/2});
    \smallskip
    \item\label{4} we extend to every $s\in (0,1)$ the result from \cite{fs24-gap}, proving that there are no admissible frequencies in intervals of the form $(2m,2m+2s)$ (see \cref{prop:gap-fs24}). 
\end{enumerate}

These results for fractional obstacle problem may be summarized in the following theorem.

\begin{theorem}[Frequency gaps]\label{teo:gaps}
Let $\mathcal{A}_{n,s}$ be the set of admissible frequencies in dimension $n+1$ from \eqref{def:admissible-frequencies} and let 
$\mathcal{A}_{n,s}^{*}:= \mathcal{A}_{n,s}\setminus \mathcal{A}_{1,s}$ be the set of "other frequencies". Then,
        for every $m\in \N$, there exist constants $c^{\pm}_{n,s,m}>0$, depending only on $n$, $s$ and $m$, such that:
\begin{itemize}
    \item if $s\le1/2$, then
$$\mathcal{A}_{n,s}^*\cap(2m-1+2s-c_{n,s,m}^-, 2m+1+c_{n,s,m}^+)=\emptyset;$$
    \item if $s>1/2$, then
    $$\mathcal{A}_{n,s}^*\cap(2m-c_{n,s,m}^-, 2m+2s+c_{n,s,m}^+)=\emptyset.$$
\end{itemize}       
\end{theorem}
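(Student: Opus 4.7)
The plan is to prove \cref{teo:gaps} by covering each of the intervals in its statement with finitely many sub-intervals, on each of which one of the four frequency-gap results (1)--(4) announced in the Introduction, or the isolation of even integer frequencies in $\mathcal{A}_{n,s}$ proved in \cite{csv20,car24}, directly applies. Concretely, the ingredients are: \cref{prop:gap-odd} (epiperimetric gap around $2m+2s$), \cref{prop:explicit-frequency-gap} (eigenvalue-monotonicity gap between $\mathbb{N}$ and $\mathbb{N}+2s$), \cref{prop:snot=1/2} (gap around $2m+1$ and $2m+1+2s$, valid only for $s\neq 1/2$), and \cref{prop:gap-fs24} (the thick gap $(2m,\,2m+2s)$). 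The constants $c^{\pm}_{n,s,m}$ in the conclusion will be taken as the minima of the half-widths produced by the outermost pieces of the cover.

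For $s\le 1/2$ and $m\in\mathbb{N}$ fixed, the only dimension-$1$ admissible frequencies inside $[2m-1+2s,\,2m+1]$ are the four values $2m-1+2s = 2(m-1)+1+2s$, $2m$, $2m+2s$, and $2m+1$. When $s<1/2$ they are distinct, and I would assemble the cover as follows: \cref{prop:snot=1/2} around the endpoints $2m-1+2s$ and $2m+1$; \cref{prop:explicit-frequency-gap} on the two sub-intervals $(2m-1+2s,\,2m)$ and $(2m+2s,\,2m+1)$, which are of the form $(k+2s,\,k+1)$ with $k\in\mathbb{N}$; the isolation of even frequencies around $2m$; \cref{prop:gap-fs24} on $(2m,\,2m+2s)$; and \cref{prop:gap-odd} around $2m+2s$. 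When $s=1/2$, two pairs of these frequencies collapse ($2m-1+2s = 2m$ and $2m+2s = 2m+1$), the two eigenvalue-gap sub-intervals become empty, and the cover reduces to the isolation of even frequencies together with \cref{prop:gap-fs24} and \cref{prop:gap-odd}; in particular, \cref{prop:snot=1/2} is not invoked, consistently with its restriction $s\neq 1/2$.

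For $s>1/2$ the analysis is simpler, since the only dimension-$1$ admissible frequencies inside $[2m,\,2m+2s]$ are $2m$ and $2m+2s$ (the odd integer $2m+1$ lies strictly inside $(2m,\,2m+2s)$ and is absorbed by the thick gap). I would cover $(2m-c^-,\,2m+2s+c^+)$ by a left neighborhood of $2m$ from the isolation of even frequencies, the open interval $(2m,\,2m+2s)$ from \cref{prop:gap-fs24}, and a right neighborhood of $2m+2s$ from \cref{prop:gap-odd}.

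The combination above is merely careful bookkeeping. The real obstacle lies in the four underlying gap propositions, especially in the epiperimetric gap \cref{prop:gap-odd}, whose proof requires a delicate Weiss-type epiperimetric inequality for the frequency $2m+2s$ and an adaptation of the strategy of \cite{cv24} from $s=1/2$ to every $s\in(0,1)$, and in \cref{prop:gap-fs24}, which extends \cite{fs24-gap} beyond $s=1/2$.
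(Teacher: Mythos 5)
Your proposal follows essentially the same route as the paper, whose proof of \cref{teo:gaps} is literally the assembly of \cref{prop:gap-odd}, \cref{prop:explicit-frequency-gap}, \cref{prop:snot=1/2}, \cref{prop:gap-fs24} and the isolation of even frequencies from \cite[Proposition 1.3]{car24}; your covering of the two intervals, including the treatment of the degenerate case $s=1/2$ and the simpler case $s>1/2$, matches the paper's bookkeeping.

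There is, however, one small but genuine omission in the case $s<1/2$: you invoke \cref{prop:snot=1/2} ``around the endpoints $2m-1+2s$ and $2m+1$'', but that proposition only excludes the \emph{points} $2m+1$ and $2m+1+2s$ from $\mathcal{A}_{n,s}$; it does not by itself produce an open neighborhood of them free of admissible frequencies. In particular, the left neighborhood of $2m-1+2s$ and the right neighborhood of $2m+1$ are not covered by any of \cref{prop:explicit-frequency-gap}, \cref{prop:gap-fs24} or \cref{prop:gap-odd} (those intervals are of the form $(2k+1,2k+1+2s)$, which none of the gap propositions reaches). The paper closes this by additionally invoking the closedness of $\mathcal{A}_{n,s}$ (\cref{remark:closeness-admissible-freq}, a short compactness argument): since $2m+1\notin\mathcal{A}_{n,s}$ and $\mathcal{A}_{n,s}$ is closed, a full open neighborhood of $2m+1$ misses $\mathcal{A}_{n,s}$, and likewise for $2m-1+2s$. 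You should add this ingredient; with it, your cover is complete and the constants $c^{\pm}_{n,s,m}$ can be chosen as you describe.
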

Observe that, when $s=1/2$, \cref{teo:gaps} reduces to the known results from \cite{csv20,sy22,cv24,fs24-gap}. In the two limits $s\downarrow 0$ and $s\uparrow 1$ instead, \cref{teo:gaps} tells us that the admissible frequencies concentrate around integers and even integers, respectively. 

We point out that the result in \cref{teo:gaps}, in the case $s>1/2$, is a consequence of \eqref{1}, \eqref{4} and \cite[Proposition 1.3]{car24} only, even though, for the application to the proof of \cref{thm:mainthm}, the conclusions from \eqref{2} and \eqref{3} would have sufficed. On the other hand, \eqref{2} and \eqref{3} start giving distinct information with respect to \eqref{4} as soon as $s<1/2$.

\begin{figure}[H]
\centering
\includegraphics[width=0.95\textwidth]{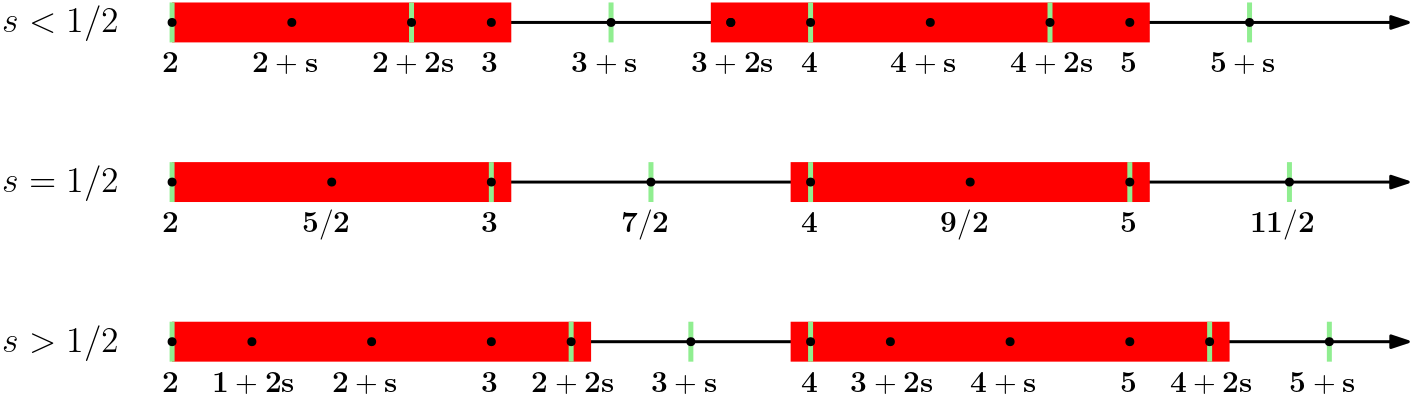}
\caption{The gaps of \cref{teo:gaps} for frequencies larger than $2$}
\label{fig:gaps}
\end{figure}
In \cref{fig:gaps}, presently known admissible frequencies are marked with vertical green lines, while red stripes indicate the intervals of frequencies excluded by \cref{teo:gaps}.

\subsection{Outline of the paper and sketch of the proof}\label{outline} To prove \cref{thm:mainthm} and \cref{thm:main_esteso}, we use the ideas developed in \cite{frs20,ft23} in combination with some results from \cite{fr21,car24}, properties on the fine structure of the free boundary that we extend from \cite{fj21,sy23,cv24} to the case of general $s\in(0,1)$ and $\vf\not\equiv0$, as for instance the rate of convergence at $(2m+2s)$-frequencies in \cref{prop:rate}, and finally the new frequency gaps from \cref{teo:gaps}.

We focus on \cref{thm:main_esteso}, since \cref{thm:mainthm} is easily deduced from it via the Caffarelli-Silvestre extension (see \cref{lemma:link_thm_main}).
We thus consider $u:B_1\times[0,1]\to\R$, a family of solutions of \eqref{def:soluzione-estesa}, \eqref{def:soluzione-famiglia}, with obstacle $\vf$ satisfying \eqref{e:hypo-phi}, with $k=4$ and $\gamma\in(a^-,1]$. 
We use bold font to indicate the union of sets over all $t\in[0,1]$. Namely, we denote by
$$
    \mathbf{\Gamma}(u):=\bigcup_{t\in[0,1]}\Gamma(u(\cdot,t))=\mathbf{Reg}(u) \cup \mathbf{Deg}(u),\\
    $$
    where
    $$
    \mathbf{Reg}(u):= \bigcup_{t\in[0,1]}\text{Reg}(u(\cdot,t)),\qquad \mathbf{Deg}(u):= \bigcup_{t\in[0,1]}\text{Deg}(u(\cdot,t)).
$$
We may split the whole degenerate set according to different values of the frequency: 
\be\label{splitting-degenerate-set}\mathbf{Deg}(u)= \Gammab_{2}^{\rm o}\cup \Gammab_{2}^{\rm a}\cup \Gammab_{2+2s}\cup \left(\Gammab_{\ge 3+s}\setminus \Gammab_{\ge4+\gamma}\right)\cup\Gammab_{\ge4+\gamma}\cup\Gammab_{*},\ee 
where (see \eqref{eq:def-gamma} and \eqref{def:gammao-and-gammaa}),
\medskip
\begin{enumerate}
    \item[i)] $\Gammab_{2}^{\rm o}$ and $\Gammab_{2}^{\rm a}$ are respectively the points with frequency $2$ for which the second term in the expansion of the corresponding solution is at least cubic (ordinary quadratic points) or otherwise (anomalous quadratic points).
    \smallskip
    \item[ii)] $\Gammab_{2+2s}$ are the points with frequency $2+2s$.
    \smallskip
    \item[iii)] $\Gammab_{\ge \lambda}$ are the points with frequency greater than or equal to $\lambda$.
    \smallskip
    \item[iv)] $\Gammab_{\ast}$ are the points whose frequency is not admissible in dimension $n+1=2$.
\end{enumerate}
\medskip

To prove \cref{thm:main_esteso}, the two main ingredients are dimension reduction arguments and cleaning-type results for each component of the splitting \eqref{splitting-degenerate-set}. After that, the application of an abstract GMT lemma (\cref{lemma:gmt}) coming from \cite{frs20} yields generic bounds on the dimension of the degenerate set. We outline hereafter the organization of the paper and the sketch of the proof of \cref{thm:main_esteso}. 

\begin{itemize}
\item In \cref{section-quadratic} we generalize the results in \cite{fj21} concerning the second blow-up at quadratic points for solutions of \eqref{def:soluzione-estesa} to the case of general obstacles $\vf\not\equiv0$, proving a new monotonicity formula for a truncated Almgren's frequency function in \cref{second-blowup-quadratic-points}. This allows us to further split the set of quadratic points in ordinary and anomalous quadratic points $\Gammab_{2}^{\rm o}$ and $\Gammab_{2}^{\rm a}$, defined in \eqref{def:gammao-and-gammaa}.
We point out that if $s>1/2$ and the space of invariant directions of the first blow-up is $(n-1)$-dimensional, then the second blow-up is a solution to the very thin obstacle problem \eqref{def:very-thin-obstacle-problem}; in all the other cases, the second blow-up is an $L_a$-harmonic polynomial. This dichotomy, first obtained in \cite{fj21}, descends from the fact that $(n-1)$-dimensional sets have zero $L_{a}$-harmonic capacity if and only if $s\le 1/2$.
\smallskip
\item In \cref{section-odd} we analyze points with frequency $2m+2s$, extending the results of \cite{frs20,sy23,cv24} to the case of a general fractional exponent $s\in (0,1)$. In particular, we prove the uniqueness of the blow-up limit with a polynomial rate of convergence in \cref{prop:rate}, together with a stratification result in \cref{cor:stratification}. This is obtained by
proving epiperimetric inequalities (see \cref{thm:epi}) and applying them at sufficiently small scales, as first done in \cite{cv24} in the case $s=1/2$. We will use the polynomial rate of convergence in \cref{prop:rate} to prove cleaning results for the set $\Gammab_{2+2s}$ (see \cref{prop:cleaning-(2+2s)}).
\smallskip
\item In \cref{section-frequency_gaps} we focus on the classification of admissible frequencies, proving in particular \cref{teo:gaps}. 
This follows from four main ingredients. First, in \cref{prop:gap-odd}, using the epiperimetric inequalities from \cref{thm:epi}, we show that frequencies $2m+2s$ are isolated.
Next, in \cref{prop:explicit-frequency-gap}, with an argument based on domain monotonicity of eigenvalues, we prove explicit gaps for frequencies between $\N+2s$ and $\N$.
Afterwards, in \cref{prop:snot=1/2}, we show that when $s\neq 1/2$, frequencies in $2\N+1$ and $2\N+1+2s$ are not admissible.
Finally, in \cref{prop:gap-fs24}, we extend \cite{fs24-gap}, proving that there are no admissible frequencies in $(2m,2m+2s)$.
The explicit gap obtained in \cref{teo:gaps} will be crucial in the proof of \cref{thm:main_esteso}, precisely, in the cleaning results for the set $\Gammab_{*}$, when $s>1/2$ (see \cref{prop:other-cleaning-results}).
\smallskip
\item In \cref{section-dim_reduc}, we prove the following dimensional bounds (see \cref{prop:dimension-reduction-ge2e*} and \cref{prop:gamma-anomalous}): 
$$\dim_{\HH}(\Gammab_{\ge2}\setminus \Gammab_{\ge 4+\gamma})\le n-1,\quad \dim_{\HH}(\Gammab_{*})\le n-2,\quad\dim_{\HH}(\Gammab_{2}^{\rm a})\le n-2.$$ 
This is obtained via dimension reduction arguments, which are based on the classification of admissible frequencies in low dimension. Here the monotonicity assumption in \eqref{def:soluzione-famiglia} is key to get the same results as one would get for a single solution.
The corresponding dimensional bounds were first proved in \cite{ft23} in the case $s=1/2$ and $\vf\equiv0$. However, we find new difficulties in our context, especially when $s>1/2$, where a thorough analysis is needed for the case where the second blow-up at quadratic points solves the very thin obstacle problem \eqref{def:very-thin-obstacle-problem}.
\smallskip
\item In \cref{section-clean}, we prove cleaning results for some subsets of the free boundary (see \cref{prop:gamma-ordinary}, \cref{prop:cleaning-(2+2s)} and \cref{prop:other-cleaning-results}). In particular, we obtain the following:
$$\{\Gamma_{2}^{\rm o}(u(\cdot,t)\}_{t\in[0,1]\}}\in
\begin{cases}\text{Clean}(3) &\mbox{if } s\le 1/2,\\
\text{Clean}(4-2s) &\mbox{if } s> 1/2,\end{cases}$$ $$\{\Gamma_{2+2s}(u(\cdot,t))\}_{t\in[0,1]\}}\in\text{Clean}(2+\eta),\quad \text{for some }\eta>0.$$
Here, for a family of sets $\{E_t\}_{t\in [0,1]}$ and a number $\beta>0$, the notation $\{E_{t}\}_{t\in [0,1]}\in \text{Clean}(\beta)$ is introduced in \cref{def:cleaned}, meaning, roughly speaking, that $t\mapsto E_{t}$ detaches from $E_{t_{0}}$ with rate at least $|t-t_{0}|^{1/\beta}$. 
These cleaning results follow by an expansion of the solution near the corresponding free boundary points and barrier arguments. 
Concerning the set $\Gammab_{2}^{\rm o}$, the difference between the cases $s\le1/2$ and $s>1/2$ is due to the fact that, when $s>1/2$, the second blow-up can either be $L_a$-harmonic or solve the very thin obstacle problem \eqref{def:very-thin-obstacle-problem}.
Regarding the set $\Gammab_{2+2s}$, we use, as a fundamental ingredient, the rate of convergence to the blow-up limit proved in \cref{prop:rate}.
The cleaning results for the other sets in the splitting \eqref{splitting-degenerate-set} are proved in \cref{prop:other-cleaning-results} and they are consequence of \cref{prop:cleaning}, coming from \cite{fr21}, and of the frequency gaps in \cref{teo:gaps}. 
\smallskip
\item Finally, in \cref{section-proofs_main}, we combine the dimensional bounds of \cref{section-dim_reduc} with the cleaning results of \cref{section-clean} through the GMT \cref{lemma:gmt}, thus proving \cref{thm:main_esteso}.
In fact, in \cref{prop:final}, we obtain even a more detailed description of the generic dimension of each component in the splitting \eqref{splitting-degenerate-set}.
\end{itemize}

In the following tables we summarize the dimensional bound, the cleaning exponent and the generic dimension of each subset in the splitting \eqref{splitting-degenerate-set} that we obtain in this work (see \cref{prop:final} for more details). 
\begin{table}[H]
    \centering
    \begin{tabular}{|c|c|c|c|}
\hline
Set & $\text{dim}_\HH\Gammab$ & Cleaning exponent & Generic $\text{dim}_\HH\Gamma$\\
\hline
$\Gammab_{2}^{\rm o}$ & $n-1$ & $3$ & $n-4$\\
\hline
$\Gammab_{2}^{\rm a}$ & $n-2$ & $2$ & $n-4$  \\
\hline
$\Gammab_{2+2s}$ & $n-1$ & $2+\eta$ & $n-3-\eta$\\
\hline
$\Gammab_{\ge 3+s}\setminus\Gammab_{\ge 4+\gamma}$ & $n-1$ & $3-s$ & $n-4+s$\\
\hline
$\Gammab_{\ge 4+\gamma}$ & $n$ & $3+\gamma$ & $n-3-\gamma$\\
\hline
$\Gammab_{*}$ & $n-2$ & $3-2s+\sigma$ & $n-5+2s-\sigma$ \\
\hline
\end{tabular}
\vspace{0.2cm}
    \caption{The case $s\le 1/2$}
    \label{tab1}
\end{table}
\vspace{-0.3cm}
\begin{table}[H]
    \centering
    \begin{tabular}{|c|c|c|c|}
\hline
Set & $\text{dim}_\HH\Gammab$ & Cleaning exponent & Generic $\text{dim}_\HH\Gamma$\\
\hline
$\Gammab_{2}^{\rm o}$ & $n-1$ & $4-2s$ & $n-5+2s$\\
\hline
$\Gammab_{2}^{\rm a}$ & $n-2$ & $(4-2s)/(1+s)$ & $n-6/(1+s)\ $ \\
\hline
$\Gammab_{2+2s}$ & $n-1$ & $2+\eta$ & $n-3-\eta$\\
\hline
$\Gammab_{\ge 3+s}\setminus\Gammab_{\ge 4+\gamma}$ & $n-1$ & $3-s$ & $n-4+s$\\
\hline
$\Gammab_{\ge 4+\gamma}$ & $n$ & $4+\gamma-2s$ & $n-4-\gamma+2s$\\
\hline
$\Gammab_{*}$ & $n-2$ & $2+\sigma$ & $n-4-\sigma$ \\
\hline
\end{tabular}
    \vspace{0.2cm}
    \caption{The case $s>1/2$}
    \label{tab2}
\end{table}
\noindent The tables should be interpreted keeping in mind that whenever the number in the last column is strictly negative, the corresponding set is generically empty.
Here $\eta>0$ and $\sigma>0$ are constants depending only on $n$ and $s$. More precisely, $\eta$ is the constant in \cref{prop:cleaning-(2+2s)}, which is related to the polynomial rate of convergence to the blow-up at points with frequency $2+2s$ (see \cref{prop:rate}).
While, $\sigma$ is the amplitude of the gap from \cref{teo:gaps} around the frequency $3$, for $s\le1/2$, and around the frequency $2+2s$, for $s>1/2$ (see also \cref{prop:other-cleaning-results}). Notice that we require the condition $\gamma\in(a^-,1]$ to estimate the generic dimension of $\Gammab_{\ge4+\gamma}$ in the case $s>1/2$.

\section{Preliminaries}\label{section-prelim}
\subsection{Notations} Throughout the paper we will make use of the following notations.
\begin{itemize}
    \item We assume that $n\in \mathbb{N}$, $s\in (0,1)$, $a:=1-2s \in (-1,1)$ and the obstacle $\varphi$ satisfying \eqref{e:hypo-phi} are fixed.
    \smallskip
    \item Points in $\R^{n+1}$ will be indicated by capital letters. We typically split $X\in \R^{n+1}$ in its two components $X=(x,y)\in \R^{n}\times \R$. Balls in $\R^{n+1}$ are indicated by $B_{r}(X_{0}):=\left\{X\in \R^{n+1}: |X-X_{0}|<r\right\}$.
    \smallskip
\item For every given set $E\subset \R^{n+1}$, we call
$$E^+:=E\cap \{y>0\},\quad E':=E\cap \{y=0\},\quad E^-:=E\cap \{y<0\}.$$
When $E=\R^{n+1}$, we write for simplicity
$$\R^{n+1}_{\pm}:= (\R^{n+1})^{\pm},\quad \R^{n+1}_{0}:= (\R^{n+1})'.$$
\item For a given open set $\Omega\subset \R^{n+1}$, we denote by $L^{2}(\Omega, \y)$ and $W^{1,2}(\Omega,\y)$ the spaces of square integrable functions and Sobolev functions in $\Omega$ with respect to the weighted measure $\y \mathscr{L}^{n+1}$. Similarly, whenever $\Sigma \subset \partial B_{R}$ is an open set of the sphere $\partial B_{R}$, then $L^{2}(\Sigma, \y)$ and $W^{1,2}(\Sigma,\y)$ will indicate the spaces of square integrable functions and Sobolev functions in $\Sigma$ with respect to the weighted measure $\y \mathcal{H}^{n}$.
\smallskip
\item For a given open set $\Omega\subset\R^{n+1}$, we define $$C^{1,\alpha}_a(\Omega):=\{w\in C^1(\Omega):\|w\|_{C^{1,\alpha}_a}<+\infty\},$$ where $$\|w\|_{C^{1,\alpha}_{a}(\Omega)}:=\|w\|_{L^\infty(\Omega)}+\|\nabla_x w\|_{C^\alpha(\Omega)}+\|\y\partial_y w\|_{C^\alpha(\Omega)}.$$
\end{itemize}
 
\subsection{Almgren's frequency function}
Given a general $v\in W_{\loc}^{1,2}(\R^{n+1},|y|^{a})$ and a point $X_{0}=(x_{0},0)\in \R^{n}\times \left\{0\right\}$ on the thin space, we define the following quantities, for every $r>0$:
\be\label{def:HD}
    H^{X_{0}}(r,v):= \int_{\partial B_{r}(X_{0})}|v|^{2}|y|^{a}\,d\mathcal{H}^{n},\qquad
    D^{X_{0}}(r,v):= \int_{B_{r}(X_{0})}|\nabla v|^{2}|y|^{a}\,dX.
\ee
In the study of obstacle-type problems, an important role is played by the so-called \lq\lq pure'' Almgren's frequency function, namely:
\bea N^{X_0}(r,v):=\frac{rD^{X_{0}}(r,v)}{H^{X_{0}}(r,v)}.\eea 
Indeed, in \cite{acs08,css08} it was proved that if $u$ is a solution of \eqref{def:soluzione-estesa} with zero obstacle $\varphi\equiv 0$ and $X_{0}\in \Lambda(u)$ is a contact point, then $r\mapsto N^{X_0}(r,u)$ is non-decreasing and so, the limiting value $N^{X_0}(0^{+},u):=\lim_{r\downarrow 0} N^{X_0}(r,u)$ is well-defined. It is called the \lq\lq frequency'' at $X_{0}$, and gives precise information about the local behavior of $u$ near $X_{0}$. The monotonicity of Almgren's frequency function is a key tool for carrying out a blow-up analysis with all its consequences.

When dealing with non-zero obstacles, the analysis has to be carried on with some modifications, as done for instance in \cite{gr19}. Suppose that $\varphi$ satisfies \eqref{e:hypo-phi}. 
Let $Q^{X_{0}}:\R^{n}\to \R$ be the $k$-th order Taylor polynomial of $\varphi$ at $X_{0}$, and let $\widetilde{Q}^{X_{0}}:\R^{n+1}\to \R$ be the even $L_{a}$-harmonic extension of $Q^{X_{0}}$ to the whole $\R^{n+1}$, which is well-defined by \cref{lemma:unique-extension-polynomial-aHarmonic}. 
We call
\begin{equation}\label{def:phitilde}
    \widetilde{\varphi}^{X_{0}}(x,y):= \varphi(x)-Q^{X_{0}}(x)+\widetilde{Q}^{X_{0}}(x,y).
\end{equation}
Then, the function
\begin{equation}\label{def-normalization-solution}
    \widetilde{u}^{X_{0}}(x,y):=u(x,y)-\widetilde{\varphi}^{X_{0}}(x,y).
\end{equation}
is an even $L_a$-harmonic extension of $u(\cdot,0)-\varphi$ to the whole $\R^{n+1}$, and moreover, by construction, it solves a thin obstacle problem with zero obstacle and a suitable right-hand side:
\begin{equation}
\left\{
    \begin{array}{rclll}\label{def:soluzione-estesa-tilde}
            -L_a \widetilde{u}^{X_{0}}(x,y)&=&g^{X_{0}}(x)|y|^{a} & \quad \mbox{for } (x,y)\in B_{1}\setminus \{\widetilde{u}^{X_{0}}=0\}',\\
            -L_a \widetilde{u}^{X_{0}}(x,y) &\ge& g^{X_{0}}(x)|y|^{a} & \quad \mbox{for } (x,y)\in B_{1},\\
		\widetilde{u}^{X_{0}}(x,0)&\ge& 0 & \quad \mbox{for } (x,0)\in B'_{1},\\
		\widetilde{u}^{X_{0}}(x,y)&=& \widetilde{u}^{X_{0}}(x,-y) &\quad \mbox{for } (x,y)\in B_{1}.
\end{array}\right.
\end{equation}
Here $g^{X_{0}}(x):= \left(\Delta_{x}\varphi(x)-\Delta_{x}Q^{X_{0}}(x)\right)$ satisfies the estimate 
\begin{equation}\label{eq:stima-g}
    |g^{X_{0}}(x)|\le C|x-x_{0}|^{k+\gamma-2}\quad \mbox{for every } x\in \R^{n}: |x|<1,
\end{equation}
where $C=C(\vf)>0$ depends on $\varphi$, $n$, $k$ and $\gamma$ as can be easily obtained by Taylor-expanding $g^{X_{0}}$ around $x_{0}$.
In the sequel we will drop the superscript $X_{0}$ whenever $X_{0}=0$.

We now recall from \cite{gr19} the monotonicity of a \lq\lq truncated'' Almgren's frequency function, with the existence of blow-up limits.
\begin{proposition}[\protect{\cite[Section 6]{gr19}}]\label{generalized-Almgren}
    Let $u$ be a solution of \eqref{def:soluzione-estesa} with obstacle $\varphi$ satisfying \eqref{e:hypo-phi} and $\|u\|_{L^\infty(B_1)}\le 1$. Given $\theta\in(0,\gamma)$, there exist $C_{0}, r_{0}>0$, such that, for every $X_{0}\in \Lambda(u)$, the function
    $$r \mapsto \phi^{X_{0}}(r,u):=(r+C_{0}r^{1+\theta})\frac{d}{dr}\log \max \left\{H^{X_{0}}(r,\widetilde{u}^{X_{0}}),r^{n+a+2(k+\gamma-\theta)}\right\}$$
    is non-decreasing in $(0,r_{0})$. In particular it is well-defined $\phi^{X_{0}}(0^{+},u):= \lim_{r\downarrow0}\phi^{X_0}(r,u)$ and $$\phi^{X_{0}}(0^{+},u)\in \left\{n+3\right\}\cup [n+a+4,n+a+2(k+\gamma)].$$
    
    \noindent If in addition $\phi^{X_{0}}(0^{+},u)=n+a+2\lambda$, for some $\lambda<k+\gamma$, then the following holds.
    \begin{itemize}
        \item [i)] $N^{X_{0}}(0^{+},\widetilde{u}^{X_{0}})=\lambda$. Moreover, there exists $C>0$ such that
        $$H^{X_{0}}(r,\widetilde{u}^{X_{0}})\le Cr^{n+a+2\lambda}\quad \mbox{for every } r\in (0,r_{0}),$$
        and, for every $\eps>0$, there exist $C_{\eps}>0$ and $r_{\eps}\in (0,r_{0})$ such that 
        $$H^{X_{0}}(r,\widetilde{u}^{X_{0}})\ge C_{\eps}r^{n+a+2\lambda+\eps}\quad \mbox{for every } r\in (0,r_{\eps}).$$
        \item [ii)] For every $r_{j}\downarrow 0$, the blow-up sequence
        $$\widetilde{u}^{X_{0}}_{r_{j}}:= \frac{\widetilde{u}^{X_{0}}(X_{0}+r_{j}\cdot)}{\lVert \widetilde{u}^{X_{0}}(X_{0}+r_{j}\cdot)\rVert_{L^{2}(\partial B_{1},|y|^{a})}}$$
        is uniformly bounded in $W_{\loc}^{1,2}(\R^{n+1},|y|^{a})$ and in $C_{a,\loc}^{1,s}(\R^{n+1}_+)$. In particular, up to subsequences, $\widetilde{u}^{X_{0}}_{r_{j}}$ converges to some $p$ strongly in $W_{\loc}^{1,2}(\R^{n+1},|y|^{a})$ and in $C_{a,\loc}^{1,\alpha}(\R^{n+1}_+)$, for any $\alpha \in (0,s)$, and $p$ is a non-zero global $\lambda$-homogeneous solution of \eqref{def:soluzione-estesa} with zero obstacle. 
    \end{itemize}  
\end{proposition}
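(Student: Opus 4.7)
The plan is to adapt the Almgren frequency monotonicity of \cite{acs08,css08} to the normalized function $\widetilde{u}^{X_0}$, which by \eqref{def:soluzione-estesa-tilde} solves a thin obstacle problem with zero obstacle but a non-trivial right-hand side $g^{X_0}(x)|y|^a$. First, differentiating under the integral sign in polar coordinates and using the divergence theorem with the equation and Signorini-type boundary conditions on the thin space, one obtains identities of the form
\[
\tfrac{d}{dr} H^{X_0}(r,\widetilde{u}^{X_0}) = \tfrac{n+a}{r} H^{X_0} + 2\int_{\partial B_r(X_0)} \widetilde{u}^{X_0}\,\partial_\nu \widetilde{u}^{X_0}\, |y|^a\,d\mathcal H^{n},
\]
together with a Pohozaev-type identity for $D^{X_0}$. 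Using \eqref{def:soluzione-estesa-tilde} and integration by parts against $\widetilde{u}^{X_0}$ and $X\cdot\nabla \widetilde{u}^{X_0}$, these identities will carry extra terms involving $g^{X_0}$; by \eqref{eq:stima-g} these error terms are controlled by $C r^{k+\gamma-2}$ times integrals of $\widetilde{u}^{X_0}$ on $B_r(X_0)$, which in turn are bounded by powers of $r$ and of $\sqrt{H^{X_0}}$.

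Next, I would write the logarithmic derivative of $H^{X_0}$, which in the pure Signorini case reduces to $(n+a)/r + 2N^{X_0}/r$ for the usual frequency. Monotonicity of the plain frequency fails here only because of the $g^{X_0}$-error, but one observes that all the error contributions are of order $r^{2(k+\gamma)-n-a-2\lambda}$ relative to $H^{X_0}$. This is precisely why the truncation by $r^{n+a+2(k+\gamma-\theta)}$ is introduced: on the region $\{H^{X_0}(r,\widetilde{u}^{X_0})\ge r^{n+a+2(k+\gamma-\theta)}\}$ the error contributes a term bounded by $C r^{2\theta-1}$ to the logarithmic derivative, which can be absorbed by a multiplicative correction $e^{C_0 r^\theta}$, or equivalently by multiplying by $(r+C_0r^{1+\theta})$; on the complementary region the max equals $r^{n+a+2(k+\gamma-\theta)}$ and $\phi^{X_0}(r,u)$ is just the constant $(n+a+2(k+\gamma-\theta))(1+C_0 r^\theta)$, which is trivially monotone for small $r$. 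Matching the two regimes and choosing $C_0$ large enough gives the monotonicity and hence existence of $\phi^{X_0}(0^+,u)$.

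For the value of $\phi^{X_0}(0^+,u)$ and item (i), I would argue as follows. Either $\phi^{X_0}(0^+,u)$ attains the truncation value $n+a+2(k+\gamma-\theta)$ (and then by iterating Dini-type arguments one can push $\theta\uparrow\gamma$), or $\phi^{X_0}(0^+,u)=n+a+2\lambda$ with $\lambda=N^{X_0}(0^+,\widetilde{u}^{X_0})<k+\gamma$. In the latter case, optimal $C^{1,s}$-regularity forces $\lambda\ge 1+s$, while the standard ACS gap rules out $\lambda\in(1+s,2)$, giving the dichotomy $\{n+3\}\cup[n+a+4,n+a+2(k+\gamma)]$. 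The upper bound $H^{X_0}(r,\widetilde{u}^{X_0})\le C r^{n+a+2\lambda}$ follows by integrating the monotonicity inequality for $\phi^{X_0}$ from $r$ to $r_0$; the lower bound with an $\varepsilon$-loss follows by contradiction: if $H^{X_0}$ decayed faster than $r^{n+a+2\lambda+\varepsilon}$ on a sequence, monotonicity of $\phi^{X_0}$ would force $\phi^{X_0}(0^+,u)\ge n+a+2\lambda+\varepsilon$, contradicting the definition of $\lambda$.

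For item (ii), the rescalings $\widetilde{u}^{X_0}_{r_j}$ are normalized in $L^2(\partial B_1,|y|^a)$ by construction; boundedness of the frequency $N^{X_0}(r,\widetilde{u}^{X_0})$ translates into a uniform $W^{1,2}(B_R,|y|^a)$-bound on the rescalings for every $R>0$, and the rescaled right-hand sides $r_j^{2(1-s)}g^{X_0}(x_0+r_j x)|y|^a$ converge uniformly to zero in compact sets, by \eqref{eq:stima-g} together with $\lambda<k+\gamma$. Scaled optimal regularity yields uniform $C^{1,s}_a$-estimates on $\R^{n+1}_+\cap B_R$, so a subsequence converges strongly in $W^{1,2}_{\loc}$ and in $C^{1,\alpha}_{a,\loc}$, $\alpha<s$, to a global solution $p$ of \eqref{def:soluzione-estesa} with zero obstacle; non-triviality comes from the normalization. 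Finally, homogeneity of $p$ is obtained from the equality case of the Almgren monotonicity applied to $p$: since $N^{X_0}(r,\widetilde{u}^{X_0})\to\lambda$ and rescaling preserves the frequency, $N(r,p)\equiv\lambda$ for all $r>0$, which forces $p$ to be $\lambda$-homogeneous. The main technical obstacle is keeping the error bookkeeping sharp enough that the truncation threshold, the correction $C_0 r^{1+\theta}$, and the threshold $k+\gamma$ all line up so that the gap $(n+3,n+a+4)$ for $\phi^{X_0}(0^+,u)$ is preserved in the presence of the $g^{X_0}$-error.
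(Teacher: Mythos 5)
The paper does not prove this proposition: it is recalled verbatim from \cite[Section 6]{gr19}, so there is no internal argument to compare against. Your plan reconstructs the standard truncated-frequency proof of that reference (Rellich/Pohozaev identities with $g^{X_0}$-error terms, truncation of $H^{X_0}$ at $r^{n+a+2(k+\gamma-\theta)}$ to make the relative error integrable, the $(1+s,2)$ frequency gap for the dichotomy, and compactness of normalized rescalings), and the outline is essentially correct and the same in spirit. Only minor bookkeeping slips appear: the scaling factor on the rescaled right-hand side should be $r_j^{2}g^{X_0}(x_0+r_jx)|y|^a/h_{r_j}$ (which tends to zero precisely because $h_{r_j}\gtrsim r_j^{\lambda+\eps}$ and $\lambda<k+\gamma$), not $r_j^{2(1-s)}g^{X_0}$, and the exponent of the error in the logarithmic derivative is $r^{\theta-1}$ rather than $r^{2\theta-1}$; neither affects the structure of the argument.
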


Thanks to \cref{generalized-Almgren} we can classify free boundary points of a solution to \eqref{def:soluzione-estesa} according to the corresponding asymptotic value of the generalized Almgren's frequency function. Let $u:B_{1}\to \R$ be a solution to \eqref{def:soluzione-estesa}, with obstacle $\varphi$ satisfying \eqref{e:hypo-phi}. For every $\lambda <k+\gamma$, we define the following subsets of the free boundary and of the contact set respectively, which are made of those points that have precisely frequency $\lambda$: 
\be\label{eq:gammalambda}
    \Gamma_{\lambda}(u):=\left\{X_{0}\in \Gamma(u): N^{X_{0}}(0^{+},\widetilde{u}^{X_{0}})=\lambda\right\},\quad
    \Lambda_{\lambda}(u):=\left\{X_{0}\in \Lambda(u): N^{X_{0}}(0^{+},\widetilde{u}^{X_{0}})=\lambda\right\}.
\ee
We also denote, for every $\lambda\le k+\gamma$, the set of free boundary points with frequency larger than $\lambda$ as 
\bea \Gamma_{\ge \lambda}(u):= \Gamma(u)\setminus \underset{\mu < \lambda}{\bigcup}\Gamma_{\mu}(u). \eea
Remember the definition of the set of admissible homogeneities
$\mathcal{A}_{n,s}$ from \eqref{def:admissible-frequencies}.
In dimension $n=0,1$ they are completely classified (see for instance the appendix of \cite{fs18}):
\begin{proposition}\label{prop:classification-solution-dim2}
    We have:
    \begin{itemize}
        \item [i)] $\mathcal{A}_{0,s}= \{2s\}$;
        \smallskip
        \item [ii)] $\mathcal{A}_{1,s}=\bigcup_{m\in \mathbb{N}}\left\{2m+1+s, 2m, 2m+2s\right\}$.
    \end{itemize}
\end{proposition}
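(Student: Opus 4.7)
The plan is to reduce each assertion to the study of a one-dimensional ordinary differential equation, exploiting the low ambient dimension, and then to classify admissible homogeneities by combining explicit eigenvalue computations with the sign constraints of the thin obstacle problem. In both cases I shall use the implicit normalization $u(0)=0$ coming from the blow-up interpretation of admissible frequencies given by \cref{generalized-Almgren}.

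For (i), in ambient dimension $n+1=1$ any even $\lambda$-homogeneous function has the form $u(y)=c|y|^\lambda$. The equation $L_a u=0$ for $y\neq 0$ reduces to the indicial relation $\lambda(\lambda+a-1)=0$, hence $\lambda\in\{0,2s\}$. The value $\lambda=0$ is ruled out by $u(0)=0$. For $\lambda=2s$ one computes, using the key identity $a+2s=1$,
\begin{equation*}
|y|^a\partial_y u = 2sc\,\mathrm{sgn}(y),\qquad L_a u = 4sc\,\delta_0,
\end{equation*}
so the distributional condition $-L_a u\geq 0$ selects $c\leq 0$, and any $c<0$ gives a non-zero admissible solution. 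Hence $\mathcal{A}_{0,s}=\{2s\}$.

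For (ii), I work in $\R^2$ and separate variables in polar coordinates, writing $u=r^\lambda f(\theta)$ on $\theta\in[0,\pi]$ (the upper half-disc), using evenness in $y$. Substituting into $\Delta u+\tfrac{a}{y}\partial_y u=0$ yields the angular equation
\begin{equation*}
f''(\theta)+a\cot\theta\, f'(\theta)+\lambda(\lambda+a)f(\theta)=0,\qquad \theta\in(0,\pi),
\end{equation*}
which, under $t=\cos\theta$, transforms into a Jacobi-type equation $(1-t^2)f''-(1+a)tf'+\lambda(\lambda+a)f=0$ with parameters $\alpha=\beta=-s$. Frobenius analysis at the singular endpoints shows that near $\theta=0$ one has $f\sim A+B\theta^{2s}$, and analogously near $\theta=\pi$. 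The thin obstacle system then imposes on each endpoint one of two alternatives: the Dirichlet-type condition $A=0$ (corresponding to a contact ray), or the Neumann-type condition $B=0$ (corresponding to $\lim_{y\downarrow 0}|y|^a\partial_y u=0$ off the contact set).

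Solving the eigenvalue problem in the four resulting boundary-condition combinations produces exactly the three announced families. The Neumann--Neumann problem admits the Jacobi polynomials $P_n^{(-s,-s)}(\cos\theta)$ as eigenfunctions, and the sign constraint $u(x,0)\geq 0$ on both rays forces $n=2m$ even, giving $\lambda=2m$ and $u$ an $L_a$-harmonic polynomial. The Dirichlet--Dirichlet problem, via the substitution $f(\theta)=|\sin\theta|^{2s}g(\cos\theta)$, reduces to the analogous Jacobi problem with parameters $\alpha=\beta=s$, yielding eigenvalues $\lambda=2m+2s$ and solutions of the form $u=|y|^{2s}q(x,y)$ with $q$ a polynomial even in $y$; the sign of the leading coefficient is fixed by the distributional condition $-L_a u\geq 0$ on the thin space. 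Each of the two mixed cases, by an analogous substitution stripping off $(1-t)^s$ at the Dirichlet endpoint only, produces $\lambda=2m+1+s$. The main obstacle is the bookkeeping between the four boundary-condition regimes, the parity selections forced by $u(\cdot,0)\geq 0$, and the sign choices forced by $-L_a u\geq 0$; in particular, in the mixed case one must check that the admissible eigenfunction has the correct global sign so that $u\geq 0$ on the non-contact ray while the distributional jump on the contact ray is a non-negative measure.
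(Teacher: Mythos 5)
Your argument is correct, and it is worth noting that the paper does not actually prove this proposition: it is stated as a known classification and attributed to the appendix of \cite{fs18}. What you have written is essentially a reconstruction of that classical argument — separation of variables, reduction of the angular problem to a Gegenbauer/Jacobi equation with parameters $\alpha=\beta=-s$, Frobenius analysis giving the two branches $A+B\theta^{2s}$ at each pole, and the observation that homogeneity plus complementarity forces a pure Dirichlet or pure Neumann condition on each of the two rays. Your computations check out where you give them: the indicial relation in part (i), the identity $a+2s=1$, the angular ODE $f''+a\cot\theta\,f'+\lambda(\lambda+a)f=0$ (consistent with $\lambda(\alpha)=\alpha(n+a+\alpha-1)$ for $n=1$), and the Jacobi parameters after the substitutions. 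Two points are asserted rather than verified and deserve to be spelled out if this were to replace the citation. First, in the mixed case one needs the explicit eigenvalue relation: writing $f=(1-t)^s h$ turns the equation into the Jacobi problem with parameters $(s,-s)$ and yields $\lambda(\lambda+a)-s(1-s)=k(k+1)$, i.e.\ $\lambda=k+s$ with $k\in\N$; the selection of odd $k$ then comes from $P_k^{(s,-s)}(1)P_k^{(s,-s)}(-1)=(-1)^k\binom{k+s}{k}\binom{k-s}{k}$, which must be negative so that one can simultaneously arrange $u\ge 0$ on the Neumann ray and non-positive flux on the Dirichlet ray. Second, in the Dirichlet--Dirichlet case the parity selection $k=2m$ is forced by $-L_au\ge 0$ (since $u\equiv 0$ on the whole thin space there, the condition $u(\cdot,0)\ge0$ is vacuous), not only the sign of the leading coefficient; and one should note, as in the paper's \cref{prop:zero-thin-space-(2m+2s)}, that the borderline case $q_0\equiv0$ collapses to an $L_a$-harmonic polynomial vanishing on the thin space and hence to zero. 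These are bookkeeping items you yourself flag, not gaps in the strategy; with them filled in, your proof is complete and self-contained, which is more than the paper offers for this statement.
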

\noindent Then, we also introduce a notation for the set of free boundary points with frequency which is not admissible in dimension $n+1=2$:  
\bea
    \Gamma_{*}(u):=\bigcup_{\lambda\in(2,k+\gamma)\setminus \mathcal{A}_{1,s}}\Gamma_{\lambda}(u).
\eea

For a family of solutions $u:B_{1}\times [0,1]\to \R$ of \eqref{def:soluzione-estesa}, \eqref{def:soluzione-famiglia}, we use bold font to denote the corresponding subsets of the union of free boundaries over $t\in [0,1]$:
\be\label{eq:def-gamma}
\mathbf{\Gamma}_{\lambda}:=\bigcup_{t\in[0,1]}\Gamma_\lambda(u(\cdot,t)),\quad \mathbf{\Gamma}_{\ge\lambda}:=\bigcup_{t\in [0,1]}\Gamma_{\ge \lambda}(u(\cdot, t)),\quad \mathbf{\Gamma_*}:=\bigcup_{t\in [0,1]}\Gamma_{*}(u(\cdot, t)).
\ee
\subsection{The Weiss energy} Next we recall the definition and some basic properties of the Weiss energy. Given $v\in W_{\loc}^{1,2}(\R^{n+1},|y|^{a})$, $\lambda\in \R$ and $r>0$, we call
\begin{equation}\label{weiss}
    W_{\lambda}(r,v):=\frac{rD(r,v)-\lambda H(r,v)}{r^{n+a+2\lambda}}.
\end{equation}
For a solution $u$ of \eqref{def:soluzione-estesa} with non-zero obstacle, calling $\widetilde{u}=\widetilde{u}^{0}$ and $g=g^{0}$ the right-hand side in \eqref{def:soluzione-estesa-tilde}, we will consider the following modification of the Weiss energy:  
\be \label{def:weiss-tilde} \widetilde W_{\lambda}(r,\widetilde{u}):=W_{\lambda}(r,\widetilde{u})-\frac{1}{r^{n+a+2\lambda-1}}\int_{B_{r}}\widetilde{u}g\y\,dX.\ee
We will drop the dependence on $r$ when $r=1$.
It is important to notice that, by the Euler-Lagrange equations in \eqref{def:soluzione-estesa-tilde}, $\widetilde{u}$ minimizes the functional 
$$v\mapsto W_{\lambda}(v)-\frac{1}{r^{n+a+2\lambda-1}}\int_{B_{1}}v g\y\,dX$$
among all functions $v \in W^{1,2}(B_{1},\y)$ even in $y$, such that $v\ge 0$ on $B_{1}'$ and $v|_{\partial B_{1}}=\widetilde{u}|_{\partial B_{1}}$. 

To conclude this part, we recall the following monotonicity formula for the modified Weiss energy.
\begin{proposition}[\protect{\cite[Proposition 2.7]{car24}}]\label{prop:monotonicity-weiss-tilde}
    Let $u$ be a solution of \eqref{def:soluzione-estesa} with obstacle $\varphi$ satisfying \eqref{e:hypo-phi}. Suppose that $0\in \Lambda_{\lambda}(u)$ for some $\lambda<k+\gamma$, and call $\widetilde{u}=\widetilde{u}^{0}$. Then there is a constant $C_{\widetilde W}=C_{\widetilde W}(\widetilde u)>0$
    for which
    \bea
    \frac{d}{dr}\left( \widetilde W_{\lambda}(r,\widetilde{u})+ C_{\widetilde W}r^{k+\gamma-\lambda}\right)\ge \frac{2}{r^{n+a+2\lambda+1}}\int_{\partial B_{r}(X_{0})}(\nabla \widetilde{u}\cdot X-\lambda \widetilde{u})^{2}|y|^{a}\,d\HH^n
    \eea
    for every $r\in(0,1)$.
    Moreover $\widetilde W_{\lambda}(0^{+},\widetilde{u}):= \lim_{r\downarrow 0}\widetilde W_{\lambda}(r,\widetilde{u})=0$. In particular,
    \bea
        \widetilde W_{\lambda}(0^{+},\widetilde{u})\ge -C_{\widetilde W}r^{k+\gamma-\lambda}\quad \mbox{for every } r\in (0,1).
    \eea
\end{proposition}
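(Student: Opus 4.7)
The strategy is a direct Rellich--Pohozaev computation for the weighted operator $L_a$, combined with careful control of the error terms produced by the non-zero right-hand side $g^{X_0}(x)|y|^a$ appearing in \eqref{def:soluzione-estesa-tilde}, by means of the pointwise bound \eqref{eq:stima-g} and a priori growth estimates on $\widetilde u$.

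First I would compute $\frac{d}{dr}W_\lambda(r,\widetilde u)$ by differentiating $H$ and $D$ and using two identities. The first is the standard one
\[
D(r,\widetilde u)=\int_{\partial B_r}\widetilde u\,(\nabla\widetilde u\cdot \nu)\,|y|^a\,d\HH^n+\int_{B_r}\widetilde u\,g\,|y|^a\,dX,
\]
obtained by integrating by parts against $L_a\widetilde u$; here one uses that on $\{\widetilde u=0\}'$ the solution vanishes, so the singular measure $-L_a\widetilde u-g|y|^a\ge 0$ (supported on the contact set) gives no contribution. The second is a Pohozaev-type identity gotten by multiplying $-L_a\widetilde u=g|y|^a$ by $X\cdot\nabla\widetilde u$ and integrating on $B_r$: using $\partial_j(|y|^a X_j)=(n+1+a)|y|^a$ and that $X\cdot\nabla\widetilde u$ vanishes on $\{\widetilde u=0\}'$ (since there $\nabla_x\widetilde u=0$ and $y=0$), one obtains
\[
\tfrac{r}{2}D'(r)=r\!\int_{\partial B_r}\!(\partial_\nu\widetilde u)^2|y|^a\,d\HH^n+\tfrac{n+a-1}{2}D(r)+\!\int_{B_r}\!(X\cdot\nabla\widetilde u)\,g\,|y|^a\,dX.
\]
Combining these, the terms organize into the perfect square
$\frac{2}{r^{n+a+2\lambda+1}}\int_{\partial B_r}(\nabla\widetilde u\cdot X-\lambda\widetilde u)^2|y|^a$ plus three error terms involving $g$: a weighted volume term $r^{-(n+a+2\lambda)}\!\int_{B_r}(X\cdot\nabla\widetilde u)g|y|^a$, a boundary term $r^{1-(n+a+2\lambda)}\!\int_{\partial B_r}\widetilde u g|y|^a$, and a multiple of $r^{-(n+a+2\lambda)}I(r)$ where $I(r):=\int_{B_r}\widetilde u g|y|^a\,dX$.

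The role of the modification $\widetilde W_\lambda=W_\lambda-r^{1-n-a-2\lambda}I(r)$ is to cancel the boundary term via $\frac{d}{dr}\bigl(r^{1-n-a-2\lambda}I(r)\bigr)$. After this cancellation I would bound the two remaining error terms using the a priori estimates from \cref{generalized-Almgren}, namely $\|\widetilde u\|_{L^\infty(B_r)}\le Cr^\lambda$ and $\|\nabla\widetilde u\|_{L^\infty(B_r)}\le Cr^{\lambda-1}$ (the latter from $C^{1,s}_a$-regularity and homogeneity scaling), together with $|g^{X_0}(x)|\le C|x|^{k+\gamma-2}$. Each error is then $\ge -C\,r^{k+\gamma-\lambda-1}$, which is precisely $\frac{d}{dr}$ of a multiple of $r^{k+\gamma-\lambda}$; choosing $C_{\widetilde W}$ large enough (depending on $\widetilde u$ and on $k+\gamma-\lambda>0$) gives the monotonicity of $\widetilde W_\lambda(r,\widetilde u)+C_{\widetilde W}r^{k+\gamma-\lambda}$ with the claimed lower bound on its derivative.

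To prove $\widetilde W_\lambda(0^+,\widetilde u)=0$ I would use the blow-up analysis: the rescalings $\bar u_r(X):=\widetilde u(rX)/r^\lambda$ satisfy $W_\lambda(r,\widetilde u)=W_\lambda(1,\bar u_r)$ and converge (up to subsequence) strongly in $W^{1,2}_{\loc}(\R^{n+1},|y|^a)$ to a non-zero $\lambda$-homogeneous global solution $p$ of the zero-obstacle problem. For such a $p$, integration by parts together with $p\,L_a p=0$ and $\partial_\nu p=\lambda p$ on $\partial B_1$ gives $D(1,p)=\lambda H(1,p)$, i.e.\ $W_\lambda(1,p)=0$; the corrective term $r^{1-n-a-2\lambda}I(r)$ is of order $r^{k+\gamma-\lambda}\to 0$ by the same pointwise bounds. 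Hence $\widetilde W_\lambda(r_j,\widetilde u)\to 0$ along the subsequence, and monotonicity promotes this to the full limit. Finally, the pointwise inequality $\widetilde W_\lambda(r,\widetilde u)\ge -C_{\widetilde W}r^{k+\gamma-\lambda}$ for all $r\in(0,1)$ is an immediate consequence: $\widetilde W_\lambda(r,\widetilde u)+C_{\widetilde W}r^{k+\gamma-\lambda}$ is non-decreasing and vanishes at $0^+$, so it is non-negative.

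The main technical obstacle is the Pohozaev step, where one must justify discarding the contribution on the contact set for a solution that is only $C^{1,\alpha}_a$ across the thin space; this is handled via the usual approximation argument exploiting that $\widetilde u=0$ and $\nabla_x\widetilde u=0$ on $\{\widetilde u=0\}'$. The rest of the argument is a bookkeeping exercise making critical use of the sharpness of the exponent $k+\gamma-\lambda$.
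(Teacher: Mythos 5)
The paper does not prove this proposition: it is quoted verbatim from \cite[Proposition 2.7]{car24}, so there is no internal proof to compare against. Your Rellich--Pohozaev computation, with the contact-set contribution discarded via $\widetilde u=\nabla_x\widetilde u=0$ and $y\partial_y\widetilde u\to 0$ on $\{\widetilde u=0\}'$, the $g$-error terms absorbed into $C_{\widetilde W}r^{k+\gamma-\lambda}$ using $\|\widetilde u\|_{L^\infty(B_r)}\lesssim r^\lambda$ and $|g|\lesssim |x|^{k+\gamma-2}$, and the vanishing of $\widetilde W_\lambda(0^+)$ via the blow-up and the complementarity identity $D(1,p)=\lambda H(1,p)$, is exactly the standard argument behind the cited result and is correct in substance. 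One small bookkeeping remark: differentiating $W_\lambda$ produces no boundary term $\int_{\partial B_r}\widetilde u g|y|^a$ — the only $g$-errors are the two volume terms $\int_{B_r}(X\cdot\nabla\widetilde u)g|y|^a$ and $\lambda\int_{B_r}\widetilde u g|y|^a$ — so the corrective term in $\widetilde W_\lambda$ does not "cancel" anything; rather its derivative contributes an additional term $r^{1-n-a-2\lambda}\int_{\partial B_r}\widetilde u g|y|^a$ plus a multiple of $r^{-n-a-2\lambda}I(r)$, each of which is itself $O(r^{k+\gamma-\lambda-1})$ and absorbed the same way, so the conclusion is unaffected.
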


\subsection{Quadratic points}
Let us now recall some fundamental known results regarding quadratic points, i.e.~the free boundary points with frequency $2$. We define the set of polynomials
\bea
        \mathcal{P}_2:=\{p \ \mbox{polynomial }: L_ap=0,\ \nabla p\cdot X=2p, \ p(x,0)\ge0,\ p(x,y)=p(x,-y)\}.
\eea
Then it is well-known (see \cite{gp09,gr19}) that the admissible blow-ups at quadratic points are exactly the polynomials in $\mathcal{P}_2$. We recall the uniqueness of the blow-up limit with explicit rate of convergence, which was proved in \cite{csv20,car24}.
\begin{proposition}[Uniqueness of the blow-up and rate of convergence at quadratic points]
\label{prop:classification-blowup-freq-2}
    Let $u$ be a solution of \eqref{def:soluzione-estesa}, with obstacle $\varphi$ satisfying \eqref{e:hypo-phi}. Suppose that $0\in\Lambda_2(u)$, and let $\widetilde{u}:= \widetilde{u}^{0}$ as in \eqref{def-normalization-solution}. Then, there exist $r_0>0$ a non-zero $p\in\mathcal{P}_2$, such that  
    $$\lVert\widetilde{u}-p\rVert_{L^\infty(B_r)}\le Cr^{2}(-\log(r))^{-c}\quad \mbox{for every } r\in (0,r_{0}),$$ where $C>0$ and $c\in(0,1)$ depend on $n$, $s$, $\vf$, $\gamma$ and $\|u\|_{L^\infty(B_1)}$. 
\end{proposition}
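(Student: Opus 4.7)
Since $0\in \Lambda_2(u)$, \cref{generalized-Almgren}(ii) already gives that \emph{any} subsequential blow-up limit of $\widetilde u_r$ exists in $W^{1,2}_{\loc}(\R^{n+1},|y|^a)\cap C^{1,\alpha}_{a,\loc}(\R^{n+1}_+)$ and is a non-zero $2$-homogeneous global solution of the extended problem with zero obstacle. By the classification from \cite{gp09,gr19}, such a limit belongs to $\mathcal P_2$. The issue is therefore to upgrade subsequential convergence to uniqueness with a quantitative rate; as in \cite{csv20,car24}, the natural tool at an even integer frequency is a \emph{log-epiperimetric inequality} for the Weiss energy $W_2$, which, unlike at frequency $1+s$, provides only a logarithmic (rather than polynomial) decay because $\mathcal P_2$ is non-trivial so the Weiss energy of admissible $2$-homogeneous competitors vanishes identically on $\mathcal P_2$.

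\textbf{Log-epiperimetric inequality.} I would prove the following statement: there exist constants $\delta,\beta>0$ (depending only on $n$ and $s$) such that for every $c\in W^{1,2}(\partial B_1,|y|^a)$ which is even in $y$ and satisfies $c\ge 0$ on $\partial B_1\cap\{y=0\}$, denoting by $z$ the $2$-homogeneous extension of $c$ to $B_1$, there exists an admissible competitor $v\in W^{1,2}(B_1,|y|^a)$ with $v|_{\partial B_1}=c$, $v(x,y)=v(x,-y)$, and $v\ge 0$ on $B_1'$, for which
\[
W_2(v)\;\le\; W_2(z)\;-\;\delta\,|W_2(z)|^{1+\beta}.
\]
The construction proceeds by decomposing $c$ along the eigenfunctions of the natural Sturm--Liouville operator on $\partial B_1$ associated with $L_a$: writing $c=c_2+c_\perp$, with $c_2$ projecting onto the $2$-homogeneous $L_a$-harmonic component and $c_\perp$ on the orthogonal complement, the spectral gap away from the eigenvalue $2(n+a+2)$ gives a gain of order $|W_2(z)|$ on the $c_\perp$ contribution; the $c_2$-component is then corrected into an element of the convex cone $\mathcal P_2|_{\partial B_1}$, at the cost of a controlled, logarithmic deficit which accounts for the exponent $1+\beta$ in the inequality.

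\textbf{From log-epi to decay of $\widetilde W_2$ and uniqueness.} Applying the log-epi to $c=\widetilde u_r|_{\partial B_1}$ at every scale $r$, and using that $\widetilde u_r$ minimizes (modulo the obstacle term $g\y$) the corresponding functional with its own boundary datum, one obtains that the competitor bound becomes an estimate for $\widetilde u_r$ itself; differentiating in $r$ and combining with \cref{prop:monotonicity-weiss-tilde}, I would derive an ODE inequality of the form
\[
\frac{d}{dr}\bigl(\widetilde W_2(r,\widetilde u)+C_{\widetilde W}r^{k+\gamma-2}\bigr)\;\ge\;\frac{c}{r}\,\bigl|\widetilde W_2(r,\widetilde u)\bigr|^{1+\beta}
\]
for $r\in(0,r_0)$, noting that the obstacle error $r^{k+\gamma-2}$ is harmless since $k+\gamma>2$. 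A standard Gronwall-type argument yields $|\widetilde W_2(r,\widetilde u)|\le C(-\log r)^{-1/\beta}$. Inserting this into the radial-derivative identity from \cref{prop:monotonicity-weiss-tilde} and integrating $\int_0^{r_0}\tfrac{1}{\rho^{n+a+5}}\int_{\partial B_\rho}(\nabla\widetilde u\cdot X-2\widetilde u)^2|y|^a\,d\mathcal H^n\,d\rho<\infty$ (with the appropriate tail estimate) shows that $\widetilde u_r\to p$ in $L^2(\partial B_1,|y|^a)$ for a unique $p\in\mathcal P_2$, with rate $\|\widetilde u_r-p\|_{L^2(\partial B_1,|y|^a)}\le C(-\log r)^{-c}$ for some $c=c(n,s,\beta)\in(0,1)$.

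\textbf{Upgrade to $L^\infty$; the main obstacle.} Finally, both $\widetilde u_r$ and $p$ solve \eqref{def:soluzione-estesa-tilde} (with $p$ having zero right-hand side and $\widetilde u_r$ having a right-hand side of order $r^{k+\gamma}$), and \cref{generalized-Almgren}(ii) provides a uniform $C^{1,\alpha}_a$ bound on the rescalings. Combining this with interior regularity for the difference $\widetilde u_r-p$ transfers the $L^2(\partial B_1,|y|^a)$ decay into an $L^\infty(B_{1/2})$ decay, with at most a loss of constants and of the exponent $c$; rescaling back to $B_r$ yields the desired bound $\|\widetilde u-p\|_{L^\infty(B_r)}\le Cr^2(-\log r)^{-c}$ for $r\in(0,r_0)$. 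The conceptually hardest part of this program is the log-epiperimetric inequality: constructing a competitor that gains logarithmically over the $2$-homogeneous extension while respecting the sign constraint on the thin space requires a delicate analysis of the non-negative cone inside $\mathcal P_2$, and is the point where the proof genuinely depends on $s\in(0,1)$ via the weight $|y|^a$.
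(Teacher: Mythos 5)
The paper does not prove this proposition itself; it is recalled directly from \cite{csv20,car24}, and your outline --- a log-epiperimetric inequality for $W_2$, a Gronwall-type logarithmic decay of the (modified) Weiss energy, a dyadic $L^2(\partial B_1,|y|^a)$ Cauchy estimate, and the $L^\infty$ upgrade via interior regularity, with the obstacle error $r^{k+\gamma-2}$ absorbed since $k+\gamma>2$ --- is exactly the strategy of those references. The one caveat is that the log-epiperimetric inequality, which you correctly identify as the real content, is asserted rather than established here, though your sketch of its proof (spectral splitting on $\partial B_1$ plus correction of the $2$-homogeneous component into the convex cone $\mathcal P_2$ at a logarithmically controlled cost) is the construction actually carried out in \cite{csv20} and extended to $s\in(0,1)$ and non-zero obstacle in \cite{car24}.
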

For a non-zero polynomial $p\in \mathcal{P}_{2}$ we define the \lq\lq spine'' as the linear subspace of directions in the thin space along which $p$ is invariant: 
\be\label{eq:L(p)}
    L(p):= \left\{\xi \in \R^{n}\times \left\{0\right\}: \nabla p \cdot \xi \equiv 0\right\}=\{(x,0)\in \R^{n+1}_{0}:p(x,0)=0\},
\ee
where the second equality is due to the fact that $p$ is a $2$-homogeneous polynomial, non-negative on the thin space.
We also denote the dimension of $L(p)$ by 
\begin{equation}\label{eq:m(p)}
    m(p):= \dim L(p)\in \left\{0,1,\dots,n-1\right\}.
\end{equation}
See \cref{section-quadratic} for a second blow-up analysis at quadratic points extending the corresponding results of \cite{fj21} to the case of non-zero obstacles. 

\subsection{Geometric Measure Theory tools}

The key tool in the proof of \cref{thm:main_esteso} is the following GMT Lemma coming from \cite{frs20}.
\begin{lemma}[\protect{\cite[Corollary 7.8]{frs20}}]
\label{lemma:gmt}
    Consider a family of sets $\left\{E_{t}\right\}_{t\in[0,1]}$ with $E_{t}\subset \mathbb{R}^{n}$, and let $E:=\underset{t\in [0,1]}{\bigcup}E_{t}$.
    \vspace{-0.3cm}
    
    \noindent Let $1\le \alpha \le n$, and assume that the following hold:
    \begin{itemize}
        \item[i)] $\text{dim}_{\mathcal{H}} E \le \alpha$;
        \smallskip
        \item[ii)] for all $\eps>0$, $t_{0}\in [0,1]$ and $X_{0}\in E_{t_{0}}$, there exists $\rho >0$ such that
        \begin{equation*}
            \{X\in B_{\rho}(X_{0}):t>t_{0}+|X-X_0|^{\beta-\eps}\}\cap E_{t} = \emptyset \quad\mbox{for every } t\in[0,1].
        \end{equation*}
    \end{itemize}
    Then:
    \begin{itemize}
        \item [a)] if $\beta >\alpha$, $\text{dim}_{\mathcal{H}}\left(\left\{t: E_{t}\neq \emptyset\right\}\right)\le \alpha / \beta$;
        \smallskip
        \item [b)] if $\beta \le \alpha$, $\text{dim}_{\mathcal{H}}\left(E_{t}\right)\le \alpha - \beta$, for almost every $t\in [0,1]$.
    \end{itemize}
\end{lemma}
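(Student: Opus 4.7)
The plan is to read condition (ii) as a quantitative H\"older rigidity on the graph $\mathcal{G}:=\{(X,t)\in\R^n\times[0,1]:X\in E_t\}$. Applying (ii) at two nearby graph points $(X_0,t_0)$ and $(X,t)$, once in each direction, yields $|t-t_0|\le|X-X_0|^{\beta-\eps}$ as soon as $|X-X_0|$ lies below both $\rho(X_0,t_0,\eps)$ and $\rho(X,t,\eps)$. The only obstruction to a direct covering argument is the non-uniformity of $\rho$. To handle this, I fix $\eps>0$ and exhaust $\mathcal{G}$ by $F_k:=\{(X,t)\in\mathcal{G}:\rho(X,t,\eps)\ge 1/k\}$, writing $E^k_t:=\{X:(X,t)\in F_k\}$. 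Since Hausdorff dimension and ``a.e.\ $t$'' assertions are stable under countable unions, it suffices to bound $\dim_{\HH}(\pi_2(F_k))$ in case (a) and $\dim_{\HH}(E^k_t)$ in case (b), uniformly in $k$.

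Next, fix $k$, small $\eta>0$ and $\delta\in(0,1/(2k))$. By (i), I cover $\pi_1(F_k)\subset E$ by balls $B_{r_i}(X_i)$ with $r_i\le\delta$ and $\sum_i r_i^{\alpha+\eta}\le 1$. Whenever $B_{r_i}(X_i)$ meets $E^k_t$ for some $t$, I choose a base point $(X_i^\star,t_i^\star)\in F_k$ with $X_i^\star\in B_{r_i}(X_i)$; then for any $(X,t)\in F_k$ with $X\in B_{r_i}(X_i)$, the H\"older rigidity gives $|t-t_i^\star|\le(2r_i)^{\beta-\eps}$, since $2r_i<1/k$ bounds both relevant radii from below. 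Hence the time-slice set $T_i:=\{t:E^k_t\cap B_{r_i}(X_i)\ne\emptyset\}$ is contained in an interval of length at most $2(2r_i)^{\beta-\eps}$. For case (a), the $\{T_i\}$ cover $\pi_2(F_k)$ and
$$\sum_i\bigl(2(2r_i)^{\beta-\eps}\bigr)^{(\alpha+\eta)/(\beta-\eps)}\le C(\alpha,\beta)\sum_i r_i^{\alpha+\eta}\le C,$$
so $\dim_{\HH}(\pi_2(F_k))\le(\alpha+\eta)/(\beta-\eps)$; sending $\eta,\eps\downarrow 0$ along a countable sequence and unioning over $k$ yields $\dim_{\HH}\bigl(\{t:E_t\ne\emptyset\}\bigr)\le\alpha/\beta$. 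For case (b), set $s:=\alpha+\eta-\beta+\eps\ge 0$. Since for each $t$ the sub-family $\{B_{r_i}(X_i):t\in T_i\}$ covers $E^k_t$ by balls of diameter $\le 2\delta$, Fubini--Tonelli gives
$$\int_0^1\HH^s_{2\delta}(E^k_t)\,dt\le\sum_i(2r_i)^s\,|T_i|\le C\sum_i r_i^{\alpha+\eta}\le C;$$
monotone convergence as $\delta\downarrow 0$ produces $\HH^s(E^k_t)<\infty$ for a.e.\ $t$, so $\dim_{\HH}(E^k_t)\le s$ for a.e.\ $t$. Taking countable sequences $\eta,\eps\downarrow 0$ and $k\to\infty$ finishes case (b).

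The step I expect to be the main obstacle is the Fubini integration in case (b): it requires measurability of $t\mapsto\HH^s_{2\delta}(E^k_t)$, which is not automatic but holds as soon as $\mathcal{G}$ is Borel or Suslin (in the applications here this follows from $\{E_t\}$ being determined by closed conditions on a monotone family of solutions). Moreover, the nested limits in $\delta,\eta,\eps,k$ must be taken along a single countable scheme so that the ``a.e.\ $t$'' conclusion survives at the end, which is routine bookkeeping but must be organized with some care.
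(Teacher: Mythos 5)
The paper does not actually prove this lemma: it is imported verbatim from \cite[Corollary 7.8]{frs20}. Your argument is correct and is, in substance, the same covering proof as in that reference: the two-sided application of the one-sided cleaning hypothesis to obtain the H\"older rigidity $|t-t_0|\le |X-X_0|^{\beta-\eps}$ on the graph $\mathcal G$, the exhaustion by the sets $F_k$ on which the radius $\rho$ is uniformly bounded below, a covering of $E$ realizing $\sum_i r_i^{\alpha+\eta}\le 1$, and then either projecting the covering to the $t$-axis (case (a)) or integrating the Hausdorff premeasure in $t$ (case (b)). All the quantitative steps check out: $2r_i<1/k$ guarantees that both radii in the rigidity estimate are admissible, the exponents match, and the countable limiting scheme in $\eta,\eps,k,\delta$ is handled correctly (for case (a) one should note explicitly that the mesh $2(2\delta)^{\beta-\eps}$ of the covering $\{T_i\}$ tends to $0$ as $\delta\downarrow0$, so the uniform bound on the sums really controls the Hausdorff measure, but this is implicit in your setup).

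The one point I would adjust is the measurability caveat at the end, which is a non-issue rather than the main obstacle. You never need $t\mapsto \HH^{s}_{2\delta}(E^k_t)$, nor the sets $T_i$, to be measurable, and no Borel or Suslin structure on $\mathcal G$ is required. Each $T_i$ is contained in an interval $I_i$ of length $2(2r_i)^{\beta-\eps}$, so $\HH^{s}_{2\delta}(E^k_t)\le \sum_i (2r_i)^{s}\mathbf{1}_{I_i}(t)=:g_\delta(t)$, which is an explicit measurable majorant with $\int_0^1 g_\delta\,dt\le C$. Applying Fatou along $\delta=1/m$ and using that $\HH^{s}_{2\delta}$ is monotone in $\delta$, one gets a measurable $G:=\liminf_m g_{1/m}$ with $\HH^{s}(E^k_t)\le G(t)$ and $\int_0^1 G\,dt\le C$, whence $\HH^{s}(E^k_t)<\infty$ for a.e.\ $t$. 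With this substitution your proof is complete and self-contained.
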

\noindent We will apply \cref{lemma:gmt} to each subset of the free boundary of a family of solutions appearing in the splitting \eqref{splitting-degenerate-set}. For each of them we will prove that i) and ii) hold for a suitable choice of $\alpha$ and $\beta$. 

To perform dimension reduction arguments and prove condition i) in \cref{lemma:gmt} we will often take advantage of the following general lemma coming from \cite{frs20}:
\begin{lemma}[\protect{\cite[Lemma 7.3]{frs20}}]
\label{lemma:characterization-of-m-dim-set}
    Let $E\subset \R^{n}$ and $f:E\to \R$ be any function. Assume that for every $\eps>0$ and $x\in E$ there exists $\rho >0$ such that, for all $r\in (0,\rho)$,
    \begin{equation*}
        E\cap \overline{B}_{r}(x)\cap f^{-1}\left([f(x)-\rho,f(x)+\rho]\right)\subset \left\{y: \dist(y,\Pi_{x,r})\le \eps r\right\},
    \end{equation*}
    for some $m$-dimensional space $\Pi_{x,r}$ passing through $x$. Then, $\text{dim}_{\mathcal{H}}(E)\le m$.
\end{lemma}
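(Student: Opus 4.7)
The plan is to fix an arbitrary $t > m$ and prove that $\mathcal{H}^{t}(E) = 0$; since $t > m$ is arbitrary, this gives $\dim_{\HH}(E) \le m$. The strategy is a standard Reifenberg-type dyadic covering argument applied to each piece of a countable decomposition of $E$ that makes both the quantified scale $\rho$ and the variation of $f$ uniform. First I would fix some $\varepsilon > 0$, to be chosen later in terms of $n$, $m$ and $t$, use the hypothesis to assign to every $x \in E$ a radius $\rho(x) > 0$, and for $k, \ell \in \N$ and $j \in \Z$ set
\[
E_{k, j, \ell} := \bigl\{\, x \in E : \rho(x) \ge 1/k \,\bigr\} \cap f^{-1}([j/k, (j+1)/k)) \cap \overline{B}_{\ell}(0).
\]
Since $E = \bigcup_{k, j, \ell} E_{k, j, \ell}$ and $\mathcal{H}^{t}$ is countably subadditive, it suffices to prove $\mathcal{H}^{t}(A) = 0$ for a single bounded piece $A := E_{k, j, \ell}$. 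By construction, for every $x \in A$ and $r \in (0, 1/k)$ the values of $f$ on $A$ differ from $f(x)$ by less than $1/k \le \rho(x)$, so the hypothesis gives $A \cap \overline{B}_{r}(x) \subset \{\, y : \dist(y, \Pi_{x, r}) \le \varepsilon r \,\}$ for some $m$-plane $\Pi_{x, r}$ through $x$.

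The covering iteration then proceeds as follows. Starting from a cover of $A$ by one ball of radius $r_{0} = \diam(A)$ centered at a point of $A$, suppose inductively that $A$ is covered by $N_{p}$ balls $\overline{B}_{r_{p}}(x_{i})$ with $x_{i} \in A$ and $r_{p} < 1/k$. Each intersection $A \cap \overline{B}_{r_{p}}(x_{i})$ lies in an $\varepsilon r_{p}$-tube around an $m$-plane through $x_{i}$, which for any $\delta \in (2\varepsilon, 1)$ can be covered by at most $C(n, m)\, \delta^{-m}$ balls of radius $2 \delta\, r_{p}$ recentered on $A$. Hence $N_{p + 1} \le C(n, m)\, \delta^{-m}\, N_{p}$ and $r_{p + 1} = 2 \delta\, r_{p}$, which yields the Hausdorff-content estimate
\[
\mathcal{H}^{t}_{r_{p}}(A) \le \bigl(\, C(n, m)\, (2 \delta)^{t - m}\, \bigr)^{p}\, r_{0}^{t}.
\]
Choosing $\delta$ small so that $C(n, m)\, (2 \delta)^{t - m} < 1$, and then $\varepsilon < \delta / 2$, letting $p \to \infty$ forces $\mathcal{H}^{t}(A) = 0$.

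The main (and essentially the only) obstacle is the correct hierarchy of the parameter choices: one must first fix $t > m$, then choose $\delta$ small enough in terms of $t - m$ to make the iteration contract, then fix $\varepsilon < \delta / 2$, and only afterwards form the countable decomposition $\{E_{k, j, \ell}\}$, whose ingredients $\rho(x)$ depend on the previously chosen $\varepsilon$. A mild secondary subtlety is that at each refinement the covering balls must be recentered on $A$ (so that the approximation hypothesis applies again at the next scale), which costs only a harmless factor $2$ in the radius and is absorbed in $\delta$. Summing over the countable family $\{E_{k, j, \ell}\}$ then yields $\mathcal{H}^{t}(E) = 0$, and the arbitrariness of $t > m$ gives the claim.
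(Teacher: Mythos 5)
Your argument is correct and is essentially the proof of \cite[Lemma 7.3]{frs20} that the paper cites rather than reproduces: decompose $E$ into countably many bounded pieces on which $\rho$ is bounded below and $f$ oscillates by less than that bound, then run the iterated tube-covering to kill the $\HH^{t}$-content for every $t>m$. The only points needing polish are harmless bookkeeping: the recentering on $A$ and the $\eps r_p$ thickness make the refined radius $C\delta r_p$ rather than exactly $2\delta r_p$ (absorbed into the choice of $\delta$, as you note), and the induction should formally start only once the covering radius has dropped below $1/k$ so that the flatness hypothesis applies.
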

\begin{remark}\label{remark:use-of-lemma-char-m-dim-sets} In the sequel we will use \cref{lemma:characterization-of-m-dim-set} taking some subset of the free boundary of a family of solutions as $E$, and Almgren's frequency function as $f$. We will typically proceed by contradiction using the contronominal implication, that is: if $\text{dim}_{\mathcal{H}}(E)> m$, then there exist $\eps>0$ and $X\in E$ such that, for every $\rho>0$, there exists $r\in (0,\rho)$ for which, given any $m$-dimensional plane $\Pi$ passing through $X$, we have
    \begin{equation*}
        E\cap \overline{B}_{r}(X)\cap f^{-1}\left([f(X)-\rho,f(X)+\rho]\right)\nsubseteq \left\{Y: \dist(Y,\Pi)\le \eps r\right\}.
    \end{equation*}
    In particular, there exist a sequence of radii $r_{j}\downarrow 0$ and $(m+1)$ sequences of points $X^{1}_{j},\dots,X^{m+1}_{j}\in E$ for which
    $$|X^{\ell}_{j}-X|\le r_{j},\quad \frac{X^{\ell}_{j}-X}{r_{j}}\to Y^{\ell}\neq 0,\quad f(X^{\ell}_{j})\to f(X),$$
    and such that $Y^{1},\dots,Y^{m+1}$ are linearly independent.
\end{remark}

Since it will appear often in the sequel, it is worth enclosing condition ii) of \cref{lemma:gmt} in a definition.
\begin{definition}\label{def:cleaned}
    Let $\left\{E_{t}\right\}_{t\in[0,1]}$ and $E$ be as in \cref{lemma:gmt}. Given a number $\beta >0$, we say that $E$ is cleaned up with rate $\beta$ if condition ii) of \cref{lemma:gmt} holds. In this case, we write $$\{E_{t}\}_{t\in[0,1]}\in\text{Clean}(\beta).$$
\end{definition}

\noindent In the following proposition we collect some known \lq\lq cleaning results'' from \cite{fr21} that will be used in the sequel.
\begin{proposition}[\protect{\cite[Proposition 2.4, Proposition 2.9]{fr21}}]\label{prop:cleaning}
    Let $u:B_1\times[0,1]\to\R$ be a family of solutions of \eqref{def:soluzione-estesa} \eqref{def:soluzione-famiglia}, with obstacle $\vf$ satisfying \eqref{e:hypo-phi}. Then 
    \begin{itemize}
        \item [i)] For every $\lambda \ge 1+s$ such that $\lambda\le \min\{k+\gamma, k+\gamma-a\}$, we have $$\{\Gamma_{\ge \lambda}(u(\cdot,t))\}_{t\in[0,1]}\in\text{Clean}(\lambda-2s).$$
        \item [ii)] Regarding quadratic free boundary points:
        \smallskip
        \begin{itemize}
        \smallskip
            \item [a)] if $s\le 1/2$ we have $$\{\Gamma_{2}(u(\cdot,t))\}_{t\in[0,1]}\in\text{Clean}(2);$$ 
            \item [b)] if $s>1/2$ we have
        $$\{\Gamma_{2}(u(\cdot,t))\}_{t\in[0,1]}\in\text{Clean}\left(\frac{4-2s}{1+s}\right).$$
        \end{itemize}
    \end{itemize}
        
\end{proposition}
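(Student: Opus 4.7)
The plan is to prove both parts by barrier arguments that exploit the monotonicity condition in \eqref{def:soluzione-famiglia}. The starting observation is that the difference $w(\cdot):=u(\cdot,t')-u(\cdot,t)$ is a nonnegative $L_a$-supersolution in $B_1$ (because enlarging the obstacle only enlarges the contact set and the obstacle problem is monotone in the data), and by the third line in \eqref{def:soluzione-famiglia} it satisfies $w\ge t'-t$ on the portion $\partial B_1\cap\{|y|\ge 1/2\}$ of the boundary. By the $L_a$-maximum principle, comparing $w$ with the $L_a$-harmonic function $\Phi$ in $B_1$ having boundary data $\mathbf 1_{\{|y|\ge 1/2\}}$, we obtain an interior lower bound $w(X)\ge c(t'-t)\Phi(X)$. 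A direct computation using the Poisson kernel \eqref{def:Poissonkernel} (or a barrier of the form $c-c'|X|^{2s}$ adapted to $L_a$) shows $\Phi(X)\ge c'|y|^{2s}$ in, say, $B_{1/4}$, and even more importantly, the trace $y^a\partial_y$ of $\Phi$ on the thin space is bounded below. This is the quantitative lift produced by increasing $t$.

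For part (i), fix $X_0\in\Gamma_{\ge\lambda}(u(\cdot,t_0))$ and look for $X\in\Gamma(u(\cdot,t))$ close to $X_0$ at a later time $t>t_0$. By \cref{generalized-Almgren}, $\tilde u^{X_0}(\cdot,t_0)\le C|X-X_0|^\lambda$ in a neighborhood of $X_0$, and the regular part $\tilde\vf^{X_0}$ is $C^{k,\gamma}$-controlled. On the other hand, the lift at the thin-space trace level (or, rescaled, at the $y^a\partial_y$ level) is of order $(t-t_0)|X-X_0|^{-2s}$ after scaling $B_r$ to $B_1$. For $X$ still to be a contact point of $u(\cdot,t)$, these two quantities must be compatible, which forces $(t-t_0)\lesssim |X-X_0|^{\lambda-2s}$. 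Taking contrapositives gives exactly the cleaning rate $\lambda-2s$. The condition $\lambda\le\min\{k+\gamma,k+\gamma-a\}$ ensures that the polynomial correction $\widetilde Q^{X_0}$ from \eqref{def:phitilde} does not swamp the $r^\lambda$ bound, and that the right-hand side $g^{X_0}|y|^a$ of \eqref{def:soluzione-estesa-tilde} controlled by \eqref{eq:stima-g} is negligible at the relevant scale.

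For part (ii), the same strategy is applied at quadratic points, but now the blow-up $p\in\mathcal{P}_2$ from \cref{prop:classification-blowup-freq-2} is exploited explicitly. When $s\le 1/2$, one has $a\ge 0$ and the admissible quadratic polynomials are non-degenerate on the thin space, so $\tilde u^{X_0}(X,t_0)$ decays exactly like $|X-X_0|^2$ on $\{y=0\}$ and the direct comparison with the $c(t-t_0)$ lift gives the cleaning exponent $2$. When $s>1/2$, the weight is singular and the barrier $\Phi$ controls only the $|y|^{2s}$ component, so one must compare against the $(1+s)$-homogeneous profile that appears at an incipient regular point at time $t$: balancing $(t-t_0)$ against a detachment of order $r^{1+s}$ with the quadratic obstacle of order $r^2$ gives, after the correct optimization, the exponent $\tfrac{4-2s}{1+s}$.

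The main obstacle is producing the sharp quantitative lift on the thin space in the case $s>1/2$: there the weight $|y|^{1-2s}$ is singular, so one cannot simply use smooth $L_a$-harmonic barriers, and the correct comparison is between the $y^a\partial_y$ trace of $w$ at the thin space and the optimal $C^{1,s}$-expansion of $u(\cdot,t)$ near an emerging regular point. This is exactly the refinement carried out in \cite[Proposition 2.4 and Proposition 2.9]{fr21}, whose construction we follow verbatim; the only change compared to their setting is that non-zero obstacles only add lower-order terms controlled by \eqref{eq:stima-g}, which are absorbed thanks to the assumption $\lambda\le\min\{k+\gamma,k+\gamma-a\}$.
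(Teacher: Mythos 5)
This proposition is imported verbatim from \cite[Propositions 2.4 and 2.9]{fr21}: the paper offers no proof of its own, so your decision to sketch the mechanism and then defer to \cite{fr21} for the actual construction is consistent with how the paper treats it. Your sketch also correctly identifies the engine that the paper itself reuses in its new cleaning results: the monotonicity in \eqref{def:soluzione-famiglia} produces a nonnegative $L_a$-supersolution $h_t=u(\cdot,t)-u(\cdot,t_0)$ with $h_t\ge t-t_0$ on $\partial B_1\cap\{|y|\ge 1/2\}$, and a barrier plus Hopf gives $h_t\ge c(t-t_0)|y|^{2s}$ (this is exactly \cref{lemma:lemma2.2}); the hypothesis $\lambda\le\min\{k+\gamma,k+\gamma-a\}$ is indeed there to make the obstacle corrections from \eqref{eq:stima-g} lower order.

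Two points in your sketch would not survive being made rigorous as written. First, in part (i) you compare the lift against the decay $\widetilde u^{X_0}(\cdot,t_0)\le C|X-X_0|^{\lambda}$ of the \emph{earlier} solution at $X_0$ and assert that "for $X$ still to be a contact point of $u(\cdot,t)$ these two quantities must be compatible." They are not in conflict by themselves: the lift $c(t-t_0)|y|^{2s}$ vanishes identically on $\{y=0\}$, which is where contact happens, so a large lift at positive height is perfectly compatible with $X_1$ being a contact point. The contradiction must instead be run at positive height $|y|\sim\rho$ near the \emph{later-time} point $X_1\in\Gamma_{\ge\lambda}(u(\cdot,t))$, pitting the lower bound $c(t-t_0)\rho^{2s}$ (plus the $-C\rho^{k+\gamma-a}$ error from superharmonicity of $\widetilde u^{X_1}(\cdot,t_0)$) against the upper bound $\widetilde u^{X_1}(\cdot,t)\le C\rho^{\lambda-\eps}$ coming from the frequency of $X_1$ for $u(\cdot,t)$ — and the uniformity of that constant over nearby points and times (via upper semicontinuity of the frequency and \cref{prop:continuity-tau}) is the genuinely nontrivial step in \cite{fr21}, which your sketch does not address. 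Second, in ii)b) the exponent $\tfrac{4-2s}{1+s}$ is asserted rather than derived: "balancing $(t-t_0)$ against a detachment of order $r^{1+s}$ with the quadratic obstacle of order $r^2$" does not visibly produce $\tfrac{4-2s}{1+s}$ (naive versions of that balance give $1-s$, $2-s$ or $2-2s$), and the actual argument in \cite{fr21} exploits the improved lift $h_t\ge c_\eps(t-t_0)\rho^{2s-1+\eps}$ away from the spine $L(p)$ (cf.\ \cref{lemma:detachement}) together with the structure of blow-ups at quadratic points. Since the result is cited, neither issue affects the paper, but as a standalone proof the proposal has a real gap precisely at the quantitative steps that determine the cleaning exponents.
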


We end this subsection with the following useful result. 
\begin{proposition}[\protect{\cite[Corollary 2.7]{fr21}}]\label{prop:continuity-tau}
    Let $u$ be a family of solutions of \eqref{def:soluzione-estesa}, \eqref{def:soluzione-famiglia}, with obstacle $\vf$ satisfying \eqref{e:hypo-phi}. Then the function $\tau:\mathbf{\Gamma}\to[0,1]$ which assigns to any $X_{0}\in \Gammab(u)$ the unique $t_{0}=:\tau(X_0)\in [0,1]$ such that $X_0\in\Gamma({u(\cdot,t_0)})$ is well-defined and continuous. Moreover, for every $\eps>0$, the map $$\mathbf{\Gamma}\cap B_{1-\eps}\ni X_0\mapsto \widetilde{u}^{X_{0}}(X_0+\cdot,\tau(X_0))$$ is continuous with respect to the uniform convergence. 
\end{proposition}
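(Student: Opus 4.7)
The plan is to establish the statement in three steps: well-definedness of $\tau$, continuity of $\tau$, and continuity of the normalized extension map. Step~1 is the main technical nucleus; steps~2 and 3 are essentially corollaries.

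\textbf{Step 1 (Well-definedness).} I would show that for $t_1 < t_2$ the free boundaries are disjoint: $\Gamma(u(\cdot,t_1)) \cap \Gamma(u(\cdot,t_2)) = \emptyset$. The approach is to establish the strict monotonicity
\[
u(X,t_2) - u(X,t_1) \geq c_K (t_2 - t_1) \qquad \text{for all } X \in K,
\]
on every compact $K \subset B_1$, with a constant $c_K>0$. This immediately yields disjointness: a common free-boundary point $X_0$ would satisfy $u(X_0,t_1) = \vf(x_0) = u(X_0,t_2)$, contradicting the bound. To prove the strict monotonicity, I would exploit the hypotheses in \eqref{def:soluzione-famiglia}: the difference $w := u(\cdot,t_2) - u(\cdot,t_1)$ is nonnegative on $B_1$ and at least $t_2-t_1$ on $\partial B_1 \cap \{|y|\geq 1/2\}$. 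The natural comparison function is $(t_2-t_1)\eta$, where $\eta$ is the $L_a$-harmonic function in the full ball $B_1$ with boundary data the indicator of $\{|y|\geq 1/2\}\cap \partial B_1$; such an $\eta$ is smooth across the thin space and, by the strong maximum principle for $L_a$, strictly positive inside $B_1$. The inequality $w \geq (t_2-t_1)\eta$ would then follow from a lattice argument based on the variational characterization of each $u(\cdot,t_j)$ as a minimizer of the Dirichlet energy under the obstacle constraint.

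\textbf{Step 2 (Continuity of $\tau$).} Take $X_k \to X_0$ in $\Gammab$ and set $t_k := \tau(X_k)$. Up to a subsequence $t_k \to t_* \in [0,1]$. Joint continuity of $(X,t)\mapsto u(X,t)$ follows from the uniform $C^{2s}$ bound in \eqref{def:soluzione-famiglia} (equicontinuity in $X$) combined with the stability of the thin obstacle problem with respect to the boundary data (continuity in $t$, encoded in the monotonicity conditions of \eqref{def:soluzione-famiglia} together with Step~1). Passing to the limit in the identity $u(X_k,t_k) = \vf(x_k)$ yields $u(X_0,t_*) = \vf(x_0)$, i.e.~$X_0 \in \Lambda(u(\cdot,t_*))$. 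Combined with $X_0 \in \Lambda(u(\cdot,\tau(X_0)))$, the strict monotonicity of Step~1 forces $t_* = \tau(X_0)$. Independence of the limit from the subsequence gives full convergence.

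\textbf{Step 3 (Continuity of the normalized extension).} From the definitions \eqref{def:phitilde}-\eqref{def-normalization-solution},
\[
\widetilde{u}^{X_0}(X_0+Z,\tau(X_0)) = u(X_0+Z,\tau(X_0)) - \vf(x_0+z) + Q^{X_0}(x_0+z) - \widetilde{Q}^{X_0}(X_0+Z).
\]
Since $\vf\in C^{k,\gamma}$ with $k\geq 2$, the Taylor polynomial $X_0 \mapsto Q^{X_0}$ depends continuously on $X_0$; by linearity of the even $L_a$-harmonic extension (\cref{lemma:unique-extension-polynomial-aHarmonic}), so does $X_0 \mapsto \widetilde{Q}^{X_0}$. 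Combining Step~2 with joint continuity of $u$ gives continuity of $X_0 \mapsto u(X_0 + \cdot,\tau(X_0))$ in the topology of uniform convergence on compact subsets of $B_{1-\eps}$, and hence of the four-term sum. The principal obstacle is Step~1: the distributional identity $-L_a w = \mu_2 - \mu_1$ carries a signed contribution on the thin space coming from the two (generally distinct) contact sets, so the comparison $w \geq (t_2-t_1)\eta$ must be proved either via a boundary Harnack / Hopf-type principle for $L_a$-harmonic functions near the thin space, or through a direct application of the lattice structure of the thin obstacle problem.
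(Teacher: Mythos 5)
Your Steps 2 and 3 are sound in outline, but Step 1 --- which you correctly identify as the nucleus --- contains a genuine error, not merely a hard point. The inequality $u(X,t_2)-u(X,t_1)\ge c_K(t_2-t_1)$ for \emph{all} $X\in K$ is false whenever $K$ meets the contact set of $u(\cdot,t_2)$. Indeed, by the monotonicity in \eqref{def:soluzione-famiglia} the contact sets are nested, $\Lambda(u(\cdot,t_2))\subset\Lambda(u(\cdot,t_1))$, so on $\Lambda(u(\cdot,t_2))$ both solutions equal $\vf$ and $w:=u(\cdot,t_2)-u(\cdot,t_1)$ vanishes identically; since any common free boundary point lies in $\Lambda(u(\cdot,t_2))$, the very point at which you evaluate the strict bound is one where it fails. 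For the same reason the comparison $w\ge(t_2-t_1)\eta$, with your $\eta$ strictly positive across the thin space, cannot be established by any method: the correct barrier must vanish on $\{y=0\}$, and what one actually obtains is $w\ge c(t_2-t_1)|y|^{2s}$ (this is \cref{lemma:lemma2.2}, proved with the $L_a$-harmonic function of $B_1\setminus\{y=0\}$ with zero thin-space data). This degenerate bound does not ``immediately yield disjointness'' by evaluation at a common point.

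The missing ingredient is a Hopf-type argument \emph{at} the putative common point, not a different proof of your false inequality. If $X_0\in\Gamma(u(\cdot,t_1))\cap\Gamma(u(\cdot,t_2))$ with $t_1<t_2$, then $w(X_0)=0$ together with $w\ge c(t_2-t_1)|y|^{2s}$ forces $\liminf_{y\downarrow 0}y^{a}\partial_y w(x_0,y)\ge 2sc(t_2-t_1)>0$; on the other hand, since $X_0\in\Gamma(u(\cdot,t_1))$ and the Neumann datum $\lim_{y\downarrow 0}y^{a}\partial_y u(\cdot,y,t_1)$ is continuous and vanishes off the contact set, it vanishes at $X_0$. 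Hence $\lim_{y\downarrow 0}y^{a}\partial_y u(x_0,y,t_2)>0$, contradicting the supersolution property $-L_a u(\cdot,t_2)\ge 0$. Your closing sentence does invoke a ``Hopf-type principle'', but as a route to the untenable bound $w\ge (t_2-t_1)\eta$; as written, the disjointness of the free boundaries is not proved. (A secondary caveat: in Step 2 the joint continuity of $(X,t)\mapsto u(X,t)$ is not available from \eqref{def:soluzione-famiglia}, which provides no upper modulus of continuity in $t$; continuity of $\tau$ is better derived from well-definedness combined with the nestedness of the contact sets and the fact that non-coincidence at a point persists for all larger $t$.)
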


\section{The second blow-up at quadratic points}\label{section-quadratic}
 In this section we split the set of quadratic free boundary points according to the behavior of a second blow-up. The existence and properties of the second blow-up at free boundary points with even frequency was extensively studied in \cite{fj21} in the case of zero obstacles. Here we will extend these results for quadratic points to the case of non-zero obstacles.

Let us first introduce some notations. We fix $\beta>0$, and we call, for any $v\in W^{1,2}(B_1,\y)$ and $r>0$,
$$\widetilde D(r,v):=D(r,v)+\beta r^{n+a+2\beta-1},\qquad \widetilde H(r,v):=H(r,v)+r^{n+a+2\beta},$$ where $D$ and $H$ were defined in \eqref{def:HD}.
Then, we consider the modified Almgren's frequency function \bea\widetilde N(r,v):=\frac{r\widetilde D(r,v)}{\widetilde H(r,v)}.\eea 
   Finally, whenever $L_{a}v$ is a measure and $v$ is continuous, we define
   \begin{equation}\label{def:F}
    F(r,v):= \int_{B_{r}}vL_{a}v.
\end{equation}
   For a non-zero polynomial $p\in\mathcal{P}_2$, recall also the definitions of the spine $L(p)$ and its dimension $m(p)$ as in \eqref{eq:L(p)} and \eqref{eq:m(p)}, respectively.
   The main result of this section is the following proposition.
   \begin{proposition}\label{second-blowup-quadratic-points}
    Let $u$ be a solution of \eqref{def:soluzione-estesa}, with obstacle $\varphi$ satisfying \eqref{e:hypo-phi} and $\|u\|_{L^\infty(B_1)}\le 1$. Given $X_{0}\in \Gamma_{2}(u)$, let $p\in \mathcal{P}_{2}$ be an admissible blow-up, and call $v:= \widetilde{u}^{X_{0}}(X_{0}+\cdot)-p$, where $\widetilde{u}^{X_{0}}$ is given by \eqref{def:soluzione-estesa-tilde}. 
    Let $\beta>0$ be such that $k+\gamma+2-2\beta>0$. Then, there exist constants $C,r_0>0$ such that
    $$r\mapsto \hat{N}(r,v):= e^{Cr^{k+\gamma+2-2\beta}}\left(\widetilde N(r,v)+Cr^{k+\gamma+2-2\beta}\right)$$
    is non-decreasing in $(0,r_{0})$. 
    More precisely,
    \be\label{eq:formula-n-tilde}\frac{d}{dr} \hat N(r,v) \ge r\frac{F(r,v)^2}{\widetilde H(r,v)^2}\quad\mbox{for every } r\in(0,r_0),\ee where $F$ is defined in \eqref{def:F}.
    In particular, $\widetilde N(0^{+},v):= \lim_{r\downarrow 0}\widetilde N(r,v)$ is well-defined, and
    \be\label{eq:lowup}\widetilde N(0^{+},v)\in [2,\beta].\ee
    In addition, the following facts hold.
    \begin{itemize}
         \item [i)] If $\ 0<r<R<r_{0}$ are such that $\underline{\lambda}\le \widetilde N(\rho,v)\le \overline{\lambda}$ for every $\rho \in [r,R]$, then, for every $\eps>0$, there is $C_\eps>0$ such that
        \begin{equation}\label{growth-secondblowup}
            c\left(\frac{R}{r}\right)^{n+a+2\underline{\lambda}}\le \frac{ \widetilde{H}(R,v)}{\widetilde{H}(r,v)}\le C_{\eps}\left(\frac{R}{r}\right)^{n+a+2\overline{\lambda}+\eps}
        \end{equation} for some constant $c>0$.
        \smallskip
    \item [ii)] For every $\underline \lambda\le \widetilde N(0^{+},v)$, the following Monneau-type monotonicity formula holds:
        \be\label{monneau}\frac{d}{dr}\left(\frac{\widetilde H(r,v)}{r^{n+a+2\underline \lambda}}\,
        e^{Cr^{n+a+2\underline \lambda}}\right)\ge 0\quad \mbox{for every } r\in (0,r_{0}).\ee 
    \item [iii)] Suppose now that $p\in \mathcal{P}_{2}$ is the first blow-up of $\widetilde{u}^{X_{0}}$ at $X_0$ and that $\widetilde N(0^{+},v)<\beta$. Then, for every $r_{j}\downarrow 0$, the blow-up sequence
        \be\label{def:resclaings-vrj}v_{r_{j}}:= \frac{v(r_{j}\cdot)}{\lVert v(r_{j}\cdot)\rVert_{L^{2}(\partial B_{1},|y|^{a})}}\ee
        is uniformly bounded in $W_{\loc}^{1,2}(\R^{n+1},|y|^{a})$. In particular, up to subsequences, it weakly converges in such space to a non-zero function $q$ which is $\lambda$-homogeneous, with $\lambda=N(0^{+},v)$. Moreover, if $k+\gamma+2-2\beta>\lambda-2$, then the following orthogonality properties hold:
        \begin{equation}\label{eq:ortogonality-first-second-bu}
            \int_{\partial B_1}p q\,d\HH^n=0\qquad\mbox{and}
\qquad\int_{\partial B_1}\bar{p} q \,d\HH^n\le 0\quad\mbox{for every } \bar{p}\in\mathcal{P}_2.
        \end{equation}
        Finally
        \begin{itemize}
            \item[a)] If $s\le 1/2$ or $m(p)\le n-2$, then $q$ is an $L_{a}$-harmonic polynomial and $\lambda\in \N_{\ge 2}$.
            \smallskip
            \item[b)]  If $s>1/2$ and $m(p)=n-1$, then $q$ is a solution of the very thin obstacle problem on $L(p)$, namely:
        \begin{equation}\label{def:very-thin-obstacle-problem}
        \left\{
        \begin{array}{rclll}
            -L_{a}q&=&0 &\quad \mbox{in } \R^{n+1}\setminus L(p),\\
            -L_{a}q &\ge& 0 &\quad\mbox{in } \R^{n+1},\\
            qL_{a}q &=& 0 &\quad \mbox{in } \R^{n+1},\\
            q&\ge& 0 &\quad \mbox{on } L(p),\\
            q(x,y)&=&q(x,-y) &\quad \mbox{for } (x,y)\in \R^{n+1}.
        \end{array}\right.
        \end{equation}
        Moreover, in this case, $\lambda\ge 2+\omega$ for some constant $\omega\in (0,1)$ depending only on $n$ and $s$.
        \end{itemize}
        
    \end{itemize}
\end{proposition}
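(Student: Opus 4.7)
The plan is to follow a standard Garofalo--Petrosyan/Focardi--Spadaro strategy for a truncated Almgren's frequency function, adapted to the non-zero obstacle case along the lines of \cite{car24}, and then to combine the resulting monotonicity with a capacity dichotomy as in \cite{fj21} to classify the second blow-up.

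First, I would set up the PDE for $v=\widetilde u^{X_0}(X_0+\cdot)-p$. Since $p\in\mathcal{P}_2$ is $L_a$-harmonic and $\widetilde u^{X_0}$ satisfies \eqref{def:soluzione-estesa-tilde}, the function $v$ solves $-L_av=g^{X_0}(X_0+\cdot)|y|^a$ in the bulk and carries a signed thin-space measure coming from the complementarity conditions; by \eqref{eq:stima-g}, the RHS is controlled by $|X|^{k+\gamma-2}|y|^a$. With this, a direct computation gives $\widetilde H'(r)=\tfrac{n+a}{r}\widetilde H(r)+2\widetilde D(r)+2F(r,v)$, while differentiating $\widetilde D$ via the Rellich--Pohozaev identity for $L_a$ (as in \cite[Section 2]{car24}) yields an expression for $\widetilde D'(r)$ in terms of the radial derivative $\int_{\partial B_r}(\nabla v\cdot X)^2|y|^a$ plus error terms controlled by $r^{k+\gamma+1-2\beta}\widetilde H(r)$ after using Young's inequality on the cross terms involving $g^{X_0}$. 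The $\beta r^{n+a+2\beta-1}$ and $r^{n+a+2\beta}$ truncations ensure $\widetilde H$ is bounded below and that $\widetilde N$ is forced to lie in $[0,\beta]$ when $v$ is small.

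Combining the above with a Cauchy--Schwarz step $(\widetilde H'(r)/2-F(r,v))^2\le \widetilde H(r)\int_{\partial B_r}(\nabla v\cdot\nu)^2|y|^a$ leads to
\begin{equation*}
\widetilde N'(r)\ge -C r^{k+\gamma+1-2\beta}\bigl(\widetilde N(r,v)+1\bigr)+r\,\frac{F(r,v)^2}{\widetilde H(r)^2},
\end{equation*}
whose integrating factor is precisely $e^{Cr^{k+\gamma+2-2\beta}}$, producing the monotonicity of $\hat N$ and the pointwise inequality \eqref{eq:formula-n-tilde}. The existence of $\widetilde N(0^+,v)$ follows; the bound $\widetilde N(0^+,v)\ge2$ comes from the fact that $p$ is the leading quadratic part of $\widetilde u^{X_0}(X_0+\cdot)$ (so $v$ vanishes at least to order $2$ at $0$), and $\widetilde N(0^+,v)\le\beta$ is built into the truncation. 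Part (i) on growth follows by integrating $(\log\widetilde H)'(r)=2\widetilde N(r,v)/r+2F(r,v)/\widetilde H(r)$ and estimating the error using the already-established upper bound on $\widetilde N$; the Monneau-type formula (ii) is obtained by differentiating $\widetilde H(r)/r^{n+a+2\underline\lambda}$ and invoking $\widetilde N(r,v)\ge\underline\lambda-Cr^{k+\gamma+2-2\beta}$.

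For (iii), the $W^{1,2}_{\loc}$-bound on the rescalings $v_{r_j}$ is immediate from (i) and the growth control of the $L^2(\y)$-norms on annuli. A standard Weiss-type argument, combined with \eqref{eq:formula-n-tilde} applied to rescalings and the absorption of the $F$-term (which here is controlled by $r^{k+\gamma+2-2\beta-\lambda}\to 0$ under the hypothesis $k+\gamma+2-2\beta>\lambda-2$), shows that any weak limit $q$ is $\lambda$-homogeneous and non-zero, with $\lambda=\widetilde N(0^+,v)$. The orthogonality \eqref{eq:ortogonality-first-second-bu} comes from the minimality of $\widetilde u^{X_0}$ against competitors $p+t\bar p$ with $\bar p\in\mathcal P_2$, together with the fact that adding a multiple of $p$ itself would change the first blow-up, forcing the $L^2$-inner product on the sphere to vanish against $p$ and to be non-positive against general $\bar p$. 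The main obstacle, and the point where the case analysis genuinely matters, is the dichotomy between (a) and (b): the contact set of $q$ is contained in $L(p)$, which is $m(p)$-dimensional; when $m(p)\le n-2$ or when $s\le 1/2$ (so that $(n-1)$-dimensional sets have zero $L_a$-capacity), $q$ is globally $L_a$-harmonic hence a polynomial with integer homogeneity $\lambda\in\mathbb{N}_{\ge 2}$, whereas when $s>1/2$ and $m(p)=n-1$, $q$ is a non-trivial solution of the very thin obstacle problem \eqref{def:very-thin-obstacle-problem} on $L(p)$, and the homogeneity gap $\lambda\ge 2+\omega$ follows from a classification of $2$-homogeneous global solutions to \eqref{def:very-thin-obstacle-problem} together with the orthogonality \eqref{eq:ortogonality-first-second-bu}, which rules out $q$ being itself a polynomial in $\mathcal P_2$.
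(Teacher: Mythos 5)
Your overall strategy coincides with the paper's: the truncated frequency $\widetilde N$ with integrating factor $e^{Cr^{k+\gamma+2-2\beta}}$, the growth estimates obtained by integrating $\widetilde H'/\widetilde H$ and controlling $\int F/\widetilde H$ via the monotonicity formula, the Monneau-type formula, the orthogonality relations obtained by expanding $\int_{\partial B_r}(\widetilde u-\bar p)^2$ and specializing $\bar p$ to multiples of $p$, and the capacity dichotomy for the classification of $q$. Two remarks on Step 1: the bound on the cross term $F(r,v)\bigl(r\widetilde D(r,v)-2\widetilde H(r,v)\bigr)$ when $F\ge 0$ is not just Young's inequality on the $g^{X_0}$-terms; it requires the almost-nonnegativity of the Weiss energy $\widetilde W_2$ at the quadratic point (\cref{prop:monotonicity-weiss-tilde}) to get $r\widetilde D-2\widetilde H\ge -Cr^{n+a+k+\gamma+2}$, and you should make this input explicit. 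Also, the hypothesis $k+\gamma+2-2\beta>\lambda-2$ is used in the orthogonality step (to kill the error terms after dividing by $\eps_r=h_r/r^2$, using $h_r\ge Cr^{\lambda+\eps}$), not primarily in the homogeneity argument.

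The genuine gap is in part (iii)(b). You assert that when $s>1/2$ and $m(p)=n-1$ the limit $q$ "is a non-trivial solution of the very thin obstacle problem," but you never address how to verify the complementarity condition $qL_aq\equiv 0$ (nor the sign condition $q\ge 0$ on $L(p)$, which needs a trace argument). Weak $W^{1,2}_{\loc}$ convergence of $v_{r_j}$ together with weak-$*$ convergence of the measures $L_av_{r_j}$ does not allow one to pass to the limit in the product $\int v_{r_j}L_av_{r_j}$: one needs both that $\int_{B_1}v_{\bar r_j}L_av_{\bar r_j}\to 0$ along suitable scales (which the paper extracts from the $F^2$-term in \eqref{eq:formula-n-tilde} via a mean-value argument) and, crucially, locally uniform equicontinuity of the rescalings $v_r$. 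The latter is the hard technical core of this section: it requires Lipschitz and semiconvexity estimates for $v$ in the directions of $L(p)$ via Bernstein's technique, with the extra error terms coming from $g^{X_0}$ and the correction $\Gamma_a\ast(|y|^ag)$, followed by a slicing argument to get uniform $C^{-a-\eps}$ bounds (\cref{prop:uniform-convergence} in the paper). Deferring to \cite{fj21} does not close this, since that reference treats only $\varphi\equiv 0$ and the whole point here is to redo these estimates with a non-zero obstacle. Without this ingredient, the claim that $q$ solves \eqref{def:very-thin-obstacle-problem} — and hence the gap $\lambda\ge 2+\omega$, which relies on $q$ being a genuine solution of that problem — is unsupported.
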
 
We now consider a solution $u$ of \eqref{def:soluzione-estesa} with obstacle $\vf$ satisfying \eqref{e:hypo-phi}. For every $X_{0}\in \Gamma_{2}(u)$ let us call $p^{X_{0}}\in \mathcal{P}_{2}$ the first blow-up of $\widetilde{u}^{X_{0}}$ at $X_{0}$, and $v^{X_{0}}= \widetilde{u}^{X_{0}}(X_{0}+\cdot)-p^{X_{0}}$.
In order to define a partition of the set of quadratic points $\Gamma_{2}(u)$, we take $$k\ge 4\qquad \mbox{and}\qquad \beta=3.$$ Then, thanks to \cref{second-blowup-quadratic-points}, we may define, for every $\lambda\in [2,3)$,
 $$\Gamma_{2,\lambda}(u):=\{X_0\in\Gamma_2(u): N(0^+,v^{X_0})=\lambda\}$$
 and, for every $\lambda\in[2,3]$,
 $$\Gamma_{2,\ge \lambda}(u):=\Gamma_2(u)\setminus \bigcup_{\mu<\lambda}\Gamma_{2,\mu}(u).$$
We thus introduce the sets of ordinary and anomalous quadratic points as follows:
\begin{equation*}
    \Gamma_{2}^{\rm o}(u):= \Gamma_{2,\ge 3}(u),\qquad \Gamma_{2}^{\rm a}(u):= \Gamma_{2}(u)\setminus \Gamma_{2}^{\rm o}(u)=\underset{\lambda \in [2,3)}{\bigcup}\Gamma_{2,\lambda}(u).
\end{equation*}
Finally, for a family of solutions $u:B_{1}\times [0,1]\to \R$ of \eqref{def:soluzione-estesa} and \eqref{def:soluzione-famiglia} we use the notations
\be \label{def:gammao-and-gammaa}\mathbf{\Gamma}_{2}^{o}:=\bigcup_{t\in[0,1]}\Gamma_{2}^{\rm o}(u(\cdot,t)),\quad \mathbf{\Gamma}_{2}^{\rm a}:=\bigcup_{t\in[0,1]}\Gamma_{2}^{\rm a}(u(\cdot,t)).\ee
The rest of this section is devoted to the proof of \cref{second-blowup-quadratic-points}. 
\begin{proof}[Proof of \cref{second-blowup-quadratic-points}]
We assume without loss of generality that $X_{0}=0$, and we set $\widetilde{u}:=\widetilde{u}^{0}$.
Let us start with some preliminary observations. 
First, by \eqref{def:soluzione-estesa-tilde} together with the fact that $\widetilde{u}(\cdot,0) \in C_{\loc}^{1,s}$ we get 
\bea
    \widetilde{u}L_{a}\widetilde{u}= -\widetilde{u}g|y|^{a},\quad  (\nabla\widetilde{u}\cdot X)L_{a}\widetilde{u}=-(\nabla\widetilde{u}\cdot X)g|y|^{a},
\eea where $g:=g^{0}$ is defined in \eqref{def:soluzione-estesa-tilde}.
In particular, since $0$ is a quadratic point, by \eqref{eq:stima-g} we can bound
\begin{equation}\label{bound-utildeLautilde}
\left|\int_{B_{r}}\widetilde{u}L_{a}\widetilde u\right|+\, \left|\int_{B_{r}}(\nabla\widetilde{u}\cdot X)L_{a}\widetilde{u}\right|\le Cr^{n+a+k+\gamma+1}.
\end{equation}
Moreover, since $p$ is $L_{a}$-harmonic and positive on the thin space, we have
\bea
    vL_{a}v=\widetilde{u}L_{a}\widetilde{u}+p (-L_{a}\widetilde{u})\ge -\widetilde{u}g|y|^{a}+pg|y|^{a}=-vg|y|^{a}.
\eea
Hence, recalling the definition of $F$ in \eqref{def:F} and by \eqref{eq:stima-g} again, we get
\begin{equation}\label{lowerbound-Lrv}
    F(r,v)\ge -Cr^{n+a+k+\gamma+1}.
\end{equation}

\smallskip
\noindent\textbf{Step 1. }(Monotonicity formula \eqref{eq:formula-n-tilde}).
We define $$Z(r,v):=\int_{B_r}(\nabla v\cdot X)L_av.$$ Arguing as in \cite[Lemma 2.3]{frs20}, we obtain the formula $$\widetilde N'(r,v)=2r\frac{F(r,v)^2}{\widetilde H(r,v)^2}+2\frac{rF(r,v)\widetilde D(r,v)-Z(r,v) \widetilde H(r,v)}{\widetilde H(r,v)^2}.$$   
Being $p$ a $2$-homogeneous function, $\nabla p \cdot X=2p$. Hence, using also that $L_{a}p=0$, we may re-write
\bea
    Z(r,v)=2F(r,v)+\int_{B_{r}}(\nabla \widetilde{u} \cdot X-2\widetilde{u})L_{a}\widetilde{u}.
\eea
Then, \bea \widetilde N'(r,v)&=2r\frac{F(r,v)^2}{\widetilde H(r,v)^2}+2\frac{F(r,v)(r\widetilde D(r,v)-2 \widetilde H(r,v))}{\widetilde H(r,v)^2}-\frac{\int_{B_{r}}(\nabla \widetilde{u} \cdot X-2\widetilde{u})L_{a}\widetilde{u}}{\widetilde H(r,v)}\\&=:2r\frac{F(r,v)^2}{\widetilde H(r,v)^2}+I_1(r)+I_2(r).\eea 
To bound $I_1(r)$ below we consider the two cases $F(r,v)\ge 0$ and $F(r,v)< 0$ separately.
Suppose first that $F(r,v)\ge0$. By \cref{prop:monotonicity-weiss-tilde} and the properties of $p$, $$r\widetilde D(r,v)-2\widetilde H(r,v)=r\widetilde D(r,\widetilde u)-2\widetilde H(r,\widetilde u)\ge -Cr^{n+a+k+\gamma+2}.$$ Then, in this case, from Young's inequality, $$I_{1}(r)\ge -2\frac{F(r,v)}{\widetilde H(r,v)^2}r^{n+a+k+\gamma+2}\ge -r\frac{F(r,v)^2}{\widetilde H(r,v)^2}-Cr^{2k+2\gamma+3-4\beta},$$ where we also used the lower bound $\widetilde H(r,v)\ge r^{n+a+2\beta}.$
If instead $F(r,v)<0$, $$I_1(r)\ge \frac{F(r,v)}{\widetilde H(r,v)}\frac{r\widetilde D(r,v)}{\widetilde H(r,v)} =\frac{F(r,v)}{\widetilde H(r,v)}\widetilde N(r,v)\ge -Cr^{k+\gamma+1-2\beta}\widetilde N(r,v),$$ owing to \eqref{lowerbound-Lrv}.
Hence, taking both cases into account, $$I_1(r)\ge -Cr^{k+\gamma+1-2\beta}\widetilde N(r,v)-r\frac{F(r,v)^2}{\widetilde H(r,v)^2}-Cr^{2k+2\gamma+3-4\beta}.$$
For what concerns $I_2(r)$, by \eqref{bound-utildeLautilde} we have
$$I_2(r)\ge -Cr^{k+\gamma+1-2\beta}.$$
Overall, collecting estimates together, \bea \widetilde N'(r,v)\ge r\frac{F(r,v)^2}{\widetilde H(r,v)^2}-Cr^{k+\gamma+1-2\beta}\widetilde N(r,v)-Cr^{k+\gamma+1-2\beta},\eea which concludes the proof of the monotonicity formula \eqref{eq:formula-n-tilde}.

\smallskip
\noindent \textbf{Step 2:} (Growth estimate \eqref{growth-secondblowup}). Let $0<r<R<r_{0}$ be such that $\underline{\lambda}\le N(\rho,v)\le \overline{\lambda}$ for every $\rho \in [r,R]$. We call
$G(r,v):= F(r,v)/\widetilde{H}(r,v)$. 
The following formula comes from straightforward computations:
\be\label{formula-H'}
    \widetilde H(r,v)'= \frac{n+a}{r}\widetilde H(r,v)+2\widetilde D(r,v)+2F(r,v).\ee
Then, by \eqref{lowerbound-Lrv} and \eqref{formula-H'} we have
\begin{align*}
    \frac{\widetilde{H}(\rho,v)'}{\widetilde{H}(\rho,v)}=\frac{n+a}{\rho}+\frac{2\widetilde{N}(r,v)}{\rho}+2G(r,v)\ge \frac{n+a+2\underline{\lambda}}{\rho}-Cr^{k+\gamma+1-2\beta}\quad \mbox{for } \rho \in (r,R),
\end{align*}
which, integrated, gives
\begin{align*}
    \log\left(\frac{\widetilde H(R,v)}{\widetilde H(r,v)}\right)&\ge (n+a+2\underline{\lambda})\log(R/r)-C\frac{R^{k+\gamma+2-2\beta}-r^{k+\gamma+2-2\beta}}{k+\gamma+2-2\beta}\\
    &\ge (n+a+2\underline{\lambda})\log(R/r) -C,    
\end{align*}
from which we deduce the lower bound in \eqref{growth-secondblowup}. To prove the upper bound, let us observe first that the monotonicity formula \eqref{eq:formula-n-tilde} gives
\begin{align*}
    \int_{r}^{R}G(\rho, v)\,d\rho&\le \left(\int_{r}^{R}\rho G(\rho, v)^{2}\,d\rho\right)^{1/2}\left(\int_{r}^{R}\frac{1}{\rho}\,d\rho\right)^{1/2}\\
    &\le \left(\hat{N}(R)-\hat{N}(r)\right)^{1/2}\log(R/r)^{1/2}\le C\log(R/r)^{1/2}.
\end{align*}
Therefore, as above,
\begin{align*}
    \log\left(\frac{\widetilde{H}(R,v)}{\widetilde{H}(r,v)}\right)&\le (n+a+2\overline{\lambda})\log(R/r)+C\log(R/r)^{1/2}\\
    &\le (n+a+2\overline{\lambda}+\eps)\log(R/r)+C_{\eps},
\end{align*}
where we used that $a^{1/2}\le \eps a+ C_{\eps}$, for $C_{\eps}>0$ large enough. The upper bound in \eqref{growth-secondblowup} follows after exponentiation. 
Now, \eqref{eq:lowup} is a consequence of \eqref{growth-secondblowup}. Moreover, similar computations as above give also \eqref{monneau}.

\smallskip
\noindent \textbf{Step 3:} (Existence of the second blow-up). 
Let us assume now that $p$ is the first blow-up of $u$ at $0$ and $\widetilde N(0^+,v)<\beta$.
Let $r_{j}\downarrow 0.$ From the monotonicity of $\hat N(r, v)$ in \eqref{eq:formula-n-tilde} we get that $\widetilde{N}(r_{j},v)$ is uniformly bounded. Thanks to the growth estimates in \eqref{growth-secondblowup}, since we are assuming that $\lambda<\beta$, also $N(r_{j},v)$ is uniformly bounded. Therefore, given $v_{r_j}$ as in \eqref{def:resclaings-vrj}, noting that
$$\lVert v_{r_{j}}\rVert_{L^{2}(\partial B_{1},|y|^{a})}=1,\qquad \lVert \nabla v_{r_{j}}\rVert_{L^{2}(B_{1},|y|^{a})}=N(r_{j},v)^{1/2},$$
we deduce that $v_{r_{j}}$ is uniformly bounded in $W^{1,2}(B_{1},\y).$ In particular, it weakly converges in such space up to a subsequence to some function $q$.

\smallskip
\noindent \textbf{Step 4:} (Orthogonality condition \eqref{eq:ortogonality-first-second-bu}). We proceed as in \cite[Lemma 2.1, Lemma 2.2]{frs20} or \cite[Lemma 3.3]{fj21}. First we notice that, since $2\le \widetilde N(0^+,v)=\lambda$, by \eqref{monneau}, for every $\bar p\in\mathcal{P}_2$, \bea e^{Cr^{k+\gamma+2-2\beta}}\left(\frac{1}{r^{n+a+4}}\int_{\partial B_r}(u-\bar p)^2\y\,d\HH^n+r^{2\beta-4}\right)&\ge\lim_{\rho\downarrow 0} \int_{\partial B_\rho}\left(\frac{u(\rho x)}{\rho^2}-\bar{p}\right)\y\,d\HH^n\\&= \int_{\partial B_1}(p-\bar p)^2\y\,d\HH^n.\eea We set $h_r:=\|v(r\cdot)\|_{L^2(\partial B_1,\y)}$. By \cref{prop:classification-blowup-freq-2}, $h_{r}=o(r^{2})$, and so $\eps_r:=h_r/r^2=o(1)$ as $r\downarrow 0$. Then, 
\bea e^{Cr^{k+\gamma+2-2\beta}}\int_{\partial B_1}\left(\frac{v(rx)}{r^2}+p-\bar p\right)^2\y\,d\HH^n&=\frac{e^{Cr^{k+\gamma+2-2\beta}}}{r^{n+a+4}}\int_{\partial B_r}(u-\bar p)^2\y\,d\HH^n\\&\ge \int_{\partial B_1}(p-\bar p)^2\y\,d\HH^n-e^{Cr^{k+\gamma+2-2\beta}}r^{2\beta-4}.\eea Expanding the left-hand side, we get
\bea e^{Cr^{k+\gamma+2-2\beta}}\biggl(\eps_r^2\int_{\partial B_1}&v_r^2\y \,d\HH^n +2\eps_r\int_{\partial B_1}v_r(p-\bar p)\y\,d\HH^n\biggl)\\&+(e^{Cr^{k+\gamma+2-2\beta}}-1)\int_{\partial B_1}(p-\bar p)^2\y\,d\HH^n+e^{Cr^{k+\gamma+2-2\beta}}r^{2\beta-4}\ge0.\eea
Let us choose $\eps>0$ sufficiently small such that $k+\gamma+2-2\beta>\lambda-2+\eps$. By \eqref{growth-secondblowup}, $h_r\ge C r^{\lambda+\eps}$. Then, dividing by $\eps_r$ and using that $e^{Cr^{k+\gamma+2-2\beta}}-1\approx Cr^{k+\gamma+2-2\beta}$ we get
$$O(\eps_r)+2e^{Cr^{k+\gamma+2-2\beta}}\int_{\partial B_1}v_r(p-\bar p)\y\,d\HH^n+\frac{r^{k+\gamma+2-2\beta}}{r^{\lambda+\eps-2}}O(1)+\frac{r^{2\beta-4}}{r^{\lambda+\eps-2}}O(1)\ge 0.$$ Finally, sending $r\downarrow 0$, we deduce $$\int_{\partial B_1}q(p-\bar p)\y\,d\HH^n\ge 0\quad \mbox{for every } \bar{p}\in \mathcal{P}_{2}.$$ The proof of \eqref{eq:ortogonality-first-second-bu} is then concluded after making the specific choices $\bar p=2^{-1}p$ and $\bar p=2p$.
\smallskip

\noindent \textbf{Step 5:} (Properties of the second blow-up). 
First we observe that, by \eqref{def:soluzione-estesa-tilde} and \eqref{eq:stima-g}, we have \be\label{eq:nonpositive}
-L_av_{r_j}\ge -Cr^{k+\gamma-\lambda-\eps}
\qquad\mbox{and}\qquad
|L_av_{r_j}|\le Cr^{k+\gamma-\lambda-\eps}\quad\mbox{in }\R^{n+1}\setminus \{\widetilde u(r\cdot)=0\}',
\ee where we used that $ H(r,v)\ge Cr^{\lambda+\eps}$, for some $\eps>0$ small enough, by \eqref{growth-secondblowup}.
Now, since $v_{r_{j}}$ is uniformly bounded in $W_{\loc}^{1,2}(\R^{n+1},|y|^{a})$ (see Step 3), the measures $L_av_{r_{j}}$ have locally bounded total variation, and hence locally weakly$^{*}$-converge, up to subsequences, to the measure $L_{a}q$, which is non-positive by \eqref{eq:nonpositive}.
Moreover, since $\{u(r_j\cdot)=0\}'$ converges to $L(p)$ as $r_j\downarrow 0$ in the Hausdorff distance, $L_{a}q=0$ in $\R^{n+1}\setminus L(p)$.
We now distinguish two cases.

\textit{Case a)} Suppose that $s\le 1/2$ or $m(p)\le n-2$. Then $L(p)$ is a set of zero $L_a$-harmonic capacity in $\R^{n+1}$, thus $L_aq=0$ in $\R^{n+1}$. Moreover, arguing as in \cite[Proposition 3.2]{fj21} and using \eqref{monneau}, one can prove that $q$ is $\lambda$-homogeneous. In particular, by \cref{prop:Liouville}, $q$ is a polynomial and $\lambda\in\N_{\ge2}$.

\textit{Case b)} Suppose instead that $s>1/2$ and $m(p)=n-1$. In this case we need to prove that $q$ is a solution of the very thin obstacle problem \eqref{def:very-thin-obstacle-problem}. The only properties that remain to be shown are $q\ge 0$ on $L(p)$ and $qL_{a}q\equiv 0$. The fact that $q\ge 0$ on $L(p)$ follows from a trace argument as in \cite[Proposition 3.4]{fj21}, by the weak convergence of $v_{r_j}$ to $q$, and the non-negativity of $v$ on $L(p)$.
To prove that $qL_{a}q\equiv 0$, leveraging on the weak$^{*}$ convergence of $L_{a}v_{r_{j}}$ to $L_{a}q$ and the equicontinuity of $\{v_{r_{j}}\}$ given by \cref{prop:uniform-convergence}, it only remains to show that
\begin{equation}\label{eq-convergence-intvLv}
    \int_{B_{1}}v_{r_{j}}L_{a}v_{r_{j}}\to 0.
\end{equation}
The monotonicity formula \eqref{eq:formula-n-tilde} can be re-written as follows:
\begin{equation}\label{eq:monoton-second-bu-2}
    r\hat{N}(r,v)'\ge \left(\int_{B_{1}}v_{r}L_{a}v_{r}\right)^{2}\left(\frac{H(r,v)}{\widetilde{H}(r,v)}\right)^{2}.
\end{equation}
Now, since the frequency of the second blow-up is strictly less than $\beta$, then $H(r,v)/\widetilde{H}(r,v)\to 1$ as $r\downarrow 0$. Moreover, by the mean value theorem, we can find $\bar{r}_{j}\in [r_{j},2r_{j}]$ such that $\bar r_j \hat N(\bar r_j,v)'$ converges to zero as $j\to \infty$.
Then \eqref{eq:monoton-second-bu-2} gives
$$\limsup_{j\to \infty}\left(\int_{B_{1}}v_{\bar{r}_{j}}L_{a}v_{\bar{r}_{j}}\right)^{2}= \limsup_{j\to \infty}\left(\int_{B_{1}}v_{\bar{r}_{j}}L_{a}v_{\bar{r}_{j}}\right)^{2}\left(\frac{H(\bar{r}_{j},v)}{\widetilde{H}(\bar{r}_{j},v)}\right)^{2} \le \limsup_{j\to \infty}\bar r_j \hat N(\bar r_j,v)'=0.$$
Re-scaling and using \eqref{eq:nonpositive} we get \eqref{eq-convergence-intvLv}.

\noindent The homogeneity of $q$ is obtained arguing as in the case $s\le1/2$.
Finally, the fact that $\lambda\ge 2+\omega$ for some $\omega>0$ depending only on $n$ and $s$, follows by a compactness argument as in \cite[Proposition 3.4]{fj21}.
\end{proof}
\noindent We now prove the following technical lemma that we used in the last step of the proof of \cref{second-blowup-quadratic-points}.
\begin{lemma}\label{prop:uniform-convergence}
    Let $u$ be a solution of \eqref{def:soluzione-estesa}, with obstacle $\vf$ satisfying \eqref{e:hypo-phi}, and $\|u\|_{L^\infty(B_1)}\le 1$. Suppose that $X_{0}\in \Gamma_{2}(u)$ and let $p\in \mathcal{P}_{2}$ be the first blow-up of $\widetilde{u}^{X_{0}}$ at $X_{0}$. We consider $v=\widetilde u^{X_0}(X_0+\cdot)-p$, and $v_r$ the rescalings in \eqref{def:resclaings-vrj}. We also suppose that $s>1/2$ and $m(p)=n-1$. Using the notation in \cref{second-blowup-quadratic-points}, if $\widetilde N(0^+,v)<\beta$, then, for every $\eps>0$, there is a constant $C_\eps>0$ such that $$[v_r]_{C^{-a-\eps}(B_{1/2})}\le C_\eps\quad \mbox{for every } r\in (0,r_{0}).$$ 
\end{lemma}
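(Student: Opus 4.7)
The plan is to establish the uniform Hölder seminorm bound on $v_r$ in two main steps: a uniform $L^\infty$ estimate, followed by a Hölder regularity estimate via weighted elliptic theory.

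First, I would derive a uniform $L^\infty(B_{3/4})$ bound on $v_r$. From Step 3 of the proof of \cref{second-blowup-quadratic-points}, the family $\{v_r\}$ is uniformly bounded in $W^{1,2}(B_1, |y|^a)$. The rescaled equation reads
\begin{equation*}
    -L_a v_r = g_r|y|^a + \nu_r,
\end{equation*}
where $g_r$ is uniformly small in $L^\infty$ (of order $r^{k+\gamma-\lambda-\eps}$, using the growth estimate \eqref{growth-secondblowup} together with the assumption $\widetilde N(0^+,v)=\lambda<\beta$, which gives $h_r\ge c r^{\lambda+\eps}$), and $\nu_r$ is a non-negative Radon measure supported on the thin space $\{y=0\}$, with total variation uniformly controlled by exploiting the $C^{1,s}$ regularity of the trace $\widetilde u^{X_0}(\cdot,0)$ and the smoothness of $p$. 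Weighted Moser iteration for the $A_2$-weighted operator $L_a$, adapted to incorporate the bounded measure on the thin space, then yields the desired $L^\infty$ bound.

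Second, I would upgrade this to a uniform Hölder estimate by decomposing $v_r = v_r^{(1)} + v_r^{(2)}$ in $B_{3/4}$, where $v_r^{(1)}$ is the even $L_a$-harmonic function in $B_{3/4}$ agreeing with $v_r$ on $\partial B_{3/4}$, and $v_r^{(2)}$ solves $-L_a v_r^{(2)} = g_r|y|^a + \nu_r$ with zero boundary data. By standard interior Hölder regularity for even $L_a$-harmonic functions with $L^\infty$ boundary data, $v_r^{(1)}$ is uniformly bounded in $C^{0,\alpha}(B_{1/2})$ for every $\alpha<1$. For $v_r^{(2)}$, the contribution of $g_r|y|^a$ is uniformly small since $g_r$ is of order $r^{k+\gamma-\lambda-\eps}$, while the contribution of $\nu_r$ is controlled by the $L_a$-Green's function representation: using the homogeneity of degree $2s-n$ of the fundamental solution of $L_a$, one shows that the potential of a uniformly bounded measure on the $n$-dimensional thin space lies in $C^{0,2s-1-\eps}$ for every $\eps>0$, yielding the required bound.

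The main technical obstacle will be the precise Hölder regularity estimate for the potential of the singular measure $\nu_r$ on the thin space. The exponent $-a-\eps=2s-1-\eps$ is natural given the scaling of the $L_a$-Green's function near $\{y=0\}$, but obtaining it uniformly in $r$ requires analyzing the near-diagonal behavior of $G$ along the thin space and combining it with the uniform total mass bound on $\nu_r$. Once this is done, the estimate is $r$-independent, and letting $\eps\downarrow0$ recovers the full statement.
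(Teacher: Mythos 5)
Your overall architecture ($L^\infty$ bound, then Hölder regularity by splitting off the potential of the thin-space measure) points in the right direction, and the first step is essentially salvageable: the paper also proves an $L^\infty$ bound, though it does so by noting that $-L_a v^{\pm}\le \y\|g\|_{L^\infty}$ (the measure $\nu_r$ is supported where $\widetilde u=0$, hence where $v\le 0$, so $v^+$ is almost subharmonic) rather than by a generic "Moser iteration with a measure", which as stated does not give an upper bound on $v_r$ from $-L_av_r=g_r\y+\nu_r$ with $\nu_r\ge0$.

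The genuine gap is in your second step. The claim that \emph{the potential of a uniformly bounded measure on the $n$-dimensional thin space lies in $C^{0,2s-1-\eps}$} is false for $n\ge 2$: the fundamental solution $\Gamma_a(X)=C|X|^{2s-n}$ is unbounded at the origin, so even a point mass on $\{y=0\}$ has an unbounded potential. Nor can you replace "bounded total mass" by "bounded density": under the normalization $v_r=v(r\cdot)/h_r$ with $h_r=o(r^2)$, the density $\lim_{y\downarrow0}y^a\partial_y v_r$ blows up as $r\downarrow0$. What actually rescues the argument — and what your proposal never uses — is the hypothesis $m(p)=n-1$: the measure concentrates on the $(n-1)$-dimensional spine $L(p)$, and the key estimate is that it is \emph{uniformly finite on each two-dimensional slice transverse to $L(p)$} (estimate \eqref{eq:estim} in the paper). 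Only with this slice-wise bound does the convolution with $\Gamma_a$ behave like a one-dimensional Riesz potential of order $2s>1$ and land in $C^{2\bar s}$ for $2\bar s<-a$. Obtaining the slice-wise bound is the bulk of the paper's proof: one first proves uniform Lipschitz and then semiconvexity estimates for $v$ in the directions of $L(p)$ via Bernstein's technique (with a penalized family $\widetilde u_\ell$ to justify the second difference quotients), and then integrates by parts slice by slice. Without this input your decomposition $v_r=v_r^{(1)}+v_r^{(2)}$ cannot close, so the proposal as written does not prove the lemma.
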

\begin{proof} Without loss of generality, we may assume that $X_0=0$. We denote by $g=g^{0}$ the corrective term appearing in the right-hand side of \eqref{def:soluzione-estesa}, and by $C(\vf)>0$ the constant in \eqref{eq:stima-g}. Let $$\Gamma_a(X):=C_{n,a}|X|^{-n+1-a}$$ be the fundamental solution of $L_a$ in $\R^{n+1}$ (see e.g.~\cite{cs07}).
We divide the proof in some steps.

\smallskip
\noindent\textbf{Step 1.} First of all we show that for every $e\in L(p)\cap\mathbb{S}^n$ we have 
\be\label{est:gamma}\|\Gamma_a\ast(\y g )\|_{L^\infty(B_1)}+\|\partial_{e}\Gamma_a\ast(\y g )\|_{L^\infty(B_1)}+\|\partial_{ee}\Gamma_a\ast(\y g )\|_{L^\infty(B_1)}\le CC(\vf),\ee for some constant $C>0$. 
By the definition of $\Gamma_a$, \eqref{eq:stima-g} and Young's inequality, it is sufficient to show that $\partial_{ee}\Gamma_a$ and $|x|^\gamma\y$ are respectively $L^p$ and $L^{p/(p-1)}$-integrable in $B_1$, for some $p>1$.
This is true because $$\int_{B_1}|\partial_{ee}\Gamma_a(X)|^p\,dX<+\infty \quad\mbox{for any } p<\frac{n+1}{n+1+a}$$
and $$\int_{B_1}\left(|x|^\gamma\y \right)^\frac{p}{p-1}\,dX<+\infty \quad\mbox{for any } p>\frac{n+1}{n+1+a+\gamma}.$$ 

\smallskip
\noindent\textbf{Step 2.} Next we prove $L^{\infty}$-estimates for $v$: \be\label{eq:l2linfty} \| v\|_{L^\infty(B_{1/2})} \le C\left(\| v\|_{L^2(B_1,\y)}+C(\vf)\right).
\ee
Following the lines of \cite[Lemma 2.5]{fj21},
we claim that \be\label{eq:estimates}-L_a v^{\pm}\le|y|^a\|g\|_{L^\infty(B_1)}\quad\text{in }B_1.\ee
The inequality for $v^{-}= \max\{p-\widetilde{u}, 0\}$ is a direct consequence of the fact that $p$ is $L_{a}$-harmonic, and maximum of subsolutions is a subsolution. To prove the same for $v^{+}$ it is enough to show that $-L_{a}v^{+}\le |y|^{a}\|g\|_{L^\infty(B_1)}$ in $\{v^{+}>0\}$ (see \cite[Lemma 2.5]{fj21} or \cite[Exercise 2.6]{psu12}). However, since $p\ge 0$ on $\{y=0\}$, we have $\{v^+>0\}'\subset\{\widetilde u>0\}'$, thus
$$-L_a v^+=-L_a v = \y g\le \y \|g\|_{L^\infty(B_1)}\quad\text{in }\{v^+>0\}.$$
Finally, applying to the $L_a$-subharmonic functions $v^{\pm}+C\|g\|_{L^\infty(B_1)}|X|^2$ the $L^{\infty}$-estimates from \cite[Theorem 2.3.1]{fks82}, we get 
\bea \| v^{\pm}\|_{L^\infty(B_{1/2})}&\le \|v^{\pm}+C\|g\|_{L^\infty(B_1)}|X|^2\|_{L^\infty(B_{1/2})}\le C\|v^{\pm}+C\|g\|_{L^\infty(B_1)}|X|^2\|_{L^2(B_1,\y)}\\&\le C(\| v^{\pm}\|_{L^2( B_1,\y)}+\|g\|_{L^\infty(B_1)})\le C(\| v^{\pm}\|_{L^2( B_1,\y)}+C(\vf)).\eea 

\smallskip
\noindent\textbf{Step 3.} Now we prove Lipschitz estimates for $v$ on the spine of $p$, namely, for every $e\in L(p)\cap \mathbb{S}^{n}$:
\be\label{eq:derivativel2}\|\partial_e v\|_{L^\infty(B_{1/2})}\le C \left(\|v\|_{L^2(B_1,\y)}+C(\vf)\right).
\ee
We proceed as in \cite[Lemma 2.8]{fj21}, using the Bernstein's technique from \cite{cdv20}.
We consider the function $w:=v-\Gamma_a\ast(\y g),$ which satisfies $L_aw=L_a(\partial_ew)=0$ in $B_{1/2}\setminus \Lambda(u)$. We also set
$$\psi:=\eta^2(\partial_e w)^2+\mu w^2,$$ where $\eta\in C^\infty_c(B_{1/2})$ is a cut-off function, even in $y$, such that $\eta\in [0,1]$ and $\eta\equiv 1$ in $B_{1/4}$. One can prove (see \cite{cdv20} or \cite[Lemma 2.8]{fj21}) that 
$-L_a\psi\le0$ in $B_{1/2}\setminus \Lambda(u)$,
provided that $\mu>0$ is chosen sufficiently large. 
Then, by the maximum principle in $B_{1/2}\setminus\Lambda(u)$, using that $\eta\equiv0$ on $\partial B_{1/2}$ and $\partial_e\widetilde u=\partial_ep=0$ on $\Lambda(u)$ as well as \eqref{est:gamma}, we obtain 
\bea \|\psi\|_{L^\infty(B_{1/4})}\le\|\psi\|_{L^\infty(\partial B_{1/2})}+\|\psi\|_{L^\infty(\Lambda(u))}\le C\left(\|v\|_{L^\infty(B_{1/2})}+C(\vf)\right)^{2}.\eea
Then, since $\eta\equiv1$ in $B_{1/4}$, we get
\begin{equation*}
    \|\partial_ev\|_{L^\infty(B_{1/4})}\le \|\partial_ew\|_{L^\infty(B_{1/4})}+CC(\vf)\le \|\psi\|_{L^\infty(B_{1/4})}^{1/2}+CC(\vf)\le C\left(\|v\|_{L^\infty(B_{1/2})}+C(\vf)\right),
\end{equation*}
where we used \eqref{est:gamma} again. Then \eqref{eq:derivativel2} follows from \eqref{eq:l2linfty} and a standard covering argument.

\smallskip
\noindent\textbf{Step 4.} In this step we prove semiconvexity estimates for $v$ on the spine of $p$, that is, for every $e\in L(p)\cap \mathbb{S}^n$:
\be\label{eq:infderivativeee} \inf_{B_{1/2}}\partial _{ee}v\ge -C\left(\|v\|_{L^2(B_1,\y)}+C(\vf)\right).
\ee 
We proceed as in \cite[Lemma 2.9]{fj21}. 
For every $\ell>0$, we define $ \widetilde u_\ell$ the solution of
\begin{equation*}
    \left\{
\begin{array}{rclll}
    -L_a  \widetilde u_\ell(x,y)&=& g(x)\y  & \quad \mbox{for } (x,y)\in B_{7/8}\setminus \Lambda( \widetilde u_\ell),\\
            -L_a  \widetilde u_\ell (x,y) &\ge& g(x)\y & \quad \mbox{for }(x,y)\in B_{7/8},\\
		 \widetilde u_\ell &\ge& 0 & \quad \mbox{on } B'_{7/8},\\
		 \widetilde u_\ell&= &\widetilde u+\ell &\quad \mbox{on } \partial B_{7/8}.
\end{array}
    \right.
\end{equation*}
Observe that $\widetilde u_{\ell}$ is $L_{a}$-harmonic in $B_{7/8}\setminus B_{7/8-2\beta}$, for some $\beta=\beta(\ell)>0$, and $\widetilde u_{\ell}\downarrow \widetilde u$ uniformly in $B_{7/8}$ as $\ell\downarrow 0$.
We then consider $w_{\ell}:=\widetilde u_\ell-p-\Gamma_a\ast(\y g)$, which satisfies $L_{a}w_{\ell}=L_{a}(\partial_{e}w_{\ell})=0$ in $B_{7/8}\setminus \Lambda(\widetilde{u}_{\ell})$. 
We claim that, for every $\ell>0$,
\begin{equation}\label{claim:semiconvexity}
    -L_{a}(\partial_{ee}w_{\ell})^{-}\le 0\quad \text{in } B_{3/4}\setminus \Lambda(\widetilde{u}_{\ell}),\qquad (\partial_{ee}w_{\ell})^{-}\le  CC(\vf)\quad \text{on } \Lambda(\widetilde{u}_{\ell}).
\end{equation}
For any $w:B_1\to\R$, $e\in L(p)\cap \mathbb{S}^n$ and $h\in(0,1/8)$, we call
$$\delta^2_{e,h} w(X)=\frac{ w(X+he)+ w(X-he)-2 w(X)}{h^2}\quad\mbox{for every } X\in B_{7/8}.$$
We notice that, since $\widetilde u_\ell= 0$ on $\Lambda(\widetilde u_\ell)$ and $e\in L(p)$, then \be\label{up}\delta^2_{e,h} \widetilde u_\ell\ge 0\quad\text{on }\Lambda(\widetilde u_\ell) \qquad\text{and}\qquad \delta^2_{e,h}p=0\quad\text{on }\Lambda(\widetilde u_\ell),
\ee
We also define $f^\ell_{\eps,e,h}:=\max\{-\delta^2_{e,h} w_\ell,\eps\}$, for every $\eps>0$. We observe that $-L_a(\delta^2_{e,h}w_\ell)\ge 0$ in $B_{7/8}\setminus \Lambda(\widetilde u_\ell)$. Since the maximum of subsolutions is a subsolution, then $$-L_a(f^\ell_{\eps,e,h})\le 0\quad\text{on }B_{7/8}\setminus \Lambda(\widetilde u_\ell).$$ Moreover, by \eqref{est:gamma} and \eqref{up}, we obtain $$f^\ell_{\eps,e,h}\le (\delta^2_{e,h}w_\ell)^-+\eps\le(\delta^2_{e,h}(\Gamma_a\ast (\y g)))^++\eps\le CC(\vf)+\eps\quad\mbox{on }\Lambda(\widetilde u_\ell).$$
In order to prove the two estimates in \eqref{claim:semiconvexity}, we pass to the limit as $\eps\downarrow 0$ and then $h\downarrow0$ in the last two inequalities. In particular, the second estimate in \eqref{claim:semiconvexity} holds, while, for the first estimate in \eqref{claim:semiconvexity}, it is sufficient to show that \be\label{claim:seminconvexity2}
\|f_{\eps,e,h}^\ell\|_{L^\infty(B_{3/4}\setminus\Lambda(\widetilde u_\ell))}\le C(\ell),
\ee for some $C(\ell)>0$ which does not depend on $\eps$ and $h$. 
Since $\widetilde u_\ell$ is $L_a$-harmonic in $B_{7/8}\setminus B_{7/8-2\beta}$, then, by \eqref{est:gamma} and \eqref{up}, we have
\be\label{est1}\|f_{\eps,e,h}^\ell\|_{L^\infty(\partial B_{7/8-\beta})}\le \|\delta^2_{e,h}w_\ell\|_{L^\infty(\partial B_{7/8-\beta})}+\eps \le \|\delta^2_{e,h}\widetilde u_\ell\|_{L^\infty(\partial B_{7/8-\beta})}+CC(\vf)+\eps\le C(\beta)+CC(\vf),\ee by the $C^2$ estimates in the tangential direction for $ L_a$-harmonic functions.
Moreover, if $\bar X\in \Lambda(\widetilde u_\ell)$ such that $f_{\eps,e,h}^\ell(\bar X)=\|f_{\eps,e,h}^\ell\|_{L^\infty(\Lambda(\widetilde u_\ell))}>\eps$, then $$0\le f_{\eps,e,h}^\ell(\bar X)=-\delta^2_{e,h}w_\ell(\bar X)\le \delta^2_{e,h}(\Gamma_a\ast(\y g)(\bar X)) \le CC(\vf)\quad\text{on }\Lambda(\widetilde u_\ell),$$ by \eqref{est:gamma} and \eqref{up}. Then \be\label{est2}\|f_{\eps,e,h}^\ell\|_{L^\infty(\Lambda(\widetilde u_\ell))}\le C(\beta)+CC(\vf).\ee
Since $f^\ell_{\eps,e,h}$ is $ L_a$-subharmonic, by the maximum principle, using \eqref{est1} and \eqref{est2}, we obtain $$\|f_{\eps,e,h}^\ell\|_{L^\infty(B_{3/4}\setminus\Lambda(\widetilde u_\ell))}\le\|f_{\eps,e,h}^\ell\|_{L^\infty(\partial B_{7/8-\beta})}+\|f_{\eps,e,h}^\ell\|_{L^\infty(\Lambda(\widetilde u_\ell))}\le C(\ell)$$ 
This is exactly \eqref{claim:seminconvexity2}, which concludes the proof of the claim \eqref{claim:semiconvexity}.

Once \eqref{claim:semiconvexity} is known, we may proceed by Bernstein's technique again, using this time the auxiliary function $$\psi_\ell:=\eta^2((\partial_{ee}w_\ell)^-)^2+\mu(\partial_ew_\ell)^2,$$ where $\eta\in C_c^\infty(B_{1/2})$ is the same cut-off function as in the previous step, namely $\eta$ is even in $y$, with $\eta\in [0,1]$ and $\eta\equiv 1$ in $B_{1/4}$. 
One can prove (see \cite{cdv20} or \cite[Lemma 2.9]{fj21}) that $-L_{a}\psi_{\ell}\le 0$ in $B_{1/2}\setminus \Lambda(\widetilde{u}_{\ell})$, provided that $\mu>0$ is chosen large enough (independently of $\ell$). 
Then, by the maximum principle in $B_{1/2}\setminus\Lambda(\widetilde{u}_{\ell})$, using that $\eta\equiv0$ on $\partial B_{1/2}$ and that $\partial_e\widetilde u=\partial_e p=0, (\partial_{ee}w_{\ell})^{-}\le  CC(\vf)$ on $\Lambda(\widetilde{u}_{\ell})$ as well as \eqref{est:gamma}, we obtain
\bea \|\psi_\ell\|_{L^\infty(B_{1/4})}\le\|\psi_\ell\|_{L^\infty(\partial B_{1/2}\setminus \Lambda(\widetilde{u}_{\ell}))}+\|\psi_\ell\|_{L^\infty(\Lambda(\widetilde{u}_{\ell}))}\le C\left(\lVert \partial_ew_\ell\rVert_{L^{\infty}(B_{1/2})}+C(\vf)\right)^{2}.
\eea
Then, since $\eta\equiv1$ in $B_{1/4}$, we get
\begin{align*}
    \lVert (\partial_{ee}(\widetilde{u}_{\ell}-p))^{-}\rVert_{L^{\infty}(B_{1/4})}&\le \lVert (\partial_{ee}w_{\ell})^{-}\rVert_{L^{\infty}(B_{1/4})}+CC(\vf)\le \lVert \psi_{\ell}\rVert_{L^{\infty}(B_{1/4})}^{1/2}+CC(\vf)\\ &\le C\left(\lVert \partial_e(\widetilde{u}_{\ell}-p)\rVert_{L^{\infty}(B_{1/2})}+C(\vf)\right),
\end{align*}
where we used \eqref{est:gamma} again. \eqref{eq:infderivativeee} is finally obtained after applying step 3, passing to the limit as $\ell \downarrow 0$ and using a standard covering argument.

\smallskip
\noindent\textbf{Step 5.} In this last step, we conclude the proof of \cref{prop:uniform-convergence}, following the lines of \cite[Proposition 3.4]{fj21}.
We can apply the estimate \eqref{eq:infderivativeee} in the last step to the function $u(r\cdot)/\|v(r\cdot)\|_{L^2(\partial B_1,\y)}$ which is a solution of \eqref{def:soluzione-estesa-tilde} with obstacle $\vf(r\cdot)/\|v(r\cdot)\|_{L^2(\partial B_1,\y)}$. Since $\widetilde N(0^+,v)<\beta$, by \eqref{eq:nonpositive} the corresponding right-hand side vanishes as $r\downarrow 0$ and so $$C\left(\frac{\vf(r\cdot)}{\|v(r\cdot)\|_{L^2(\partial B_1,\y)}}\right)\to 0\quad\text{as }r\downarrow 0.$$
Using also that $\|v_r\|_{L^2(\partial B_1,\y)}=1$, from the previous stap we get \be\label{semiconv} \inf_{B_{1/2}}\partial _{ee}v_r\ge -C\quad\mbox{for every } e\in L(p)\cap \mathbb{S}^n,\ee i.e.~$v_r$ are locally uniformly semiconvex in the directions parallel to $L(p)$.

Without loss of generality, we may assume that $L(p)=\{x_n=y=0\}$. Now we define $Q_{1}:=B_{1}''\times D_{1}\subset \R^{n-1}\times \R^2$ and we write $X=(x'',x_{n},y)\in\R^{n-1}\times \R\times \R$.
We also set $$w_r:=v_r-\frac{\Gamma_a\ast (\y g)(r\cdot)}{\|v(r\cdot)\|_{L^2(\partial B_1,\y)}}.$$ Then, the functions $w_r$ satisfy
\be\label{w_r:est}\lim_{y\downarrow0}y^a\partial_y w_r\le0\qquad\text{and}\qquad L_aw_r=0\quad\text{in }B_1\cap \{|y|>0\}\ee
and, by \eqref{semiconv}, the functions $w_r$ are equi-Lipschitz, and so 
\be\label{w_r:est2}\|w_r(x'',\cdot,\cdot)\|_{L^2(D_1,\y)}\le C\ee for some constant $C>0$ which not depends on $r$.
A simple integration by parts as in \cite[Proposition 3.4]{fj21}, together with \eqref{est:gamma}, \eqref{semiconv}, \eqref{w_r:est} and \eqref{w_r:est2}, shows that the measure
$$\mu_r:=\lim_{y\downarrow 0}y^a\partial_{y}w_r$$ is finite on each $x''$ slice, or equivalently \be\label{eq:estim}0\ge \int_{-1}^1\zeta(|(x'',x_n)|)\mu_r(x'',x_{n})\,dx_n\ge -C,\ee for some constant $C>0$ which does not depend on $r$. Here $\zeta:[0,+\infty]\to[0,1]$ is such that $\zeta\equiv 1$ in $[0,1/2]$ and $\zeta\equiv0$ in $[3/4,+\infty)$. 
We set $\bar w_r:=\Gamma_a(\cdot,y)\ast_x(\zeta \mu_r)$, then \be\label{barw_r}\lim_{y\downarrow0}y^a\partial_y \bar w_r=\zeta \mu_r\qquad\text{and}\qquad L_a\bar w_r=0\quad\text{in }B_1\cap \{|y|>0\}.\ee
By \eqref{eq:estim}, we see that $\bar w_r$ is uniformly bounded. Moreover $$(-\Delta)^{\bar s}_X \bar w_r=(-\Delta)^{\bar s}_X\Gamma_a\ast_x(\zeta \mu_r),$$ then, if $2\bar s<-a$, since $(-\Delta)^{\bar s}_X \Gamma_a=C|X|^{-n+1-a-2\bar s}$ and by Young inequality, we get $ (-\Delta)^{\bar s}_X \bar w_r$ is uniformly bounded provided $2\bar s<-a$. By interior regularity for the fractional Laplacian (see \cite[Theorem 1.1]{rs16}), we get the sequence $\bar w_r\in C^{2\overline s}_{\loc}(B_{1})$, with uniform estimates.
Finally, by \eqref{w_r:est} and \eqref{barw_r}, the functions $w_r-\bar w_r$ are $L_a$-harmonic in $B_{1/2}$ and thus $w_r-\bar w_r\in C^{1}_{\loc}(B_{1/2})$ with uniform estimates. This implies that $w_r$ (and consequently $v_r$) is $C^{2\bar s}$ regular, with uniform estimates, as desired.
\end{proof}

\section{Points with frequency \texorpdfstring{$2m+2s$}{2m+2s}}\label{section-odd}

In this section we prove the results stated in \Cref{subsec:2m+2sIntro} for points with frequency $2m+2s$. 
We follow the strategy of \cite{cv24}, which is based on the proof of an epiperimetric inequality. In \Cref{subsection:classification_bu_2m+2s} we show that the admissible blow-ups at $(2m+2s)$-frequency points are exactly the functions in $\mathcal{P}_{2m+2s}$, defined in \eqref{eq:def-p-2m+2s}; then, in \Cref{subsection:epiperimetric}, we prove epiperimetric inequalities (see \cref{thm:epi}) for the Weiss energy $W_{2m+2s}$; 
finally, in \Cref{subsection:proofs_2m+2s} we apply the epiperimetric inequalities at all sufficiently small scales to prove \cref{prop:rate}.

\subsection{Characterization of the admissible blow-ups}\label{subsection:classification_bu_2m+2s}
In this subsection we prove that the admissible blow-ups belong to the class $\mathcal{P}_{2m+2s}$.
We first build an explicit homogeneous solution of \eqref{def:soluzione-estesa} with zero obstacle, which will be used as a competitor.
\begin{lemma}
    \label{lemma:explicit-(2m+2s)}
    For a given $m\in \N$, we consider the sequence of numbers $\gamma_{0},\dots,\gamma_{m}$ defined recursively as follows:
    \begin{equation*}
    \left\{
\begin{array}{rclll}
     \gamma_{0}&=&-1,\\
            \gamma_{k+1}&=&-\frac{(2m-2k)(2m-2k-1)}{(2k+2)(2k+2+2s)}\gamma_{k},\quad \mbox{for } k=0,\dots,m-1.
\end{array}
    \right.
    \end{equation*}
    Then, the $(2m+2s)$-homogeneous function
    \begin{equation*}
        P(x,y):=|y|^{2s}\overunderset{m}{k=0}{\sum}\gamma_{k}y^{2k}\overunderset{n}{j=1}{\sum}x_{j}^{2m-2k}
    \end{equation*}
    is a solution of \eqref{def:soluzione-estesa} with zero obstacle. Moreover, 
    \begin{equation*}
       \left\{\begin{array}{rclll}
            P(x,0)&=&0&\quad \text{for } x\in \R^{n},\\
            \lim_{y\downarrow 0}y^{a}\partial_{y}P(x,y)&<&0&\quad \text{for }x\in \R^{n}\setminus \left\{0\right\}.
        \end{array}\right.
    \end{equation*}
\end{lemma}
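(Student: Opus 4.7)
The goal is to verify directly that the explicit ansatz
$P(x,y)=|y|^{2s}\sum_{k=0}^m\gamma_k y^{2k}\sum_{j=1}^n x_j^{2m-2k}$
satisfies all the conditions in \eqref{def:soluzione-estesa} with zero obstacle, together with the stated sign of $\lim_{y\downarrow 0}y^a\partial_y P$. Evenness in $y$ is built into the formula, and $P(x,0)=0$ follows from the factor $|y|^{2s}$. Hence the core of the proof reduces to a direct computation of $L_a P$ for $y>0$, and then an analysis of the behavior at $y=0$ that gives both the strict negativity statement and the sign condition $-L_aP\ge 0$ globally.

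\textbf{Computation of $L_aP$ for $y>0$.} On $\{y>0\}$ the operator reads $L_aP=y^a\Delta_{x,y}P+ay^{a-1}\partial_y P$. The plan is to differentiate term by term. Setting $Q(x,y)=\sum_{k=0}^m\gamma_ky^{2k}\sum_j x_j^{2m-2k}$, a short calculation gives
\begin{align*}
y^a\Delta_xP&=\sum_{k=1}^m\gamma_{k-1}(2m-2k+2)(2m-2k+1)\,y^{2k-1}\sum_{j=1}^n x_j^{2m-2k},\\
y^a\partial_{yy}P+ay^{a-1}\partial_yP&=\sum_{k=0}^m\gamma_k(2s+2k)(2s+2k-1+a)\,y^{2k-1}\sum_{j=1}^n x_j^{2m-2k},
\end{align*}
where I repeatedly use the identity $a+2s=1$ to collapse the exponents. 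Since $2s+2k-1+a=2k$, the second sum simplifies to $\sum_{k=0}^m 2k(2s+2k)\gamma_k\,y^{2k-1}\sum_j x_j^{2m-2k}$, whose $k=0$ term vanishes. Matching the two sums at each index $k\in\{1,\dots,m\}$ forces
\[
\gamma_{k-1}(2m-2k+2)(2m-2k+1)+\gamma_k\cdot 2k(2s+2k)=0,
\]
which is precisely the recursion in the statement (after the shift $k\mapsto k+1$). Hence $L_aP=0$ on $\{y>0\}$; by evenness, the same holds on $\{y<0\}$.

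\textbf{Behavior at the thin space and the obstacle condition.} From $\partial_yP=y^{2s-1}\sum_k\gamma_k(2s+2k)y^{2k}\sum_j x_j^{2m-2k}$ and the identity $a+2s-1=0$, one gets
\[
y^a\partial_yP=\sum_{k=0}^m\gamma_k(2s+2k)\,y^{2k}\sum_{j=1}^n x_j^{2m-2k},
\]
so $\lim_{y\downarrow 0}y^a\partial_yP(x,y)=2s\gamma_0\sum_j x_j^{2m}=-2s\sum_j x_j^{2m}$, which is strictly negative for $x\ne 0$. Since $P$ is even in $y$ and $L_a$-harmonic in $\{y\ne 0\}$, a standard distributional computation (cf.~\eqref{formula:y-derivative-on-thin-space}) yields
\[
L_aP=2\Bigl(\lim_{y\downarrow 0}y^a\partial_yP\Bigr)\mathcal{H}^n\res\R^{n+1}_0=-4s\Bigl(\sum_{j=1}^n x_j^{2m}\Bigr)\mathcal{H}^n\res\R^{n+1}_0\le 0,
\]
so $-L_aP\ge 0$ as a non-negative measure on $\R^{n+1}$. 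Combining this with $P(\cdot,0)\equiv 0$, the evenness in $y$, and $L_aP=0$ off the contact set $\{y=0\}$, all four conditions of \eqref{def:soluzione-estesa} with zero obstacle hold, and $P$ is visibly $(2m+2s)$-homogeneous.

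\textbf{Main obstacle.} The computation itself is routine; the only subtle point is the use of the identity $a+2s=1$ at two different places to ensure (i) all contributions to $L_aP$ land on the same integer powers $y^{2k-1}$, so that matching coefficients leads to a clean recursion, and (ii) that the boundary trace $\lim_{y\downarrow 0}y^a\partial_yP$ is finite and non-trivial. Once these cancellations are tracked carefully, the recursion in the statement falls out and the obstacle sign condition is immediate.
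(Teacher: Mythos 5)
Your proof is correct and follows essentially the same route as the paper: direct computation of $L_aP$ on $\{y\neq 0\}$ using $a+2s=1$ to match coefficients against the recursion for $\gamma_k$, the explicit trace $\lim_{y\downarrow 0}y^a\partial_yP=2s\gamma_0\sum_j x_j^{2m}$, and the distributional identity \eqref{formula:y-derivative-on-thin-space} to get the sign of $L_aP$ across the thin space. Your version merely makes the coefficient-matching and the distributional step more explicit than the paper does.
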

\begin{proof}
    To check that $L_{a}P = 0$ in $\R^{n+1}\setminus \{y=0\}$ it is enough to use the expression $$L_{a}=|y|^{a}\left(\Delta_{x}+\partial^{2}_{yy}+\frac{a}{y}\partial_{y}\right)$$ on each homogeneous component in the sum and exploit the definition of the sequence $\gamma_{0},\dots,\gamma_{m}$. The fact that $P$ vanishes on the thin space is obvious by definition. Finally, we compute, on $\R^{n+1}_{+}$,
    \begin{equation*}
        y^{a}\partial_{y}P(x,y)=\overunderset{m}{k=0}{\sum}\gamma_{k}(2k+2s)y^{2k}\overunderset{n}{j=1}{\sum}x_{j}^{2m-2k}.
    \end{equation*}
    Thus, 
    \begin{equation*}
         \underset{y\downarrow 0}{\lim}y^{a}\partial_{y}P(x,y)=2s\gamma_{0}\overunderset{n}{j=1}{\sum}x_{j}^{2m},
    \end{equation*}
    which is strictly negative if $x\neq 0$ because $\gamma_{0}=-1$. 
\end{proof}
We are now ready to give the aforementioned characterization of homogeneous global solutions.
\begin{proposition}\label{prop:zero-thin-space-(2m+2s)}
    Let $p:\R^{n+1}\to \R$ be a $\kappa$-homogeneous solution of \eqref{def:soluzione-estesa} with zero obstacle. Then,
    \begin{itemize}
        \item [i)] $\kappa = 2m+2s$ for some $m\in \N$ if and only if $p\equiv 0$ on $\R^{n+1}_{0}$;
        \smallskip
        \item [ii)] if the conditions in i) hold, then $p\in\mathcal{P}_{2m+2s}$. Moreover, $p$ can be written (uniquely) as   \begin{equation}\label{decomposition_polynomials_2m+2s}
            p(x,y)=-|y|^{2s}(p_0(x)+y^2p_1(x,y)),
        \end{equation}
        for some homogeneous polynomials $p_0, p_{1}$, with $p_{0}\ge 0$ and $p_{1}$ even in $y$.
    \end{itemize}
\end{proposition}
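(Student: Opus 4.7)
The plan is to establish both directions of (i) together with (ii). The heart of the argument is to treat the case $p\equiv 0$ on $\R^{n+1}_0$ by introducing $q := |y|^{-2s}p$ and showing that $q$ is a polynomial via a Liouville-type argument; the converse direction is then obtained by contradiction in the non-contact region.

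\textbf{Proof of direction $(\Leftarrow)$ of (i) and of (ii).} Suppose $p\equiv 0$ on $\R^{n+1}_0$ and $p\not\equiv 0$. Since $p$ is $L_a$-harmonic off the thin space, even in $y$, with vanishing Dirichlet trace, the standard Caffarelli--Silvestre reduction (see \cite{cs07, frro24}) yields that $q:=|y|^{-2s}p$ extends smoothly in $(x,y^2)$ across $\{y=0\}$ and solves $\dive(|y|^{2s+1}\nabla q) = 0$ away from the thin space. Since $q$ is $(\kappa - 2s)$-homogeneous it has polynomial growth at infinity, and \cref{prop:Liouville} forces $q$ to be a polynomial, so $\kappa - 2s \in \N$. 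The sign condition $-L_ap\ge 0$, combined with \eqref{formula:y-derivative-on-thin-space} and the direct computation $\lim_{y\downarrow 0} y^a \partial_y p = 2s\, q(x,0)$, makes $q(\cdot,0)$ a non-positive homogeneous polynomial of degree $\kappa - 2s$. If this degree were odd, the sign condition would give $q(\cdot,0)\equiv 0$, and the recurrence on the coefficients $b_j$ of $q(x,y) = \sum_j b_j(x)y^{2j}$ (derived from $L_ap=0$ off the thin space) would propagate the vanishing to all $b_j$, forcing $q\equiv 0$ and contradicting $p\not\equiv 0$. Hence $\kappa - 2s = 2m$ for some $m \in \N$, and writing $q = b_0(x) + y^2\tilde q(x,y)$ and setting $p_0 := -b_0 \ge 0$, $p_1 := -\tilde q$ yields both the decomposition \eqref{decomposition_polynomials_2m+2s} and the membership $p \in \mathcal{P}_{2m+2s}$.

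\textbf{Proof of direction $(\Rightarrow)$ of (i).} Assume $\kappa = 2m+2s$ and suppose by contradiction that $v := p(\cdot,0) \not\equiv 0$. Then $\Omega := \{v > 0\}$ is a non-empty open cone in $\R^n$; on $\Omega\times\{0\}$ the obstacle condition is inactive, so $\lim_{y\downarrow 0}y^a\partial_yp = 0$ there. By standard theory of $L_a$-harmonic even functions with zero conormal derivative, $p$ admits a local smooth expansion $p(x,y) = \sum_{j\ge 0}a_j(x)y^{2j}$ around any $(x_0,0)$ with $x_0\in\Omega$, where $a_0 = v$ and $a_{j+1} = -\Delta_x a_j/[2(j+1)(2j+2-2s)]$; the $(2m+2s)$-homogeneity of $p$ forces each $a_j$ to be $(2m+2s-2j)$-homogeneous. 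The main obstacle is to turn this local regularity into a global contradiction. The plan splits into two cases: when $s = 1/2$, $\kappa = 2m+1$ is an integer and $v$ must be a non-negative homogeneous polynomial of odd degree, hence identically zero by the sign argument $v(-x) = -v(x)$; when $s \ne 1/2$, the non-integer degree $2m+2s$ of $v(x) = |x|^{2m+2s}\omega(x/|x|)$ is incompatible with the $C^{1,s}_{\loc}$-regularity of $p$ at the origin (coming from the obstacle-problem regularity theory), forcing $\omega \equiv 0$ and hence $v \equiv 0$.
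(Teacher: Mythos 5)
Your proof of the implication ``$p\equiv 0$ on $\R^{n+1}_{0}$ $\Rightarrow$ $\kappa=2m+2s$'' and of part (ii) is essentially the paper's argument: reduce to $q=|y|^{-2s}p$, show $q$ is a polynomial even in $y$, read off the sign of $q(\cdot,0)$ from $\lim_{y\downarrow 0}y^{a}\partial_{y}p=2s\,q(x,0)$, and rule out odd degree. One technical caveat: you invoke \cref{prop:Liouville} for the conjugate equation $\dive(|y|^{1+2s}\nabla q)=0$, but that proposition is stated for the weight $|y|^{a}$ with $a=1-2s\in(-1,1)$, whereas $1+2s\notin(-1,1)$; the clean way to get ``$q$ is a polynomial'' with the tools at hand is to pass to the odd reflection $\sgn(y)\,p$, which is entire $L_a$-harmonic by \cref{prop:odd-extension}, and then apply \cref{prop:Liouville}(ii) literally. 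Your recurrence argument propagating $b_{0}\equiv 0$ to all $b_{j}$ is a valid (and slightly more direct) substitute for the paper's ``Liouville plus \cref{lemma:unique-extension-polynomial-aHarmonic}'' step.

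The forward implication, however, has a genuine gap in both of your cases. For $s=1/2$ you assert that $v=p(\cdot,0)$ ``must be a non-negative homogeneous polynomial of odd degree''; but all that is known is that $v$ is a continuous, non-negative, \emph{positively} $(2m+1)$-homogeneous function, and for such functions $v(-x)=-v(x)$ fails in general (e.g.\ $(x_{1})_{+}^{3}$ is positively $3$-homogeneous and non-negative). The local expansion $\sum_{j}a_{j}(x)y^{2j}$ is only available on the open cone $\{v>0\}$ and does not make $v$ a global polynomial; indeed, proving that $(2m+1)$-homogeneous Signorini solutions are polynomials is precisely the non-trivial classification result of \cite{fs18}. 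For $s\neq 1/2$, the ``incompatibility with $C^{1,s}_{\loc}$-regularity at the origin'' only bites when $2m+2s<1+s$, i.e.\ when $m=0$: for $m\ge 1$ one has $\kappa=2m+2s>1+s$, and a non-polynomial $\kappa$-homogeneous function such as $|x|^{2m+2s}\omega(x/|x|)$ is perfectly compatible with $C^{1,s}$ (even $C^{2}$) regularity at the origin, so no contradiction arises. The paper's proof of this direction avoids pointwise regularity altogether: it constructs in \cref{lemma:explicit-(2m+2s)} an explicit $(2m+2s)$-homogeneous solution $P$ with $P\equiv 0$ on the thin space and $\lim_{y\downarrow 0}y^{a}\partial_{y}P<0$ away from the origin, and tests $p$ against $P$ by integration by parts; because $p$ and $P$ share the same homogeneity, the Rellich-type terms cancel and one obtains
\begin{equation*}
\int_{\R^{n+1}_{0}}\Big(\lim_{y\downarrow 0}y^{a}\partial_{y}P\Big)\,\Psi\, p\,d\mathcal{H}^{n}=0,
\end{equation*}
which together with the strict negativity of the weight and $p\ge 0$ on the thin space forces $p\equiv 0$ there. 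You would need this (or some equally global) mechanism to close the forward direction.
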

\begin{proof}
    Let $p$ be a $(2m+2s)$-homogeneous global solution of \eqref{def:soluzione-estesa}. We wish to prove that $p$ vanishes identically on the thin space.
    Let $P$ be the explicit $(2m+2s)$-homogeneous solution from \cref{lemma:explicit-(2m+2s)}. Given any cut-off function $\psi \in C_{c}^{\infty}([0,\infty))$, we call $\Psi(X):=\psi(|X|)$ and compute
    \begin{align*}
        2\int_{\R^{n+1}_{0}}\left(\lim_{y\downarrow 0}y^{a}\partial_{y}P\right)\Psi p \,d\mathcal{H}^{n}&= \int_{\R^{n+1}} \Psi p L_{a}P \\
        &=-\int_{\R^{n+1}}|y|^{a}\nabla P \cdot \nabla \Psi p\,dX-\int_{B_{1}}|y|^{a}\nabla P \cdot \nabla p \Psi\,dX \\
        &=\int_{\R^{n+1}}|y|^{a}\Big(-\nabla P \cdot \nabla \Psi p + \nabla p \cdot \nabla \Psi P\Big)\,dX+ \int_{\R^{n+1}}P \Psi L_{a}p\\
        &=\int_{\R^{n+1}}|y|^{a}\frac{\psi'(|X|)}{|X|}\Big(\nabla P \cdot X p- \nabla p \cdot X P\Big)\,dX =0.
    \end{align*}
    We used formula \eqref{formula:y-derivative-on-thin-space}, integration by parts, and the following facts:
    \begin{equation*}
        PL_{a}p\equiv 0,\qquad \nabla P \cdot X=(2m+2s)P,\qquad \nabla p\cdot X=(2m+2s)p.
    \end{equation*}
    From the fact that $\lim_{y\downarrow 0}y^{a}\partial_{y}P(x,y)<0$ for every $x\in \R^{n}\setminus \left\{0\right\}$,  that $p\ge 0$ on $\R^{n+1}_{0}$ and the arbitrariness of $\psi$, we deduce that $p$ is identically zero on the thin space.

    Conversely, suppose that $p$ vanishes on the thin space. Then, by \cref{prop:odd-extension} and \cref{prop:Liouville} we have that $$p(x,y)=|y|^{2s}q(x,y),$$ for some polynomial $q$ even in $y$ and homogeneous of degree $\ell \in \mathbb{N}$. We claim that $\ell$ is even. Suppose by contradiction that $\ell=2m+1$ for some $m\in \mathbb{N}$. Then, $p$ would have the form
    $$p(x,y)=|y|^{2s}\overunderset{m}{k=0}{\sum} q_{k}(x)y^{2k},$$
    for some polynomials $q_{k}$ in $\R^{n}$ with odd homogeneities $2m-2k+1$. We may compute
    $$\lim_{y\downarrow 0}y^{a}\partial_{y}p(x,y)=\underset{y\downarrow 0}{\lim}\overunderset{m}{k=0}{\sum}(2k+2s)q_{k}(x)y^{2k}=2sq_{0}(x).$$
    Since $p$ solves \eqref{def:soluzione-estesa}, we must have $q_{0}\le 0$ in $\R^{n}$, which implies that $q_{0}\equiv 0$. This in turn yields that $p$ is $L_a$-harmonic in the whole $\R^{n+1}$, and so, by \cref{prop:Liouville}, $p$ must be a polynomial. Finally by \cref{lemma:unique-extension-polynomial-aHarmonic}, $p\equiv 0$ since it vanishes on the thin space, a contradiction.
    \end{proof}
\noindent When $m=1$, we derive from \cref{prop:zero-thin-space-(2m+2s)} an even more precise characterization of homogeneous blow-ups.
\begin{corollary}\label{corollary:classification-(2+2s)}
    Let $p:\R^{n+1}\to \R$ be a $(2+2s)$-homogeneous solution of \eqref{def:soluzione-estesa} with zero obstacle. Then, 
    $$p(x,y)=|y|^{2s}\left(x\cdot Ax-by^{2}\right),$$
    where $A$ is a non-negative definite symmetric $(n\times n)$-matrix and $b=\Tr A /(2+2s)>0$. 
\end{corollary}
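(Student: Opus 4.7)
The plan is to specialize the structural result of \cref{prop:zero-thin-space-(2m+2s)} to the case $m=1$ and then pin down the matrix $A$ and the constant $b$ by a homogeneity count together with the PDE $L_a p = 0$ off the thin space.

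As a first step, I would invoke part (ii) of \cref{prop:zero-thin-space-(2m+2s)} to write uniquely
$$p(x,y) = -|y|^{2s}\bigl(p_{0}(x) + y^{2}\,p_{1}(x,y)\bigr),$$
with $p_{0}, p_{1}$ homogeneous polynomials, $p_{0}\ge 0$, and $p_{1}$ even in $y$. Since $p$ is $(2+2s)$-homogeneous and $|y|^{2s}$ carries homogeneity $2s$, the bracket is $2$-homogeneous. Hence $p_{0}$ is a $2$-homogeneous polynomial in the $x$ variables only, while $y^{2}p_{1}$ is $2$-homogeneous in $(x,y)$, forcing $p_{1}$ to be of homogeneity $0$, i.e.\ a constant. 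Representing the quadratic form $p_{0}$ by a (unique) symmetric $n\times n$ matrix and absorbing the outer sign into the definitions of $A$ and $b$, one rewrites this decomposition in the form $p(x,y)=|y|^{2s}(x\cdot Ax - by^{2})$ of the statement, with $A$ non-negative definite as a direct consequence of $p_{0}\ge 0$.

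The second step is to enforce $L_{a}p=0$ on $\{y>0\}$ in order to relate $b$ and $\Tr A$. Expanding $L_{a}=|y|^{a}\bigl(\Delta_{x}+\partial_{yy}+\tfrac{a}{y}\partial_{y}\bigr)$ on the explicit form of $p$ and using the identity $a+2s=1$, the potentially singular $y^{2s-2}$ contributions cancel exactly, and the remaining $y^{2s}$-coefficient reduces to a single scalar identity of the form $2\,\Tr A - 2(2+2s)\,b=0$. This gives precisely $b=\Tr A/(2+2s)$.

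Finally, since the hypothesis is that $p$ is a non-zero global $(2+2s)$-homogeneous solution, $A$ cannot be the zero matrix; being non-negative definite with at least one strictly positive eigenvalue, one has $\Tr A>0$ and therefore $b>0$. The argument is essentially mechanical, with no genuine obstacle: the only point that requires some care is the consistent bookkeeping of signs when moving from the $-|y|^{2s}(p_{0}+y^{2}p_{1})$ normalization of \cref{prop:zero-thin-space-(2m+2s)} to the $|y|^{2s}(x\cdot Ax - by^{2})$ normalization of the corollary.
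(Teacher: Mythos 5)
Your route is exactly the intended one: specialize part (ii) of \cref{prop:zero-thin-space-(2m+2s)} to $m=1$, read off the homogeneities (so $p_0$ is a quadratic form in $x$ and $p_1$ is a constant), and then expand $L_a=|y|^a(\Delta_x+\partial_{yy}+\tfrac{a}{y}\partial_y)$ on $\{y>0\}$, where the identity $a+2s=1$ kills the $y^{2s-2}$ terms and the $y^{2s}$ coefficient gives $2\Tr A-2(2+2s)b=0$. All of that is correct, and the paper proves the corollary in precisely this way (it is stated as an immediate consequence of the proposition).

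The one step you flag as ``requiring care'' is, however, the step that does not close as written. From $p=-|y|^{2s}(p_0+y^2p_1)$ with $p_0\ge0$ and $p_1=:b$ constant, absorbing the outer minus sign gives $p=|y|^{2s}\bigl((-p_0)(x)-by^2\bigr)$; matching this to $|y|^{2s}(x\cdot Ax-by^2)$ forces $x\cdot Ax=-p_0(x)\le0$, i.e.\ $A$ is non-\emph{positive} definite, not non-negative definite. Your sentence ``with $A$ non-negative definite as a direct consequence of $p_0\ge0$'' is therefore self-contradictory under the absorption you perform. One can also see directly that the displayed formula with $A\ge0$ is incompatible with $-L_ap\ge0$: by \eqref{formula:y-derivative-on-thin-space}, superharmonicity requires $\lim_{y\downarrow0}y^a\partial_yp=2s\,(x\cdot Ax)\le0$, so $A\ge0$ would force $A=0$ and hence $p\equiv0$. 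The correct conclusion of your own argument is $p(x,y)=|y|^{2s}\bigl(by^2-x\cdot Ax\bigr)$ with $A$ (the matrix of $p_0$) non-negative definite and $b=\Tr A/(2+2s)>0$ for $p\not\equiv0$ --- which is the form actually used later in the proof of \cref{prop:cleaning-(2+2s)}, and indicates that the sign slip originates in the statement of the corollary itself. So your derivation is substantively right, but you should state the corrected normalization explicitly rather than asserting that the signs can be absorbed to match the displayed formula.
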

From Proposition \ref{prop:zero-thin-space-(2m+2s)} and the uniform convergence to the blow-up, one immediately gets the following characterization of $(2m+2s)$-frequency points (see \cite{survey} for the corresponding result in the case $s=1/2$).
\begin{corollary}
Let $u$ be a solution of \eqref{def:soluzione-estesa}, then we have
\bea
    \underset{m\ge 1}{\bigcup}\Gamma_{2m+2s}(u)= \left\{X\in \Gamma(u): \underset{r\downarrow 0}{\lim}\,\frac{\mathcal{H}^{n}(\Lambda(u) \cap B_{r}(X))}{\mathcal{H}^{n}\left(\left\{y=0\right\}\cap B_{r}(X)\right)}=1\right\}.
\eea
\end{corollary}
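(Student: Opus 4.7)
The strategy is a blow-up analysis combining the classification of homogeneous blow-ups that vanish on the thin space (\cref{prop:zero-thin-space-(2m+2s)}) with the convergence properties from \cref{generalized-Almgren}. The plan is to prove the two set inclusions separately, each by contradiction.

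For the inclusion $\supseteq$, I would fix $X_{0}\in \Gamma_{2m+2s}(u)$ and assume, toward a contradiction, that $|U_{r_{j}}|\ge \varepsilon >0$ along some sequence $r_{j}\downarrow 0$, where $U_{r_{j}}:=\{\widetilde{u}^{X_{0}}_{r_{j}}(\cdot,0) > 0\}\cap B'_{1}$. Up to subsequence, $\widetilde{u}^{X_{0}}_{r_{j}}$ converges to a non-zero $p \in \mathcal{P}_{2m+2s}$ in $C^{1,\alpha}_{a,\loc}(\R^{n+1}_{+})$, and by \cref{prop:zero-thin-space-(2m+2s)} one can write $p(x,y)=|y|^{2s}q(x,y)$ with $q$ polynomial even in $y$, so that $\lim_{y\downarrow 0}y^{a}\partial_{y}p = 2sq(\cdot,0)$. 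On $U_{r_{j}}$ the Euler--Lagrange equation \eqref{def:soluzione-estesa-tilde} combined with \eqref{formula:y-derivative-on-thin-space} gives $\lim_{y\downarrow 0}y^{a}\partial_{y}\widetilde{u}^{X_{0}}_{r_{j}} = -g_{r_{j}}/2$; using the bound $|g|\le C|x|^{k+\gamma-2}$ from \eqref{eq:stima-g}, the lower estimate $h_{r}\ge C_{\varepsilon}r^{2m+2s+\varepsilon}$ from \cref{generalized-Almgren} i), and the hypothesis $2m+2s<k+\gamma$, the rescaled right-hand side $g_{r_{j}}\to 0$ uniformly on $B'_{1}$. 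The $C^{1,\alpha}_{a}$ convergence (up to the thin space) then forces $U_{r_{j}}\subseteq\{|q(\cdot,0)|<\delta\}$ for every $\delta>0$ and $j$ large, yielding that the polynomial $q(\cdot,0)$ vanishes on a set of positive measure and hence identically. Writing $q=y^{2}r$ and expanding $r(x,y)=\sum_{k}r_{k}(x)y^{2k}$, the identity $L_{a}p=0$ in $\{y>0\}$ translates into a triangular recursion on the $r_{k}$ which propagates from $r_{0}\equiv 0$ to force $r\equiv 0$, hence $p\equiv 0$, contradicting the non-triviality of the blow-up.

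For the inclusion $\subseteq$, I would assume $X_{0}\in \Gamma(u)$ has contact set density $1$ and derive that its frequency equals $2m+2s$ for some $m\ge 1$. Regular points (frequency $1+s$) are excluded because the free boundary is there a $C^{1,\alpha}$-manifold of dimension $n-1$ and $\Lambda(u)$ has density $1/2$; so the frequency $\lambda$ at $X_{0}$ belongs to $[2,k+\gamma)$. Along any $r_{j}\downarrow 0$, \cref{generalized-Almgren} provides a non-zero $\lambda$-homogeneous blow-up $p$, with $\widetilde{u}^{X_{0}}_{r_{j}}\to p$ locally uniformly on $\R^{n+1}$. If $p(x_{0},0)>0$ at some $x_{0}\in B'_{1}$, then by continuity $p>0$ on some $B'_{\eta}(x_{0})$, so $\widetilde{u}^{X_{0}}_{r_{j}}>0$ on $B'_{\eta}(x_{0})$ for large $j$, giving $|U_{r_{j}}|\ge |B'_{\eta}|$ and contradicting density $1$. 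Hence $p\equiv 0$ on the thin space, and \cref{prop:zero-thin-space-(2m+2s)} i) gives $\lambda=2m+2s$ for some $m\ge 1$.

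The main obstacle is the inclusion $\supseteq$: mere uniform convergence $\widetilde{u}^{X_{0}}_{r_{j}}\to p\equiv 0$ on the thin space does not a priori preclude a ``thin'' positivity set on which $\widetilde{u}^{X_{0}}_{r_{j}}$ is small but positive. The crucial upgrade is to use $C^{1,\alpha}_{a}$ convergence to control the Neumann-type trace $\lim_{y\downarrow 0}y^{a}\partial_{y}\widetilde{u}^{X_{0}}_{r_{j}}$ up to the thin space, together with the vanishing of $g_{r_{j}}$ (which uses $2m+2s<k+\gamma$ in an essential way), to force the polynomial $q(\cdot,0)$ to vanish identically and to turn this into the non-triviality contradiction via the $L_{a}$-harmonicity recursion.
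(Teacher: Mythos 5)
Your proof is correct and follows exactly the route the paper indicates (the paper gives no written proof, asserting the corollary follows ``from Proposition 4.2 and the uniform convergence to the blow-up''): convergence of the rescalings in $C^{1,\alpha}_{a}$ up to the thin space, identification of the Neumann trace $\lim_{y\downarrow 0}y^{a}\partial_{y}p=-2sp_{0}$, and the triangular recursion showing that a non-trivial $p\in\mathcal{P}_{2m+2s}$ must have $p_{0}\not\equiv 0$. Two minor remarks: on the rescaled positivity set the Neumann trace $\lim_{y\downarrow 0}y^{a}\partial_{y}\widetilde{u}^{X_{0}}_{r_{j}}$ is exactly $0$, not $-g_{r_{j}}/2$, since $g|y|^{a}\,dX$ is absolutely continuous in $\R^{n+1}$ and contributes nothing to the singular part of $L_{a}\widetilde{u}^{X_{0}}$ on $\{y=0\}$ (this only simplifies your estimate); and your labels $\subseteq$/$\supseteq$ are swapped relative to the displayed identity. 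The one point left open --- why a point with contact-set density one cannot lie in $\Gamma_{\ge k+\gamma}(u)$, where the truncated frequency hits its ceiling and no homogeneous blow-up is available, which is needed for your claim that $\lambda\in[2,k+\gamma)$ --- is a caveat inherent to the statement itself and is equally unaddressed by the paper.
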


\subsection{The epiperimetric inequality}\label{subsection:epiperimetric}
In this subsection we prove epiperimetric inequalities for the Weiss energy $W_{2m+2s}$ (see \eqref{weiss} for its definition), which are the main tools in the derivation of \cref{prop:rate} and the frequency gap around $2m+2s$ (see \cref{prop:gap-odd}). 

By \cref{prop:zero-thin-space-(2m+2s)}, given $p\in \mathcal{P}_{2m+2s}$, there are unique homogeneous polynomials $p_0(x), p_{1}(x,y)$, with $p_{0}\ge 0$ and $p_{1}$ even in $y$ such that $p(x,y)=-|y|^{2s}(p_0(x)+y^2p_1(x,y))$. Therefore, the operator
\begin{equation}\label{e:definition-of-T}
T[p]:=p_0= -\frac{\lim_{y\downarrow 0}y^a\partial_{y}p(\cdot, y)}{2s}
\end{equation}
is well-defined. We will prove the following proposition.

\begin{proposition}[Epiperimetric inequalities for $W_{2m+2s}$] \label{thm:epi} 
     There are constants $\delta>0$, $\eps>0$ and $\kappa>0$ depending only on $n$ and $s$ such that the following holds.

    Given $c\in W^{1,2}(\partial B_1,\y)$, with $c\ge 0$ on $\partial B_1'$ and $c$ even in $y$, let $z(r,\theta)=r^{2m+2s}c(\theta)$ be the $(2m+2s)$-homogeneous extension in $\R^{n+1}$ of $c$. We suppose that  
     \be\label{eq:quasizero} c\equiv 0 \quad\text{on } \{T[p]\ge\delta\}\cap \partial B_1'\quad \mbox{for some } p\in \mathcal{P}_{2m+2s}, \ee 
     where $\|p\|_{L^2(\partial B_1,|y|^{a})}=1$ and $T$ is the operator in \eqref{e:definition-of-T}. Then the following epiperimetric inequalities hold.
     \begin{itemize}
     \smallskip
         \item [i)] (Positive energies). There is a function $\zeta\in W^{1,2}(B_1, |y|^{a})$, even in $y$, such that $\zeta\ge 0$ on $B_1'$, $\zeta=c$ on $\partial B_1$, and 
     \be\label{eq:epi}W_{2m+2s}(\zeta)\le (1-\kappa)W_{2m+2s}(z).\ee 
         \item [ii)] (Negative energies). If $|W_{2m+2s}(z)|\le \eps$ and $\|c-p\|_{L^2(\partial B_1,\y)}\le \eps$, then there is a function $\hat{\zeta}\in W^{1,2}(B_1,\y)$, even in $y$, such that $\hat{\zeta}\ge 0$ on $B_1'$, $\hat{\zeta}=c$ on $\partial B_1$, and
     \be\label{eq:epi-neg}
     W_{2m+2s}(\hat{\zeta})\le (1+|W_{2m+2s}(z)|)W_{2m+2s}(z).
     \ee
     \end{itemize}
 \end{proposition}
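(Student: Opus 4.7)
Both inequalities will be established by combining the finite-dimensional structure of $\mathcal{P}_{2m+2s}$ with a spectral analysis of the quadratic form $c\mapsto W_{2m+2s}(z)$ on the space of even-in-$y$ traces. For the positive-energy case \eqref{eq:epi}, I take as candidate $\zeta$ the natural constrained minimizer of $W_{2m+2s}$ and argue by contradiction/compactness. For the negative-energy case \eqref{eq:epi-neg}, I build $\hat\zeta$ explicitly as a perturbation of $p$ using an appropriate $L_{a}$-harmonic extension of $c-p$.

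\textbf{Positive energies.} Let $\zeta$ be the minimizer of $W_{2m+2s}$ in the convex admissible class
\[
\mathcal{A}(c) := \{v \in W^{1,2}(B_{1},\y) : v \text{ even in } y,\ v \ge 0 \text{ on } B_{1}',\ v|_{\partial B_{1}} = c\};
\]
existence follows from the direct method since, for fixed boundary values, minimizing $W_{2m+2s}$ reduces to minimizing the convex Dirichlet energy $D(\cdot)$. Suppose \eqref{eq:epi} fails for every $\kappa>0$: there exist sequences $c_{k}$, $p_{k}\in \mathcal{P}_{2m+2s}$ with $\|p_{k}\|_{L^{2}(\partial B_{1},\y)}=1$, satisfying \eqref{eq:quasizero}, such that the corresponding minimizers $\zeta_{k}$ satisfy $W_{2m+2s}(\zeta_{k}) \ge (1-1/k)W_{2m+2s}(z_{k}) > 0$. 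Since $\mathcal{P}_{2m+2s}$ is finite-dimensional, $p_{k}\to p_{\infty}\in \mathcal{P}_{2m+2s}$ up to a subsequence, with $\|p_{\infty}\|_{L^{2}(\partial B_{1},\y)}=1$. Writing $c_{k} = p_{k}|_{\partial B_{1}} + \tau_{k} w_{k}$ with $\tau_{k} = \|c_{k}-p_{k}\|_{L^{2}(\partial B_{1},\y)}$ and $\|w_{k}\|_{L^{2}(\partial B_{1},\y)}=1$, I extract a weak $L^{2}$-limit $w_{\infty}$. A second-order Taylor expansion of $W_{2m+2s}$ at $p_{\infty}$, combined with the decomposition of $w_{\infty}$ in the eigenbasis of the spherical $L_{a}$-operator, reduces the failure of \eqref{eq:epi} in the limit to forcing $w_{\infty}$ to lie in the kernel of the quadratic form $c\mapsto W_{2m+2s}(z)$, which equals the trace of $\mathcal{P}_{2m+2s}$. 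The support hypothesis \eqref{eq:quasizero} passes to the limit, yielding $w_{\infty}\equiv 0$ on (the interior of) $\{T[p_{\infty}]\ge \delta\}$; hence $T[w_{\infty}]$ is a polynomial vanishing on a set of positive measure, so $T[w_{\infty}]\equiv 0$ by unique continuation, and finally $w_{\infty}\equiv 0$ by \cref{prop:zero-thin-space-(2m+2s)}, contradicting $\|w_{\infty}\|_{L^{2}(\partial B_{1},\y)}=1$.

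\textbf{Negative energies.} Here I construct $\hat\zeta$ as a perturbation of $p$: writing $c = p|_{\partial B_{1}} + (c-p)|_{\partial B_{1}}$, I set $\hat\zeta = p + \xi$, where $\xi$ is an $L_{a}$-harmonic extension of $(c-p)|_{\partial B_{1}}$ in $B_{1}$, modified with a cutoff to vanish on $\{T[p]\ge \delta\}\cap B_{1}'$. The obstacle condition $\hat\zeta\ge 0$ on $B_{1}'$ holds since on $\{T[p]\ge \delta\}$ one has $c\equiv 0$ by \eqref{eq:quasizero}, so $(c-p)|_{\partial B_{1}}$ agrees with $-p$ there and is matched by the cutoff design; on $\{T[p]<\delta\}$ the smallness $\|c-p\|_{L^{2}(\partial B_{1},\y)}\le \eps$ keeps $\xi$ small enough that $\hat\zeta\ge 0$. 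Since $H(1,\hat\zeta)=H(1,z)=\int_{\partial B_{1}}c^{2}\y d\HH^n$, the bound \eqref{eq:epi-neg} is equivalent to $D(\hat\zeta)-D(z)\le -|W_{2m+2s}(z)|^{2}$. Using that $p$ is $L_{a}$-harmonic and $(2m+2s)$-homogeneous (so the homogeneous extension of $p$ has $W_{2m+2s}$-energy zero, via integration by parts), a direct computation expresses $D(z)-D(\hat\zeta)$ as an explicit positive quantity which, under the smallness assumptions, dominates $|W_{2m+2s}(z)|^{2}$ and thus yields \eqref{eq:epi-neg}.

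\textbf{Main obstacles.} The crucial step is the spectral analysis in the positive case: the kernel of the quadratic form $c\mapsto W_{2m+2s}(z)$ at homogeneity $2m+2s$ is exactly the trace of $\mathcal{P}_{2m+2s}$, so without an additional constraint no strict gain $\kappa>0$ is possible; the support hypothesis \eqref{eq:quasizero}, together with \cref{prop:zero-thin-space-(2m+2s)} and unique continuation for polynomials, is precisely what separates $w_{\infty}$ from the kernel and closes the contradiction. In the negative case, the main subtlety is that $\hat\zeta$ must simultaneously satisfy the obstacle condition and gain on the energy by the quantitative amount $|W_{2m+2s}(z)|^{2}$; this is why both smallness hypotheses $|W_{2m+2s}(z)|\le \eps$ and $\|c-p\|_{L^{2}(\partial B_{1},\y)}\le \eps$ are imposed in the statement. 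A secondary technical ingredient is the strong $W^{1,2}(B_{1},\y)$-convergence and stability of contact sets of the minimizers $\zeta_{k}$, which follows from standard regularity theory for thin obstacle problems associated with $L_{a}$.
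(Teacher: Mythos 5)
Your proposal takes a genuinely different route from the paper, and in both parts the key quantitative step has a real gap. For part i), the compactness/contradiction scheme does not close. The quadratic form $c\mapsto W_{2m+2s}(r^{2m+2s}c)$ is exactly quadratic, and its kernel on traces vanishing on the thin sphere is the full $\lambda(2m+2s)$-eigenspace, i.e.\ the linear space $V^{a}_{2m+2s}(\{y=0\})$ of \cref{lem:eigenspaces-empty-full}; this is strictly larger than the convex cone $\mathcal{P}_{2m+2s}$, and \emph{every} element of it vanishes identically on $\partial B_1'$, hence satisfies the support constraint \eqref{eq:quasizero} automatically. So the constraint does not separate $w_\infty$ from the kernel, and your final unique-continuation step conflates the vanishing of the trace $w_\infty$ on $\{T[p_\infty]\ge\delta\}$ with the vanishing of $T[w_\infty]$ there: by \eqref{e:definition-of-T}, $T$ is built from the conormal derivative $\lim_{y\downarrow0}y^a\partial_y$, which is not controlled by the vanishing of the trace. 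In addition, part i) carries no smallness hypothesis, so the expansion of $c_k$ around $p_k$ and the "Taylor expansion at $p_\infty$" are not available, and weak $L^2(\partial B_1,|y|^a)$ convergence of $w_k$ does not allow passing to the limit in $W_{2m+2s}(z_k)$ or in the minimal energies. This nontrivial kernel of the linearization is precisely the situation in which soft contradiction proofs of epiperimetric inequalities break down. The paper argues directly instead: it decomposes $c$ in the hybrid eigenbasis of \cref{lemma:convergence-of-s_delta}, extends the modes with eigenvalue up to $\lambda(2m+2s)$ with homogeneity $2m+2s$ and all remaining modes with homogeneity $2m+2s+1/2$, and computes via \cref{lemma:formulas-homog-epi} and \cref{lemma:double-product}. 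The entire role of \eqref{eq:quasizero} is to place $c$ in $W^{1,2}_0(S_\delta,|y|^a)$, where the eigenvalue stability of \cref{lemma:convergence-of-s_delta} gives $\lambda_j^\delta\ge\lambda(2m+2s+1/2)$ for $j>\ell$; it is this explicit spectral gap, not a soft limiting argument, that produces $\kappa$.

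For part ii), your competitor $\hat\zeta=p+\xi$ has two problems. First, by \cref{prop:zero-thin-space-(2m+2s)} one has $p\equiv0$ on the thin space, so $\hat\zeta=\xi$ on $B_1'$ and the obstacle condition amounts to $\xi\ge0$ there; $L^2$-smallness of $c-p$ gives no sign information, and the $L_a$-harmonic extension of a trace that is nonnegative only on $\partial B_1'$ need not be nonnegative on the interior thin ball. Second, the inequality $D(z)-D(\hat\zeta)\ge|W_{2m+2s}(z)|^2$ is exactly the content of the statement and is asserted rather than proved; a cutoff correction of a harmonic extension adds Dirichlet energy of a size that is not comparable to the quadratic quantity $|W_{2m+2s}(z)|^2$ one must gain. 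The paper's competitor is again spectral: write $c=\chi+\omega$ with $\chi$ the component along the eigenfunction corresponding to $p$, and extend $\omega$ with the \emph{lower} homogeneity $\alpha<2m+2s$ calibrated so that $\kappa_{\mu,\alpha}=-W_{2m+2s}(z)$; the sign conditions $-L_a(r^{\mu}\chi)\ge0$ and $c\ge0$ on $\partial B_1'$ give $W_{2m+2s}(z)\ge W_{2m+2s}(r^{\mu}\omega)$, and \cref{lemma:formulas-homog-epi} turns the comparison into an exact identity from which \eqref{eq:epi-neg} follows for $\eps$ small.
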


\smallskip
\begin{remark}
    For simplicity,  
    in the proof of \cref{thm:epi} we fix $p\in\mathcal{P}_{2m+2s}$ with $\lVert p\rVert_{L^{2}(\partial B_{1},|y|^{a})}=1$ and we derive a small constant $\delta(p)>0$ such that \eqref{eq:epi} and \eqref{eq:epi-neg} hold for every admissible trace $c\in W^{1,2}(\partial B_1,\y)$ satisfying \eqref{eq:quasizero}. From this, noting that the constant $\delta(p)$ from \cref{lemma:convergence-of-s_delta} has a continuous dependence on $p$, by compactness, one we may then find a small $\delta>0$ independent of $p$, as stated in \cref{thm:epi}.
\end{remark}
\noindent Before proving \cref{thm:epi} we need to introduce a few notations and some constructions.

Let us fix a function $p\in\mathcal{P}_{2m+2s}$ with $\|p\|_{L^2(\partial B_1,|y|^{a})}=1$. We define, for every $\delta \ge 0$, the following subsets of the unit sphere:
   $$\mathcal{Z}_\delta:=\{T[p]\ge\delta\}\cap \partial B_1',\qquad S_\delta:=\partial B_1\setminus \mathcal{Z}_\delta.$$  
   Let $L_a^{\mathbb S^{n}}$ be the trace of the operator $L_a$ on $\partial B_1$. On the Hilbert space
   \bea W^{1,2}_0(S_\delta,\y):=\{\phi \in W^{1,2}(\partial B_1,|y|^a): \phi=0 \mbox{ on }\mathcal{Z}_\delta, \phi \mbox{ even in } y\},\eea
  we have that $-L_a^{\mathbb S^{n}}$ is a positive self-adjoint compact operator. In particular, there is a non-decreasing sequence of eigenvalues (counted with multiplicity) 
	$$0\le\lambda_1^\delta\le\lambda_2^\delta\le\ldots\le\lambda_j^\delta\le\ldots$$ 
 and a sequence of eigenfunctions $\{\phi_j^{\delta}\}\subset W^{1,2}_0(S_\delta,\y)$, forming an orthonormal basis of $W^{1,2}_{0}(S_\delta,\y)$, such that, for every $j\ge 1$: \bea
 \left\{
 \begin{array}{rclll}
      -L_a^{\mathbb S^{n}} \phi_j^{\delta}&=&\lambda_j^\delta \phi_j^{\delta}\y&\quad \mbox{in }S_\delta,\\
        \phi_j^{\delta}&=&0&\quad \mbox{on }\mathcal{Z}_\delta.\\
 \end{array}
 \right.
 \eea
 
When $\delta=0$, we simply write $\lambda_{j}$ and $\phi_{j}$ in the place of $\lambda_{j}^{\delta}$ and $\phi_{j}^{\delta}$. In this case, $W^{1,2}_{0}(S_{0},|y|^{a})$ corresponds to the space of even extensions of Sobolev functions in the half sphere $\partial B_{1}^{+}$ which are zero on $\partial B_{1}'$. 

We recall the following fact concerning the relationship between homogeneous $L_{a}$-harmonic funcitons in $\R^{n+1}$ and their traces on the unit sphere. Given $\phi \in W^{1,2}_{0}(S_{0},|y|^{a})$ and $\alpha \ge 0$, then 
\begin{gather*}
    r^{\alpha}\phi(\theta)\text{ is $L_a$-harmonic in $\{y\neq 0\}$} \iff
    -L_{a}^{\mathbb{S}^{n}}\phi=\lambda(\alpha)\phi|y|^{a} \text{ in $S_{0}$, with $\lambda(\alpha):=\alpha(n+a+\alpha-1)$}. 
\end{gather*}
In this case $r^{\alpha}\phi(\theta)$ is a polynomial multiplied by $|y|^{2s}$ and $\alpha\in \mathbb{N}+2s$, as can be easily checked using \cref{prop:odd-extension} and \cref{prop:Liouville}.
In particular:
\begin{itemize}
		\item $\lambda_1=\lambda(2s)$ and the corresponding eigenfunction $\phi_1$ is a multiple of $|y|^{2s}$;
        \smallskip
		\item $\lambda_2=\ldots=\lambda_{n+1}=\lambda(1+2s)$ and the corresponding eigenspace, of dimension $n$, is generated by the restriction to $\partial B_1$ of $L_{a}$-harmonic functions in $\{y\neq 0\}$ obtained from the multiplication of linear functions with $|y|^{2s}$;
		\smallskip
		\item in general, there is an explicit function $f:\N\to\N$ such that 
     $$\lambda_{f(j-1)+1}=\ldots=\lambda_{f(j)}=\lambda(j+2s),$$ 
  and the corresponding eigenspace, of dimension $f(j)-f(j-1)$, is generated by the restriction to $\partial B_1$ of $L_a$-harmonic functions in $\{y\neq 0\}$ obtained from the multiplication of $j$-homogeneous polynomials with $|y|^{2s}$.
\end{itemize}
Notice that the trace of the function $p\in \mathcal{P}_{2m+2s}$ itself is an eigenfunction of $-L_{a}^{\mathbb{S}^{n}}$ in $W^{1,2}_{0}(S_{0},|y|^{a})$ with eigenvalue $\lambda(2m+2s)$. Therefore, calling
\bea \ell:=f(2m)\eea
    the number of eigenvalues with homogeneity less than or equal to $2m+2s$, we may assume without loss of generality that 
    \bea
        p= r^{2m+2s}\phi_{\ell}(\theta).
    \eea
    
    The first step in the construction of the competitor $\zeta$ in \cref{thm:epi} consists in decomposing the trace $c$ with respect to the hybrid system $\{\phi_{j}\}_{j=1}^{\ell}\cup \{\phi^{\delta}_{j}\}_{j=\ell+1}^{\infty}$. This is possible whenever $\delta$ is small enough, owing to the stability of eigenvalues and eigenfunctions of $L_{a}^{\mathbb{S}^{n}}$ with respect to domain variation (see \cite[Proposition 3.1, Lemma 3.2]{cv24}). 
\begin{lemma}\label{lemma:convergence-of-s_delta} 
There exists $\delta>0$ for which we can chose $\{\phi_{j}^{\delta}\}$ in such a way that the following hold.
\begin{itemize}
    \item [i)] The linear map $F:\R^\ell\to\R^\ell$ defined as
$$F(\nu):=\left(\int_{\partial B_1}p_\nu \phi_1^\delta\y\,d\HH^n,\ldots,\int_{\partial B_1}p_\nu \phi_\ell^\delta\y\,d\HH^n\right),\qquad p_{\nu}:= \overunderset{\ell}{j=1}{\sum}\nu_{j}\phi_{j}$$ 
is invertible.
\smallskip
\item [ii)] $\lambda_j^\delta\ge \lambda(2m+2s+1/2)$ for every $j>\ell$.
\end{itemize}
     \end{lemma}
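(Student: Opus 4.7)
My plan is to deduce both claims from the continuity of the Dirichlet spectrum of $-L_a^{\mathbb{S}^n}$ on $S_\delta$ as $\delta \downarrow 0$. A preliminary observation is that $T[p]$ cannot vanish identically on $\partial B_1'$: were this the case, the relation $-L_a p = -4s\, T[p]\, \mathcal{H}^n \res B_1'$ would make $p$ globally $L_a$-harmonic, even in $y$, and vanishing on the thin space, forcing $p \equiv 0$ and contradicting $\|p\|_{L^2(\partial B_1, |y|^a)} = 1$. Therefore $T[p]$ is a nontrivial polynomial of degree $2m$ on $\partial B_1'$, and $\mathcal{Z}_\delta \subset \{T[p] > 0\} \cap \partial B_1' \subsetneq \mathcal{Z}_0 = \partial B_1'$ for every $\delta > 0$. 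This inclusion yields $W^{1,2}_0(S_0, |y|^a) \subset W^{1,2}_0(S_\delta, |y|^a)$; the Courant--Fischer principle then gives the one-sided comparison $\lambda_j^\delta \le \lambda_j$ for every $j$.

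Next I would establish the matching lower bound and the convergence of eigenfunctions via a compactness argument. For any sequence $\delta_k \downarrow 0$, the normalizations $\|\phi_j^{\delta_k}\|_{L^2(\partial B_1, |y|^a)} = 1$ together with $\|\nabla \phi_j^{\delta_k}\|_{L^2(\partial B_1, |y|^a)}^2 = \lambda_j^{\delta_k} \le \lambda_j$ yield uniform $W^{1,2}$-bounds. Up to a subsequence, $\phi_j^{\delta_k} \rightharpoonup \bar\phi_j$ weakly in $W^{1,2}(\partial B_1, |y|^a)$, strongly in $L^2(\partial B_1, |y|^a)$, and $\lambda_j^{\delta_k} \to \bar\lambda_j$. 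The main obstacle is to verify that $\bar\phi_j$ satisfies the homogeneous Dirichlet condition on $\partial B_1'$. I would address this via the continuity of the trace operator from the weighted $W^{1,2}$ space to $L^2$ on $\partial B_1'$, combined with the fact that $\bigcup_k \mathcal{Z}_{\delta_k} = \{T[p] > 0\} \cap \partial B_1'$ differs from $\partial B_1'$ only by the algebraic set $\{T[p] = 0\} \cap \partial B_1'$, which has zero $\mathcal{H}^n$-measure. Passing to the limit in the weak formulation of the eigenvalue equation, $\{\bar\phi_j\}$ becomes an orthonormal family of eigenfunctions in $W^{1,2}_0(S_0, |y|^a)$ with eigenvalues $\bar\lambda_j \le \lambda_j$; a dimension count on eigenspaces then forces $\bar\lambda_j = \lambda_j$, and after an orthogonal change of basis within each repeated eigenspace I may assume $\bar\phi_j = \phi_j$ for $j = 1, \ldots, \ell$.

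Both claims then follow. For (ii): since $\lambda_{\ell+1} = \lambda(2m+1+2s) > \lambda(2m+2s+1/2)$, the convergence $\lambda_{\ell+1}^\delta \to \lambda_{\ell+1}$ together with the monotonicity $\lambda_j^\delta \ge \lambda_{\ell+1}^\delta$ for $j > \ell$ yields the lower bound for $\delta$ small enough. For (i): the matrix representing $F$ in the basis $\{\phi_1, \dots, \phi_\ell\}$ has entries $\int_{\partial B_1} \phi_j \phi_k^\delta |y|^a\, d\mathcal{H}^n$ which, by the $L^2$-convergence $\phi_k^\delta \to \phi_k$ and the orthonormality of $\{\phi_j\}$, converge to those of the identity matrix. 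In particular, $F$ is invertible for $\delta$ sufficiently small.
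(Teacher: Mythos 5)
Your argument is correct and follows essentially the same route as the paper, which does not prove this lemma directly but attributes it to the stability of eigenvalues and eigenfunctions of $L_a^{\mathbb{S}^n}$ under domain variation (citing \cite[Proposition 3.1, Lemma 3.2]{cv24}); your compactness argument is precisely a self-contained proof of that stability, combining the one-sided monotonicity $\lambda_j^\delta\le\lambda_j$ from the inclusion $W^{1,2}_0(S_0,\y)\subset W^{1,2}_0(S_\delta,\y)$ with the identification of the limit eigenfunctions, where the key point — that $\{T[p]=0\}\cap\partial B_1'$ is the zero set of a non-trivial polynomial and hence $\mathcal H^{n-1}$-negligible on $\partial B_1'$, so the limit trace vanishes a.e.\ and the limit lies in $W^{1,2}_0(S_0,\y)$ — is handled correctly. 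The only cosmetic slip is writing ``zero $\mathcal H^n$-measure'' where $\mathcal H^{n-1}$-measure on $\partial B_1'$ is the relevant notion for the trace.
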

The next lemma, which is obtained exactly as \cite[Lemma 2.11]{car24}, contains explicit formulas for the Weiss energy of homogeneous functions.
\begin{lemma}\label{lemma:formulas-homog-epi}
     Let $\psi\in W^{1,2}_0(S_\delta,\y)$ be decomposed as follows: $$\psi(\theta)=\sum_{j=1}^\infty c_j \phi_j^\delta(\theta),$$
     where $\{\phi_j^\delta\}$ is an ortonormal basis of $W^{1,2}_0(S_\delta,\y)$ of eigenfunctions for $-L_a^{\mathbb S^{n}}$. Then, the following formulas hold
     \begin{gather*}
         W_\mu(r^\mu \psi)=\frac{1}{n+a+2\mu-1}\sum_{j=1}^\infty(\lambda_j^\delta-\lambda(\mu))c_j^2, \label{eq:lower_modes}\\
         W_\mu(r^\alpha\psi)-(1-\kappa_{\alpha,\mu})W_\mu(r^\mu\psi)=\frac{\kappa_{\alpha,\mu}}{n+a+2\alpha-1}\sum_{j=1}^\infty(\lambda(\alpha)-\lambda_j^\delta)c_j^2, \label{eq:higher_modes}
     \end{gather*}
     where we set \be\label{eq:kappa_alpha_mu}\kappa_{\alpha,\mu}:=\frac{\alpha-\mu}{n+a+\alpha+\mu-1}.\ee
\end{lemma}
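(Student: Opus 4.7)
The proof will be a direct computation: both identities follow from expressing $H(1,r^\alpha\psi)$ and $D(1,r^\alpha\psi)$ in terms of the Fourier coefficients $c_j$ of $\psi$, and then simplifying with the algebraic identity $\lambda(\alpha)-\lambda(\mu)=(\alpha-\mu)(n+a+\alpha+\mu-1)$. There is no real obstacle; the only thing to be careful about is the integration by parts on the sphere using the eigenvalue equation for $L_a^{\mathbb{S}^n}$.

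The plan is to pass to spherical coordinates $X=r\theta$ on $\R^{n+1}$, so that $|y|^a = r^a |\theta_y|^a$ (writing $\theta_y$ for the $y$-component of $\theta$) and the volume element is $r^n\,dr\,d\mathcal{H}^n$. For $v(X)=r^\alpha \psi(\theta)$, orthonormality of $\{\phi_j^\delta\}$ in $L^2(\partial B_1,|y|^a)$ gives immediately
\begin{equation*}
H(1,v)=\int_{\partial B_1}\psi^2\,|y|^a\,d\mathcal{H}^n=\sum_j c_j^2.
\end{equation*}
For $D$, I split the gradient into radial and tangential parts, $|\nabla v|^2=\alpha^2 r^{2\alpha-2}\psi^2+r^{2\alpha-2}|\nabla_\theta\psi|^2$; integrating $r$ out produces a factor $\tfrac{1}{n+a+2\alpha-1}$ (here I need $n+a+2\alpha-1>0$, which is satisfied for the admissible homogeneities in play). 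The tangential term is handled by integration by parts on $\partial B_1$ against the weak formulation of $-L_a^{\mathbb{S}^n}\phi_j^\delta=\lambda_j^\delta\phi_j^\delta|y|^a$, which turns $\int|\nabla_\theta\psi|^2|y|^a\,d\mathcal{H}^n$ into $\sum_j \lambda_j^\delta c_j^2$. Combining everything yields the master formula
\begin{equation*}
W_\mu(r^\alpha\psi)=\frac{1}{n+a+2\alpha-1}\sum_j\bigl((\alpha-\mu)^2+\lambda_j^\delta-\lambda(\mu)\bigr)c_j^2,
\end{equation*}
where I have used the identity $\alpha^2-\mu(n+a+2\alpha-1)=(\alpha-\mu)^2-\lambda(\mu)$, which rewrites the $H$ contribution in a form symmetric in $(\alpha-\mu)$.

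From here, setting $\alpha=\mu$ in the master formula gives the first identity of the lemma immediately. For the second, I compute $(1-\kappa_{\alpha,\mu})W_\mu(r^\mu\psi)$ using $1-\kappa_{\alpha,\mu}=\tfrac{n+a+2\mu-1}{n+a+\alpha+\mu-1}$, so that $(1-\kappa_{\alpha,\mu})W_\mu(r^\mu\psi)=\tfrac{1}{n+a+\alpha+\mu-1}\sum_j(\lambda_j^\delta-\lambda(\mu))c_j^2$. Subtracting from the master formula and putting the expression over the common denominator $(n+a+2\alpha-1)(n+a+\alpha+\mu-1)$, the coefficient of $c_j^2$ becomes
\begin{equation*}
\frac{(\alpha-\mu)^2(n+a+\alpha+\mu-1)+(\lambda_j^\delta-\lambda(\mu))(\mu-\alpha)}{(n+a+2\alpha-1)(n+a+\alpha+\mu-1)}.
\end{equation*}
Using $\lambda(\alpha)-\lambda(\mu)=(\alpha-\mu)(n+a+\alpha+\mu-1)$, the first summand in the numerator rewrites as $(\alpha-\mu)(\lambda(\alpha)-\lambda(\mu))$; factoring $(\alpha-\mu)$ leaves $(\lambda(\alpha)-\lambda_j^\delta)$, and one recognizes exactly $\tfrac{\kappa_{\alpha,\mu}}{n+a+2\alpha-1}(\lambda(\alpha)-\lambda_j^\delta)$ as the prefactor, which is the second formula. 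The only step requiring any thought is the algebraic rearrangement on the $H$-side that produces the $(\alpha-\mu)^2$ term; everything else is bookkeeping, and convergence of the series is standard since $\psi\in W^{1,2}_0(S_\delta,|y|^a)$ ensures $\sum_j(1+\lambda_j^\delta)c_j^2<\infty$.
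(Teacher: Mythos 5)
Your computation is correct: the master formula $W_\mu(r^\alpha\psi)=\tfrac{1}{n+a+2\alpha-1}\sum_j\bigl((\alpha-\mu)^2+\lambda_j^\delta-\lambda(\mu)\bigr)c_j^2$ and the two algebraic identities $\alpha^2-\mu(n+a+2\alpha-1)=(\alpha-\mu)^2-\lambda(\mu)$ and $\lambda(\alpha)-\lambda(\mu)=(\alpha-\mu)(n+a+\alpha+\mu-1)$ all check out, and the paper itself gives no proof but defers to \cite[Lemma 2.11]{car24}, where exactly this direct spherical-coordinates computation is carried out. The only (implicit) point worth making explicit is that the $\{\phi_j^\delta\}$ are taken orthonormal in $L^2(\partial B_1,|y|^a)$ (not in $W^{1,2}$), which is what makes $H(1,r^\alpha\psi)=\sum_j c_j^2$ and $\int_{\partial B_1}|\nabla_\theta\psi|^2|y|^a\,d\HH^n=\sum_j\lambda_j^\delta c_j^2$ valid.
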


For any two given functions $v,w\in W^{1,2}(B_1,\y)$, we define the bilinear form \bea R_\mu(v,w):=\int_{B_1}\nabla v\cdot \nabla w \y\,dX-\mu\int_{\partial B_1} vw\y\,d\HH^n.
 \eea 
In the sequel we also need the following lemma which can be easily proved as \cite[Lemma 3.5]{cv24}.
\begin{lemma} \label{lemma:double-product} Let $\psi,\phi\in W^{1,2}(\partial B_1,\y)$, even with respect to $\{y=0\}$, with $$\psi(\theta)=\sum_{j=1}^\infty c_j\phi_j(\theta),$$
  where $\{\phi_j\}\subset W^{1,2}_0(S_0,\y)$ is an ortonormal basis of $W^{1,2}_0(S_{0},\y)$ of eigenfunctions for $-L_a^{\mathbb S^{n}}$.
  Then $$R_\mu(r^\mu\psi(\theta),r^\alpha\phi(\theta))=\frac1{n+a+\alpha+\mu-1}\,\beta_\mu(\psi,\phi),$$ where 
  \begin{equation*}
      \begin{aligned}
            \beta_\mu(\psi,\phi):=\int_{\partial B_1}\sum_{j=1}^\infty(\lambda_j-\lambda(\mu))c_j\phi_j(\theta)\phi(\theta)\y\,d\HH^n
            -2\int_{\partial{B_1'}}\Big(\lim_{\theta_{n+1}\downarrow 0} \theta_{n+1}^{a}\partial_{\theta_{n+1}}\psi\Big)\phi\,d\HH^{n-1}.
      \end{aligned}
  \end{equation*}
  
\end{lemma}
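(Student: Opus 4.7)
The plan is to compute $R_\mu(v,w)$ with $v=r^\mu \psi(\theta)$ and $w=r^\alpha\phi(\theta)$ by reducing everything to integrals on $\partial B_1$. First, I will express $\nabla v$ and $\nabla w$ in spherical coordinates as $\nabla v = \mu r^{\mu-1}\psi\,\hat r + r^{\mu-1}\nabla_{\mathbb S^n}\psi$ and similarly for $w$, so that
\[
\nabla v\cdot\nabla w = r^{\mu+\alpha-2}\bigl(\mu\alpha\,\psi\phi + \nabla_{\mathbb S^n}\psi\cdot\nabla_{\mathbb S^n}\phi\bigr).
\]
Since $\y\,dX = r^{n+a}|\theta_{n+1}|^a\,dr\,d\HH^n$, the radial factor becomes $r^{n+a+\mu+\alpha-2}$, whose integral over $(0,1)$ gives precisely the prefactor $\tfrac{1}{n+a+\alpha+\mu-1}$. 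Combining with $\mu\int_{\partial B_1}vw\,\y\,d\HH^n$ and simplifying $\mu\alpha-\mu(n+a+\alpha+\mu-1)= -\lambda(\mu)$ yields
\[
R_\mu(v,w)=\frac{1}{n+a+\alpha+\mu-1}\Bigl[\int_{\partial B_1}\!\nabla_{\mathbb S^n}\psi\cdot\nabla_{\mathbb S^n}\phi\,\y\,d\HH^n-\lambda(\mu)\!\int_{\partial B_1}\!\psi\phi\,\y\,d\HH^n\Bigr].
\]

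Next, I will perform an integration by parts on the sphere to bring in the operator $L_a^{\mathbb S^n}$. Since $\y=|\theta_{n+1}|^a$ degenerates on the equator $\partial B_1'$, I will carry out the calculation on $\partial B_1^\pm\cap\{|\theta_{n+1}|>\eps\}$ and take $\eps\downarrow 0$. The divergence theorem on each hemisphere gives
\[
\int_{\partial B_1^\pm\cap\{|\theta_{n+1}|>\eps\}}\!\nabla_{\mathbb S^n}\psi\cdot\nabla_{\mathbb S^n}\phi\,\y\,d\HH^n=-\int_{\partial B_1^\pm\cap\{|\theta_{n+1}|>\eps\}}\!\phi\,L_a^{\mathbb S^n}\psi\,d\HH^n + \text{B}^\pm_\eps,
\]
where the boundary term $\text{B}^\pm_\eps$ comes from the level set $\{\theta_{n+1}=\pm\eps\}$ with the tangential outward normal on the sphere pointing toward the equator. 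A short computation of this normal shows that, up to $O(\eps)$, the normal derivative reduces to $\mp\partial_{\theta_{n+1}}\psi$; after multiplying by the weight $\eps^a$, using the evenness of $\psi,\phi$ in $y$ (which doubles the contribution) and sending $\eps\downarrow 0$, the two boundary terms combine to
\[
-2\int_{\partial B_1'}\Bigl(\lim_{\theta_{n+1}\downarrow 0}\theta_{n+1}^a\partial_{\theta_{n+1}}\psi\Bigr)\phi\,d\HH^{n-1}.
\]

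To conclude, I will expand the interior term using the eigenfunction decomposition. Since $-L_a^{\mathbb S^n}\phi_j=\lambda_j\phi_j\y$, one has $-L_a^{\mathbb S^n}\psi=\sum_j c_j\lambda_j\phi_j\y$, and combining with the $-\lambda(\mu)\int\psi\phi\,\y$ term produces the series $\sum_j(\lambda_j-\lambda(\mu))c_j\phi_j$ in the integrand. This assembles precisely into $\beta_\mu(\psi,\phi)$, giving the claimed identity.

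The only technical obstacle is justifying the passage to the limit $\eps\downarrow 0$ in the two boundary integrals and the interior integration by parts for general $\psi\in W^{1,2}(\partial B_1,\y)$. This is standard given that the eigenfunctions $\phi_j$ have explicit behavior $\phi_j\sim |\theta_{n+1}|^{2s}$ near the equator (so that $\lim_{\theta_{n+1}\downarrow 0}\theta_{n+1}^a\partial_{\theta_{n+1}}\phi_j$ exists), and can be handled by first verifying the identity on finite partial sums of the eigenexpansion of $\psi$ and then passing to the limit in $L^2(\partial B_1,\y)$; alternatively, one can repeat verbatim the argument from \cite[Lemma 3.5]{cv24}, where the same computation is performed in the particular case $a=0$, noting that the only ingredients used there are the spherical decomposition, the self-adjointness of $-L_a^{\mathbb S^n}$ in $W^{1,2}_0(S_0,\y)$ and the integration by parts above, all of which carry over unchanged to general $a\in(-1,1)$.
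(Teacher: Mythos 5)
Your computation is correct: the radial integration giving the prefactor $\tfrac{1}{n+a+\alpha+\mu-1}$, the identity $\mu\alpha-\mu(n+a+\alpha+\mu-1)=-\lambda(\mu)$, the integration by parts on the sphere with the degenerate weight producing the equator term $-2\int_{\partial B_1'}(\lim_{\theta_{n+1}\downarrow 0}\theta_{n+1}^a\partial_{\theta_{n+1}}\psi)\phi\,d\HH^{n-1}$, and the eigenfunction expansion all check out. This is essentially the same argument the paper relies on, since it proves the lemma by direct reference to the analogous computation in \cite[Lemma 3.5]{cv24} (the case $a=0$), which carries over verbatim to general $a\in(-1,1)$ exactly as you describe.
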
 
\noindent Now we are ready to prove \cref{thm:epi}
\begin{proof}[Proof of \cref{thm:epi}]
    Let us take $c\in W^{1,2}(\partial B_1,\y)$ even in $y$ and such that $c\ge 0$ on $\partial B_{1}'$. We take $\delta>0$ as in \cref{lemma:convergence-of-s_delta}. Let us assume that condition \eqref{eq:quasizero} holds for some $p\in \mathcal{P}_{2m+2s}$ with $\|p\|_{L^2(\partial B_1,|y|^{a})}=1$. Then, by definition, we will have $c\in W^{1,2}_{0}(S_{\delta},|y|^{a})$. Let $z(r,\theta)=r^{2m+2s}c(\theta)$ be the $(2m+2s)$-homogeneous extension of $c$.
    
    We prove point i).
    Thanks to \cref{lemma:convergence-of-s_delta}, we may chose a suitable orthonormal basis $\{\phi_{j}^{\delta}\}$ of eigenfunctions for $-L_{a}^{\mathbb{S}^{n}}$ in $W^{1,2}_{0}(S_{\delta},|y|^{a})$ that makes possible the following decomposition:
\bea c(\theta)=\psi(\theta)+\phi(\theta),\quad \mbox{where}\quad \psi(\theta)=\sum_{j=1}^\ell c_j\phi_j\quad\mbox{and}\quad \phi(\theta)=\sum_{j=\ell+1}^\infty c_j\phi_j^\delta,\eea and for which \be\label{eq:lambda_j_grandi}\lambda_j^\delta\ge \lambda(2m+2s+1/2)=:\alpha\quad\mbox{for every } j>\ell.
\ee  
We chose the competitor $\zeta$ as \bea \zeta(r,\theta):=r^{2m+2s}\psi(\theta)+r^{\alpha}\phi(\theta).\eea 
Notice that  $\zeta=c$ on $\partial B_{1}$, and since $\psi \in W^{1,2}_{0}(S_{0},|y|^{a})$ and $c \ge 0$ on $\partial B_{1}'$ by hypothesis, then $\zeta\ge0$ on $B_1'$. Renaming $\mu:=2m+2s$, we now show that \eqref{eq:epi} holds for $\kappa=\kappa_{\alpha,\mu}$ as in \eqref{eq:kappa_alpha_mu}. Indeed we can write
\begin{align*}
    W_{\mu}(\zeta)&=W_{\mu}(r^{\mu}\psi)+W_{\mu}(r^{\alpha}\phi)+2R_{\mu}(r^{\mu}\psi,r^{\alpha}\phi),\\ 
    W_{\mu}(z)&=W_{\mu}(r^{\mu}\psi)+W_{\mu}(r^{\mu}\phi)+2R_{\mu}(r^{\mu}\psi,r^{\mu}\phi).
\end{align*}
Now, since $\psi$ is composed only by modes lower than $\lambda(\mu)$, by \eqref{eq:lower_modes},
\begin{equation*}
    W_{\mu}(r^{\mu}\psi)-(1-\kappa_{\alpha,\mu})W_{\mu}(r^{\mu}\psi) = \kappa_{\alpha, \mu}W_{\mu}(r^{\mu}\psi) \le 0.
\end{equation*}
On the other hand, since $\psi$ containes only modes higher than $\alpha$, by \eqref{eq:higher_modes} and \eqref{eq:lambda_j_grandi},
\begin{equation*}
    W_{\mu}(r^{\alpha}\phi)-(1-\kappa_{\alpha,\mu})W_{\mu}(r^{\mu}\phi)\le 0.
\end{equation*}
Finally, owing to \cref{lemma:double-product} and the definition of $\kappa_{\alpha, \mu}$ in \eqref{eq:kappa_alpha_mu}, we get
$$
    R_{\mu}(r^{\mu}\psi,r^{\alpha}\phi)-(1-\kappa_{\alpha, \mu})R_{\mu}(r^{\mu}\psi,r^{\mu}\phi)
    =  \left(\frac{1}{n+a+\alpha+\mu-1}-\frac{(1-\kappa_{\alpha, \mu})}{n+a+2\mu-1}\right)\beta_{\mu}(\psi, \phi)=0.
$$
The last three inequalities above together give \eqref{eq:epi} with $\kappa=\kappa_{\alpha, \mu}$, as desired.

We now address point ii). We may assume that $W_{\mu}(z)<0$ since otherwise we can simply take $\hat{\zeta}=z$ as a competitor. Thanks to the fact that $c\in W^{1,2}_{0}(S_{\delta},|y|^{a})$, choosing $\delta>0$ small enough, as in \cref{lemma:convergence-of-s_delta} we may decompose $c$ as
\bea c(\theta)=\chi(\theta)+\omega(\theta),\quad \mbox{where}\quad \chi(\theta)=b_{\ell}\phi_{\ell}\quad\mbox{and}\quad \omega(\theta)=\sum_{j\neq \ell}b_j\phi_j^\delta.\vspace{-0.3cm}\eea 
Moreover, since $\lVert c-\phi_{\ell}\rVert_{L^{2}(\partial B_{1},|y|^{a})}\le \eps$, then
$$\lVert \omega\rVert_{L^{2}(\partial B_{1},|y|^{a})}\le C\eps.$$
We choose $\alpha<\mu$ for which $\kappa_{\mu,\alpha}=-W_{\mu}(z)>0$ and we define the competitor $\hat{\zeta}$ as 
$$\hat{\zeta}(r,\theta):=r^{\mu}\chi(\theta)+r^{\alpha}\omega(\theta).$$
Notice that, since $\chi$ is a $\mu$-homogeneous solution of \eqref{def:soluzione-estesa} with zero obstacle, and $c\ge 0$ on $\partial B_{1}'$, we will have $W_{\mu}(r^{\mu}\chi)=0$. Then
$$R_\mu(r^\mu\chi,r^\mu\omega)=R_\mu(r^\mu\chi,r^\mu c)=-\int_{B_1}L_a(r^\mu\chi)r^\mu c\ge0.$$ Therefore
    \be\label{w1}W_{\mu}(z)=W_{\mu}(r^\mu\omega)+2R_\mu(r^\mu\chi,r^\mu\omega) \ge W_\mu(r^\mu\omega).\ee
Moreover, exactly as in the proof of point i),
\bea R_{\mu}(r^{\mu}\chi,r^{\alpha}\omega)-(1+\kappa_{\mu, \alpha})R_{\mu}(r^{\mu}\chi,r^{\mu}\omega)=0.
\eea
Therefore, using \cref{lemma:formulas-homog-epi} and \eqref{w1}, we get
\bea W_{\mu}(\hat{\zeta})-(1+\kappa_{\mu,\alpha})&W_{\mu}(z)=W_{\mu}(r^{\alpha}\omega)-(1+\kappa_{\mu,\alpha})W_{\mu}(r^{\mu}\omega)\\
&=\frac{-\kappa_{\mu,\alpha}}{n+a+2\alpha-1}\sum_{j\neq \ell}(\lambda(\alpha)-\lambda_j^\delta)c_j^2\\&=\frac{-\kappa_{\mu,\alpha}}{n+a+2\alpha-1}\sum_{j\neq\ell}(\lambda(\alpha)-\lambda(\mu))c_j^2+\frac{-\kappa_{\mu,\alpha}}{n+a+2\alpha-1}\sum_{j\neq\ell}(\lambda(\mu)-\lambda_j^\delta)c_j^2
 \\&=C\kappa_{\mu,\alpha}^2\|\omega\|^2_{L^2(\partial B_1,\y)}
 +\kappa_{\mu,\alpha}\frac{n+a+2\mu-1}{n+a+2\alpha-1}W_\mu(r^\mu\omega)\\&\le C\kappa_{\mu,\alpha}^2\eps
 -\frac{n+a+2\mu-1}{n+a+2\alpha-1}\kappa_{\mu,\alpha}^{2},
 \eea
and the right-hand side is negative provided that we choose $\eps$ small enough.
\end{proof}
\subsection{Proofs of \cref{prop:rate} and \cref{cor:stratification}}\label{subsection:proofs_2m+2s}
To prove \cref{prop:rate} we will apply the epiperimetric inequality in point i) of \cref{thm:epi} to the rescalings of a solution $u$ of \eqref{def:soluzione-estesa}. We will use the following notations:
\begin{equation}\label{notation:application-epi}
    \widetilde{u}:= \widetilde{u}^{0},\qquad \widetilde{u}_{\rho}:= \frac{\widetilde{u}(\rho \cdot)}{\lVert \widetilde{u}(\rho \cdot)\rVert_{L^{2}(\partial B_{1},|y|^{a})}},\qquad \widetilde{u}_{\rho,r}:= \frac{\widetilde{u}_{\rho}(r\cdot)}{r^{2m+2s}}.
\end{equation}
Notice that the two subscripts refer to different types of rescalings. We can write $\widetilde{u}_{\rho,r}$ in the following equivalent ways:
\begin{equation}\label{eq:riscritture-rescalings}
    \widetilde{u}_{\rho,r}= \left(\frac{\rho^{n+a+2\mu}}{H(\rho,\widetilde{u})}\right)^{1/2}\frac{\widetilde{u}(\rho r \cdot)}{(\rho r)^{\mu}}=\left(\frac{\rho^{n+a+2\mu}}{H(\rho,\widetilde{u})}\right)^{1/2} \left(\frac{H(\rho r, \widetilde{u})}{(\rho r)^{n+a+2\mu}}\right)^{1/2}\widetilde{u}_{\rho r}.
\end{equation}
In particular, thanks to \cref{generalized-Almgren}, for a fixed $\rho$, $\widetilde{u}_{\rho,r}$ has uniform $C^{1,s}_{a,\loc}$-bounds for every $r\in (0,1)$ and converge, as $r\downarrow 0$ to a blow-up in $\mathcal{P}_{2m+2s}$. 

We recall the following consequences of \cref{prop:monotonicity-weiss-tilde} (see e.g.~\cite{car24}) for the Weiss energy $\widetilde W_{\mu}$ defined in \eqref{def:weiss-tilde}.

\begin{lemma}\label{prop:monotonicity-weiss-smart}
    Let $u$ be a solution to \eqref{def:soluzione-estesa}, with $\vf$ satisfying \eqref{e:hypo-phi} and $\|u\|_{L^\infty(B_1)}\le 1$. Suppose that $0\in \Lambda_{\mu}(u)$, with $\mu< k+\gamma$. Using the notations in \eqref{notation:application-epi}, we call $z_{\rho,r}$ the $\mu$-homogeneous extension of $\widetilde{u}_{\rho,r}|_{\partial B_{1}}$. Calling $C(\rho):= C_{\widetilde{W}}(\widetilde{u}_{\rho})$ as in \cref{prop:monotonicity-weiss-tilde}, we have
    \begin{equation}\label{eq:monot-furba-weiss}
        \begin{aligned}
            \frac{d}{dr}\left(\widetilde{W}_{\mu}(\widetilde{u}_{\rho,r})+C(\rho)r^{k+\gamma-\mu}\right)&\ge \frac{n+a+2\mu-1}{r}\left(W_{\mu}(z_{\rho,r})-\widetilde{W}_{\mu}(\widetilde{u}_{\rho,r})\right)\\
        &+\frac{1}{r}\int_{\partial B_{1}}(\nabla \widetilde{u}_{\rho,r}\cdot \nu- \mu \widetilde{u}_{\rho,r})^{2}|y|^{a}\,d\HH^n,
        \end{aligned}
    \end{equation} and 
    \begin{equation}\label{eq:weiss-argomento-diadico}
        \int_{\partial B_1}\left|\widetilde{u}_{\rho,r}-\widetilde{u}_{\rho,r'}\right|\y\,d\HH^n\le C \log\left(\frac r{r'}\right)^{1/2}\left(\widetilde{W}_\mu(\widetilde{u}_{\rho,r})+C(\rho)r^{k+\gamma-\mu}\right)^{1/2},
    \end{equation}
    for every $ 0<r'\le r\le 1$ and for some constant $C>0$. Moreover $C(\rho)\to 0$ as $\rho \to 0$.
\end{lemma}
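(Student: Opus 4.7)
The plan is to derive both \eqref{eq:monot-furba-weiss} and \eqref{eq:weiss-argomento-diadico} by reducing to \cref{prop:monotonicity-weiss-tilde} applied to the normalized blow-up $\widetilde u_\rho$ and then unwinding the scaling $\widetilde u_{\rho,r}=\widetilde u_\rho(r\cdot)/r^\mu$; the vanishing $C(\rho)\to 0$ is read off from how the source term in \eqref{def:soluzione-estesa-tilde} rescales.

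\textbf{Step 1 (rescaled source and vanishing of $C(\rho)$).} The function $\widetilde u_\rho$ solves a thin obstacle problem of the form \eqref{def:soluzione-estesa-tilde} in $B_{1/\rho}$ with a rescaled source $g_\rho$. Combining \eqref{eq:stima-g} with the two-sided bound $H(\rho,\widetilde u)\sim\rho^{n+a+2\mu}$ coming from \cref{generalized-Almgren}~i) (using $0\in\Lambda_\mu(u)$ with $\mu<k+\gamma$), one gets $\|g_\rho\|_{L^\infty(B_{1/\rho})}\le C\rho^{k+\gamma-\mu-\eps}$ for any small $\eps>0$. Since the constant $C_{\widetilde W}$ in \cref{prop:monotonicity-weiss-tilde} is, as inspection of its proof shows, quantitatively controlled by the $L^\infty$-size of the source, this yields $C(\rho)=C_{\widetilde W}(\widetilde u_\rho)\to 0$ as $\rho\downarrow 0$.

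\textbf{Step 2 (refined monotonicity \eqref{eq:monot-furba-weiss}).} Compute $\tfrac{d}{dr}\widetilde W_\mu(\widetilde u_{\rho,r})$ directly, using the pointwise identity $\partial_r\widetilde u_{\rho,r}(x)=r^{-1}(\nabla\widetilde u_{\rho,r}(x)\cdot x-\mu\widetilde u_{\rho,r}(x))$ in $B_1$. An integration by parts — together with the Signorini complementarity, which kills the singular contribution of $L_a\widetilde u_{\rho,r}$ on the thin contact set since $\widetilde u_{\rho,r}$ vanishes there — gives
\[
\tfrac{d}{dr}\widetilde W_\mu(\widetilde u_{\rho,r})=\tfrac{2}{r}\int_{\partial B_1}(\nabla\widetilde u_{\rho,r}\cdot\nu-\mu\widetilde u_{\rho,r})^{2}\y\,d\HH^n+E(r),
\]
with $|E(r)|\le C\rho^{k+\gamma-\mu-\eps}r^{k+\gamma-\mu-1}$ absorbable in $\tfrac{d}{dr}(C(\rho)r^{k+\gamma-\mu})$. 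To promote this pure square formula to \eqref{eq:monot-furba-weiss}, use the identity
$(n+a+2\mu-1)D(1,z_{\rho,r})=\int_{\partial B_1}(|\nabla_\tau\widetilde u_{\rho,r}|^{2}+\mu^{2}\widetilde u_{\rho,r}^{2})\y\,d\HH^n$
for the Dirichlet energy of the $\mu$-homogeneous extension $z_{\rho,r}$ (which shares the boundary trace of $\widetilde u_{\rho,r}$), together with the integration-by-parts relation $\int_{\partial B_1}\widetilde u_{\rho,r}\nabla\widetilde u_{\rho,r}\cdot\nu\,\y\,d\HH^n=D(1,\widetilde u_{\rho,r})-\int_{B_1}\widetilde u_{\rho,r}g_{\rho,r}\y\,dX$. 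A careful rearrangement splits the coefficient $2$ in front of the boundary square as $1+(n+a+2\mu-1)$, the second piece reconstructing, up to obstacle errors, the Weiss-defect $W_\mu(z_{\rho,r})-\widetilde W_\mu(\widetilde u_{\rho,r})$; absorbing those errors in $C(\rho)r^{k+\gamma-\mu}$ yields \eqref{eq:monot-furba-weiss}.

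\textbf{Step 3 (trace estimate \eqref{eq:weiss-argomento-diadico}).} Using the boundary identity $s\partial_s\widetilde u_{\rho,s}=\nabla\widetilde u_{\rho,s}\cdot\nu-\mu\widetilde u_{\rho,s}$ on $\partial B_1$ and Cauchy--Schwarz in $s$ against $ds/s$,
\[
\int_{\partial B_1}|\widetilde u_{\rho,r}-\widetilde u_{\rho,r'}|\y\,d\HH^n\le\int_{r'}^{r}\!\int_{\partial B_1}|\partial_s\widetilde u_{\rho,s}|\y\,d\HH^n\,ds\le C\log\!\bigl(r/r'\bigr)^{1/2}\Bigl(\int_{r'}^{r}\tfrac{1}{s}\!\int_{\partial B_1}(\nabla\widetilde u_{\rho,s}\cdot\nu-\mu\widetilde u_{\rho,s})^{2}\y\,d\HH^n\,ds\Bigr)^{\!1/2}.
\]
Keeping only the second, square, term on the RHS of \eqref{eq:monot-furba-weiss} and integrating in $s$, the last double integral is controlled by $\widetilde W_\mu(\widetilde u_{\rho,r})+C(\rho)r^{k+\gamma-\mu}$, which gives \eqref{eq:weiss-argomento-diadico}.

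\textbf{Main obstacle.} The hard part is Step 2: \cref{prop:monotonicity-weiss-tilde} directly furnishes only the pure boundary square on the RHS, and the additional Weiss-defect term $(n+a+2\mu-1)(W_\mu(z_{\rho,r})-\widetilde W_\mu(\widetilde u_{\rho,r}))/r$ — non-negative modulo obstacle errors by the minimality of $\widetilde u_{\rho,r}$ against the admissible competitor $z_{\rho,r}$ — has to be extracted via the integration-by-parts identity for the Dirichlet energy of the homogeneous extension and a careful rearrangement. The obstacle correction must be tracked along every step and absorbed into $C(\rho)r^{k+\gamma-\mu}$, which is possible precisely because of the quantitative decay of $\|g_\rho\|_{L^\infty}$ established in Step~1.
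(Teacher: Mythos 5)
Your argument is correct and coincides with the standard proof recalled from \cite{car24,cv24,frs20}: the coefficient-$2$ boundary-square formula for $\frac{d}{dr}\widetilde W_\mu(\widetilde u_{\rho,r})$, the conversion of one copy of the square into the Weiss defect via the Dirichlet energy of the $\mu$-homogeneous extension and the Pohozaev-type identity, Cauchy--Schwarz for the dyadic $L^1$ estimate, and the decay of the rescaled source for $C(\rho)\to0$. (Only a cosmetic remark: the split is $2=1+1$, with the second copy of $\int_{\partial B_1}(\partial_\nu \widetilde u_{\rho,r}-\mu\widetilde u_{\rho,r})^2\y\,d\HH^n$ equal to $(n+a+2\mu-1)(W_\mu(z_{\rho,r})-W_\mu(\widetilde u_{\rho,r}))$ plus the bulk term $2\int_{B_1}(\nabla\widetilde u_{\rho,r}\cdot X-\mu\widetilde u_{\rho,r})L_a\widetilde u_{\rho,r}$, rather than carrying a factor $n+a+2\mu-1$ itself.)
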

\cref{prop:rate} is a consequence of \cref{prop:monotonicity-weiss-smart} and the following lemma, which allows the use of the epiperimetric inequality at all sufficiently small scales. 
   \begin{lemma}\label{lemma:every-rescaled-fundamental} Let $u$ be a solution of \eqref{def:soluzione-estesa}, with obstacle $\vf$ satisfying \eqref{e:hypo-phi} and $\|u\|_{L^\infty(B_1)}\le 1$. Suppose that $0\in\Lambda_{2m+2s}(u)$ and $2m+2s< k+\gamma$. Then, using the notations in \eqref{notation:application-epi}, there exist some $\rho \in (0,1)$ and $p\in \mathcal{P}_{2m+2s}$ with $\lVert p\rVert_{L^{2}(\partial B_{1},|y|^{a})}=1$ for which the epiperimetric inequality in point i) of \cref{thm:epi} can be applied to the trace $\widetilde u_{\rho,r}|_{\partial B_1}$, for every $r\in(0,1)$.
\end{lemma}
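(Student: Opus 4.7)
The plan is to extract a blow-up $p\in\mathcal{P}_{2m+2s}$ of $\widetilde u$ at $0$ along a geometric sequence of scales, and to show that, for some small $\rho>0$, the contact set $\Lambda(u)$ contains the full solid cone over the compact set $K:=\{T[p]\ge\delta\}\cap\partial B_1'$, intersected with $B_\rho$. Since $\widetilde u_{\rho,r}|_{\partial B_1}$ is (up to a positive multiplicative constant) the trace of $\widetilde u_{\rho r}$, this cone property translates exactly into the required vanishing $\widetilde u_{\rho,r}\equiv 0$ on $K$ for every $r\in(0,1)$. Combined with the non-negativity and even symmetry inherited from $\widetilde u$, this enables the application of \cref{thm:epi}.

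Fix $\rho_0\in(0,1)$ and set $\rho_j:=\rho_0\cdot 2^{-j}$. By \cref{generalized-Almgren} combined with \cref{prop:zero-thin-space-(2m+2s)}, after extracting a subsequence, $\widetilde u_{\rho_j}\to p$ in $W^{1,2}_{\loc}(\R^{n+1},\y)$ and in $C^{1,\alpha}_{a,\loc}(\R^{n+1}_+)$ for some $p\in\mathcal{P}_{2m+2s}$ with $\|p\|_{L^2(\partial B_1,\y)}=1$. The crucial step is to upgrade this to convergence uniform across all nearby scales. For $t\in[1/2,1]$ one has $\widetilde u_{\rho_jt}(X)=c_j(t)\widetilde u_{\rho_j}(tX)$, where $c_j(t)$ is the ratio of the $L^2(\partial B_1,\y)$-norms of $\widetilde u(\rho_j\cdot)$ and $\widetilde u(\rho_jt\cdot)$. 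The $L^2$-convergence $\widetilde u_{\rho_j}(t\cdot)\to t^{2m+2s}p$, uniform in $t\in[1/2,1]$, forces $c_j(t)\to t^{-(2m+2s)}$ uniformly in $t$. Combining this with the locally uniform $C^{1,\alpha}_a$-convergence of $\widetilde u_{\rho_j}(t\cdot)$ to $t^{2m+2s}p$, one concludes $\widetilde u_{\rho_jt}\to p$ in $C^{1,\alpha}_{a,\loc}(\R^{n+1}_+)$, uniformly in $t\in[1/2,1]$.

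Next I would use this uniform convergence to verify the cone property. By \eqref{e:definition-of-T}, $\lim_{y\downarrow 0}y^a\partial_y p(x,0)=-2sT[p](x)\le-2s\delta$ for every $x\in K$. Since $C^{1,\alpha}_a$ convergence controls $y^a\partial_y(\cdot)$ in $C^\alpha$ up to the thin space, for $j\ge j_0$ large enough and uniformly in $t\in[1/2,1]$ one obtains
$$\lim_{y\downarrow 0}y^a\partial_y\widetilde u_{\rho_jt}(x,0)\le -s\delta\quad\mbox{for every } x\in K.$$
On the other hand, the rescaled version of \eqref{def:soluzione-estesa-tilde} satisfied by $\widetilde u_{\rho_jt}$ has no singular part on the non-contact portion of the thin space (the right-hand side is absolutely continuous with respect to Lebesgue measure), so $\lim_{y\downarrow 0}y^a\partial_y\widetilde u_{\rho_jt}=0$ there. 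Hence $K\subset\Lambda(\widetilde u_{\rho_jt})$. Since every $\sigma\in(0,\rho_{j_0}]$ can be written as $\sigma=\rho_jt$ for some $j\ge j_0$ and $t\in[1/2,1]$, taking $\rho:=\rho_{j_0}$ yields $K\subset\Lambda(\widetilde u_\sigma)$ for every $\sigma\in(0,\rho]$, which is exactly the desired property.

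The main obstacle is the upgrade from convergence along the single sequence $\{\rho_j\}$ to a convergence uniform across all nearby scales; the geometric choice $\rho_{j+1}=\rho_j/2$ is tailored so that the intervals $[\rho_{j+1},\rho_j]$ cover $(0,\rho_{j_0}]$ via the rescalings indexed by $t\in[1/2,1]$. A secondary point is the non-emptiness of $K$: since $p$ is non-zero and normalized in the finite-dimensional space $\mathcal{P}_{2m+2s}$, the polynomial $T[p]$ cannot vanish identically (otherwise $p=-|y|^{2s+2}p_1$ would be globally $L_a$-harmonic, hence zero by a Liouville argument as in the proof of \cref{prop:zero-thin-space-(2m+2s)}), and by compactness of the unit sphere in $\mathcal{P}_{2m+2s}$ the constant $\delta$ in \cref{thm:epi} may be taken small enough to guarantee $K\ne\emptyset$.
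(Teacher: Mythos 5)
Your route is genuinely different from the paper's, and several of its ingredients are sound: the identity $\widetilde u_{\rho_j t}=c_j(t)\,\widetilde u_{\rho_j}(t\cdot)$, the resulting uniform convergence over $t\in[1/2,1]$, and the deduction of $K\subset\Lambda(\widetilde u_{\rho_j t})$ from the strict negativity of $\lim_{y\downarrow 0}y^a\partial_y$ on $K$ (replacing the paper's explicit barrier) are all fine. The gap is at the very first step: \cref{generalized-Almgren} only gives $\widetilde u_{\rho_{j}}\to p$ along a \emph{subsequence} $\{\rho_{j_k}\}$ of your dyadic scales, whereas your covering $(0,\rho_{j_0}]=\bigcup_{j\ge j_0}[\rho_{j+1},\rho_j]$ requires the cone property on \emph{every} dyadic interval, i.e.\ convergence of the full sequence $\widetilde u_{\rho_j}$ to a single $p$. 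Uniqueness of the blow-up at $(2m+2s)$-points is precisely the conclusion of \cref{prop:rate}, which is proved \emph{using} this lemma, so it cannot be assumed here: a priori different subsequences may converge to different elements of $\mathcal P_{2m+2s}$ with different sets $\{T[p]\ge\delta\}$, and the intervals $[\rho_{j_k+1},\rho_{j_k}]$ leave uncovered gaps of unbounded relative length. This possible oscillation between scales is exactly the difficulty the lemma is meant to overcome, so the argument as written is close to circular.

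The paper resolves this by a continuation argument rather than by compactness alone: a barrier argument first shows that the vanishing property holds for all $r\in[r_0,1)$ with $r_0\le 1/3$ (using only $L^\infty$-closeness of $\widetilde u_\rho$ to a fixed $p$ at the top scale); then, assuming $r_0>0$, the epiperimetric inequality is applied on $(r_0,1)$ to produce a quantitative decay of the Weiss energy, hence (via \eqref{eq:weiss-argomento-diadico} and a dyadic summation) $L^\infty$-closeness of $\widetilde u_{\rho,r_0}$ to the \emph{same} $p$ at scale $r_0$, which allows the barrier step to be rerun and contradicts the minimality of $r_0$. Your argument would close if you could show quantitative closeness of $\widetilde u_\sigma$ to a fixed $p$ at all scales, but that is essentially the content of the lemma. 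A compactness--contradiction variant of your argument does yield the weaker statement with a scale-dependent $p_\sigma$ (still enough to invoke \cref{thm:epi}, whose constants are uniform in $p$), but that is not what you wrote, and it is not the statement of the lemma.
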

\begin{proof}
    We can take $\rho$ small enough so that, for some $p\in \mathcal{P}_{2m+2s}$ with $\lVert p\rVert_{L^{2}(\partial B_{1},|y|^{a})}=1$ we have
    $$\widetilde W_{\mu}(\widetilde{u}_{\rho})+C(\rho)\le \eps_{1},\quad \lVert \widetilde{u}_{\rho}-p\rVert_{L^{2}(\partial B_{1},|y|^{a})}\le \eps_{1},\quad \lVert \widetilde{u}_{\rho}-p\rVert_{L^{\infty}(B_{2})}\le \eps_{1},$$
    and
    $$|L_{a}\widetilde{u}_{\rho}|\le \eps_{1}\quad \mbox{in } B_{2}\setminus \{\widetilde{u}_{\rho}=0\}',$$
    where $\eps_{1}>0$ has to be chosen sufficiently small. Given $\delta>0$ as in \cref{thm:epi}, we define
    $$r_{0}:= \inf \{r\in (0,1): \widetilde{u}_{\rho,t}=0 \mbox{ on } \mathcal Z_{\delta}(p), \mbox{ for every } t\in (r,1)\}.$$

    \noindent \textbf{Step 1}.
    As a first step we prove that $r_{0}\le 1/3$, using a barrier argument as in \cite[Lemma B.3]{frs20} or \cite[Lemma 4.4]{cv24}. To do so, it is enough to show that if $Z=(z,0)\in B_{1}'\setminus B_{1/3}'$ is such that $T[p](z)\ge \delta/3^{2m}$, then $\widetilde{u}_{\rho}(Z)=0$. For every $c\ge 0$ we consider the barrier $$\phi_{c}(X):= |X-Z|^{2}-\left(\frac{n}{1+a}+2\right)y^{2}+c.$$
    Recalling the decomposition \eqref{decomposition_polynomials_2m+2s}, if $\eps_{1}=r_{1}^{2+2s}$, and $\eps_{1}$ is small enough, on $\partial B_{r_{1}}(Z)$, we can bound
    \begin{align*}
        \widetilde{u}_{\rho}(X)&\le p(X)+\eps_{1}=-|y|^{2s}\left(p_{0}(x)+y^{2}p_{1}(x,y)\right)+\eps_{1}\le -|y|^{2s}p_{0}(z)+Cr_{1}^{2+2s}+\eps_{1}\\
        &\le -C\delta |y|^{2s}+Cr_{1}^{2+2s}\le \phi_{c}(X),
    \end{align*}
    for every $c\ge 0$. Suppose by contradiction that there is $c_{*}>0$ for which $\phi_{c_{*}}$ touches $\widetilde{u}$ from above at some point $X_{*} \in \overline B_{r_{1}}(Z)$. Then, by the previous computation, $X_{*}\in B_{r_{1}}(Z)$. However, $X_{*}\not \in \{\widetilde{u}_{\rho}=0\}'$ since $\phi_{c_{*}}\ge c_{*}>0$ there. Hence $X_{*}\not\in  B_{r_{1}}(Z)\setminus \{\widetilde{u}_{\rho}=0\}'$, but then, $-L_{a}\phi_{c}(X_{*})=2>\eps_{1}\ge -L_{a}\widetilde{u}_{\rho}(X_{*})$ which is not possible as $\phi_{c_{*}}$ is touching $\widetilde{u}_{\rho}$ from above. Therefore, $\phi_{0}\ge \widetilde{u}_{\rho}$ on $B_{r_{1}}(Z)$, and in particular $\widetilde{u}_{\rho}(Z)=0$.   
   
   \smallskip
    \noindent \textbf{Step 2}.
    We assume that $r_{0}\in (0,1/3)$ and try to find a contradiction for $\eps_{1}$ sufficiently small. 
    For every $r\in (r_{0},1)$, let $\zeta_{\rho,r}$ be the competitor given by point i) of \cref{thm:epi} for the trace $\widetilde{u}_{\rho, r}|_{\partial B_{1}}$. If $\mu:=2m+2s$, the epiperimetric inequality gives 
    \begin{align*}
        \widetilde W_{\mu}(\widetilde{u}_{\rho,r})\le W_{\mu}(\zeta_{\rho,r})+C(\rho)r^{k+\gamma-\mu}\le  (1-\kappa)W_{\mu}(z_{\rho,r}) +C(\rho)r^{k+\gamma-\mu}.
    \end{align*}
    Then, \eqref{eq:monot-furba-weiss} yields
    \begin{align*}
        \frac{d}{dr}\left(\widetilde W_{\mu}(\widetilde{u}_{\rho,r})+C(\rho)r^{k+\gamma-\mu}\right)&\ge \frac{n+a+2\mu-1}{r}\left(W_{\mu}(z_{\rho,r})-\widetilde W_{\mu}(\widetilde{u}_{\rho,r})\right)\\
        &\ge \frac{\kappa}{1-\kappa}\frac{(n+a+2\mu-1)}{r}\widetilde W_{\mu}(\widetilde{u}_{\rho,r})-CC(\rho)r^{k+\gamma-\mu-1}.
    \end{align*}
    Integrating such inequality we get 
    $$\widetilde{W}_{\mu}(\widetilde{u}_{\rho,r})+C(\rho)r^{k+\gamma-\mu}\le C\eps_1r^{\alpha}\quad \mbox{for every } r\in (r_{0},1),$$
    for some $C,\alpha>0$.
    As a consequence, we may apply \eqref{eq:weiss-argomento-diadico} with a dyadic argument to obtain 
    $$\int_{\partial B_{1}}|\widetilde{u}_{\rho,r}-\widetilde{u}_{\rho}||y|^{a}\,d\HH^n\le C\eps_{1}^{1/2}\quad \mbox{for every } r\in (r_{0}/8, 1).$$
    Then, by triangular inequality, for every $r\in (r_{0}/8, 1)$, we will have
    \bea \int_{\partial B_{1}}|\widetilde{u}_{\rho,r}-p||y|^{a}\,d\HH^n&\le \int_{\partial B_{1}}|\widetilde{u}_{\rho,r}-\widetilde{u}_{\rho}||y|^{a}\,d\HH^n+\int_{\partial B_{1}}|\widetilde{u}_{\rho}-p||y|^{a}\,d\HH^n\\&\le C\eps_{1}^{1/2}+C\eps_{1}=:\eps_{2}.\eea
    Using the fact that $r_{0}\le 1/3$ we may compute
    \begin{align*}
        \int_{B_{2}\setminus B_{1/8}}|\widetilde{u}_{\rho,r_{0}}-p||y|^{a}\,d\HH^n&= \int_{1/8}^{2}\int_{\partial B_{t}}|\widetilde{u}_{\rho,r_{0}}-p||y|^{a}\,d\HH^n\,dt \\&= \int_{1/8}^{2}t^{n+a+\mu}\int_{\partial B_{1}}|\widetilde{u}_{\rho,r_{0}t}-p||y|^{a}
        \,d\HH^n\,dt\\&\le C\eps_{2}.
    \end{align*}
    Hence, by uniform $C^{1,s}_{a}$-bounds on the family $\{\widetilde{u}_{\rho,r}\}_{r\in (0,1)}$ and arguing as in \cite[Lemma 4.3]{cv24}, we deduce the $L^{\infty}$-estimate
    $$\lVert \widetilde{u}_{\rho,r_{0}}-p\rVert_{L^{\infty}(B_{3/2}\setminus B_{1/4})}\le \eps_{3}.$$
    Finally, up to chosing $\eps_{1}$ (and consequently $\eps_{2}$ and $\eps_{3}$) small enough, this $L^{\infty}$-bound is sufficient to run exactly the same argument of step 1 on $\widetilde{u}_{\rho,r_{0}}$, deducing that the epiperimetric inequality may be also applied to $\widetilde{u}_{\rho,r}|_{\partial B_{1}}$ for every $r\in (r_{0}/3,r_{0}]$, thus contradicting the definition of $r_{0}$.  
\end{proof}
We are now in the position to prove the main result of this section.
\begin{proof}[Proof of \cref{prop:rate}]
    Take $\rho\in (0,1)$ and $p\in \mathcal{P}_{2m+2s}$ a blow-up as in \cref{lemma:every-rescaled-fundamental}. We rename $\mu:= 2m+2s$. Arguing exactly as in step 2 of \cref{lemma:every-rescaled-fundamental}, applying the epiperimetric inequality at each scale, in combination with \eqref{eq:monot-furba-weiss}, we get
    $$\widetilde{W}_{\mu}(\widetilde{u}_{\rho,r})+C(\rho)r^{k+\gamma-\mu}\le Cr^{\alpha}\quad \mbox{for every }r\in (0,1),$$
    for some $C, \alpha>0$.
    Then, using \eqref{eq:weiss-argomento-diadico} with a dyadic argument, we obtain
    \begin{align*}
        \int_{\partial B_{1}}|\widetilde{u}_{\rho,r}-\widetilde{u}_{\rho,r'}||y|^{a}\,d\HH^n\le Cr^{\alpha/2}\quad\mbox{for every } 0<r'< r \le 1.
    \end{align*}
    Hence, up to multiplying $p\in \mathcal{P}_{2m+2s}$ and $\alpha$ by some constant, we get
    $$
    \int_{\partial B_{1}}|\widetilde{u}_{\rho,r}-p||y|^{a}\,d\HH^n\le Cr^{\alpha}\quad \mbox{for every } r\in (0,1),
    $$
    and arguing as in \cite[Lemma 7.2]{csv20}, one can prove that $p\not\equiv0$.
    Now, since $(\widetilde{u}_{\rho,r}-p)^{\pm}$ are subsolutions for $L_{a}$ (with a small right-hand side of order $Cr^{k+\gamma-\mu}$), we get the $L^{\infty}$-estimate
    $$\lVert \widetilde{u}_{\rho,r}-p\rVert_{L^{\infty}(B_{1/2})}\le Cr^{\alpha}\quad \mbox{for every } r\in (0,1).$$
    Finally, we see from \eqref{eq:riscritture-rescalings} that the latter can be re-written, up to multiplying $p$ with a positive constant,  as
    $$\lVert \widetilde{u}-p\rVert_{L^{\infty}(B_{r})}\le Cr^{2m+2s+\alpha}\quad \mbox{for every } r\in (0,\rho),$$
    and the theorem is proved.
\end{proof}
Once the rate of convergence to the blow-up limit is proved, the stratification of the contact set follows by standard arguments as in \cite{gp09,csv20, car24}.
\begin{proof}[Proof of \cref{cor:stratification}]
    For every $X_{0}\in \Lambda_{2m+2s}(u)$ let $p^{X_{0}}\in \mathcal{P}_{2m+2s}$ be the first blow-up at $X_{0}$. By \cref{prop:rate} we know that $p^{X_{0}}\not \equiv 0$ and
    $$\|\widetilde u(X_0+\cdot)-p^{X_0}\|_{L^\infty(B_r)}= O(r^{2m+2s+\alpha})\quad \mbox{as } r\downarrow 0.$$
    Calling $p_0^{X_0}:=T[p^{X_0}]$ the non-negative $2m$-homogeneous polynomial from \eqref{e:definition-of-T}, we define 
    $$d^{X_0}:=\text{dim}\{\xi\in\R^n:\nabla p_0^{X_0}\cdot \xi\equiv0 \mbox{ in } \R^n\}\in \{0,\dots,n-1\}.$$
    Then we set
    $$\Lambda_{2m+2s}^j(u):=\{X_0\in\Lambda_{2m+2s}(u):\ d^{X_0}=j\}\qquad \mbox{for } j\in \{0,\dots,n-1\}.$$
    Arguing as in \cite{csv20, car24}, using the rate of convergence to the blow-up, one can show that the function $X_0\mapsto p^{X_0}_{0}$ is locally $\alpha$-H\"{o}lder continuous. Then the standard Whitney's extension argument applies (see for instance \cite{gp09}) and gives that $\Lambda_{2m+2s}^{j}$ is locally contained in the zero set of a $C^{1,\alpha}$ function whose differential has $j$-dimensional kernel at each point of $\Lambda_{2m+2s}^{j}$. The proof is thus concluded by the implicit function theorem. 
\end{proof}

\section{Frequency gaps}\label{section-frequency_gaps}
This section is devoted to the proof of  the frequency gaps in \cref{teo:gaps}.
In \Cref{subsection:gap-odd} we prove the frequency gap around  $2m+2s$ using the epiperimetric inequalities of \cref{thm:epi}. In \Cref{subsection:explicit-gap} we exploit the monotonicity of eigenvalues for the spherical $L_{a}^{\mathbb{S}^{n}}$ operator with respect to domain inclusion to prove explicit gaps between $\N $ and $\N+2s$, and to show that the frequencies in $2\N+1$ and $2\N+1+2s$ are not admissible when $s\neq1/2$. Finally, in \Cref{subsection:prop:gap-fs24}, we extend the result \cite{fs24-gap} proving that the frequencies in $(2m,2m+2s)$ are not admissible.
\subsection{Frequency gap at $\boldsymbol{2m+2s}$}\label{subsection:gap-odd}
In the following proposition we show that $(2m+2s)$-frequencies are isolated.
\begin{proposition}[Frequency gap at $2m+2s$]\label{prop:gap-odd} 
   Let $\mathcal{A}_{n,s}$ be the set of admissible frequencies in dimension $n+1$ from \eqref{def:admissible-frequencies} and let 
$\mathcal{A}_{n,s}^{*}:= \mathcal{A}_{n,s}\setminus \mathcal{A}_{1,s}$ be the set of "other frequencies".
   Then, for every $m\in \N$, there exist constants $c^\pm_{n,s,m}>0$ depending only on $n$, $s$ and $m$, such that     $$\mathcal{A}_{n,s}^*\cap \left((2m+2s-c_{n,s,m}^-,2m+2s+c_{n,s,m}^+)\right)=\emptyset.$$ 
\end{proposition}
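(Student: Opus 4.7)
My plan is to argue by contradiction, pitting the minimization property of global homogeneous solutions against the epiperimetric inequalities of \cref{thm:epi}. Suppose there is a sequence $\lambda_j \in \mathcal{A}_{n,s}^*$ with $\lambda_j \to 2m+2s$ and $\lambda_j \neq 2m+2s$ (the value $2m+2s$ itself is excluded, being in $\mathcal{A}_{1,s}$). For each $j$, I pick a non-zero $\lambda_j$-homogeneous global solution $p_j$ of \eqref{def:soluzione-estesa} with zero obstacle, normalized so that $\|p_j\|_{L^2(\partial B_1, \y)} = 1$. By the compactness and regularity statements underlying \cref{generalized-Almgren}, up to a subsequence $p_j$ converges, strongly in $W^{1,2}_{\loc}(\R^{n+1}, \y)$ and uniformly on compact sets of $\R^{n+1}\setminus\{0\}$, to a non-trivial $(2m+2s)$-homogeneous global solution $p_\infty$; \cref{prop:zero-thin-space-(2m+2s)} then places $p_\infty \in \mathcal{P}_{2m+2s}$.

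The heart of the argument is a short computation followed by the epiperimetric inequality. Using $\lambda_j$-homogeneity, the identity $\nabla p_j \cdot X = \lambda_j p_j$, and the complementarity $p_j L_a p_j \equiv 0$, integration by parts yields $D(1, p_j) = \lambda_j H(1, p_j)$, and hence
$$W_{2m+2s}(p_j) = \big(\lambda_j - (2m+2s)\big)\, H(1, p_j) = \lambda_j - (2m+2s) \to 0.$$
I then verify the hypothesis \eqref{eq:quasizero} of \cref{thm:epi} for $c_j := p_j|_{\partial B_1}$ with $p = p_\infty$, namely that $c_j \equiv 0$ on $\{T[p_\infty] \geq \delta\} \cap \partial B_1'$ for $j$ large. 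This comes from re-running Step~1 of the proof of \cref{lemma:every-rescaled-fundamental} with $p_j$ in place of $\widetilde{u}_\rho$: the required $L^\infty$ closeness $\|p_j - p_\infty\|_{L^\infty(B_{r_1}(Z))} \leq \eps_1$ at each $Z \in \partial B_1'$ with $T[p_\infty](Z) \geq \delta$ is furnished by the uniform $C^{1,\alpha}_a$ estimates and homogeneity, and the barrier $\phi_c(X)=|X-Z|^2-(n/(1+a)+2)y^2+c$ still works since $-L_a p_j = 0$ off the contact set. Finally, recall from \Cref{section-prelim} that, for zero obstacle, $p_j$ minimizes $W_{2m+2s}$ among even-in-$y$ competitors that are nonnegative on $B_1'$ and have trace $c_j$ on $\partial B_1$ (the minimization is in fact $\lambda$-independent because $H(1,\cdot)$ is fixed by the trace). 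If $\lambda_j > 2m+2s$, inequality \eqref{eq:epi} produces $\zeta_j$ with $W_{2m+2s}(\zeta_j) \leq (1-\kappa) W_{2m+2s}(p_j)$, while minimality gives $W_{2m+2s}(p_j) \leq W_{2m+2s}(\zeta_j)$, contradicting $W_{2m+2s}(p_j) > 0$. If $\lambda_j < 2m+2s$, then for $j$ large both $|W_{2m+2s}(p_j)|$ and $\|c_j - p_\infty\|_{L^2(\partial B_1, \y)}$ fall below the threshold $\eps$ of \cref{thm:epi}, and \eqref{eq:epi-neg} yields $\hat\zeta_j$ with $W_{2m+2s}(\hat\zeta_j) \leq W_{2m+2s}(p_j) - |W_{2m+2s}(p_j)|^2$; minimality then forces $|W_{2m+2s}(p_j)|^2 \leq 0$, again a contradiction. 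The two cases supply $c^{+}_{n,s,m}$ and $c^{-}_{n,s,m}$.

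The main obstacle I foresee is the barrier step, i.e.\ propagating the exact vanishing of $c_j$ onto $\{T[p_\infty] \geq \delta\} \cap \partial B_1'$: this requires uniform $L^\infty$ control of $p_j - p_\infty$ on a fixed annular neighborhood of the thin space, which rests on the fact that the $C^{1,\alpha}_a$ estimates for homogeneous global solutions are stable as $\lambda_j$ varies over a bounded interval around $2m+2s$. Once this stability is secured, the remainder of the argument is essentially immediate from \eqref{eq:epi}, \eqref{eq:epi-neg} and the minimizing character of $p_j$.
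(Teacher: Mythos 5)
Your proposal is correct and follows essentially the same route as the paper's proof: compactness produces a limit blow-up in $\mathcal{P}_{2m+2s}$, condition \eqref{eq:quasizero} is verified, and the two epiperimetric inequalities of \cref{thm:epi} are played against the minimality of the homogeneous solution, the two signs of $\lambda_j-(2m+2s)$ supplying $c^+_{n,s,m}$ and $c^-_{n,s,m}$. The one imprecision is that \eqref{eq:epi} and \eqref{eq:epi-neg} bound the energy of the competitor by $W_{2m+2s}(z_j)$ with $z_j=r^{2m+2s}c_j(\theta)$ the $(2m+2s)$-homogeneous extension of the trace, not by $W_{2m+2s}(p_j)$ itself; the paper reconciles the two via the second identity of \cref{lemma:gap}, which only introduces a factor $1+O(\lambda_j-2m-2s)$ and leaves both contradictions intact.
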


Before giving the proof of \cref{prop:gap-odd}, we recall the following lemma which comes from a straightforward computation.
\begin{lemma}[\protect{\cite[Lemma 2.11]{car24}}]
\label{lemma:gap}
    	Let $c\in W^{1,2}(\partial B_1,\y)$ be such that $r^{\mu+t}c$ is a solution of \eqref{def:soluzione-estesa} with zero obstacle, then 
		\begin{equation*}\label{terza}
			W_\mu(r^{\mu+t}c)=t\| c\|_{L^2(\partial B_1,\y)}^2
		\quad\mbox{and}\quad
			W_\mu(r^{\mu}c)=\left(1+\frac{t}{n+a+2\mu-1}\right)W_\mu(r^{\mu+t}c).
		\end{equation*}
	\end{lemma}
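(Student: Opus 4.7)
The plan is to prove both identities by direct computation, exploiting the homogeneity of $c$ together with an integration by parts afforded by the fact that $v:=r^{\mu+t}c$ solves the thin obstacle problem with zero obstacle.

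For the first identity, I would start by noting that at $r=1$ we have $W_\mu(v)=D(1,v)-\mu H(1,v)$, and that $H(1,v)=\int_{\partial B_1}|c|^2\y\,d\HH^n=\|c\|_{L^2(\partial B_1,\y)}^2$. The crucial observation is that, since $v$ is a solution of \eqref{def:soluzione-estesa} with zero obstacle, we have $v L_a v \equiv 0$: indeed $L_av$ is supported on the contact set $\{v=0\}\cap\{y=0\}$, and $v$ vanishes there. Therefore, the weighted integration-by-parts formula yields
$$D(1,v)=\int_{\partial B_1}v\,(\nabla v\cdot\nu)\,\y\,d\HH^n-\int_{B_1}v L_a v=\int_{\partial B_1}v\,(\nabla v\cdot X)\,\y\,d\HH^n.$$
Since $v$ is $(\mu+t)$-homogeneous, Euler's identity gives $\nabla v\cdot X=(\mu+t)v$, so $D(1,v)=(\mu+t)H(1,v)$ and hence
$$W_\mu(v)=\big((\mu+t)-\mu\big)H(1,v)=t\,\|c\|_{L^2(\partial B_1,\y)}^2,$$
proving the first identity.

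For the second identity, the idea is to compute $D(1,r^\mu c)$ by separating radial and angular variables. Writing $X=(r,\theta)$ with $\theta\in\partial B_1$ and noting that $|y|^a=r^a|\theta_{n+1}|^a$, a straightforward polar decomposition of $|\nabla(r^\mu c)|^2=\mu^2 r^{2\mu-2}c^2+r^{2\mu-2}|\nabla_\theta c|^2$ gives
$$D(1,r^\mu c)=\frac{1}{n+a+2\mu-1}\int_{\partial B_1}\bigl(\mu^2 c^2+|\nabla_\theta c|^2\bigr)\y\,d\HH^n,$$
and analogously for $v=r^{\mu+t}c$ with $\mu$ replaced by $\mu+t$ in the factor in front and in the $\mu^2$ term. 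Combining the first identity (which we already know: $D(1,v)=(\mu+t)H(1,v)$) with its polar-coordinate expression, one extracts
$$\int_{\partial B_1}|\nabla_\theta c|^2\y\,d\HH^n=(\mu+t)(\mu+t+a+n-1)\,\|c\|_{L^2(\partial B_1,\y)}^2,$$
which is simply the eigenvalue relation $\lambda(\mu+t)=(\mu+t)(\mu+t+a+n-1)$.

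Substituting this expression for the angular Dirichlet integral back into the formula for $D(1,r^\mu c)$ and subtracting $\mu H(1,r^\mu c)$ reduces the computation to the algebraic identity
$$(\mu+t)(\mu+t+a+n-1)-\mu(\mu+a+n-1)=t\bigl(n+a+2\mu-1+t\bigr),$$
which gives $W_\mu(r^\mu c)=t\bigl(1+\tfrac{t}{n+a+2\mu-1}\bigr)\|c\|_{L^2(\partial B_1,\y)}^2$. Comparing with the first identity yields the second. There is no real obstacle here — the only point requiring care is justifying the vanishing of the boundary contribution $\int v L_a v$ from the Signorini condition; the rest is bookkeeping in polar coordinates.
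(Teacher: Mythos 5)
Your proof is correct, and it is essentially the computation the paper has in mind: the paper does not reprove this lemma but quotes it from \cite[Lemma 2.11]{car24} as a ``straightforward computation'', which is exactly your argument — the identity $\int_{B_1} vL_av=0$ (since the measure $L_av$ is supported on $\{v=0\}'$ where the continuous function $v$ vanishes) plus Euler's relation gives $D(1,v)=(\mu+t)H(1,v)$ and hence the first formula, and the polar decomposition with the eigenvalue relation $\int_{\partial B_1}|\nabla_\theta c|^2\y\,d\HH^n=(\mu+t)(n+a+\mu+t-1)\|c\|^2_{L^2(\partial B_1,\y)}$ yields the second. Your algebra checks out, so no gaps to report.
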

\begin{proof}[Proof of \cref{prop:gap-odd}] 
The proof is based on a compactness argument with the epiperimetric inequalities in \cref{thm:epi}, combined with \cref{lemma:gap}. Call $\mu:= 2m+2s$ and suppose by contradiction that there is a sequence $t_{k}\to 0$ and homogeneous solutions $u_{k}=r^{\mu+t_{k}}c_{k}(\theta)$ of \eqref{def:soluzione-estesa} with zero obstacle, and $\lVert c_{k}\rVert_{L^{2}(\partial B_{1},|y|^{a})}=1$. Then, up to subsequences, $u_{k}$ converges in $C^{1,\alpha}_{a}(B_{1/2}^{+})$ to some $p\in \mathcal{P}_{2m+2s}$. In particular, for $k$ large enough, $u_{k}$ satisfies all the hypotheses of \cref{thm:epi}. Up to extracting a further subsequence, we may assume that either $t_{k}\downarrow 0$ or $t_{k}\uparrow 0$.

\textit{Case a): }($t_{k}\downarrow 0$). Let $\zeta_{k}$ be the competitor in \eqref{eq:epi}. Since $u_{k}$ solves \eqref{def:soluzione-estesa}, it minimizes $W_{\mu}$ among all functions with its same trace on the unit sphere. Using also \cref{lemma:gap} we get
\begin{align*}
    t_{k}= W_{\mu}(r^{\mu+t_{k}}c_{k})\le W_{\mu}(\zeta_{k})\le (1-\kappa)W_{\mu}(r^{\mu}c_{k})=(1-\kappa)(1+Ct_{k})t_{k}.
\end{align*}
Hence, since $t_{k}>0$,
$$1\le (1-\kappa)+O(t_{k}),$$
which is a contradiction for $k$ large.

\textit{Case b): }($t_{k}\uparrow 0$). Let $\hat{\zeta}_{k}$ be the competitor in \eqref{eq:epi-neg}. Arguing as above we get
\begin{align*}
    t_{k}= W_{\mu}(r^{\mu+t_{k}}c_{k})\le W_{\mu}(\hat{\zeta}_{k})\le (1+|W_{\mu}(r^{\mu}c_{k})|)W_{\mu}(r^{\mu}c_{k})=(1-(1+Ct_{k})t_{k})(1+Ct_{k})t_{k},
\end{align*}
where $C= 1/(n+a+2\mu-1)<1$. Hence, since $t_{k}<0$,
$$0\ge -t_{k}(1-C)+O(t_{k}^{2}),$$
which is again a contradiction for $k$ large.
\end{proof}
\subsection{Frequency gaps for $\boldsymbol{s\neq 1/2}$}\label{subsection:explicit-gap}
For any closed cone $\Lambda \subset \{y=0\}$ and any $\alpha \ge 0$ we denote by $V_{\alpha}^{a}(\Lambda)$ the vector space of $\alpha$-homogeneous solutions $u\in W^{1,2}_{\loc}(\R^{n+1}, \y)$ of the problem

\begin{equation*}\label{problema:La-harmonic-with-boundary-conditions}
    \left\{
        \begin{array}{rclll}
             -L_{a}u&=&0&\quad \mbox{in } \R^{n+1}\setminus \Lambda,\\
             u&=&0&\quad \mbox{on } \Lambda,\\
             u(x,y)&=&u(x,-y)&\quad \mbox{for } (x,y)\in \R^{n+1}.
        \end{array}
        \right.
\end{equation*}
\begin{remark}\label{remark:derivative-dualproblem}
    Notice that if $u$ is a global $\alpha$-homogeneous solution of the thin obstacle problem \eqref{def:soluzione-estesa} with zero obstacle, then $u\in V_{\alpha}^{a}(\Lambda(u))$. In addition, by the regularity theory for \eqref{def:soluzione-estesa}, calling $v$ the even extension of $y^{a}\partial_{y}u$ (defined on $\R^{n+1}_{+}$) to the whole $\R^{n+1}$, we have that $v\in V_{\alpha-2s}^{-a}(\{y=0\}\setminus \Lambda(u))$.
\end{remark}
Observe that $u\in V_{\alpha}^{a}(\Lambda)$ if and only if the trace of $u$ on $\mathbb{S}^{n}$ is an eigenfunction of the spherical $-L_{a}^{\mathbb{S}^{n}}$ operator (see \Cref{subsection:epiperimetric}) with Dirichlet boundary conditions on $\mathbb{S}^{n}\cap \Lambda$ and corresponding eigenvalue $\lambda(\alpha)=\alpha(n+a+\alpha-1)$, i.e.
\begin{equation*}
    \left\{
        \begin{array}{rclll}
             -L_{a}^{\mathbb{S}^{n}}u&=&\lambda(\alpha)u|y|^{a}&\quad \mbox{in } \mathbb{S}^{n}\setminus \Lambda,\\
             u&=&0&\quad \mbox{on } \mathbb{S}^{n}\cap\Lambda,\\
             u(x,y)&=&u(x,-y)&\quad \mbox{for } (x,y)\in \mathbb{S}^{n}.
        \end{array}
        \right.
\end{equation*}
Let us denote by $\{\lambda_{j}^{a}(\Lambda)\}_{j=1}^{\infty}$ the eigenvalues of the operator $-L_{a}^{\mathbb{S}^{n}}$ (in increasing order and counted with multiplicity) in the Hilbert space of Sobolev functions in $\mathbb{S}^{n}$ which are even in $y$ and vanish on $\mathbb{S}^{n}\cap \Lambda$.
\begin{remark}\label{remark:monotonicity-eigenvalues}
    It is a standard consequence of min-max formulas for eigenvalues of Dirichlet elliptic operators that all eigenvalues are monotone with respect to domain inclusion. Precisely, if $\Lambda \subseteq \widetilde{\Lambda}\subset \{y=0\}$ are two closed cones, then $\lambda_{j}^{a}(\Lambda)\le \lambda_{j}^{a}(\widetilde{\Lambda})$ for every $j\ge 1$. Moreover, one may prove by unique continuation arguments that if equality holds for some $j$, then $\widetilde{\Lambda}\setminus \Lambda$ has empty interior in the relative topology of $\{y=0\}$.
\end{remark}
In the following lemma we give a characterization and explicitly compute the dimension of the spaces $V_{\alpha}^{a}(\Lambda)$ in the particular cases when $\Lambda =\emptyset$ or $\Lambda= \{y=0\}$.
\begin{lemma}\label{lem:eigenspaces-empty-full}
    The set $V_{\alpha}^{a}(\emptyset)\neq \{0\}$ if and only if $\alpha \in \N$. Similarly, $V_{\alpha}^{a}(\{y=0\})\neq\{0\}$ if and only if $\alpha \in \N +2s$.  Moreover, for every $k\in \N$:
    \begin{gather*}
        V_{k}^{a}(\emptyset)=\{p: \mbox{ $p$ polynomial} , \ \nabla p \cdot X=kp, \ p(x,y)=p(x,-y), \ L_{a}p =0\},\\
        V_{k+2s}^{a}(\{y=0\})=\{|y|^{2s}p: \mbox{ $p$ polynomial}, \ \nabla p \cdot X=kp,  \ p(x,y)=p(x,-y), \ L_{a}(\sgn(y)|y|^{2s}p)=0\}.        
    \end{gather*}
    In particular:
    $$\dim V_{k}^{a}(\emptyset)=\dim V_{k+2s}^{a}(\{y=0\})=\binom{n+k-1}{n-1}.$$
\end{lemma}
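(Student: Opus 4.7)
\smallskip
\noindent \textit{Plan.} The two statements are linked by the duality from \cref{remark:derivative-dualproblem}: if $u \in V^{a}_{\alpha}(\{y=0\})$, then the even extension $v$ of $y^{a}\partial_{y}u|_{\{y>0\}}$ belongs to $V^{-a}_{\alpha-2s}(\emptyset)$. My strategy is to treat $V^{a}_{\alpha}(\emptyset)$ first by direct means, and then to transfer the information to $V^{a}_{\alpha}(\{y=0\})$ through this correspondence.

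\smallskip
For $V^{a}_{\alpha}(\emptyset)$, any element $u$ satisfies $L_{a}u=0$ pointwise on $\{y\neq 0\}$, and even symmetry in $y$ upgrades this to a global weak equation on $\R^{n+1}$ (the $A_{2}$-weight $|y|^{a}$ makes the thin-space extension routine). Being $\alpha$-homogeneous, $u$ has polynomial growth, so \cref{prop:Liouville} forces $u$ to be a polynomial, which in turn gives $\alpha = k \in \N$. Conversely, given a $k$-homogeneous polynomial $q(x)$, the unique even $L_{a}$-harmonic extension produced by \cref{lemma:unique-extension-polynomial-aHarmonic} lives in $V^{a}_{k}(\emptyset)$. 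The map $q\mapsto\widetilde q$ is an isomorphism from the space of $k$-homogeneous polynomials in $n$ variables onto $V^{a}_{k}(\emptyset)$: injectivity is clear since $\widetilde q|_{y=0}=q$, and surjectivity follows because any $u \in V^{a}_{k}(\emptyset)$ is the even $L_{a}$-harmonic extension of its own trace, which is a $k$-homogeneous polynomial in $x$. Counting monomials yields the stated form and the dimension $\binom{n+k-1}{n-1}$.

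\smallskip
For $V^{a}_{\alpha}(\{y=0\})$, apply the duality. The map $u\mapsto v$ is injective: $v\equiv 0$ forces $\partial_{y}u\equiv 0$ on $\{y>0\}$, and combined with $u|_{y=0}=0$ this yields $u\equiv 0$. An inverse can be built explicitly: given $v\in V^{-a}_{k}(\emptyset)$ with $k:=\alpha-2s$, write the expansion $v(x,y)=\sum_{j\ge 0}y^{2j}v_{j}(x)$ (even in $y$ with $v_{j}$ a polynomial in $x$) and set $u(x,y):=\int_{0}^{|y|}t^{-a}v(x,t)\,dt$; using $a=1-2s$, the integration produces $u(x,y)=|y|^{2s}p(x,y)$ with $p(x,y)=\sum_{j}\frac{y^{2j}}{2j+2s}v_{j}(x)$, which is a polynomial, $k$-homogeneous and even in $y$. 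Hence $V^{a}_{\alpha}(\{y=0\})\neq\{0\}$ iff $\alpha-2s\in\N$, and $\dim V^{a}_{k+2s}(\{y=0\})=\dim V^{-a}_{k}(\emptyset)=\binom{n+k-1}{n-1}$. The explicit condition $L_{a}(\sgn(y)|y|^{2s}p)=0$ on $\R^{n+1}$ is the distributional translation of $L_{a}u=0$ on $\{y\neq 0\}$: one verifies that $L_{a}(\sgn(y)u)=\sgn(y)L_{a}u$ pointwise on $\{y\neq 0\}$, and the vanishing $u|_{y=0}=0$ guarantees that $\partial_{y}(\sgn(y)u)$ produces no singular contribution along the thin space.

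\smallskip
The step that requires genuine care is the duality bookkeeping: checking that the integral inverse lands in $V^{a}_{\alpha}(\{y=0\})$ with the precise factorization $|y|^{2s}p$ for a polynomial $p$, and that the pointwise harmonicity away from $\{y=0\}$ is exactly the global distributional condition on $\sgn(y)|y|^{2s}p$. Once this dictionary is set, both the characterizations and the dimension counts are direct consequences of \cref{prop:Liouville}, \cref{lemma:unique-extension-polynomial-aHarmonic} and elementary monomial counting.
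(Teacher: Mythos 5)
Your proposal is correct, and for the second space it takes a genuinely different route from the paper. For $V_{k}^{a}(\emptyset)$ the two arguments are essentially the same: both reduce to \cref{prop:Liouville} and then parametrize the space by a single $k$-homogeneous polynomial in $x$ — you do this via the trace and \cref{lemma:unique-extension-polynomial-aHarmonic}, while the paper writes $v=\sum_{\ell}y^{2\ell}p_{\ell}(x)$ and observes that the recursion $-\Delta_{x}p_{\ell}=4(\ell+1)(\ell+1-s)p_{\ell+1}$ determines everything from $p_{0}$ (which is exactly the trace). For $V_{k+2s}^{a}(\{y=0\})$, however, the paper does \emph{not} use the conjugate-equation duality: it applies \cref{prop:odd-extension} to $u|_{\{y>0\}}$ (legitimate because $u$ vanishes on the thin space), so that $\sgn(y)u$ is globally $L_{a}$-harmonic, and then part ii) of \cref{prop:Liouville} directly yields $u=|y|^{2s}p$; the dimension then comes from the analogous recursion on the coefficients of $p$. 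Your derivative/antiderivative correspondence with $V_{k}^{-a}(\emptyset)$ buys a structural explanation of why the two dimensions coincide (the binomial coefficient is weight-independent), and it is the same duality the paper exploits later in \cref{prop:explicit-frequency-gap}.

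Two small points deserve more care in your write-up. First, \cref{remark:derivative-dualproblem} is stated for global solutions of the thin obstacle problem, whereas an element of $V_{\alpha}^{a}(\{y=0\})$ need not satisfy the sign condition $-L_{a}u\ge 0$; the underlying fact you need — that the even extension of $y^{a}\partial_{y}u$ is a \emph{global} weak solution of $L_{-a}v=0$, with no singular contribution on $\{y=0\}$ — requires checking that $\lim_{y\downarrow 0}y^{-a}\partial_{y}v=-\lim_{y\downarrow 0}\Delta_{x}u=0$, which follows from $u\equiv 0$ on the thin space and tangential regularity, but is not literally covered by the remark. Second, you assert but do not verify that the antiderivative $u(x,y)=\int_{0}^{|y|}t^{-a}v(x,t)\,dt$ satisfies $L_{a}u=0$ off the thin space; this is a two-line computation ($\Delta_{x}u=-\int_{0}^{|y|}\partial_{t}(t^{-a}\partial_{t}v)\,dt=-|y|^{-a}\partial_{y}v$, the boundary term at $t=0$ vanishing because $v$ is an even polynomial), and it should be recorded since you yourself flag it as the step requiring care. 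With these verifications supplied, the argument is complete.
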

\begin{proof}
    The characterizations of the spaces $V_{k}^{a}(\emptyset)$ and $V_{k+2s}^{a}(\{y=0\})$ are direct consequences of Liouville-type theorem  \cref{prop:Liouville} with \cref{prop:odd-extension}. To compute their dimension, calling $m:= \lfloor k/2\rfloor$, we notice that two general elements $v\in V_{k}^{a}(\emptyset), w\in V_{k+2s}^{a}(\{y=0\})$ take the form
    $$v(x,y)= \overunderset{m}{\ell=0}{\sum}y^{2\ell}p_{\ell}(x),\quad w(x,y)= |y|^{2s}\overunderset{m}{\ell=0}{\sum}y^{2\ell}q_{\ell}(x),$$
    where, for every $\ell \in \{0,\dots, m\}$, $p_{\ell}$ and $q_{\ell}$ are $(k-2\ell)$-homogeneous polynomials in $\R^{n}$ and
    $$\begin{cases}
        -\Delta_{x}p_{\ell}=4(\ell+1)(\ell+1-s)p_{\ell+1},\\
        -\Delta_{x}q_{\ell}=4(\ell+1)(\ell+1+s)q_{\ell+1},
    \end{cases}\qquad \text{for every } \ell \in \{0,\dots, m-1\}.$$
    In particular, $p_{1},\dots,p_{m}$ and $q_{1},\dots,q_{m}$ are uniquely determined once the $k$-homogeneous polynomials $p_{0}$ and $q_{0}$ are chosen. This proves that $V_{k}^{a}(\emptyset)$ and $V_{k+2s}^{a}(\{y=0\})$ have both the same dimension of the space of $k$-homogeneous polynomials in $\R^{n}$, as desired.
\end{proof}
 As a consequence of \cref{remark:monotonicity-eigenvalues} and \cref{lem:eigenspaces-empty-full} we deduce the following lemma.
\begin{lemma}\label{lem:homog-fractional-solutions}
   Given a closed cone $\Lambda \subseteq \{y=0\}$ and $\alpha\ge 0$, if $V_{\alpha}^{a}(\Lambda)\neq \{0\}$, then $\alpha \in \bigcup_{k\in \N}[k,k+2s].$ 
\end{lemma}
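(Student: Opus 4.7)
The plan is to recast the hypothesis as a spectral statement for the tangential operator $-L_a^{\mathbb{S}^n}$ on even Sobolev functions of $\mathbb{S}^n$ with Dirichlet condition on $\mathbb{S}^n\cap\Lambda$, and then deduce the claim from the domain monotonicity of its eigenvalues (\cref{remark:monotonicity-eigenvalues}) combined with the explicit description of the spectrum in the two extreme cases $\Lambda=\emptyset$ and $\Lambda=\{y=0\}$ provided by \cref{lem:eigenspaces-empty-full}.

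The first step is to observe that a nonzero $u(r,\theta)=r^{\alpha} c(\theta)\in V_\alpha^a(\Lambda)$ corresponds to $c$ being a nonzero even eigenfunction associated with the eigenvalue $\lambda(\alpha):=\alpha(\alpha+n+a-1)$. Hence $\lambda(\alpha)=\lambda_{j_*}^a(\Lambda)$ for some index $j_*\ge1$, and the chain $\emptyset\subseteq\Lambda\subseteq\{y=0\}$ gives, via \cref{remark:monotonicity-eigenvalues},
$$\lambda_{j_*}^a(\emptyset)\;\le\;\lambda(\alpha)\;\le\;\lambda_{j_*}^a(\{y=0\}).$$

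The central step is the pairing of the endpoint eigenvalues with the \emph{same} integer. By \cref{lem:eigenspaces-empty-full}, the spectrum for $\emptyset$ consists of $\{\lambda(k)\}_{k\in\N}$ (strictly increasing in $k$, since $n+a>0$) with multiplicities $m_k:=\binom{n+k-1}{n-1}$, while the spectrum for $\{y=0\}$ consists of $\{\lambda(k+2s)\}_{k\in\N}$ with the \emph{same} multiplicities $m_k$. Ordering both spectra with multiplicity, the slot $j_*$ corresponds to a unique common integer $k=k_{j_*}\in\N$ characterised by $\sum_{i<k}m_i<j_*\le\sum_{i\le k}m_i$. In particular,
$$\lambda_{j_*}^a(\emptyset)=\lambda(k)\qquad\text{and}\qquad \lambda_{j_*}^a(\{y=0\})=\lambda(k+2s).$$

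Finally I would invert $\lambda$ to pass from $\lambda(k)\le\lambda(\alpha)\le\lambda(k+2s)$ to $\alpha\in[k,k+2s]$. Since $\lambda'(\alpha)=2\alpha+n+a-1$, the map $\lambda$ is strictly increasing on $[k,k+2s]$ whenever $2k+n+a-1\ge 0$, which always holds except in the corner case $(n,k)=(1,0)$ with $s>1/2$. In that isolated case one checks by hand (using $a=1-2s$) that $\alpha\ge 0$ together with $0\le\lambda(\alpha)\le 2s$ forces $\alpha\in\{0\}\cup[2s-1,2s]\subseteq[0,2s]$, so the conclusion persists. The principal (conceptual) obstacle of the argument is precisely the multiplicity-matching in the third paragraph: without the identity $\dim V_k^a(\emptyset)=\dim V_{k+2s}^a(\{y=0\})$ provided by \cref{lem:eigenspaces-empty-full}, one could not pair the two endpoint eigenvalues using a single integer $k$, and the sandwich would only yield $\alpha$ in a weaker union of intervals.
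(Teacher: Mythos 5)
Your proposal is correct and follows essentially the same route as the paper's proof: domain monotonicity of the Dirichlet eigenvalues of $-L_a^{\mathbb{S}^n}$ sandwiches $\lambda(\alpha)$ between $\lambda_{j}^a(\emptyset)$ and $\lambda_j^a(\{y=0\})$, and the equality of multiplicities $\dim V_k^a(\emptyset)=\dim V_{k+2s}^a(\{y=0\})$ from \cref{lem:eigenspaces-empty-full} pairs both endpoints with the same integer $k$ (the paper encodes your cumulative-count condition via the sequence $g(k)$). Your explicit verification that $\lambda$ can be inverted — including the corner case $n=1$, $s>1/2$, $k=0$ where $\alpha\mapsto\lambda(\alpha)$ is not monotone near $0$ — is a point the paper leaves implicit, and it checks out.
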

\begin{proof}
    Define the sequence $\{g(k)\}_{k\in \N}$ recursively as follows:
    $$g(0)=1,\qquad g(k+1)=g(k)+\binom{n+k-1}{n-1}\quad \text{for every } k\in \N.$$
    By \cref{lem:eigenspaces-empty-full} and the monotonicity of eigenvalues with respect to domain inclusion (see \cref{remark:monotonicity-eigenvalues}) we have, for every $k\in \N$, 
    $$\lambda(k)=\lambda_{j}^{a}(\emptyset)\le \lambda_{j}^{a}(\Lambda)\le \lambda_{j}^{a}(\{y=0\})=\lambda(k+2s)\qquad \text{for every $j\in \{g(k),\dots, g(k+1)-1\}$}.$$
    In particular, if $u\in V_{\alpha}^{a}(\Lambda)\neq \emptyset$, it means that $\lambda(\alpha)=\lambda_{j}^{a}(\Lambda)\in \bigcup_{k\in \N}[\lambda(k),\lambda(k+2s)]$ for some $j\ge 1$, that is $\alpha \in \bigcup_{k\in \N}[k,k+2s]$. 
\end{proof}
By \cref{lem:homog-fractional-solutions}, we obtain the following explicit frequency gaps.
\begin{proposition}[Explicit frequency gaps]\label{prop:explicit-frequency-gap}
    Let $\mathcal{A}_{n,s}$ be the set of admissible frequencies in dimension $n+1$ from \eqref{def:admissible-frequencies}. Then, for every $k\in \N$, the following hold:
    \begin{itemize}
        \item [i)] if $s<1/2$, then $\mathcal{A}_{n,s}\cap (k+2s, k+1) = \emptyset$;
        \smallskip
        \item[ii)] if $s>1/2$, then $\mathcal{A}_{n,s}\cap (k+1, k+2s) = \emptyset$.
    \end{itemize}   
\end{proposition}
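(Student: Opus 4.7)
The plan is to extract two constraints on any admissible frequency $\lambda$, one from the homogeneous solution itself and one from its weighted normal derivative, and then to intersect them. Fix $\lambda \in \mathcal{A}_{n,s}$ and pick a non-zero $\lambda$-homogeneous global solution $p$ of \eqref{def:soluzione-estesa} with zero obstacle. By \cref{remark:derivative-dualproblem}, $p \in V_\lambda^a(\Lambda(p))$, so \cref{lem:homog-fractional-solutions} gives immediately
\[ \lambda \in \bigcup_{k \in \N}[k,\, k+2s]. \]

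For case i), since $s < 1/2$ we have $2s < 1$, so the intervals $[k, k+2s]$ are pairwise disjoint and their complement in $[0, +\infty)$ consists exactly of the open gaps $(k+2s, k+1)$ for $k \in \N$. This already yields $\mathcal{A}_{n,s} \cap (k+2s, k+1) = \emptyset$ and disposes of case i) without any dual argument.

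For case ii), where the constraint above becomes vacuous (the intervals now cover $[0, +\infty)$), I would bring in the dual problem. Let $v$ be the even extension of $y^a \partial_y p$ from $\R^{n+1}_+$ to $\R^{n+1}$. A direct homogeneity computation using $a = 1-2s$ shows that $v$ is $(\lambda - 2s)$-homogeneous. If $v \not\equiv 0$, then by \cref{remark:derivative-dualproblem} one has $v \in V_{\lambda - 2s}^{-a}(\{y=0\} \setminus \Lambda(p))$, which is non-trivial. Applying \cref{lem:homog-fractional-solutions} with $a$ replaced by $-a$ (equivalently, with the fractional exponent $s$ replaced by $1-s$, since $-a = 1 - 2(1-s)$) then gives
\[ \lambda \in \bigcup_{k \in \N}[k+2s,\, k+2]. \]
Since $s > 1/2$, these intervals have length $2 - 2s < 1$, so their complementary gaps inside $[2s, +\infty)$ are precisely $(k+2,\,(k+1)+2s)$ for $k \in \N$, i.e.\ the intervals $(j+1, j+2s)$ for $j \ge 1$; moreover the region $(1, 2s)$ lies strictly below the first interval $[2s, 2]$ and is likewise excluded. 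If instead $v \equiv 0$, then by the representation formula \eqref{formula:y-derivative-on-thin-space} we get $L_a p = 0$ as a distribution on all of $\R^{n+1}$, so by the Liouville-type \cref{prop:Liouville}, $p$ is a polynomial, and by homogeneity $\lambda \in \N$; this is incompatible with $\lambda \in (k+1, k+2s)$ since $k+2s < k+2$.

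The main obstacle is rather mild: it amounts to the dichotomy $v \equiv 0$ versus $v \not\equiv 0$ in case ii), and to the legitimacy of applying \cref{lem:homog-fractional-solutions} after swapping $a \leftrightarrow -a$. The substantive content is already packaged inside \cref{lem:homog-fractional-solutions}, which in turn relies on the domain monotonicity of Dirichlet eigenvalues (\cref{remark:monotonicity-eigenvalues}) sandwiched between the two explicit endpoints $\Lambda = \emptyset$ and $\Lambda = \{y=0\}$ computed in \cref{lem:eigenspaces-empty-full}.
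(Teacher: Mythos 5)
Your proof is correct and takes essentially the same route as the paper: case i) is a direct application of \cref{lem:homog-fractional-solutions} to $p\in V_{\lambda}^{a}(\Lambda(p))$, and case ii) passes to the dual function $y^{a}\partial_{y}p$ via \cref{remark:derivative-dualproblem} and applies the same lemma with $a$ replaced by $-a$. Your explicit handling of the degenerate alternative $v\equiv 0$ (via \cref{prop:Liouville}) is a detail the paper leaves implicit, but it does not alter the argument.
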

\begin{proof}
    Let $u:\R^{n+1}\to \R$ be an $\alpha$-homogeneous non-zero solution of the thin obstacle problem \eqref{def:soluzione-estesa} with zero obstacle. Since $u\in V_{\alpha}^{a}(\Lambda(u))$, if $s<1/2$, \cref{lem:homog-fractional-solutions} immediately gives that $\alpha \not \in (k+2s,k+1)$ for any $k\ge 0$. If $s>1/2$, taking into account \cref{remark:derivative-dualproblem} we may apply \cref{lem:homog-fractional-solutions} to $v:= y^{a}\partial_{y}u \in V^{-a}_{\alpha-2s}\left(\{y=0\}\setminus \Lambda(u)\right)$ and deduce that 
    $\alpha-2s\not\in(k+2(1-s),k+1+2(1-s))$ for any $k\ge 0$, or equivalently
    $\alpha \not \in (k+2,k+1+2s)$ for any $k\ge 0$.
\end{proof}
Now we prove that for $s\neq 1/2$, the sign conditions on solutions of \eqref{def:soluzione-estesa} with zero obstacle
 are not compatible with homogeneities $2m+1$ and $2m+1+2s$.
 \begin{proposition}\label{prop:snot=1/2}
    Let $\mathcal{A}_{n,s}$ be the set of admissible frequencies in dimension $n+1$ from \eqref{def:admissible-frequencies}. Suppose that $s\neq 1/2$, then $$\mathcal{A}_{n,s} \cap \bigcup_{m\in \N}\{2m+1,2m+1+2s\}=\emptyset.$$
\end{proposition}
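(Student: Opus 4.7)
The plan is to combine the two-pronged strategy of Proposition \ref{prop:explicit-frequency-gap} (working directly with $u\in V_{\alpha}^{a}(\Lambda(u))$ when $s<1/2$ and with the dual $v=y^{a}\partial_{y}u\in V_{\alpha-2s}^{-a}(\{y=0\}\setminus\Lambda(u))$ coming from Remark \ref{remark:derivative-dualproblem} when $s>1/2$) with a sign observation: an odd-degree homogeneous polynomial on $\R^{n}$ with a definite sign everywhere must vanish identically. Fix a non-zero $\alpha$-homogeneous solution $u$ of \eqref{def:soluzione-estesa} with zero obstacle and $\alpha\in\{2m+1,\,2m+1+2s\}$; the goal is to deduce $u\equiv 0$.

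First I would apply Lemma \ref{lem:homog-fractional-solutions}. For $s<1/2$ the closed intervals $[k,k+2s]$ are pairwise disjoint (length $2s<1$), which forces $\alpha$ to be an endpoint: the left endpoint $k=2m+1$ when $\alpha=2m+1$, the right endpoint $k+2s$ with $k=2m+1$ when $\alpha=2m+1+2s$. For $s>1/2$, the same lemma applied to the dual $v$ with exponent $-a$ (intervals $[k,k+2(1-s)]$ of length $2(1-s)<1$) makes $\alpha-2s$ land at the right endpoint $(2m-1)+2(1-s)=2m+1-2s$ when $\alpha=2m+1$ and at the left endpoint $2m+1$ when $\alpha=2m+1+2s$. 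By Lemma \ref{lem:eigenspaces-empty-full}, left-endpoint eigenvalues are realized by polynomial eigenfunctions (no Dirichlet) and right-endpoint ones by $|y|^{2s}\times\mathrm{polynomial}$ eigenfunctions (full thin-space Dirichlet). The equality case in the domain monotonicity of eigenvalues (Remark \ref{remark:monotonicity-eigenvalues}) then produces a dichotomy governed by $\alpha$: if $\alpha=2m+1$ then $\Lambda(u)$ has empty interior in $\{y=0\}$, while if $\alpha=2m+1+2s$ then $\Lambda(u)=\{y=0\}$.

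In the first case, I would show that $u$ is globally $L_{a}$-harmonic. For $s<1/2$ this follows because $\{y=0\}$ has zero $L_{a}$-harmonic capacity ($a>0$), so the distribution $L_{a}u$ supported on $\Lambda(u)$ must vanish; for $s>1/2$ the same argument applied to the dual $v$ (with $-a>0$) makes $v$ globally $L_{-a}$-harmonic of homogeneity $2m+1-2s\notin\N$, and Liouville (Proposition \ref{prop:Liouville} together with Proposition \ref{prop:odd-extension}) yields $v\equiv 0$, whence $\partial_{y}u\equiv 0$ and $u$ is again globally $L_{a}$-harmonic. Lemma \ref{lem:eigenspaces-empty-full} then writes $u$ as a $(2m+1)$-homogeneous $L_{a}$-harmonic polynomial even in $y$, whose trace $u(\cdot,0)$ is a non-negative odd-degree polynomial on $\R^{n}$, hence identically zero; the recurrence $-\Delta_{x}p_{\ell}=4(\ell+1)(\ell+1-s)p_{\ell+1}$ coming from $L_{a}u=0$ (never degenerate for $s\in(0,1)$) propagates the vanishing through every homogeneous component, giving $u\equiv 0$.

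In the second case, $u\in V_{\alpha}^{a}(\{y=0\})$ and Lemma \ref{lem:eigenspaces-empty-full} writes $u=|y|^{2s}p(x,y)$ for a homogeneous polynomial $p$ of degree $2m+1$ even in $y$. A direct computation yields $\lim_{y\downarrow 0}y^{a}\partial_{y}u=2s\,p(\cdot,0)\le 0$, so the odd-degree polynomial $p(\cdot,0)$ has a definite sign on $\R^{n}$ and must vanish; the companion recurrence $-\Delta_{x}q_{\ell}=4(\ell+1)(\ell+1+s)q_{\ell+1}$ then propagates this to $p\equiv 0$ and hence $u\equiv 0$. The main technical hurdle will be the capacity–Liouville step for $s>1/2$ in the first case: here $\{y=0\}$ is no longer $L_{a}$-negligible, so one must detour through the dual $v$ and exploit its non-integer homogeneity to force $v\equiv 0$.
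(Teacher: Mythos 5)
Your overall architecture coincides with the paper's: the endpoint analysis of the intervals from \cref{lem:homog-fractional-solutions} (applied to $u$ when $s<1/2$ and to the dual $v=y^{a}\partial_{y}u$ from \cref{remark:derivative-dualproblem} when $s>1/2$), the equality case of domain monotonicity from \cref{remark:monotonicity-eigenvalues} yielding the dichotomy \lq\lq$\Lambda(u)$ has empty interior'' versus \lq\lq$\Lambda(u)=\{y=0\}$'', and the odd-homogeneity sign argument combined with the recurrences of \cref{lem:eigenspaces-empty-full}. Your treatment of the case $\alpha=2m+1+2s$ is exactly the paper's.

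In the case $\alpha=2m+1$, however, your justification that $u$ (resp.\ $v$) is globally $L_{a}$- (resp.\ $L_{-a}$-) harmonic is wrong. The thin space $\{y=0\}$ has codimension one and positive $L_{a}$-harmonic capacity for \emph{every} $a\in(-1,1)$: a one-dimensional computation with the weight $|y|^{a}$ shows that $\{y=0\}$ is non-polar precisely when $\int_{0}^{1}y^{-a}\,dy<\infty$, i.e.\ $a<1$, which always holds. (If it had zero capacity, the thin obstacle problem would be trivial and \cref{lem:homog-fractional-solutions} would collapse to $\mathcal{A}_{n,s}\subset\N$, contradicting the rest of your own argument; the sets of zero capacity relevant in this paper are the $(n-1)$-dimensional ones, and only for $s\le 1/2$.) Moreover, even a correct capacity statement would not suffice here: \lq\lq empty interior'' does not imply that the measure $L_{a}u$ supported on $\Lambda(u)$ vanishes, since a closed set with empty interior can carry positive $\mathcal{H}^{n}$-measure. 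The step must be closed as the paper does: by the $C^{1,s}_{a}$ regularity theory, the density $\lim_{y\downarrow0}y^{a}\partial_{y}u$ of the measure $L_{a}u$ in \eqref{formula:y-derivative-on-thin-space} is a continuous function on the thin space which vanishes on $\{y=0\}\setminus\Lambda(u)$; since $\Lambda(u)$ has empty interior this set is dense, so the density vanishes identically and $u\in V^{a}_{2m+1}(\emptyset)$ directly (this also makes your Liouville detour through $v$ for $s>1/2$ unnecessary). With this repair the remainder of your proof goes through.
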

    \begin{proof}
        Let $u$ be a non-zero $\alpha$-homogeneous solution of \eqref{def:soluzione-estesa} with zero obstacle, and let $v$ be the even extension of $y^{a}\partial_{y}u$ to the whole $\R^{n+1}$. By the regularity theory we know that $u\in V_{\alpha}^{a}(\Lambda(u))$, $v\in V_{\alpha-2s}^{-a}(\{y=0\}\setminus \Lambda(u))$ and both $u$ and $v$ are continuous. 
    
Suppose by contradiction that $\alpha =2m+1$ for some $m\in \N$. If $s<1/2$, by \cref{lem:homog-fractional-solutions} and the strict monotonicity of eigenvalues  (see \cref{remark:monotonicity-eigenvalues}) we deduce that $\Lambda(u)$ has empty interior in the relative topology of $\{y=0\}$. The same holds true for $s>1/2$ as one may show using the very same argument with $v$ in the place of $u$. Then, since $v$ is continuous and vanishes in $\{y=0\}\setminus \Lambda(u)$, it vanishes in the whole $\{y=0\}$, which means that $u\in V_{2m+1}^{a}(\emptyset)$. 
       By \cref{lem:eigenspaces-empty-full}, $u$ can be written as
        $$u(x,y)=\overunderset{m}{\ell=0}{\sum}y^{2\ell}q_{\ell}(x),$$
        where $q_{\ell}$ are $(2m+1-2\ell)$-homogeneous polynomials. In particular $0\le u(x,0)=q_{0}(x)$. Now, since $q_{0}$ has odd homogeneity, we deduce that $q_{0}\equiv 0$, which implies $u\equiv 0$, a contradiction.
        
        Similarly, if $\alpha=2m+1+2s$ for some $m\in \N$, then $u\in V_{2m+1+2s}^{a}(\{y=0\})$, and so $u$ takes the form
        $$u(x,y)=|y|^{2s}\overunderset{m}{\ell=0}{\sum}y^{2\ell}q_{\ell}(x),$$
        for some $(2m+1-2\ell)$-homogeneous polynomials $q_{\ell}$. As a consequence, 
        $0\ge \lim_{y\downarrow 0}y^{a}\partial_{y}u(x,y)= 2s q_{0}(x).$
        Having $q_{0}$ odd homogeneity, this implies that $q_{0}\equiv 0$, which in turn forces $u\equiv 0$, a contradiction again.
       \end{proof}
    
    \begin{remark}\label{remark:closeness-admissible-freq}
        The set of admissible frequencies $\mathcal{A}_{n,s}$ is closed in $\R$. This can be proved with a simple compactness argument based on regularity estimates for solutions of the thin obstacle problem \eqref{def:soluzione-estesa}. Given a sequence $\alpha_{j}\in \mathcal{A}_{n,s}$ such that $\alpha_{j}\to \alpha$, consider $\alpha_{j}$-homogeneous solutions $u_{j}$ of \eqref{def:soluzione-estesa} with zero obstacle. Assuming that $\lVert u_{j}\rVert_{L^{\infty}(B_{1})}=1$ for every $j$, by the regularity estimates for the thin obstacle problem, the sequence $\{u_{j}\}_{j\in \N}$ is uniformly bounded in $C^{1,s}_{a}(B_{1/2}^+)$, and thus uniformly converges, up to subsequences to a non-zero $\alpha$-homogeneous solution of \eqref{def:soluzione-estesa} with zero obstacle, thus showing $\alpha\in \mathcal{A}_{n,s}$. 
    \end{remark}
\subsection{Frequency gap in $\boldsymbol{(2m,2m+2s)}$}\label{subsection:prop:gap-fs24}
Now we prove that are no admissible frequencies in the interval $(2m,2m+2s)$, for $m\in\N$, following closely the argument presented in \cite{fs24-gap}. 
\begin{proposition}\label{prop:gap-fs24}
    Let $\mathcal{A}_{n,s}$ be the set of admissible frequencies in dimension $n+1$ from \eqref{def:admissible-frequencies}. Then, for every $m\in\N$, we have $$\mathcal {A}_{n,s}\cap (2m,2m+2s)=\emptyset.$$
\end{proposition}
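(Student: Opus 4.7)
The plan is to adapt the strategy of \cite{fs24-gap}, which establishes the analogous gap in the Signorini case $s=1/2$, to general $s \in (0,1)$.

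Argue by contradiction. Suppose there exists a non-zero $\alpha$-homogeneous solution $u$ of \eqref{def:soluzione-estesa} with zero obstacle and $\alpha \in (2m, 2m+2s)$. By closedness of $\mathcal{A}_{n,s}$ (\cref{remark:closeness-admissible-freq}), one may additionally assume that $\alpha$ realizes $\inf\bigl(\mathcal{A}_{n,s}\cap (2m,2m+2s)\bigr)$. Let $\Lambda := \Lambda(u)$ be the closed contact cone in $\{y=0\}$. Then $u \in V^{a}_{\alpha}(\Lambda)$ and, by \cref{remark:derivative-dualproblem}, the dual function $v := y^{a}\partial_{y}u$ (extended evenly across the thin space) lies in $V^{-a}_{\alpha-2s}(\{y=0\}\setminus\Lambda)$. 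Since $\alpha$ is strictly between $2m$ and $2m+2s$, combining the strict monotonicity of Dirichlet eigenvalues (\cref{remark:monotonicity-eigenvalues}) with the endpoint identifications of \cref{lem:eigenspaces-empty-full} forces both $\Lambda$ and its complement in $\{y=0\}$ to have nonempty interior, so $\partial\Lambda$ is a nontrivial $(n-1)$-dimensional free boundary in the thin space.

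The heart of the argument, following \cite{fs24-gap}, is a Hadamard-type domain-variation computation: perturb $\Lambda$ by a smooth compactly supported vector field on $\{y=0\}$ to obtain a family $\Lambda_{t}$, and simultaneously compute the first variations of the Dirichlet eigenvalues $\lambda^{a}(\Lambda_{t})$ and $\lambda^{-a}(\{y=0\}\setminus\Lambda_{t})$ governing the primal and dual problems. The trace identity \eqref{formula:y-derivative-on-thin-space} together with the Signorini sign constraints $u \ge 0$ and $\lim_{y\downarrow 0}y^{a}\partial_{y}u \le 0$ on $\{y=0\}$ fix the sign of the boundary integrals on $\partial\Lambda$ that appear in these variations. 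The goal is then to select a perturbation that strictly decreases $\lambda^{a}(\Lambda_{t})$ while preserving the sign constraints and the existence of an eigenfunction compatible with the obstacle structure, thereby producing an admissible frequency $\alpha' \in (2m,\alpha) \subset (2m, 2m+2s)$ and contradicting the minimality of $\alpha$.

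The main obstacle in extending the proof from $s=1/2$ to general $s$ is the asymmetric weighting between the primal and dual problems (weights $|y|^{a}$ versus $|y|^{-a}$). For $s=1/2$ both weights reduce to $1$ and the primal and dual problems are formally identical, so a single Hadamard formula controls the whole argument. For general $s \in (0,1)\setminus\{1/2\}$, the two variation formulas must be derived in genuinely weighted form, and the boundary integrals on $\partial\Lambda$ arising on the two sides must be reconciled using the precise boundary behavior of $|y|^{a}\partial_{y}u$ (rather than of $\partial_{y}u$) from the regularity theory of \cite{cs07}, together with a careful use of \eqref{formula:y-derivative-on-thin-space} and \cref{prop:odd-extension} to pair traces with the correct weight.
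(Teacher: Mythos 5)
Your proposal does not follow the strategy of \cite{fs24-gap} (which is the route the paper takes), and the plan you substitute for it has a gap at its central step. The actual argument in \cite{fs24-gap}, reproduced in the paper, is an explicit comparison-function computation: one constructs the $\alpha$-homogeneous, $x$-radial $L_a$-harmonic function $\widetilde p_\alpha = r^\alpha p_\alpha(\theta)$ via a one-dimensional ODE in the polar angle, proves by a Sturm-type zero-counting argument that $p_\alpha(\pi/2)\,\partial_\theta^a p_\alpha(\pi/2)<0$ precisely when $\alpha\in(2m,2m+2s)$, and then integrates by parts against $u$ so that the Signorini sign conditions $u\ge0$ and $\lim_{y\downarrow0}y^a\partial_y u\le0$ force $u\equiv0$ on the thin space. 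No domain variation or minimization over $\mathcal{A}_{n,s}$ is involved.

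The gap in your plan is the claim that a perturbation $\Lambda_t$ of the contact cone which strictly decreases the Dirichlet eigenvalue $\lambda^a(\Lambda_t)$ would "produce an admissible frequency $\alpha'\in(2m,\alpha)$." Membership in $\mathcal{A}_{n,s}$ requires a \emph{homogeneous solution of the obstacle problem}, i.e.\ a function satisfying both sign constraints on all of $\{y=0\}$, not merely a homogeneous $L_a$-harmonic function vanishing on some cone. An eigenfunction of the perturbed Dirichlet problem has no reason to satisfy $u\ge0$ on $\{y=0\}$ or $\lim_{y\downarrow0}y^a\partial_y u\le0$ off $\Lambda_t$ (and for a generic compactly supported perturbation $\Lambda_t$ is not even a cone, so the eigenfunction is not homogeneous). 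Hence no contradiction with the minimality of $\alpha$ is obtained. Note also that eigenvalue monotonicity alone cannot close this gap: by \cref{lem:eigenspaces-empty-full} and \cref{remark:monotonicity-eigenvalues} one only gets $\alpha\in\bigcup_k[k,k+2s]$ (this is exactly \cref{lem:homog-fractional-solutions}), and the interval $(2m,2m+2s)$ sits entirely inside the allowed band $[2m,2m+2s]$, so the extra input must come from the sign conditions — which is precisely what the explicit barrier $\widetilde p_\alpha$ and the sign of $p_\alpha(\pi/2)\partial_\theta^a p_\alpha(\pi/2)$ accomplish. As written, your "heart of the argument" remains a programme rather than a proof, and the programme itself would not yield the stated conclusion.
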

\begin{proof}
    We use polar coordinates $(r,\theta)\in (0,\infty)\times [0,\pi/2]$ such that $r=|X|$ and $ y=r\cos(\theta)$. For every $\alpha \ge 0$,
we consider the $\alpha$-homogeneous function $\widetilde{p}_{\alpha}(X)=r^{\alpha}p_\alpha(\theta)$, radial with respect to the $y$ axis, which solves $$L_a\widetilde p_\alpha=0\quad\text{in }\R^{n+1}_+\qquad\text{and}\qquad \widetilde p_\alpha(e_{n+1})=1.$$
Writing the operator $L_a$ in polar coordinates, we see that $p_\alpha$ is uniquely determined by solving the following ODE: 
\begin{equation}\label{eq:ODE-gap}
     p''_\alpha+\Big((n-1)\cot(\theta)-a\tan(\theta)\Big)p_\alpha'+\lambda(\alpha)p_\alpha =0 \quad \mbox{in } (0,\pi/2),\qquad  p_\alpha(0)=1, \quad p'_\alpha(0)=0.
\end{equation}
where $\lambda(\alpha)=\alpha(n+a+\alpha-1)$. 
Denoting by $\partial^{a}_{\theta} p_{\alpha}:=\cos(\theta)^{a}p'_{\alpha}$, we observe that \be\label{eq:gap0}\lim_{y\downarrow0}y^a\partial_y\widetilde p_\alpha(x,y)=-|x|^{\alpha-2s}\partial^{a}_{\theta}p_{\alpha}(\pi/2),\qquad \partial^{a}_{\theta}p_{\alpha}(\pi/2):=\lim_{\theta\uparrow \pi/2}\partial_\theta^ap_\alpha(\theta).\ee
We claim that 
\be\label{eq:claim-gap}p_\alpha(\pi/2) \partial_\theta^{a}p_\alpha(\pi/2)<0\quad\text{for every }\alpha\in(2m,2m+2s).\ee
Given the claim, we conclude as follows. Suppose $u$ is an $\alpha$-homogeneous solution of \eqref{def:soluzione-estesa} with zero obstacle and $\alpha\in(2m,2m+2s)$. Since $u$ and $\widetilde p_\alpha$ have the same homogeneity, integration by parts yields
$$0=\int_{B_1^+}uL_a\widetilde p_\alpha\,dX-\int_{B_1^+}\widetilde p_\alpha L_au\,dX=\int_{B_1'}\widetilde p_\alpha\lim_{y\downarrow 0}y^a\partial_yu\,d\HH^n-\int_{B_1'}u\lim_{y\downarrow 0}y^a\partial_{y}\widetilde p_\alpha\,d\HH^n.$$
By \eqref{eq:gap0} and \eqref{eq:claim-gap}, together with the sign conditions $u\ge0$ and $\lim_{y\downarrow 0}y^a\partial_yu\le0$ on $B_1'$, the latter forces $u\equiv\lim_{y\downarrow 0}y^a\partial_yu\equiv0$ on $B_1'$, which implies that $u\equiv0$.

Claim \eqref{eq:claim-gap} is proved arguing exactly as in \cite[Lemma 2]{fs24-gap}. First, by the explicit form of the ODE \eqref{eq:ODE-gap}, zeros of $p_\alpha$ and $\partial_{\theta}^{a}p_\alpha$ in $[0,\pi/2]$ must be isolated and intertwined. Secondly, by the strong maximum principle, between any two consecutive zeros of $p_\alpha$ there must be at least a zero of $p_{\alpha'}$, with $\alpha>\alpha'$, then the number of zeros of $p_\alpha$ is non-decreasing in $\alpha$.
Furthermore, $p_\alpha(\pi/2)=0$ if and only if $\alpha\in2\N+2s$, and $\partial_{\theta}^{a}p_\alpha(\pi/2)=0$ if and only if $\alpha\in2\N$, by the characterization of eigenfunctions on the half sphere (see \Cref{subsection:explicit-gap}).
By the continuity of the family $p_\alpha$ with respect to $\alpha$, the number of zeros
of $p_\alpha$ and $\partial_{\theta}^{a}p_{\alpha}$ in the interval $[0, \pi/2]$ remains constant as $\alpha$ ranges between $2m$ and $2m+2s$, and between $2m+2s$ and $2m$, increasing exactly by one each time $\alpha$ passes $2m$ or $2m+2s$. 
Hence $p_\alpha(\pi/2)$ is positive if and only if $\alpha\in(0,2s)\cup\bigcup_{m\in\N_{\ge1}}(4m-2+2s,4m+2s)$, while $\partial_{\theta}^ap_\alpha(\pi/2)$ is positive if and only if $\alpha\in\bigcup_{m\in\N_{\ge1}}(4m-2,4m)$, and this shows \eqref{eq:claim-gap}.
\end{proof}
\begin{proof}[Proof of \cref{teo:gaps}]
    It is enough to collect together \cref{prop:gap-odd}, \cref{prop:explicit-frequency-gap}, \cref{prop:snot=1/2}, \cref{remark:closeness-admissible-freq}, \cref{prop:gap-fs24} and \cite[Proposition 1.3]{car24}.
\end{proof}
\section{Dimension reduction arguments}\label{section-dim_reduc}
In this section we will estimate the Hausdorff dimension of the sets $\mathbf{\Gamma}_{\ge 2}\setminus \Gammab_{\ge k+\gamma}$, $\mathbf{\Gamma}_{*}$ and $\Gammab_{2}^{\rm a}$ by using dimension reduction arguments which take advantage of the classification of admissible frequencies in low dimension. In the case $s=1/2$ and $\vf \equiv 0$ we recover the dimensional bounds obtained in \cite{ft23}. Recall the definition of the map $\tau: \Gammab \to [0,1]$ in \cref{prop:continuity-tau}.
\subsection{Dimensional bound for $\boldsymbol{\Gammab_{\ge 2}\setminus \Gammab_{\ge k+\gamma}}$ and $\boldsymbol{\Gammab_{*}}$}
In this subsection we estimate the size of the sets $\mathbf{\Gamma}_{\ge 2}\setminus \Gammab_{\ge k+\gamma}$ and $\mathbf{\Gamma}_{*}$. 
The analogous result in the case $s=1/2$ and $\vf\equiv0$ was proved in \cite[Proposition 3.1]{ft23}.
\begin{proposition}\label{prop:dimension-reduction-ge2e*}
    Let $u:B_{1}\times [0,1]\to \R$ be a family of solutions to \eqref{def:soluzione-estesa}, \eqref{def:soluzione-famiglia} with obstacle $\vf$ satisfying \eqref{e:hypo-phi}. Then:
        \begin{itemize}
        \item[i)] $\text{dim}_{\mathcal{H}}(\Gammab_{\ge 2}\setminus \Gammab_{\ge k+\gamma})\le n-1$ if $n\ge 2$, and $\Gammab_{\ge 2}\setminus \Gammab_{\ge k+\gamma}$ is discrete if $n=1$;
        \smallskip
        \item[ii)] $\text{dim}_{\mathcal{H}}(\Gammab_{*})\le n-2$ if $n\ge 3$, $\Gammab_{*}$ is discrete if $n=2$, and it is empty if $n=1$. 
        \end{itemize}  
\end{proposition}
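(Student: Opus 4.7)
The plan is to apply the GMT dimension-reduction machinery of \cref{lemma:characterization-of-m-dim-set} (in the contrapositive form of \cref{remark:use-of-lemma-char-m-dim-sets}) with $E$ equal to $\Gammab_{\ge 2}\setminus \Gammab_{\ge k+\gamma}$ for (i) and to $\Gammab_*$ for (ii), and with auxiliary function $f(X):=\phi^X(0^+,u(\cdot,\tau(X)))$, the generalized Almgren frequency of \cref{generalized-Almgren} evaluated at the unique $t=\tau(X)$ to which $X$ belongs (cf.~\cref{prop:continuity-tau}). Since $\Gammab\subset\{y=0\}$, any limit direction produced by the Remark automatically lives in $\R^{n}\times\{0\}$.

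For (i), assume $n\ge 2$ and, by contradiction, $\dim_{\HH}(E)>n-1$. By \cref{remark:use-of-lemma-char-m-dim-sets} there exist $X\in E$, radii $r_j\downarrow 0$ and $n$ sequences $X_j^\ell\in E$ such that $(X_j^\ell-X)/r_j\to Y^\ell$, with $Y^1,\dots,Y^n$ linearly independent in $\R^{n}\times\{0\}$, and $f(X_j^\ell)\to f(X)$. Setting $t_0:=\tau(X)$ and $\widetilde u:=\widetilde u^X(\cdot,t_0)$, \cref{generalized-Almgren} gives that the normalized rescalings of $\widetilde u$ at $X$ along $r_j$ converge (up to a subsequence) to a non-zero $\lambda$-homogeneous global solution $p$ of \eqref{def:soluzione-estesa} with zero obstacle, for some $\lambda\in[2,k+\gamma)$. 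The monotonicity in \eqref{def:soluzione-famiglia} and the continuity of $\tau$ and of $X_0\mapsto \widetilde u^{X_0}(X_0+\cdot,\tau(X_0))$ from \cref{prop:continuity-tau} allow us to compare the rescalings of $\widetilde u^{X_j^\ell}(\cdot,\tau(X_j^\ell))$ centred at $X_j^\ell$ with those of $\widetilde u^X(\cdot,t_0)$ centred at $X$; together with the convergence of frequencies $f(X_j^\ell)\to f(X)$ and the upper semicontinuity of Almgren's function, this yields $Y^\ell\in\Gamma_\lambda(p)$. Since $p$ is $\lambda$-homogeneous with $N^{0}(\cdot,p)\equiv N^{Y^\ell}(\cdot,p)\equiv\lambda$, a classical consequence of Almgren's monotonicity forces $p$ to be translation invariant in the direction $Y^\ell$. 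As $Y^1,\dots,Y^n$ span $\R^{n}\times\{0\}$, $p$ depends only on the $y$-variable and is therefore a non-zero $\lambda$-homogeneous global solution of the thin obstacle problem in dimension $n+1=1$; by \cref{prop:classification-solution-dim2}(i), $\lambda=2s<2$, a contradiction. The case $n=1$ is handled analogously: any accumulation point of $E$ would already supply one non-trivial limit direction and lead to the same $1$D contradiction, showing that $E$ is discrete.

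For (ii), the scheme is identical with $m=n-2$: assuming $\dim_{\HH}(\Gammab_*)>n-2$ produces $n-1$ linearly independent invariant directions in $\R^{n}\times\{0\}$ for the blow-up $p$, which therefore depends only on one thin variable and on $y$. Thus $p$ is a non-zero $\lambda$-homogeneous global solution of the thin obstacle problem in dimension $n+1=2$, and \cref{prop:classification-solution-dim2}(ii) forces $\lambda\in\mathcal{A}_{1,s}$. This contradicts the very definition of $\Gammab_*$, which requires $\lambda\in(2,k+\gamma)\setminus\mathcal{A}_{1,s}$. The cases $n=2$ (giving discreteness) and $n=1$ (giving emptiness) are treated identically, by ruling out even a single non-trivial limit direction in $\Gammab_*$ via the same $2$D classification.

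The main obstacle is justifying rigorously that $Y^\ell\in\Gamma_\lambda(p)$ and the ensuing translation invariance of $p$ in direction $Y^\ell$. In the family setting this requires the combined use of the monotonicity in \eqref{def:soluzione-famiglia}, the uniqueness and continuity of $\tau$ from \cref{prop:continuity-tau}, and the uniform growth controls in \cref{generalized-Almgren}: these ingredients ensure that, although the points $X_j^\ell$ belong to the free boundaries of different solutions $u(\cdot,\tau(X_j^\ell))$, their rescalings can be compared to the rescalings of the single function $\widetilde u^X(\cdot,t_0)$ and their frequencies pass to the limit correctly, thereby reducing the dimension-reduction step to the one-solution setting.
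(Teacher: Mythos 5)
Your overall scaffolding coincides with the paper's: apply \cref{lemma:characterization-of-m-dim-set} via \cref{remark:use-of-lemma-char-m-dim-sets} with $f(X)=N(0^{+},\widetilde u(\cdot,\tau(X)))$, extract $n$ (resp.\ $n-1$) linearly independent limit directions, show that the blow-up is translation invariant along them, and contradict \cref{prop:classification-solution-dim2}. The gap is precisely in the step you yourself flag as ``the main obstacle'': the claim that $Y^{\ell}\in\Gamma_{\lambda}(p)$, from which you want to run the classical rigidity argument ($N^{Y^{\ell}}(r,p)\equiv\lambda$ for a $\lambda$-homogeneous $p$ forces invariance in direction $Y^{\ell}$). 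That argument needs $N^{Y^{\ell}}(0^{+},p)\ge\lambda$, and the standard way to obtain it --- upper semicontinuity of the frequency along the blow-up --- requires the points $X_{j}^{\ell}$ to be contact points of the \emph{same} function whose rescalings converge to $p$, namely $\widetilde u^{\bar X}(\cdot,\tau(\bar X))$. They are not: $X_{j}^{\ell}\in\Gamma(u(\cdot,\tau(X_{j}^{\ell})))$ with $\tau(X_{j}^{\ell})\neq\tau(\bar X)$ in general, and by the nesting of contact sets induced by the monotonicity in \eqref{def:soluzione-famiglia}, $X_{j}^{\ell}$ may be an interior point of $\Lambda(u(\cdot,\tau(\bar X)))$ (if $\tau(X_{j}^{\ell})>\tau(\bar X)$) or may not belong to it at all (if $\tau(X_{j}^{\ell})<\tau(\bar X)$). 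Hence the frequencies $f(X_{j}^{\ell})$ are frequencies of different solutions and cannot be transferred to $p$ by semicontinuity; invoking ``monotonicity, continuity of $\tau$ and growth controls'' does not by itself produce the inequality $N^{Y^{\ell}}(0^{+},p)\ge\lambda$, and the reduction ``to the one-solution setting'' is exactly what remains to be proved.

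The paper's substitute for this step is \cref{lemma:fundamental-dimension-reduction}, which avoids frequency semicontinuity altogether. One compares three functions, $v_{j}=\widetilde u^{\bar X}(\bar X+r_{j}\cdot,\tau(\bar X))$, $w_{j}=\widetilde u^{X_{j}}(X_{j}+r_{j}\cdot,\tau(X_{j}))$ and $z_{j}=\widetilde u^{X_{j}}(X_{j}+r_{j}\cdot,\tau(\bar X))$: monotonicity gives the ordering $w_{j}\le z_{j}$, the function $z_{j}$ differs from $v_{j}(Y_{j}+\cdot)$ only by the $O(r_{j}^{k+\gamma})$ obstacle correction, and after normalizing by $\eps_{j}=\lVert w_{j}\rVert+\lVert z_{j}\rVert$ one obtains in the limit $\alpha\widetilde q\le\beta\,\hat q$ with $\alpha+\beta=1$, where $\widetilde q$ is the ($\lambda$-homogeneous) blow-up of the \emph{other} solution and $\hat q$ is the normalized $q(\bar Y+\cdot)$. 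The Hopf-type rigidity statements of \cref{lemma:properties-homogeneous-solutions} (whose point ii) already relies on the frequency gaps of \cref{teo:gaps}) then force $\alpha,\beta>0$, $q=c\,\widetilde q$, and finally $\partial_{\bar Y}q\equiv 0$. This comparison-plus-Hopf mechanism is the actual content of the proof and is absent from your proposal; without it (or an equivalent argument) the translation invariance of the blow-up in the directions $Y^{\ell}$, and hence the whole dimension reduction, is not established. The remaining parts of your argument (the use of \cref{prop:classification-solution-dim2} in dimensions $1$ and $2$, and the discreteness/emptiness statements for small $n$) do match the paper.
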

\noindent We first need some preliminary lemmas. The first one settles some useful properties of homogeneous solutions with zero obstacle (see also \cite[Lemma 2.1, Lemma 2.6]{ft23} for the case $s=1/2$).
\begin{lemma}\label{lemma:properties-homogeneous-solutions}
    Let $u:\R^{n+1}\to\R$ be a $\kappa$-homogeneous solution to \eqref{def:soluzione-estesa} with zero obstacle. Then:
    \begin{itemize}
        \item [i)] if $u\ge 0$, or $u\le 0$ and $\kappa>2s$, then $u\equiv 0$;
        \smallskip
        \item [ii)] if $\partial_{e}u\ge 0$ for some non-zero vector $e\in \R^{n}\times \left\{0\right\}$ and $\kappa \ge 2$, then $\partial_{e}u \equiv 0$;
        \smallskip
        \item [iii)] if $\kappa>2s$ and $v$ is another solution to \eqref{def:soluzione-estesa} with zero obstacle such that $v\ge u$ in $B_{1}$ and $v(0)=0$, then $v\equiv u$.
    \end{itemize}
\end{lemma}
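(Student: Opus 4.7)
My plan is to handle the three parts in sequence, each with its own technique built on maximum/minimum principles for the degenerate/singular operator $L_a$ and the $\kappa$-homogeneity of $u$.

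For part i), if $u\ge 0$, then $u$ is globally $L_a$-superharmonic (since $-L_au\ge 0$ as a measure), and by $\kappa$-homogeneity (assuming $\kappa>0$; otherwise $u$ is constant, hence $0$) the value $u(0)=0$ is an interior minimum, so the strong minimum principle immediately yields $u\equiv 0$. If instead $u\le 0$ with $\kappa>2s$, the obstacle condition $u(\cdot,0)\ge 0$ combined with $u\le 0$ forces $u\equiv 0$ on the thin space; then $u$ is $L_a$-harmonic off $\{y=0\}$ and even in $y$, so by \cref{prop:odd-extension} one writes $u(x,y)=|y|^{2s}v(x,y)$ with $v$ even in $y$ and globally $L_{-a}$-harmonic. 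By \cref{prop:Liouville} $v$ is a polynomial of degree $\kappa-2s\in\N_{\ge 1}$, and since $v\le 0$ attains the interior maximum $0$, the strong maximum principle for $L_{-a}$-subharmonic functions gives $v\equiv 0$, hence $u\equiv 0$.

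For part ii), set $W:=\partial_e u$, which is $(\kappa-1)$-homogeneous, even in $y$, non-negative, and $L_a$-harmonic in $B_1^+$. When $\kappa=2$, by \cref{prop:classification-blowup-freq-2} the solution $u\in\mathcal{P}_2$ is a $2$-homogeneous $L_a$-harmonic polynomial whose restriction to $\{y=0\}$ is a non-negative quadratic form in $x$, so the linear function $\partial_eu(\cdot,0)\ge 0$ must vanish identically. When $\kappa>2$, the strong maximum principle in $B_1^+$ gives the dichotomy $W\equiv 0$ (done) or $W>0$ interior. In the latter case, a Hopf comparison at the origin with the $L_a$-harmonic barrier $y^{2s}$ would yield $W(0,y)\ge c\,y^{2s}$ for some $c>0$ and small $y>0$; however, the explicit formula $W(0,y)=y^{\kappa-1}W(0,1)$ — together with the frequency gaps of \cref{teo:gaps} ensuring $\kappa-1>2s$ whenever $\kappa-1$ is admissible — contradicts this lower bound.

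For part iii), set $w:=v-u\ge 0$ in $B_1$: it satisfies $w(0)=0$, is $L_a$-harmonic in $B_1\setminus\{y=0\}$ (since both $u$ and $v$ are), and is even in $y$. I would then run a blow-up analysis of $v$ at the origin with Almgren frequency $\lambda$ (well-defined by \cref{generalized-Almgren}). Combining $v\ge u$ with the standard blow-up scalings rules out $\lambda<\kappa$ (in which case the non-trivial $\lambda$-homogeneous blow-up of $v$ would be non-negative, contradicting part i)) and $\lambda>\kappa$ (in which case sending $r\downarrow 0$ in $v(rX)/r^\kappa\ge u(X)$ gives $u\le 0$ in $\R^{n+1}$, so part i) yields $u\equiv 0$, and the strong minimum principle for the $L_a$-superharmonic function $v\ge 0$ in $B_1$ with $v(0)=0$ then gives $v\equiv 0$). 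The remaining critical case $\lambda=\kappa$ is closed by the same Hopf-versus-homogeneity contradiction used in part ii), applied this time to $w$ in $B_1^+$: the matching $\kappa$-homogeneity of $u$ and of the blow-up of $v$ forces $w(0,y)=O(y^\kappa)$ with $\kappa>2s$, contradicting the Hopf lower bound $w(0,y)\ge c\,y^{2s}$.

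The hardest step will be the critical $\lambda=\kappa$ subcase of part iii): Hopf's lemma must be correctly implemented for the degenerate/singular operator $L_a$ across both regimes $s\le 1/2$ and $s>1/2$, the blow-up of $v$ at origin has to be aligned with the $\kappa$-homogeneity of $u$ to identify the leading power of $w(0,y)$, and the hypothesis $\kappa>2s$ must be used sharply to guarantee that the Hopf leading order is strictly smaller than $y^{2s}$.
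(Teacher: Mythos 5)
Your overall strategy is sound and reaches the right conclusions, but it is packaged quite differently from the paper, and one recurring step needs more care. The paper's proof is a uniform three-line mechanism: reduce to the half-space $\R^{n+1}_+$, apply the strong maximum principle there to get strict positivity of the relevant function ($u$, $\partial_e u$, or $v-u$), then apply the Hopf lemma (\cref{lemma:Hopf}) at the origin and contradict either the superharmonicity constraint $\lim_{y\downarrow 0}y^a\partial_y(\cdot)\le 0$ from \eqref{formula:y-derivative-on-thin-space} or the vanishing of $\lim_{y\downarrow 0}y^a\partial_y(\cdot)(0,y)$ forced by homogeneity when $\kappa>2s$. Your part i), second case (odd reflection, \cref{prop:odd-extension} and \cref{prop:Liouville} to write $u=|y|^{2s}p$ with $p$ a polynomial, then kill $p$) is a legitimate alternative. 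Your part ii) is essentially the paper's argument, though you should phrase the gap condition as ``$\kappa$ admissible and $\kappa>2$ implies $\kappa>1+2s$'' (it is $\kappa$, not $\kappa-1$, whose admissibility is constrained by \cref{teo:gaps}). Your part iii) is correct but far longer than necessary: the Hopf-versus-homogeneity contradiction you reserve for the subcase $\lambda=\kappa$ in fact closes the whole statement directly when applied to $w=v-u$ in $B_1^+$ (using that $\lim_{y\downarrow0}y^a\partial_y u(0,y)=0$ by $(\kappa-2s)$-homogeneity and $\lim_{y\downarrow0}y^a\partial_y v\le 0$ by superharmonicity of $v$), so the entire blow-up case analysis on $\lambda$ is dispensable.

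The step that is not justified as written is the repeated appeal to a ``strong minimum principle'' at the origin (in part i), first case, and again in the $\lambda>\kappa$ subcase of part iii)). The origin lies on the degenerate hyperplane $\{y=0\}$, and the maximum principle available in the paper (\cref{prop:maximum-principle}) only yields the dichotomy $u\equiv 0$ or $u>0$ in $B_r\setminus\{y=0\}$ — it does \emph{not} let you conclude from $u(0)=0$ with $0\in\{y=0\}$. To close this you must either invoke the weak Harnack inequality for non-negative supersolutions of $A_2$-degenerate equations (Fabes--Kenig--Serapioni), which does give the strong minimum principle across the thin space but is not among the tools recorded in the paper, or argue as the paper does: from $u>0$ in $\R^{n+1}_+$ and $u(0)=0$, the Hopf lemma gives $\lim_{y\downarrow 0}y^a\partial_y u(0,y)>0$, contradicting $-L_au\ge 0$ via \eqref{formula:y-derivative-on-thin-space}. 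Either fix is short, but one of them is needed.
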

\begin{proof} Let us prove point i). If $u\ge 0$ and $u\not \equiv 0$, then, by the strong maximum principle, $u>0$ in $\R_{+}^{n+1}$ and \cref{lemma:Hopf} gives $\lim_{y\downarrow 0}y^{a}\partial_{y}u(0,y)>0,$ which, thanks to formula \eqref{formula:y-derivative-on-thin-space}, contradicts the fact that $u$ is $L_a$-superharmonic. If instead $u\le 0$, $\kappa >2s$ and $u\not \equiv 0$, from the same reasoning as above, we have that $\lim_{y\downarrow 0}y^{a}\partial_{y}u(0,y)<0,$ which contradicts the $(a+\kappa-1)$-homogeneity of $|y|^{a}u$. 

Then we prove point ii). Let us call $v:= \partial_{e}u$. By assumption we have $v\ge 0$ and since $\kappa >1$, $v(0)=0$. We suppose that $v \not\equiv 0$, otherwise we are done. Then the strong maximum principle gives $v>0$ in $\R_{+}^{n+1}$ and \cref{lemma:Hopf} yields $\lim_{y\downarrow 0}y^{a}\partial_{y}v(0,y)>0,$ from which we deduce that 
    $2\le\kappa \le 1+2s$. Hence, by \cref{teo:gaps}, $\kappa=2$ and $v$ is a non-negative linear function, thus $v\equiv0$, a contradiction.

    Finally, for point iii), we assume by contradiction that $v\not\equiv u$. Then, by the strong maximum principle, $v>u$ in $\R_{+}^{n+1}$, and by \cref{lemma:Hopf}, combined with the fact that $\kappa >2s$,
    $$\lim_{y\downarrow 0}y^{a}\partial_{y}v>\lim_{y\downarrow 0}y^{a}\partial_{y}u =0,$$
    a contradiction, since $v$ is $L_a$-superharmonic.    
\end{proof}
Next, we prove the following accumulation lemma, which is similar to \cite[Lemma 3.2]{ft23}.
\begin{lemma}\label{lemma:fundamental-dimension-reduction}
    Let $u:B_{1}\times [0,1]\to \R$ be a family of solutions to \eqref{def:soluzione-estesa}, \eqref{def:soluzione-famiglia} with obstacle $\vf$ satisfying \eqref{e:hypo-phi}. Let $\bar{X}\in \Gammab_{\ge 2}\setminus \Gammab_{\ge k+\gamma}$. Suppose that there exist a sequence of radii $r_{j}\downarrow 0$ and a sequence of points $X_{j}\in \Gammab_{\ge 2}\setminus \Gammab_{\ge k+\gamma}$ such that $|X_{j}-\bar{X}|\le r_{j}$ and
    $$Y_{j}:=\frac{X_{j}-\bar{X}}{r_{j}}\to \bar{Y}\neq 0,\quad \lambda_{j}:=N(0^{+},\widetilde{u}(\cdot,\tau(X_{j})))\to N(0^{+},\widetilde{u}(\cdot,\tau(\bar{X})))=:\lambda.$$
    If $q$ is any non-zero $\lambda$-homogeneous solution of \eqref{def:soluzione-estesa} with zero obstacle obtained as a blow-up of  $\widetilde{u}^{\bar{X}}(\cdot,\tau(\bar{X}))$ at the point $\bar{X}$ along the sequence $r_{j}$, then $q$ is translation invariant in the direction $\bar{Y}$. 
\end{lemma}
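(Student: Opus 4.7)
My plan is to run the standard dimension reduction argument based on Almgren's monotonicity formula. Set
\begin{equation*}
\rho_j := \lVert \widetilde{u}^{\bar X}(\bar X + r_j \,\cdot\,, \tau(\bar X))\rVert_{L^{2}(\partial B_{1}, |y|^{a})},\qquad \widetilde{u}_{r_j}(X) := \frac{\widetilde{u}^{\bar X}(\bar X + r_j X, \tau(\bar X))}{\rho_j},
\end{equation*}
so that, by \cref{generalized-Almgren}, up to a subsequence, $\widetilde{u}_{r_j} \to q$ strongly in $W^{1,2}_{\loc}(\R^{n+1}, |y|^{a})$ and in $C^{1,\alpha}_{a,\loc}(\R^{n+1}_+)$, with $q$ a non-zero $\lambda$-homogeneous global solution of \eqref{def:soluzione-estesa} with zero obstacle.

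The heart of the argument is to show that $\bar Y$ is a contact point of $q$ with $N^{\bar Y}(0^+, q) = \lambda$. To this end, I would consider the $X_j$-centered rescalings
\begin{equation*}
U_j(X) := \frac{\widetilde{u}^{X_j}(X_j + r_j X, \tau(X_j))}{\sigma_j},\qquad \sigma_j := \lVert \widetilde{u}^{X_j}(X_j + r_j \,\cdot\,, \tau(X_j))\rVert_{L^{2}(\partial B_{1}, |y|^{a})},
\end{equation*}
which, by \cref{generalized-Almgren} and the regularity theory, sub-converge to a global solution $\widetilde q$; moreover $\widetilde q$ is $\lambda$-homogeneous (here the hypothesis $\lambda_j \to \lambda$ is crucial). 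The continuity of $\tau$ and of $X_0 \mapsto \widetilde{u}^{X_0}(X_0 + \,\cdot\,, \tau(X_0))$ from \cref{prop:continuity-tau}, combined with the comparability $\sigma_j \sim \rho_j$ coming from the growth estimates in \cref{generalized-Almgren} together with $\lambda_j \to \lambda$, should yield the identification $\widetilde q = c\, q(\,\cdot\, + \bar Y)$ for some $c > 0$. Since each $X_j$ is a contact point at time $\tau(X_j)$, one has $U_j(0) = 0$ and hence $q(\bar Y) = 0$; moreover the frequency of $U_j$ at the origin equals $\lambda_j \to \lambda$, whence $N^{\bar Y}(0^+, q) = \lambda$. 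Combined with $N^{\bar Y}(\infty, q) = \lambda$ (from the $\lambda$-homogeneity of $q$ centered at the origin) and the monotonicity of Almgren's frequency, this forces $N^{\bar Y}(r, q) \equiv \lambda$ for every $r > 0$. The equality case of Almgren's monotonicity then yields $\nabla q(X)\cdot (X - \bar Y) = \lambda q(X)$; subtracting the analogous identity $\nabla q(X) \cdot X = \lambda q(X)$ coming from the $\lambda$-homogeneity at the origin gives $\nabla q(X) \cdot \bar Y = 0$, which is the desired translation invariance of $q$ in the direction $\bar Y$.

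The main obstacle is the identification $\widetilde q = c\, q(\,\cdot\, + \bar Y)$: the subtle point is that each $X_j$ is a free boundary point at a time $\tau(X_j) \ne \tau(\bar X)$ in general, so the two functions $\widetilde{u}^{X_j}(\cdot, \tau(X_j))$ and $\widetilde{u}^{\bar X}(\cdot, \tau(\bar X))$ differ at scale $r_j$ not only by the translation $X_j - \bar X = r_j Y_j$, but also through the change of reference time. Controlling this discrepancy requires the uniform convergence in \cref{prop:continuity-tau} together with the monotonicity of the family \eqref{def:soluzione-famiglia} in $t$, and a careful comparison of the normalizing factors $\rho_j$ and $\sigma_j$ based on the two-sided growth bounds of \cref{generalized-Almgren}.
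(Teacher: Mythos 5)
Your overall scheme (identify the blow-up $\widetilde q$ of $\widetilde u^{X_j}(\cdot,\tau(X_j))$ at $X_j$ with $c\,q(\bar Y+\cdot)$, deduce that $q$ is $\lambda$-homogeneous about both $0$ and $\bar Y$, and subtract the Euler identities) is the classical Federer reduction and would be fine for a single solution. For the family, however, the identification $\widetilde q = c\,q(\bar Y+\cdot)$ is precisely the hard point, and the tools you invoke do not close it. First, the growth estimates of \cref{generalized-Almgren} only give $\rho_j = r_j^{\lambda+o(1)}$ and $\sigma_j = r_j^{\lambda_j+o(1)}$, which is perfectly compatible with $\sigma_j/\rho_j\to 0$ or $\to\infty$; so the ``comparability $\sigma_j\sim\rho_j$'' is not available a priori -- it is equivalent to part of what must be proved. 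Second, the uniform convergence in \cref{prop:continuity-tau} controls $\widetilde u^{X_j}(X_j+\cdot,\tau(X_j))-\widetilde u^{\bar X}(\bar X+\cdot,\tau(\bar X))$ only at scale $O(1)$, which says nothing after restricting to scale $r_j$ and dividing by $\rho_j\approx r_j^{\lambda}$: the time increment $u(\cdot,\tau(\bar X))-u(\cdot,\tau(X_j))\ge 0$ may survive, or even dominate, in the blow-up limit. What monotonicity of the family gives for free is only a one-sided inequality between the two limits, not their equality, so your claims $q(\bar Y)=0$ and $N^{\bar Y}(0^{+},q)=\lambda$ remain unjustified.

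The missing ingredient is a rigidity statement for homogeneous global solutions, which the paper isolates in \cref{lemma:properties-homogeneous-solutions} (a Hopf-lemma package). Concretely, the paper introduces the intermediate function $z_j:=\widetilde u^{X_j}(X_j+r_j\cdot,\tau(\bar X))$ (center $X_j$, but reference time $\tau(\bar X)$), shows that its normalization converges to $q(\bar Y+\cdot)/\lVert q(\bar Y+\cdot)\rVert$ -- here one must also handle the obstacle-recentering error $(\widetilde\varphi^{\bar X}-\widetilde\varphi^{X_j})(X_j+r_j\cdot)=O(r_j^{k+\gamma})$, which is negligible only because $\lVert v_j\rVert\ge Cr_j^{k+\gamma-\eps}$ thanks to $\lambda<k+\gamma$ -- and passes the ordering $w_j\le z_j$ to the limit as $\alpha\widetilde q\le\beta\,q(\bar Y+\cdot)/\lVert q(\bar Y+\cdot)\rVert$ with $\alpha+\beta=1$. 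Point i) of \cref{lemma:properties-homogeneous-solutions} rules out $\alpha=0$ or $\beta=0$ (a signed $\lambda$-homogeneous solution with $\lambda\ge 2$ is trivial), point iii) upgrades the ordering of two ordered $\lambda$-homogeneous solutions vanishing at the origin to equality, and point ii) converts $\partial_{\bar Y}q\ge 0$ into $\partial_{\bar Y}q\equiv 0$. Once the identification is granted, your conclusion is correct (and you would not even need the equality case of Almgren: homogeneity of $\widetilde q$ about the origin already makes $q$ homogeneous about $\bar Y$), but as written the argument has a genuine gap at its central step.
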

\begin{proof}
    We define the following functions:
    \begin{gather*}
        v_{j}:= \widetilde{u}^{\bar{X}}(\bar{X}+r_{j}\cdot,\tau(\bar{X})),\\
        w_{j}:= \widetilde{u}^{X_{j}}(X_{j}+r_{j}\cdot,\tau(X_{j})),\\
        z_{j}:= \widetilde{u}^{X_{j}}(X_{j}+r_{j}\cdot,\tau(\bar{X})).
    \end{gather*}
    Observe that $\tau(X_{j})\to \tau(\bar{X})$ as $X_{j}\to \bar{X}$, thanks to \cref{prop:continuity-tau}. We may assume without loss of generality that $\tau(X_{j})\le \tau(\bar{X})$, so that, by the monotonicity assumption in the definition of a family of solutions \eqref{def:soluzione-famiglia}, we have $w_{j}\le z_{j}$.
    We suppose that 
    $$\frac{v_{j}}{\lVert v_{j}\rVert_{L^{2}(\partial B_{1},|y|^{a})}}\rightharpoonup q\quad \mbox{in } W_{\loc}^{1,2}(\R^{n+1},|y|^{a}),$$
    where $q$ is a $\lambda$-homogeneous solution of \eqref{def:soluzione-estesa} with zero obstacle such that $\lVert q\rVert_{L^{2}(\partial B_{1},|y|^{a})}=1$, and we wish to prove that $q$ is invariant in the direction $\bar{Y}$. First of all, since $\lambda_{j}\in [2,\kappa+\gamma)$ and $X_{j}\to \bar{X}$, the sequence $w_{j}/\lVert w_{j}\rVert_{L^{2}(\partial B_{1},|y|^{a})}$ is bounded in $W_{\loc}^{1,2}(\R^{n+1},|y|^{a})$ and in $C_{a,\loc}^{1,s}(\R^{n+1}_+)$. In particular, since in addition $\lambda_{j}\to \lambda$, up to subsequences
    $$\frac{w_{j}}{\lVert w_{j}\rVert_{L^{2}(\partial B_{1},|y|^{a})}}\rightharpoonup \widetilde{q}\quad \mbox{in } W_{\loc}^{1,2}(\R^{n+1},|y|^{a}),$$
    where $\widetilde{q}$ is a $\lambda$-homogeneous solution of \eqref{def:soluzione-estesa} with zero obstacle such that $\lVert \widetilde{q}\rVert_{L^{2}(\partial B_{1},|y|^{a})}=1$. Then observe that we can re-write
    $$z_{j}=v_{j}(Y_{j}+\cdot)+\left(\widetilde{\varphi}^{\bar{X}}-\widetilde{\varphi}^{X_{j}}\right)(X_{j}+r_{j}\cdot),$$ where $\widetilde \vf^{\bar X}$ and $\widetilde \vf^{ X_j}$ are as in \eqref{def:phitilde}.
    Therefore, since 
    $$\left|\left(\widetilde{\varphi}^{\bar{X}}-\widetilde{\varphi}^{X_{j}}\right)(X_{j}+r_{j}\cdot)\right|=O(r_{j}^{\kappa+\gamma})\quad\mbox{on } \partial B_{1}\qquad\mbox{and}\qquad \lVert v_{j}\rVert_{L^{2}(\partial B_{1},|y|^{a})}\ge Cr_{j}^{k+\gamma-\eps},$$
    we have
    $$\frac{z_{j}}{\lVert v_{j}\rVert_{L^{2}(\partial B_{1},|y|^{a})}}=\frac{v_{j}(Y_{j}+\cdot)}{\lVert v_{j}\rVert_{L^{2}(\partial B_{1},|y|^{a})}}+\frac{\left(\widetilde{\varphi}^{\bar{X}}-\widetilde{\varphi}^{X_{j}}\right)(X_{j}+r_{j}\cdot)}{\lVert v_{j}\rVert_{L^{2}(\partial B_{1},|y|^{a})}}\rightharpoonup q(\bar{Y}+\cdot)$$
    in $W_{\loc}^{1,2}(\R^{n+1},|y|^{a})$. From this we deduce that
    $$\frac{z_{j}}{\lVert z_{j}\rVert_{L^{2}(\partial B_{1},|y|^{a})}}\rightharpoonup \frac{q(\bar{Y}+\cdot)}{\lVert q(\bar{Y}+\cdot)\rVert_{L^{2}(\partial B_{1},|y|^{a})}}=:\hat{q}\quad \mbox{in } W_{\loc}^{1,2}(\R^{n+1},|y|^{a}).$$
    Now, calling $\eps_{j}:=\lVert w_{j}\rVert_{L^{2}(\partial B_{1},|y|^{a})}+\lVert z_{j}\rVert_{L^{2}(\partial B_{1},|y|^{a})}$, we clearly have, up to subsequences,
    $$\frac{w_{j}}{\eps_{j}}\rightharpoonup \alpha\widetilde{q},\quad \frac{z_{j}}{\eps_{j}}\rightharpoonup \beta\hat{q}\quad \mbox{in }W_{\loc}^{1,2}(\R^{n+1},|y|^{a})$$
    for some $\alpha,\beta \in [0,1]$ such that $\alpha+\beta=1$. In addition, from the ordering $w_{j}\le z_{j}$, we deduce that $\alpha\widetilde{q}\le \beta\hat{q}$. Hence, in particular, none of $\alpha$ and $\beta$ can be zero. In fact, if $\alpha=0$ or $\beta=0$, we would obtain respectively that $q\ge 0$, or $\widetilde{q}\le 0$, which are both excluded by point i) in \cref{lemma:properties-homogeneous-solutions} as $\lambda \ge 2$. Therefore $\alpha,\beta \in (0,1)$. Now, calling $\sigma:=\lVert q(\bar{Y}+\cdot)\rVert_{L^{2}(\partial B_{1},|y|^{a})}$, and using the $\lambda$-homogeneity of both $q$ and $\widetilde{q}$ we get
    $$q(\rho\bar{Y}+\cdot)\ge \frac{\alpha \sigma}{\beta}\widetilde{q}(\cdot)\quad \mbox{for every } \rho >0,$$
    from which we deduce that  $q\ge (\alpha \sigma/\beta)\widetilde{q}$. This in turn implies that $q= (\alpha \sigma/\beta)\widetilde{q}$ by point iii) in \cref{lemma:properties-homogeneous-solutions}. Hence,
    $$q(\rho\bar{Y}+\cdot)\ge q(\cdot)\quad \mbox{for every } \rho >0,$$
    which gives that $\partial_{\bar{Y}}q\ge 0$. By point ii) in \cref{lemma:properties-homogeneous-solutions} we finally obtain that $\partial_{\bar{Y}}q\equiv 0$, as desired.
\end{proof}
\begin{proof}[Proof of \cref{prop:dimension-reduction-ge2e*}]
    Let us prove point i) starting from the case $n\ge 2$. We apply \cref{lemma:characterization-of-m-dim-set} in the way explained in \cref{remark:use-of-lemma-char-m-dim-sets} to $E=\Gammab_{\ge 2}\setminus \Gammab_{\ge k+\gamma}$, $f:E\rightarrow \R$ given by $f(X):=N(0^{+},\widetilde{u}(\cdot, \tau(X)))$ and $m=n-1$. Assume by contradiction that $\dim_{\mathcal{H}}(E)>n-1$. Then there exist $\bar{X}\in E$, a sequence of radii $r_{j}\downarrow 0$ and $n$ sequences of points $X^{1}_{j},\dots,X^{n}_{j} \in E$, such that
    $$|X^{\ell}_{j}-\bar{X}|\le r_{j},\quad \frac{X^{\ell}_{j}-\bar{X}}{r_{j}}\to \bar{Y}^{\ell}\neq 0,\quad f(X^{\ell}_{j})\to f(X)=:\lambda \ge 2,$$
    with $\bar{Y}^{1},\dots, \bar{Y}^{n}$ linearly independent vectors in the thin space.
    Let $q$ be any non-zero $\lambda$-homogeneous solution to \eqref{def:soluzione-estesa} with zero obstacle obtained as a blow-up of $\widetilde{u}^{\bar{X}}(\cdot,\tau(\bar{X}))$ at the point $\bar{X}$. By \cref{lemma:fundamental-dimension-reduction}, $q$ is invariant in each of the directions $\bar{Y}^{1},\dots, \bar{Y}^{n}$. In particular we have found a non-zero solution of \eqref{def:soluzione-estesa} with zero obstacle in dimension $n+1=1$ which is $\lambda$-homogeneous, with $\lambda \ge 2$, and this contradicts point i) in \cref{prop:classification-solution-dim2}. Hence $\dim_{\mathcal{H}}(\Gammab_{\ge 2}\setminus \Gammab_{\ge k+\gamma})\le n-1$. If $n=1$, the same argument shows that points in $\Gammab_{\ge 2}\setminus \Gammab_{\ge k+\gamma}$ cannot accumulate. Concerning point ii), the case $n\ge 3$  is proved analogously, using $E:= \Gammab_{*}$ and $m=n-2$, this time contradicting point ii) of \cref{prop:classification-solution-dim2}. When $n=2$ the same argument shows that points in $\Gammab_{*}$ cannot accumulate, while if $n=1$, $\Gammab_{*}$ is clearly empty by definition.
\end{proof}
\subsection{Dimensional bound for $\boldsymbol{\Gammab_{2}^{\rm a}}$}
For the anomalous quadratic points, i.e.~quadratic points where the homogeneity of the second blow-up is in $[2,3)$, we follow the same strategy as in the previous subsection. In this case, the second blow-up takes the role that was previously played by the first blow-up. We will prove the following proposition (see \cite[Proposition 4.3]{ft23} for the case $s=1/2$ with zero obstacle).
\begin{proposition}\label{prop:gamma-anomalous}
    Let $u:B_1\times[0,1]\to\R$ be a family of solutions to \eqref{def:soluzione-estesa}, \eqref{def:soluzione-famiglia} with obstacle $\varphi$ satisfying \eqref{e:hypo-phi}, with $k\ge 4$. Then $\text{dim}_\HH
    (\Gammab_{2}^{\rm a})\le n-2$ if $n \ge 3$, $\text{dim}_\HH(\Gammab_{2}^{\rm a})$ is discrete if $n=2$, and it is empty if $n=1$.
\end{proposition}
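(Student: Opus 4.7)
The plan is to mimic the strategy of \cref{prop:dimension-reduction-ge2e*} one level deeper, applying the dimension reduction at the level of the \emph{second} blow-up from \cref{second-blowup-quadratic-points}. The argument splits into three steps.

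\textbf{Step 1 (accumulation lemma for second blow-ups).} I would first establish an analog of \cref{lemma:fundamental-dimension-reduction} at the level of second blow-ups: if $\bar X\in\Gammab_{2}^{\rm a}$ and sequences $X_j\in\Gammab_{2}^{\rm a}$, $r_j\downarrow 0$ satisfy $|X_j-\bar X|\le r_j$, $(X_j-\bar X)/r_j\to \bar Y\neq 0$, and the second frequencies $N(0^+,v^{X_j})$ converge to $N(0^+,v^{\bar X})$, then every second blow-up $q$ of $v^{\bar X}$ extracted along $r_j$ is translation-invariant in direction $\bar Y$. The proof follows the scheme of \cref{lemma:fundamental-dimension-reduction}, comparing the rescalings of $v^{\bar X}(\bar Y+\cdot)$ and $v^{X_j}$ and exploiting the monotonicity in \eqref{def:soluzione-famiglia}. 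The key new ingredients are the continuity of the first-blow-up map $X\mapsto p^X$ on $\Gamma_2$ (from \cref{prop:classification-blowup-freq-2} and \cref{prop:continuity-tau}), which controls $p^{X_j}(\cdot)-p^{\bar X}(Y_j+\cdot)$ as $X_j\to\bar X$, together with the Taylor-type estimates on $\widetilde\vf^{X_j}-\widetilde\vf^{\bar X}$ already used in \cref{lemma:fundamental-dimension-reduction}.

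\textbf{Step 2 (double dimension reduction).} Assume by contradiction that $\dim_\HH(\Gammab_{2}^{\rm a})>n-2$ for $n\ge 3$. By \cref{lemma:characterization-of-m-dim-set} and \cref{remark:use-of-lemma-char-m-dim-sets} applied with $E=\Gammab_{2}^{\rm a}$, $f(X)=N(0^+,v^X)$ and $m=n-2$, we obtain $\bar X\in\Gammab_{2}^{\rm a}$ and $n-1$ linearly independent thin directions $\bar Y^1,\ldots,\bar Y^{n-1}$ along which points of $\Gammab_{2}^{\rm a}$ accumulate with second frequencies converging to $\lambda:=N(0^+,v^{\bar X})\in[2,3)$. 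Since all involved points lie in $\Gammab_2$, the first frequency is constantly $2$; \cref{lemma:fundamental-dimension-reduction} then forces $p^{\bar X}$ to be invariant in each $\bar Y^\ell$, so $m(p^{\bar X})=n-1$ and $L(p^{\bar X})=\mathrm{span}(\bar Y^1,\ldots,\bar Y^{n-1})$. Step 1 further gives that $q$ is invariant in the same directions. Choosing coordinates so that $e_n$ is orthogonal to $L(p^{\bar X})$, both $p^{\bar X}$ and $q$ depend only on $(x_n,y)$; explicitly, $p^{\bar X}=A(x_n^2-y^2/(1+a))$ with $A>0$.

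\textbf{Step 3 (reduced 2D contradiction).} I then apply the dichotomy of \cref{second-blowup-quadratic-points}. If $s\le 1/2$ (Case a), $q$ is a $2$-homogeneous $L_a$-harmonic polynomial in $(x_n,y)$, hence necessarily $q=c(x_n^2-y^2/(1+a))$ for some $c\in\R$; the orthogonality $\int_{\partial B_1}p^{\bar X}q\,\y\,d\HH^n=0$ from \eqref{eq:ortogonality-first-second-bu} forces $cA=0$, hence $q\equiv 0$, contradicting $\|q\|_{L^2(\partial B_1,\y)}=1$. If $s>1/2$ (Case b), $q$ solves the very thin obstacle problem on $L(p^{\bar X})$, which in the reduced setting collapses to $\{0\}\subset\R^2$; thus $q$ is $L_a$-harmonic on $\R^2\setminus\{0\}$, $\lambda$-homogeneous with $\lambda\ge 2+\omega$. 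Writing $q=r^\lambda f(\theta)$, the spherical ODE on $S^1$ with weight $|\sin\theta|^a$, combined with \cref{lem:eigenspaces-empty-full} for $n=1$, implies that the only positive homogeneities strictly greater than $1$ admissible for $L_a$-harmonic, $y$-even functions on $\R^2\setminus\{0\}$ are the positive integers; since $[2+\omega,3)\cap\N=\emptyset$, we obtain a contradiction. The small-dimensional cases follow the same template: directly in 2D for $n=1$ (no accumulation needed), and with $m=0$ in \cref{lemma:characterization-of-m-dim-set} for $n=2$ (a single invariance direction).

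\textbf{Main obstacle.} The principal difficulty is Step 1, the accumulation lemma for second blow-ups, which requires careful bookkeeping of the quadratic corrections $p^{X_j}(\cdot)-p^{\bar X}(Y_j+\cdot)$ relative to the decay rate of $v^{\bar X}$ as $X_j\to\bar X$. The case $s>1/2$ is the genuinely new ingredient, since there the second blow-up may solve the very thin obstacle problem \eqref{def:very-thin-obstacle-problem}; the resolution relies on the collapse of $L(p^{\bar X})$ to a single point after reduction, at which the spherical Sturm-Liouville analysis rules out any admissible homogeneity in the interval $(2,3)$.
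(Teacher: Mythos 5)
Your Step 1 contains the decisive gap: for anomalous points the second blow-up need \emph{not} be translation invariant in the accumulation direction when its homogeneity is $\lambda=2$, and this is exactly the case that covers all of $\Gammab_{2}^{\rm a}$ when $s\le 1/2$ (and part of it when $s>1/2$). Running the comparison scheme of \cref{lemma:fundamental-dimension-reduction} at the second-blow-up level produces a balance of the form $c\widetilde q=\hat q+c_2 q(\bar Y+\cdot)+c_3\,\nabla p\cdot e$, where the extra term $\nabla p\cdot e$ (coming from the translation of the first blow-up off its spine) and the fact that for $\lambda=2$ the limits are merely $L_a$-harmonic polynomials (no sign structure to feed a Hopf-type rigidity argument) only yield the pointwise condition $q(\bar Y)=0$, not $\partial_{\bar Y}q\equiv 0$; this is precisely the content of \cref{lemma:accumulation-quadratic-points}, case a). Consequently your Step 2--3 reduction of $q$ to a function of $(x_n,y)$ is unavailable for $\lambda=2$: the $n-1$ linearly independent directions produced by \cref{lemma:characterization-of-m-dim-set} give only $n-1$ zeros of the quadratic form $q(\cdot,0)$ on $L(p^{\bar X})$, and a nonzero quadratic form (e.g.\ $x_1x_2$) can perfectly well vanish at $n-1$ independent points without vanishing on their span. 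The paper's proof for this stratum is structurally different: it does not use \cref{lemma:characterization-of-m-dim-set} at all but extracts, from the assumption $\mathcal H^{\beta}(\Gammab_{2,2})>0$ with $\beta>n-2$, a set $A$ of accumulation directions of \emph{positive} $\mathcal H^{\beta}$-measure contained in $\{p=q=0\}$; only then does $q\equiv 0$ on the $(n-1)$-dimensional spine follow, and the conclusion $q\equiv 0$ still requires the full orthogonality conditions \eqref{eq:ortogonality-first-second-bu} (including the inequality tested against the polynomials $\bar p_i$ to kill the cross terms $(c\cdot x)x_n$), not just $\int pq=0$.

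Your treatment of the $\lambda\in(2,3)$ stratum (necessarily $s>1/2$, $m(p)=n-1$) is essentially the paper's: there the second blow-up genuinely solves the very thin obstacle problem, the accumulation lemma does give translation invariance, and the reduced two-dimensional problem excludes homogeneities in $(2,3)$ (the paper cites \cref{prop:classification-verythinobstacle-dim2} rather than re-deriving the spherical eigenvalue analysis; note that \cref{lem:eigenspaces-empty-full} concerns the thin space $\{y=0\}$, not the very thin set, so it is not the right reference). Two smaller points: the dichotomy in \cref{second-blowup-quadratic-points} is governed jointly by $s$ and $m(p)$, not by $s$ alone, so labelling the cases ``$s\le1/2$'' versus ``$s>1/2$'' conflates the two strata $\Gammab_{2,2}$ and $\Gammab_{2,(2,3)}$; and to repair your argument you would need to replace Step 1 for $\lambda=2$ by the weaker conclusion $q(\bar Y)=0$ and then substitute the measure-theoretic accumulation argument for the $(n-1)$-directions argument.
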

\noindent We point out that the lower bound $k\ge4$ is necessary for the set  $\Gammab_{2}^{\rm a}$ to be well-defined (see \cref{section-quadratic}).

To prove \cref{prop:gamma-anomalous} we need some preliminary lemmas. The first one is the analogue of \cref{lemma:properties-homogeneous-solutions} of the previous subsection.
\begin{lemma}\label{lemma:properties-homogeneous-solutions-verythin}
    Suppose that $s>1/2$, and let $u:\R^{n+1}\to\R$ be a $\kappa$-homogeneous solution to the very thin obstacle problem \eqref{def:very-thin-obstacle-problem} on $L=\{x_{n}=y=0\}$. Then,
    \begin{itemize}
        \item [i)] if $u\ge 0$, or $u\le 0$ and $\kappa>2s-1$, then $u\equiv 0$;
        \smallskip
        \item [ii)] if $\partial_{e}u\ge 0$ for some non-zero vector $e\in L$ and $\kappa \ge 2$, then $\partial_{e}u \equiv 0$;
        \smallskip
        \item [iii)] if $\kappa>2s-1$ and $v$ is another solution to \eqref{def:soluzione-estesa} such that $v\ge u$ in $B_{1}$ and $v(0)=0$, then $v\equiv u$.
    \end{itemize}
\end{lemma}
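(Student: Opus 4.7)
The proof parallels that of \cref{lemma:properties-homogeneous-solutions}, with the critical exponent $2s$ replaced by $2s-1$. This shift reflects the passage from the codimension-one thin space (where the sharp Hopf-type asymptotic for $L_a$-harmonic functions vanishing on $\{y=0\}$ is $|y|^{2s}$) to the codimension-two stratum $L=\{x_n=y=0\}$, where the sharp Hopf-type asymptotic is $\rho^{2s-1}$ with $\rho:=\sqrt{x_n^2+y^2}$. Indeed, a direct computation in the transverse plane gives $L_a(\rho^{2s-1})=0$ outside $L$: writing $L_a=|y|^a(\Delta+(a/y)\partial_y)$ and using $\Delta\rho^\alpha=\alpha^2\rho^{\alpha-2}$ in $\R^2$ shows the condition $\alpha(\alpha+a)=0$, so that $\alpha=-a=2s-1$ is the only positive root. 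The barrier $\rho^{2s-1}$ is axially symmetric (in particular even in $y$), $L_a$-harmonic in $\R^{n+1}\setminus L$, positive off $L$ and vanishing on $L$ (since $s>1/2$). Throughout the proof, $u$ is $L_a$-harmonic in $\R^{n+1}\setminus L$, so the strong maximum principle applies there, and the complementarity $uL_au\equiv 0$ combined with $-L_au\ge 0$ forces $-L_au$ to be a non-negative measure supported on $\{u=0\}\cap L$.

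For (i), assume first $u\ge 0$ with $u\not\equiv 0$. The strong maximum principle in $\R^{n+1}\setminus L$ gives $u>0$ off $L$, and by continuity $u=0$ on $\operatorname{supp}(-L_au)\subseteq L$. Comparison with a small multiple of the barrier $\rho^{2s-1}$ in a transverse two-dimensional disk centered at a point of $L\setminus\{0\}$ where $u=0$ produces a strictly positive codimension-two Hopf derivative, which contradicts the sign of the measure $-L_au\ge 0$ through a distributional integration-by-parts identity. For the case $u\le 0$ with $\kappa>2s-1$, we apply the same barrier argument to $-u\ge 0$; the ensuing lower bound $-u\gtrsim\rho^{2s-1}$ near $L$ is incompatible with $(-u)$ being $\kappa$-homogeneous since $\kappa>2s-1$ forces the competing growth rates to match only when $-u\equiv 0$.

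For (ii), set $v:=\partial_eu$ with $e\in L\setminus\{0\}$. Because $L$ is invariant under translations along $e$, the shifted function $u(\cdot+te)$ solves the same very thin obstacle problem for every $t\in\R$, so $v$ is $L_a$-harmonic in $\R^{n+1}\setminus L$ and $(\kappa-1)$-homogeneous. Since $\kappa\ge 2>1$, $v(0)=0$. The monotonicity $v\ge 0$ translates into $u(\cdot+te)\ge u$ for $t\ge 0$, whence $\{u(\cdot+te)=0\}\cap L\subseteq\{u=0\}\cap L$; a standard argument from the theory of obstacle problems then upgrades this set inclusion to the distributional inequality $-L_av\le 0$. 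Hence $-v$ satisfies the hypotheses of point (i) (note that $\kappa-1\ge 1>2s-1$ since $s<1$), and (i) yields $v\equiv 0$.

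For (iii), the difference $w:=v-u\ge 0$ is $L_a$-harmonic in $\R^{n+1}\setminus L$ (since both $u$ and $v$ are $L_a$-harmonic there), is $\kappa$-homogeneous with $\kappa>2s-1$, and $w(0)=0$. The strong maximum principle gives $w\equiv 0$ or $w>0$ off $L$; in the latter case the Hopf comparison with $\rho^{2s-1}$ gives $w\gtrsim\rho^{2s-1}$ near $L$, which contradicts the $\kappa$-homogeneity of $w$ as in (i), since $\kappa>2s-1$.

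The principal obstacle is the codimension-two Hopf-type boundary lemma for $L_a$-harmonic functions on $\R^{n+1}\setminus L$ with the sharp exponent $2s-1$. This statement does not appear explicitly in the previous sections of the paper and must be proved by combining the explicit barrier $\rho^{2s-1}$ with a careful distributional integration-by-parts identity that relates the positive Hopf asymptotics across $L$ to the sign of the measure $-L_au$. Once this codimension-two Hopf principle is secured, the three conclusions reduce to clean applications of the strong maximum principle together with the homogeneity assumption, exactly as in the codimension-one argument of \cref{lemma:properties-homogeneous-solutions}.
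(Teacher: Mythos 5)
Your overall strategy---a codimension-two Hopf lemma built from the transverse barrier $\rho^{-a}=\rho^{2s-1}$, combined with the strong maximum principle and a homogeneity count---is exactly the paper's. The ``principal obstacle'' you identify is in fact supplied in the appendix: \cref{prop:operator-La-forverythinobstacle} expresses $L_au$ on $L$ through the transverse flux $f_a[u](x')=\lim_{\eps\downarrow0}\int_{\partial D_\eps}u_\nu|y|^a\,d\mathcal{H}^{1}$, and \cref{hopf-verythincase} is precisely the codimension-two Hopf lemma, proved with the barrier $|(x_n,y)|^{-a}(1+Ay^2)\phi(x')$. Your points i) and ii) then go through essentially as in the paper: positivity off $L$ forces a positive flux at the origin by Hopf, while either superharmonicity (which forces $f_a[u]\le 0$ on $L$) or homogeneity above the threshold $2s-1$ (which forces $f_a[u](0)=0$) forbids it. Two small remarks: in i) you should center the barrier at the origin, where $u$ vanishes by homogeneity, rather than at ``a point of $L\setminus\{0\}$ where $u=0$'', since no such point need exist; and in ii) the detour through ``$-L_av\le0$'' is both unjustified as stated and unnecessary---the contradiction for $v=\partial_e u\ge0$ only uses that $v$ is $L_a$-harmonic off $L$, positive off $L$, and $(\kappa-1)$-homogeneous with $\kappa-1>2s-1$.

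Point iii), however, has a genuine gap. You assert that $w:=v-u$ is $\kappa$-homogeneous, but $v$ is not assumed homogeneous (compare \cref{lemma:properties-homogeneous-solutions} iii), where $v$ is likewise an arbitrary solution), so $w$ carries no homogeneity and your concluding step ``contradicts the $\kappa$-homogeneity of $w$ as in (i)'' does not apply; note also that a lower bound $w\gtrsim\rho^{2s-1}$ near $L$ is by itself perfectly compatible with $w(0)=0$ since $2s-1>0$. The missing ingredient is the superharmonicity of $v$: Hopf applied to $w>0$ gives $f_a[w](0)>0$; the homogeneity of $u$ alone, together with $\kappa>2s-1$, gives $f_a[u](0)=0$; hence $f_a[v](0)=f_a[w](0)+f_a[u](0)>0$, which contradicts $-L_av\ge0$ via \cref{prop:operator-La-forverythinobstacle}. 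This is how the paper closes the argument.
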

\begin{proof}
    According to \cref{prop:operator-La-forverythinobstacle}, we use the following notation:
    $$f_{a}[u](0):= \underset{\eps \downarrow 0}{\lim}\int_{\partial D_{\eps}}u_{\nu}(0,x_{n},y)|y|^{a}\,d\mathcal{H}^{1}.$$
    To prove point i) we first observe that, by the maximum principle in \cref{prop:maximum-principle}, we can suppose that either $u>0$ or $u<0$ in $\R^{n+1}\setminus L$. Now,  if $u>0$ in $\R^{n+1}\setminus L$, then $f_{a}[u](0)>0$ by \cref{hopf-verythincase}. This is a contradiction with \cref{prop:operator-La-forverythinobstacle} and with the fact that $u$ is $L_{a}$-superharmonic. If instead $u<0$ in $\R^{n+1}\setminus L$ and $\kappa>2s-1$, then $f_{a}[u](0)=0$, by homogeneity, which contradicts \cref{hopf-verythincase}. 
    
    Let us now address point ii). Call $w=\partial_{e}u$ and notice that $w$ is $(\kappa-1)$-homogeneous and satisfies $L_{a}w=0$ in $\R^{n+1}\setminus L$. Suppose that $w>0$ in $\R^{n+1}\setminus L$. If $\kappa>2s$, then $f_{a}[w](0)=0$ by homogeneity, which contradicts \cref{hopf-verythincase}. 
    
    Finally, let us show point iii). Suppose that $v>u$ on $\R^{n+1}\setminus L$. Then, again by \cref{hopf-verythincase} we must have $f_{a}[v](0)>f_{a}[u](0)$. Now, if $\kappa>2s-1$, then $f_{a}[u](0)=0$, which means that $f_{a}[v](0)>0$. This contradicts \cref{prop:operator-La-forverythinobstacle}, since $v$ is $L_{a}$-superharmonic. 
\end{proof}
We proceed with the following accumulation lemma, which is the analogue of \cref{lemma:fundamental-dimension-reduction} for the set $\Gammab_{2}^{\rm a}$ (see also \cite[Lemmas 4.4-4.6]{ft23} for the case $s=1/2$ and $\vf\equiv0$).
\begin{lemma}\label{lemma:accumulation-quadratic-points}
    Let $u:B_{1}\times [0,1]\to \R$ be a family of solutions to \eqref{def:soluzione-estesa}, \eqref{def:soluzione-famiglia} with obstacle $\vf$ satisfying \eqref{e:hypo-phi}, with $k\ge4$. Let $\bar{X}\in \Gammab_{2}^{\rm a}$, call $p\in \mathcal{P}_{2}$ the first blow-up of $\widetilde{u}^{\bar{X}}(\cdot, \tau(\bar{X}))$ at $\bar{X}$ and $$v:= \widetilde{u}^{\bar{X}}(\bar{X}+\cdot, \tau(\bar{X}))-p.$$ Suppose that there exist a sequence of radii $r_{j}\downarrow 0$ and a sequence of points $X_{j}\in \Gammab_{2}^{\rm a}$ with $|X_{j}-\bar{X}|\le r_{j}$ such that, calling $p_{j}\in \mathcal{P}_{2}$ the first blow-up of $\widetilde{u}^{X_{j}}(\cdot, \tau(X_{j}))$ at $X_{j}$ and $$v_{j}:= \widetilde{u}^{X_{j}}(X_{j}+\cdot, \tau(X_{j}))-p_{j},$$ we have 
     $$Y_{j}:=\frac{X_{j}-\bar{X}}{r_{j}}\to \bar{Y}\neq 0,\quad \lambda_{j}:= N(0^{+},v_{j})\to N(0^{+},v)=:\lambda.$$
    Then, $\bar{Y}\in L= \{(x,0): p(x,0)=0\}$. Moreover, let $q\in W_{\loc}^{1,2}(\R^{n+1},|y|^{a})$ be any non-zero $\lambda$-homogeneous function obtained as a blow-up of $v$ at $0$ along the sequence $r_{j}$:
    \begin{itemize}
        \item [a)] if $\lambda=2$, then $q(\bar{Y})=0$;
        \smallskip
        \item [b)] if $\lambda\in (2,3)$, then $q$ is translation invariant in the direction $\bar{Y}$.
    \end{itemize}
\end{lemma}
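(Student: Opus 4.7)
The plan is to adapt the accumulation argument of \cref{lemma:fundamental-dimension-reduction} to the second-blow-up level, with the extra ingredient of comparing the first blow-ups $p$ and $p_j$ at the neighbouring base points.

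A first observation, which gives $\bar Y\in L(p)$ directly, is that the family monotonicity in \eqref{def:soluzione-famiglia} makes the contact set $\Lambda(u(\cdot,t))$ non-increasing in $t$. Combining $u(\bar X,\tau(X_j))\le u(\bar X,\tau(\bar X))=\varphi(\bar X)-\tau(\bar X)$ with the obstacle inequality $u(\bar X,\tau(X_j))\ge \varphi(\bar X)-\tau(X_j)$ forces $\tau(X_j)\ge \tau(\bar X)$, and hence $\Lambda(u(\cdot,\tau(X_j)))\subseteq \Lambda(u(\cdot,\tau(\bar X)))$. Thus $X_j\in \Lambda(u(\cdot,\tau(\bar X)))$, giving $\widetilde u^{\bar X}(X_j,\tau(\bar X))=0$. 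The expansion $\widetilde u^{\bar X}(X_j,\tau(\bar X)) = p(X_j-\bar X)+v(X_j-\bar X)$ then yields $r_j^2 p(Y_j) = -v(r_jY_j)$, and the growth bound $|v(r_jY_j)|\le C r_j^{\lambda+\eta}$ from \cref{second-blowup-quadratic-points} implies $p(Y_j)\to 0$, i.e.~$\bar Y\in L(p)$.

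For the remaining parts I would introduce the three rescalings
\begin{align*}
V_j(Y) &:= v(r_j(Y+Y_j)) = \widetilde u^{\bar X}(X_j+r_jY,\tau(\bar X)) - p(X_j-\bar X+r_jY),\\
W_j(Y) &:= v_j(r_jY) = \widetilde u^{X_j}(X_j+r_jY,\tau(X_j)) - p_j(r_jY),\\
Z_j(Y) &:= \widetilde u^{X_j}(X_j+r_jY,\tau(\bar X)) - p_j(r_jY).
\end{align*}
The continuity of the first blow-up map $X_0\mapsto p^{X_0}$ (from the rate in \cref{prop:classification-blowup-freq-2}) gives $p_j\to p$ uniformly on compact sets, and Taylor expanding $\vf$ at $\bar X$ and $X_j$ yields
\[
Z_j(Y)-V_j(Y) = r_j^2\,[p(Y+Y_j)-p_j(Y)] + O(r_j^{k+\gamma}).
\]
The monotonicity of $\widetilde u^{X_j}$ in $t$ (combined with $\tau(X_j)\ge \tau(\bar X)$) gives $W_j\ge Z_j$. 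Setting $\eps_j:=\|v(r_j\cdot)\|_{L^2(\partial B_1,\y)}+\|v_j(r_j\cdot)\|_{L^2(\partial B_1,\y)}$, which satisfies $c r_j^{\lambda+\eta}\le \eps_j\le C r_j^\lambda$ by \cref{second-blowup-quadratic-points}, along a subsequence $V_j/\eps_j\rightharpoonup \alpha\,q(\cdot+\bar Y)$ and $W_j/\eps_j\rightharpoonup \beta\,\widetilde q$ weakly in $W^{1,2}_{\loc}(\R^{n+1},\y)$, with $\alpha+\beta=1$ and $\widetilde q$ a non-zero $\lambda$-homogeneous blow-up of $v_j$. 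Using $\bar Y\in L(p)$ the polynomial correction $r_j^2[p(Y+Y_j)-p_j(Y)]$ is a $o(\eps_j)$, so $(Z_j-V_j)/\eps_j\to 0$ and $W_j\ge Z_j$ gives $\beta\,\widetilde q\ge \alpha\,q(\cdot+\bar Y)$ in the limit.

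In the case $\lambda\in (2,3)$ one is necessarily in case b) of \cref{second-blowup-quadratic-points}, so $s>1/2$, $m(p)=n-1$, and both $q,\widetilde q$ solve the very thin obstacle problem on $L(p)$; applying the closing argument of \cref{lemma:fundamental-dimension-reduction} with \cref{lemma:properties-homogeneous-solutions-verythin} in place of \cref{lemma:properties-homogeneous-solutions} yields first $\alpha,\beta>0$, then $q=(\alpha\sigma/\beta)\widetilde q$ for a suitable $\sigma>0$, and finally $\partial_{\bar Y}q\equiv 0$. In the case $\lambda=2$, evaluating $\beta\widetilde q\ge \alpha q(\cdot+\bar Y)$ at $Y=0$ together with $V_j(0)/\eps_j=-r_j^2 p(Y_j)/\eps_j=O(r_j^\eta)\to 0$ and $W_j(0)=v_j(0)=0$ yields $\alpha q(\bar Y)\le 0$, and a matching lower bound obtained by running the analogous argument for $\widetilde u^{X_j}(\cdot,\tau(X_j))$ (whose second blow-up is $\widetilde q$) forces $q(\bar Y)=0$, provided $\alpha>0$. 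The main technical obstacle is this last step in the case $\lambda=2$: since case a) of \cref{second-blowup-quadratic-points} produces $q$ merely as an $L_a$-harmonic polynomial without any sign constraint, the non-degeneracy $\alpha>0$ (and the matching reverse inequality) cannot be obtained from \cref{lemma:properties-homogeneous-solutions} as in \cref{lemma:fundamental-dimension-reduction}, and has to be extracted instead from the two-sided growth bounds in \cref{second-blowup-quadratic-points} together with the orthogonality relations \eqref{eq:ortogonality-first-second-bu}.
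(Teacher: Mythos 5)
Your overall architecture --- three rescalings centred at $X_j$, an ordering coming from the monotonicity in \eqref{def:soluzione-famiglia}, passage to the limit, and \cref{lemma:properties-homogeneous-solutions-verythin} to close case b) --- matches the paper's, but two steps do not hold as written. First, $\tau(X_j)\ge\tau(\bar X)$ is \emph{not} forced: in the extended problem the obstacle is $\vf$ for every $t$ (not $\vf-t$), and even in the non-extended formulation the two inequalities you combine (which anyway presuppose $\tau(X_j)\le\tau(\bar X)$ to invoke monotonicity) only show that $\bar X$ lies in the contact set at time $\tau(X_j)$ --- which is automatic, since the contact set is non-increasing in $t$, and yields no contradiction. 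One may only \emph{assume} $\tau(X_j)\ge\tau(\bar X)$ after passing to a subsequence and must then treat the opposite sign symmetrically; since both your route to $\bar Y\in L$ and the ordering $W_j\ge Z_j$ hinge on this sign, the symmetric case cannot be omitted. (The paper proves $\bar Y\in L$ differently: it first shows $p_j\to p$ and then compares $p_j$ with $p(Y_j+\cdot)$ via the Harnack inequality.)

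Second, and more seriously, the claim $r_j^2\,[p(Y+Y_j)-p_j(Y)]=o(\eps_j)$ is unjustified, and it is precisely the difficulty the paper's proof is organised around. Writing $p(Y+Y_j)-p(Y)=\nabla p(Y)\cdot(Y_j-Z_j)+p(Y_j)$ with $Z_j$ the projection of $Y_j$ onto $L$, the quantity $\dist(Y_j,L)\le|Y_j-\bar Y|$ tends to $0$ at a completely uncontrolled rate, and $\|p_j-p\|$ decays only with the logarithmic modulus of \cref{prop:classification-blowup-freq-2}; since $\eps_j$ can be as small as $r_j^{\lambda+\eta}$, for $\lambda=2$ you would need $\dist(Y_j,L)+\|p_j-p\|=O(r_j^{\eta})$, which is not available. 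The paper therefore retains this discrepancy as a third term $w_j^{(3)}=p(X_j-\bar X+\cdot)-p$, includes its norm in the normalising factor, and obtains the balance $c\widetilde q=\hat q+c_2\,q(\bar Y+\cdot)+c_3\,\nabla p\cdot e$ with an additional $1$-homogeneous limit $c_3\,\nabla p\cdot e$, $e\perp L$. This balance is also what closes the case $\lambda=2$ that you leave open: there $\hat q\ge0$ is $L_a$-harmonic in all of $\R^{n+1}$ (the spine has zero $L_a$-capacity in case a)) and vanishes at the origin, hence $\hat q\equiv0$; differentiating the balance at the origin in the direction $\bar Y$ annihilates the $2$-homogeneous terms (their gradients vanish at $0$) and the term $\nabla p\cdot e$ (because $\bar Y\in L$), leaving $2c_2\,q(\bar Y)=0$ with $c_2\neq0$ by comparison of homogeneities. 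Neither the orthogonality relations \eqref{eq:ortogonality-first-second-bu} nor two-sided growth bounds are needed for that step.
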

\begin{proof}
    We divide the proof into several steps. 

    \smallskip
    \noindent \textbf{Step 1}: ($p_{j}$ converges to $p$). On the one hand, by \cref{prop:classification-blowup-freq-2},
    $$\lVert v\rVert_{L^{\infty}(B_{r})}+ \lVert v_{j}\rVert_{L^{\infty}(B_{r})}\le Cr^{2}\omega(r)\quad \mbox{for every } r\in (0,r_{0}),$$
    where $\omega(r):= (-\log(r))^{-c}\to 0$ as $r\downarrow 0$. On the other hand, by \cref{prop:continuity-tau}, 
    $$\widetilde{u}^{X_{j}}(X_{j}+\cdot, \tau(X_{j}))\to \widetilde{u}^{\bar{X}}(\bar{X}+\cdot, \tau(\bar{X})),$$ where the convergence is uniform.
    Thus, for every $r\in (0,r_{0})$,
    \begin{align*}
        \underset{j\to \infty}{\limsup}\lVert p_{j}-p\rVert_{L^{\infty}(B_{r})}&\le \underset{j\to \infty}{\limsup}\left(\lVert v\rVert_{L^{\infty}(B_{r})}+ \lVert v_{j}\rVert_{L^{\infty}(B_{r})}\right)\\
        &+ \underset{j\to \infty}{\limsup} \lVert \widetilde{u}^{X_{j}}(X_{j}+\cdot, \tau(X_{j}))- \widetilde{u}^{\bar{X}}(\bar{X}+\cdot, \tau(\bar{X}))\rVert_{L^{\infty}(B_{r})}\\
        &\le Cr^{2}\omega(r),
    \end{align*}
    which implies that $p_{j}\to p$ locally uniformly, as $p,p_{j}\in \mathcal{P}_{2}$ are quadratic polynomials.

    \smallskip
    \noindent \textbf{Step 2}: ($\bar{Y}\in L$). As $L$ coincides with the subspace of $\R^{n}\times \{0\}$ of invariance directions of $p$, it is enough to prove that $p(\bar{Y}+\cdot)\equiv p$. 
    We may assume without loss of generality that $\tau(X_{j})\ge \tau(\bar{X})$. Then, thanks to the monotonicity assumption in \eqref{def:soluzione-famiglia} and recalling $\widetilde \vf^{\bar X}$ and $\widetilde \vf^{ X_j}$ as in \eqref{def:phitilde}, in $B_{2}$:
    \begin{align*}
        r_{j}^{-2}\widetilde{u}^{X_{j}}(X_{j}+r_{j}\cdot, \tau(X_{j}))&\ge r_{j}^{-2}\widetilde{u}^{\bar{X}}(\bar{X}+r_{j}(Y_{j}+\cdot), \tau(\bar{X}))\\
        &+r_{j}^{-2}\left(\widetilde{\varphi}^{\bar{X}}(\bar{X}+r_{j}(Y_{j}+\cdot))-\widetilde{\varphi}^{X_{j}}(X_{j}+r_{j}\cdot)\right)\\
        &\ge r_{j}^{-2}\widetilde{u}^{\bar{X}}(\bar{X}+r_{j}(Y_{j}+\cdot), \tau(\bar{X}))-Cr_{j}^{k+\gamma-2},
    \end{align*} where in the last inequality we used the regularity assumption on $\vf$.
    Hence, by \cref{prop:classification-blowup-freq-2} we get, in $B_{2}$:
    \begin{align*}
        C\omega(r_{j})+p_{j}&\ge r_{j}^{-2}\widetilde{u}^{X_{j}}(X_{j}+r_{j}\cdot, \tau(X_{j}))\\
        &\ge r_{j}^{-2}\widetilde{u}^{\bar{X}}(\bar{X}+r_{j}(Y_{j}+\cdot), \tau(\bar{X}))-Cr_{j}^{k+\gamma-2}\\
        &\ge p(Y_{j}+\cdot)-C\omega(r_{j})-Cr_{j}^{k+\gamma-2}.
    \end{align*}
    From the inequality above we see that the $L_{a}$-harmonic functions $$P_{j}:= p_{j}-p(Y_{j}+\cdot)+2C\left(\omega(r_{j})+r_{j}^{k+\gamma-2}\right)$$ are non-negative in $B_{2}$, and since $p\ge 0$ on the thin space, we also have $P_{j}(0)\le 2C\left(\omega(r_{j})+r_{j}^{k+\gamma-2}\right)$. Hence, from the Harnack inequality for $L_{a}$ (see \cref{lemma:harnack}) we deduce
    $$\lVert p_{j}-p(Y_{j}+\cdot)\rVert_{L^{\infty}(B_{1})}\le C\left(\omega(r_{j})+r_{j}^{k+\gamma-2}\right).$$
    Finally, passing to the limit in $j\to \infty$ and using Step 1 we get $\lVert p-p(\bar{Y}+\cdot)\rVert_{L^{\infty}(B_{1})}=0$, as desired.

    \smallskip
    \noindent \textbf{Step 3}: (Blow-up analysis). Let us suppose that
    $$\frac{v(r_{j}\cdot)}{\lVert v(r_{j}\cdot)\rVert_{L^{2}(\partial B_{1},|y|^{a})}}\rightharpoonup q\quad \mbox{in }W_{\loc}^{1,2}(\R^{n+1},|y|^{a}),$$
    where $q$ is non-zero and $\lambda$-homogeneous. Moreover $q$ is an $L_{a}$-harmonic polynomial if $\lambda=2$, or a solution of the very thin obstacle problem \eqref{def:very-thin-obstacle-problem} on $L$ if $\lambda\in (2,3)$.
    We consider the functions $$w_{j}:= \widetilde{u}^{X_{j}}(X_{j}+\cdot, \tau(X_{j}))-p.$$
    Then $w_{j}$ can be written as the sum of three terms: $w_{j}=w_{j}^{(1)}+w_{j}^{(2)}+w_{j}^{(3)}$, where 
    \begin{align*}
        w_{j}^{(1)}&:= \widetilde{u}^{X_{j}}(X_{j}+\cdot, \tau(X_{j}))-\widetilde{u}^{\bar{X}}(X_{j}+\cdot, \tau(\bar{X})),\\
        w_{j}^{(2)}&:= \widetilde{u}^{\bar{X}}(X_{j}+\cdot, \tau(\bar{X}))-p(X_{j}-\bar{X}+\cdot),\\
        w_{j}^{(2)}&:= p(X_{j}-\bar{X}+\cdot)-p.
    \end{align*}
    First of all, we observe that by the uniform bounds on the family of solutions $u$, the compactness of the sequence $X_{j}$, the convergence of $\lambda_{j}$ to $\lambda$, and that of $p_{j}$ to $p$, one can repeat the same arguments used to prove \cref{second-blowup-quadratic-points} to deduce that, up to subsequences:
    $$\frac{w_{j}(r_{j}\cdot)}{\lVert w_{j}(r_{j}\cdot)\rVert_{L^{2}(\partial B_{1},|y|^{a})}}\rightharpoonup \widetilde{q}\quad \mbox{in }W_{\loc}^{1,2}(\R^{n+1},|y|^{a}),$$
    where $\widetilde{q}$ is non-zero and $\lambda$-homogeneous. Moreover $\widetilde{q}$ is an $L_{a}$-harmonic polynomial if $\lambda=2$, or a solution of the very thin obstacle problem \eqref{def:very-thin-obstacle-problem} on $L$ if $\lambda\in (2,3)$.
    
    Secondly, assuming without loss of generality that $\tau(X_{j})\ge \tau(\bar{X})$, by the monotonicity property \eqref{def:soluzione-famiglia}, we have
    $$w_{j}^{(1)}(r_{j}\cdot)\ge -Cr_{j}^{k+\gamma-2}\quad \mbox{in }B_{1}.$$
    Then observe that $w_{j}^{(2)}(r_{j}\cdot)= v(r_{j}(Y_{j}+\cdot))$. Hence, up to subsequences
    $$\frac{w_{j}^{(2)}(r_{j}\cdot)}{\lVert w_{j}^{2}(r_{j}\cdot)\rVert_{L^{2}(\partial B_{1},|y|^{a})}}\rightharpoonup \frac{q(\bar{Y}+\cdot)}{\lVert q(\bar{Y}+\cdot)\rVert_{L^{2}(\partial B_{1},|y|^{a})}}\quad \mbox{in }W_{\loc}^{1,2}(\R^{n+1},|y|^{a}).$$
    Finally, let us call $Z_{j}$ the projection of $Y_{j}$ on $L$. Since $p$ is invariant in direction $Z_{j}$, we have $$w_{j}^{(3)}(r_{j}\cdot)= r_{j}^{-2}\left(p((Y_{j}+\cdot))-p\right)= r_{j}^{-2}\left(p((Y_{j}-Z_{j}+\cdot))-p\right).$$
    Hence, up to subsequences, we have
     $$\frac{w_{j}^{(3)}(r_{j}\cdot)}{\lVert w_{j}^{(3)}(r_{j}\cdot)\rVert_{L^{2}(\partial B_{1},|y|^{a})}}\rightharpoonup \nabla p \cdot e\quad \mbox{in }W_{\loc}^{1,2}(\R^{n+1},|y|^{a}),$$
    where $e\in \R^{n}\times \{0\}$ is some vector such that $e\perp L$. Let us call $$\eps_{j}:= \lVert w_{j}^{(1)}(r_{j}\cdot)\rVert_{L^{2}(\partial B_{1},|y|^{a})}+\lVert w_{j}^{(2)}(r_{j}\cdot)\rVert_{L^{2}(\partial B_{1},|y|^{a})}+\lVert w_{j}^{(3)}(r_{j}\cdot)\rVert_{L^{2}(\partial B_{1},|y|^{a})}.$$
    Up to extracting a further subsequence, we get
    $$
    \left\{
    \begin{array}{rclll}
        w_{j}(r_{j}\cdot)/\eps_{j}&\rightharpoonup& c\widetilde{q}\\
        w_{j}^{(1)}(r_{j}\cdot)/\eps_{j}&\rightharpoonup& \hat{q}\ge 0\\
        w_{j}^{(2)}(r_{j}\cdot)/\eps_{j}&\rightharpoonup& c_{2}q(\bar{Y}+\cdot)\\
        w_{j}^{(3)}(r_{j}\cdot)/\eps_{j}&\rightharpoonup& c_{3}\nabla p \cdot e
    \end{array}
    \right.\qquad \mbox{in } W_{\loc}^{1,2}(\R^{n+1},|y|^{a}),
    $$
    where $c, c_{2}, c_{3}\in [0,1]$ are not all zero, and $\hat{q}$ is $L_{a}$-harmonic away from $L$ (to get $\hat{q}\ge 0$ we have used that $\eps_{j}\ge Cr_{j}^{3}$ for some $C>0$, as $\lambda<3$).

    \smallskip
    \noindent \textbf{Step 4}: (Conclusion). From Step 3 we obtain the balance equation
    \begin{equation}\label{eq:balance-conclusion-dimreduction-quadratic}
        c\widetilde{q}=\hat{q}+c_{2}q(\bar{Y}+\cdot)+c_{3}\nabla p \cdot e.
    \end{equation}
    We consider the two cases $\lambda=2$ and $\lambda\in (2,3)$ separately. 

    \textit{Case a)}.
    If $\lambda=2$, then $q$ and $\widetilde{q}$ are both $L_{a}$-harmonic, thus $\hat{q}$ is $L_{a}$-harmonic too, and since $\hat{q}\ge 0$, by Liouville's theorem (see \cref{prop:Liouville}), $\hat{q}\equiv 0$. Then
    $$c\widetilde{q}=c_{2}q(\bar{Y}+\cdot)+c_{3}\nabla p \cdot e.$$
    Taking the partial derivative of this expression at $0$ in direction $\bar{Y}$, using that $q$ and $\widetilde{q}$ are $2$-homogeneous and that $\partial_{\bar{Y}}p=0$, we get
    $$2c_{2}q(\bar{Y})=0.$$
    Finally observe that $c_{2}$ cannot be zero, otherwise by comparison of homogeneities we would also have $c=c_{3}=0$. Hence $q(\bar{Y})=0$ as desired.

    \textit{Case b)}.
    Suppose instead that $\lambda\in (2,3)$. Then $s>1/2$, $L$ is $(n-1)$-dimensional, and both $q$ and $\widetilde{q}$ are $\lambda$-homogeneous solutions of the very thin obstacle problem \eqref{def:very-thin-obstacle-problem} on $L$. Notice that $\widetilde{q}$ and $q(\bar{Y}+\cdot)$, when restricted to the thin space, must be even with respect to $L$. Indeed, their odd parts are $L_{a}$-harmonic in the whole $\R^{n+1}$. Hence if not zero, they would be polynomials with integer homogeneity, which contradicts $\lambda\in (2,3)$. Using the balance \eqref{eq:balance-conclusion-dimreduction-quadratic}, we deduce that the odd part of $\hat{q}$ must cancel exactly $c_{2}\nabla p \cdot e$. Therefore, as $\hat{q}\ge 0$, we get
    $$c\widetilde{q}\ge c_{2}q(\bar{Y}+\cdot).$$
    We observe that none of $c$ and $c_{2}$ can be zero, otherwise, by \cref{lemma:properties-homogeneous-solutions-verythin} point i) we would get either $q\equiv 0$ or $\widetilde{q}\equiv 0$, a contradiction. Thus, using the $\lambda$-homogeneity of both $q$ and $\widetilde{q}$, we deduce that indeed
    $$c\widetilde{q}\ge c_{2}q.$$
    Now \cref{lemma:properties-homogeneous-solutions-verythin} point iii) gives $c\widetilde{q}= c_{2}q$, hence 
    $$q\ge q(\bar{Y}+\cdot),$$
    which in turn implies, again by homogeneity, that $\partial_{-\bar{Y}}q\ge 0$. Finally, \cref{lemma:properties-homogeneous-solutions-verythin} point ii) ensures that $\partial_{\bar{Y}}q\equiv 0$, thus $q$ is translation invariant in direction $\bar{Y}$, as desired.
\end{proof}
\begin{proof}[Proof of \cref{prop:gamma-anomalous}]
    We split $\Gammab_{2}^{\rm a}$ into the disjoint sets $\Gammab_{2,2}$ and $\Gammab_{2,(2,3)}:=\Gammab_{2}^{\rm a}\setminus \Gammab_{2,2}$. We will use two slightly different dimension reduction arguments to get the dimensional bound on the two parts of the splitting.
    
    \smallskip
    \noindent \textbf{Step 1:} (Bound on $\Gammab_{2,2}$). We first assume that $n\ge3$. Remember that $\Gammab_{2,2}$ is made of those quadratic points at which second blow-ups are $2$-homogeneous $L_{a}$-harmonic polynomials (recall \cref{second-blowup-quadratic-points}). Suppose by contradiction that 
    $$\mathcal{H}^{\beta}(\Gammab_{2,2})>0$$
    for some $\beta>n-2$. Then, by classical properties of Haussdorf measures, there must exist a point $\bar{X}\in \Gammab_{2,2}$, a set $A\subset B_{1}'$ with $\mathcal{H}^{\beta}(A)>0$, and a sequence of radii $r_{j}\downarrow 0$ such that, for each $\bar{Y} \in A$, there exists a sequence $X_{j}\in \Gammab_{2,2}$ with $|X_{j}-\bar{X}|\le r_{j}$ for which $(X_{j}-\bar{X})/r_{j}\to \bar{Y}$. Let $q$ be a non-zero $2$-homogeneous $L_{a}$-harmonic polynomial obtained as a second blow-up of $\widetilde{u}^{\bar{X}}(\cdot, \tau(\bar{X}))$ at $\bar{X}$. Then, thanks to \cref{lemma:accumulation-quadratic-points}, we have 
    $$A\subset \{(x,0): p(x,0)=q(x,0)=0\}.$$
    Now, since $\mathcal{H}^{\beta}(A)>0$ and $\beta>n-2$, it must be the case that $L=\{(x,0):p(x,0)=0\}$ is an $(n-1)$-dimensional subspace of $\R^{n}\times \{0\}$ and $q\equiv 0$ on $L$. This is because both $p$ and $q$ are $2$-homogeneous polynomials and $p$ is non-negative on the thin space, therefore $L$ coincides with the subspace of invariant directions of $p(\cdot,0)$. We may then assume without loss of generality that 
    $$L=\{x_{n}=y=0\}.$$
    As $p$ and $q$ are both $L_{a}$-harmonic even in $y$, and vanish on $L$, using also the fact that $p\ge 0$ on the thin space, we deduce that $p$ and $q$ must have this form:
    $$
    \begin{cases}
        p(x,y)=(1+a)bx_{n}^{2}-by^{2};\\
        q(x,y)= (1+a)(c\cdot x)x_{n}-c_{n}y^{2}.
    \end{cases}\quad \mbox{for some } b\in (0,\infty) \mbox{ and } c\in \R^{n}.$$
    Now we exploit the orthogonality conditions \eqref{eq:ortogonality-first-second-bu} using $\bar{p}_{i}(x,y)=(1+a)(c_{i}^{2}x_{i}^{2}+x_{n}^{2}+2c_{i}x_{i}x_{n})-(1+c_{i}^{2})y^{2}$ for $i=1,\dots,n-1$ to get
    \begin{gather*}
        0=\int_{\partial B_{1}}pq\,d\HH^n= bc_{n}\int_{\partial B_{1}}\big((1+a)x_{n}^{2}-y^{2}\big)^{2}\,d\HH^n \implies c_{n}=0;\\
        0\ge \int_{\partial B_{1}}\bar{p}_{i}q\,d\HH^n = 2(1+a)^{2}c_{i}^{2}\int_{\partial B_{1}}x_{i}^{2}x_{n}^{2}\,d\HH^n \implies c_{i}=0 \quad \mbox{for every } i\in \{1,\dots,n-1\}.
    \end{gather*}
    Therefore $q\equiv 0$, which is a contradiction. 

    In the case $n=2$, the same argument gives that $\Gammab_{2,2}$ cannot accumulate. If $n=1$, the orthogonality conditions \eqref{eq:ortogonality-first-second-bu} used as above give that the second blow-up cannot be $2$-homogeneous, and so $\Gammab_{2,2}$ is empty.
    
    \smallskip
    \noindent \textbf{Step 2:} (Bound on $\Gammab_{2,(2,3)}$). We assume $n\ge3$. Recall from \cref{second-blowup-quadratic-points} that $\Gammab_{2,(2,3)}$ is empty if $s\le 1/2$, while if $s>1/2$, it is made of all those quadratic points at which the first blow-up has an $(n-1)$-dimensional spine and each second blow-up solves the very thin obstacle problem \eqref{def:very-thin-obstacle-problem} on such spine. This time the dimension reduction argument will be based on \cref{lemma:characterization-of-m-dim-set}. Consider the function $f:\Gammab_{2,(2,3)}\to (2,3)$ defined as
    $$f(X):= N(0^{+},\widetilde{u}^{X}(X+\cdot, \tau(X))-p^{X})$$
    where $p^{X}\in \mathcal{P}_{2}$ is the first blow-up of $\widetilde{u}^{X}(\cdot, \tau(X))$ at the point $X$. Suppose by contradiction that $\dim_{\mathcal{H}}(\Gammab_{2,(2,3)})>n-2$. Then, by \cref{lemma:characterization-of-m-dim-set} there must exist a point $\bar{X}\in \Gamma_{2,(2,3)}(u)$, a sequence of radii $r_{j}\downarrow 0$ and $n-1$ sequences of points $X^{1}_{j},\dots,X^{n-1}_{j}\in \Gammab_{2,(2,3)}(u)$ with $|X^{i}_{j}-\bar{X}|\le r_{j}$ such that
    $$Y^{i}_{j}:=\frac{X^{i}_{j}-\bar{X}}{r_{j}}\to \bar{Y}^{i}\neq 0,\quad \lambda^{i}_{j}:= f(X^{i}_{j})\to f(\bar{X})=:\lambda,$$
    where $\bar{Y}^{1},\dots,\bar{Y}^{n-1}$ are linearly independent. In particular, if $q$ is any non-zero $\lambda$-homogeneous solution of the very thin obstacle problem \eqref{def:very-thin-obstacle-problem} obtained as a second blow-up of $\widetilde{u}^{\bar{X}}(\cdot, \tau(\bar{X}))$ at $\bar{X}$ along the sequence $r_{j}$, by \cref{lemma:accumulation-quadratic-points}, $q$ has an $(n-1)$-dimensional space of invariant directions in $\R^{n}\times \{0\}$ and so it may be regarded as a solution of the very thin obstacle problem \eqref{def:very-thin-obstacle-problem} in $\R^{2}$. This however contradicts \cref{prop:classification-verythinobstacle-dim2} since $\lambda \in (2,3)$. 

    If $n=2$, the same argument gives that $\Gammab_{2,(2,3)}$ cannot accumulate. Finally, if $n=1$, $\Gammab_{2,(2,3)}$ is empty by \cref{prop:classification-verythinobstacle-dim2}. 
\end{proof}
\section{Cleaning results}\label{section-clean}
In this section we will prove cleaning results for the sets $\Gammab_2^{o}$ and $\Gammab_{2+2s}$, using the rate of convergence to the blow-up and comparison arguments. We will also show how to deduce some cleaning results for high frequencies and for $\Gammab_{*}$ from \cref{prop:cleaning} and the frequency gaps from \cref{teo:gaps}.
Recall the notation \lq\lq Clean'' for the cleaning rate of a family of sets from \cref{def:cleaned}.
\subsection{Cleaning of the set $\boldsymbol{\Gammab_2^{\rm o}}$} For the ordinary harmonic quadratic points $\Gammab_2^{\rm o}$ (see definition in \eqref{def:gammao-and-gammaa}) we prove the following proposition, which is the generalization of \cite[Proposition 4.1]{ft23} to the case $s\in(0,1)$ and $\vf\not\equiv0$.

\begin{proposition}\label{prop:gamma-ordinary}
    Let $u:B_{1}\times [0,1]\to \R$ be a family of solution of \eqref{def:soluzione-estesa}, \eqref{def:soluzione-famiglia}, with obstacle $\vf $ satisfying \eqref{e:hypo-phi}, with $k\ge 4$. Then:
    \begin{itemize}
        \item [a)] if $s\le1/2$, we have $$\{\Gamma_2^{\rm o}(u(\cdot,t))\}_{t\in[0,1]}\in\text{Clean}(3);$$
        \item [b)] if $s>1/2$, we have $$\{\Gamma_2^{\rm o}(u(\cdot, t))\}_{t\in[0,1]}\in\text{Clean}(4-2s).$$ 
    \end{itemize}
\end{proposition}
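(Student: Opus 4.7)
The plan is to adapt the approach of \cite[Proposition~4.1]{ft23} to general $s\in(0,1)$ and non-zero obstacle, splitting the analysis according to whether $s\le 1/2$ or $s>1/2$. Fix $X_0\in \Gamma_2^{\rm o}(u(\cdot,t_0))$ with $t_0=\tau(X_0)$, let $\widetilde u:=\widetilde u^{X_0}(\cdot,t_0)$, and let $p\in\mathcal{P}_2$ be its first blow-up at $X_0$, as provided by \cref{prop:classification-blowup-freq-2}. Since $X_0$ is \emph{ordinary} -- i.e., the second blow-up of $v:=\widetilde u(X_0+\cdot)-p$ has frequency at least $3$ -- the growth bound \eqref{growth-secondblowup} in \cref{second-blowup-quadratic-points} combined with standard subsolution estimates would yield
\[
\widetilde u(X_0+X)=p(X)+v(X),\qquad \|v\|_{L^\infty(B_r)}\le C\, r^{3-\eps},
\]
for every $\eps>0$. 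This holds uniformly also in the case $s>1/2$ with $m(p)=n-1$, where the second blow-up solves the very thin obstacle problem \eqref{def:very-thin-obstacle-problem} on $L(p)$ rather than being polynomial.

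The next step is a quantitative monotonicity comparison in the spirit of \cite[Proposition~2.4]{fr21}: for $t>t_0$ I would produce a non-negative barrier $G_{X_0}$, $L_a$-harmonic in $B_{1/2}\setminus \Lambda(u(\cdot,t_0))$ and vanishing on the contact set, with
\[
u(Y,t)-u(Y,t_0)\ge c_0(t-t_0)\,G_{X_0}(Y)\qquad \text{in } B_{1/2}.
\]
The behavior of $G_{X_0}$ near $X_0$ is governed by Hopf-type estimates for $L_a$-harmonic functions with Dirichlet data on subspaces of codimension $n+1-m(p)$, and it is precisely here that the case distinction between $s\le 1/2$ and $s>1/2$ enters.

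Then I would argue by contradiction: assume $X\in \Gamma(u(\cdot,t))\cap B_\rho(X_0)$ with $t-t_0>|X-X_0|^{\beta-\eps}$, where $\beta=3$ if $s\le 1/2$ and $\beta=4-2s$ if $s>1/2$. The monotonicity \eqref{def:soluzione-famiglia} forces $\Lambda(u(\cdot,t))\subset \Lambda(u(\cdot,t_0))$, so $X$ is also a contact point at $t_0$; evaluating the expansion on the thin space gives $p(X-X_0)=O(|X-X_0|^{3-\eps})$, so $X$ must lie close to the spine $L(p)$. Picking a test point $Y$ transverse to $L(p)$ on the sphere of radius $r:=|X-X_0|$ around $X_0$, combining the upper bound from the expansion ($\widetilde u(Y)\lesssim r^2$) with the lower bound $c_0(t-t_0)\,G_{X_0}(Y)$ coming from the monotonicity, and using the Hopf estimate on $G_{X_0}(Y)$, would produce the desired contradiction and thus the cleaning rate.

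The main obstacle will be the case $s>1/2$ with $m(p)=n-1$: here the spine $L(p)$ has codimension $2$ in $\R^{n+1}$, the second blow-up solves the very thin obstacle problem, and the Hopf rate of $G_{X_0}$ deteriorates to $2(1-s)$, which is responsible for the weaker cleaning rate $4-2s<3$ in this regime.
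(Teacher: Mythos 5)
Your overall architecture is the same as the paper's: the cubic bound on the remainder $v=\widetilde u^{X_0}(X_0+\cdot)-p$ coming from the ordinary-point condition via \eqref{growth-secondblowup}, a detachment estimate for $h_t=u(\cdot,t)-u(\cdot,t_0)$ built on the first eigenfunction of $L_a^{\mathbb S^n}$ on the sphere minus a cone around the spine $L(p)$ (this is exactly \cref{lemma:detachement}), and a contradiction at scale $r=|X-X_0|$. Two points, however, do not hold up. First, the Hopf exponent you assign to the case $s>1/2$, $m(p)=n-1$ is wrong: the first $L_a$-harmonic function vanishing on the codimension-two subspace $\{x_n=y=0\}$ is $(x_n^2+y^2)^{-a/2}$, of homogeneity $-a=2s-1$, not $2(1-s)$. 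The bookkeeping is $\beta=3-\sigma$ with $\sigma$ the Hopf rate, so $\sigma=0$ gives $\text{Clean}(3)$ and $\sigma=2s-1$ gives $\text{Clean}(4-2s)$; with your value $\sigma=2-2s$ the same argument would only produce $\text{Clean}(1+2s)$, which for $s<3/4$ is strictly weaker than what you claim to prove.

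Second, the contradiction step is not closed as described. The barrier $G_{X_0}$ vanishes on $\Lambda(u(\cdot,t_0))$, and by monotonicity the putative new free boundary point $X\in\Gamma(u(\cdot,t))$ already lies in $\Lambda(u(\cdot,t_0))$, so the lower bound $u(X,t)-u(X,t_0)\ge c_0(t-t_0)G_{X_0}(X)$ degenerates to $0\ge 0$ precisely where you need it; conversely, at a test point $Y$ transverse to $L(p)$ you do get strict positivity of $u(\cdot,t)-\varphi$, but $Y$ is already in the non-coincidence set at time $t_0$, so nothing is contradicted there. The missing ingredient is a comparison in a ball of radius $\sim r$ around $X_0$ that transports the detachment gain from $\{|y|\gtrsim r\}$ down to the thin space: one bounds $w:=r^{-2}\bigl(u(r\cdot,t)-\widetilde\varphi^{X_0}(r\cdot)\bigr)$ from below by $c\,r^{\sigma+\beta-2-\eps}\psi+p-Cr^{1-\eps}$, where $\psi$ is the $L_a$-harmonic function in $B_2$ with boundary datum $\chi_{\{|y|\ge 1/2\}}$ and $\psi\ge c>0$ on $B_{3/2}$ by Harnack. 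Since $\sigma+\beta-2=1$ and $p\ge 0$ on the thin space, this forces $w>0$ on the whole thin ball $B'_{3/2}$, hence $u(\cdot,t)>\varphi$ on $B'_{3r/2}(X_0)$, which excludes any free boundary point there and yields the contradiction. Without this step your sketch does not reach a contradiction.
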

We first prove a preliminary lemma, which is inspired by \cite[Lemma 4.2]{ft23} and \cite[Lemma 2.8]{fr21}.
\begin{lemma}\label{lemma:detachement}
    Let $u:B_{1}\times [0,1]\to \R$ be a family of solutions to \eqref{def:soluzione-estesa}, \eqref{def:soluzione-famiglia}, with obstacle $\vf $ satisfying \eqref{e:hypo-phi}, with $k\ge 4$. Suppose that $0\in \Gamma_{2}(u(\cdot,0))$. Let $D_r:=\partial B_r\cap \{|y|\ge r/2\}$ and call $h_{t}:= u(\cdot, t)-u(\cdot, 0)$.
    Then, for every $\eps>0$ there are constants $\rho_\eps>0$ and $c_\eps>0$ such that the following holds.
    \begin{itemize}
        \item [a)] If $s\le 1/2$, we have $$\min_{D_r}h_t\ge c_\eps r^\eps t\quad\mbox{for every } r\in(0,\rho_\eps),\ t\in[0,1].$$ 
         \item [b)] If $s>1/2$, we have $$\min_{D_r}h_t\ge c_\eps r^{2s-1+\eps} t\quad\mbox{for every } r\in(0,\rho_\eps),\ t\in[0,1].$$ 
    \end{itemize}
\end{lemma}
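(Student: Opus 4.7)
The approach combines the monotonicity information from \eqref{def:soluzione-famiglia}, the rate of convergence at quadratic points from \cref{prop:classification-blowup-freq-2}, and an iterated comparison argument with an explicit $L_a$-subharmonic barrier.

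First, I would record the basic consequences of \eqref{def:soluzione-famiglia}: $h_t\ge 0$ in $B_1$ and $h_t\ge t$ on $\partial B_1\cap\{|y|\ge 1/2\}$. For every $t>0$ the monotonicity forces the contact sets to be disjoint, $\Lambda(u(\cdot,0))\cap\Lambda(u(\cdot,t))=\emptyset$: any common point would yield $\vf=u(\cdot,0)\le u(\cdot,t)=\vf-t$, contradicting $t>0$. Writing $-L_a h_t=-L_a u(\cdot,t)+L_a u(\cdot,0)$ as a signed measure, this disjointness implies that $h_t$ is $L_a$-harmonic outside $\Lambda(u(\cdot,0))\cup\Lambda(u(\cdot,t))$ and, crucially, $L_a$-superharmonic on the open set $\Omega:=B_1\setminus\Lambda(u(\cdot,0))$, since on $\Lambda(u(\cdot,t))\subset\Omega$ one has $-L_a h_t=-L_a u(\cdot,t)\ge 0$ as measures. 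On the other hand, \cref{prop:classification-blowup-freq-2} provides a nonzero $p\in\mathcal{P}_2$ with $\|u(\cdot,0)-\vf-p\|_{L^\infty(B_r)}\le Cr^2\omega(r)$ and $\omega(r)=O((-\log r)^{-c})$: hence for every $\delta>0$ and every $r$ sufficiently small (depending on $\delta$), the set $\Lambda(u(\cdot,0))\cap B_r$ is contained in a $\delta r$-tubular neighborhood of the spine $L(p)\subset\{y=0\}$, which has dimension $m(p)\le n-1$.

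The core of the proof is an iterative comparison: there exists $\lambda=\lambda(\eps)\in(0,1)$ such that
\begin{equation*}
    \min_{D_{r/2}}h_t\ge \lambda\min_{D_r}h_t
\end{equation*}
for every $r$ small enough, with $\lambda=1-C\eps$ in case (a) and $\lambda=c_{n,s}\cdot 2^{-(2s-1+\eps)}$ in case (b). Starting from $\min_{D_{1/2}}h_t\ge ct$ (which follows from $h_t\ge t$ on $D_1$ together with Harnack \cref{lemma:harnack} applied to $h_t$, $L_a$-harmonic on the slab $\{1/2\le|X|\le 1\}\cap\{|y|\ge 1/4\}$) and iterating $\lceil\log_2(1/r)\rceil$ times yields the claimed bounds. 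The one-step inequality is proved by comparing $h_t$ in the spherical shell $B_r\setminus B_{r/2}$ with an explicit $L_a$-subharmonic barrier $\psi$, using that $h_t$ is $L_a$-superharmonic in $\Omega$ and $\psi$ is $L_a$-subharmonic, so $h_t-\psi$ is $L_a$-superharmonic and one obtains $h_t\ge\psi$ by the minimum principle, provided $\psi\le h_t$ on the boundary. The boundary inequality on $\Lambda(u(\cdot,0))$ reduces to $\psi\le 0$ on a $\delta r$-thickening of $L(p)$, which is built into $\psi$.

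In case (a) ($s\le 1/2$) the spine $L(p)$ has zero $L_a$-capacity, so the condition on $\Lambda(u(\cdot,0))$ is vacuous in the Sobolev sense, and $\psi$ can be taken as a rescaled small multiple of the positive $\eps$-homogeneous $L_a$-harmonic function $\widetilde p_\eps$ (radially symmetric about the $y$-axis, see the proof of \cref{prop:gap-fs24}), fitted between $D_r$ and $D_{r/2}$; this yields a loss $(1-C\eps)$ per step. In case (b) ($s>1/2$) with $m(p)=n-1$, the spine has positive $L_a$-capacity, so the barrier must vanish on (a thickening of) $L(p)$; the natural candidate is a Hopf-type barrier of the form $\psi(X)=c\,t\,(|X|/r)^{\eps}\,\dist(X,L(p))^{2s-1}$, suitably cut off to be $L_a$-subharmonic in the shell and zero on the $\delta r$-thickening of $L(p)$. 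Its $(2s-1+\eps)$-homogeneity accounts for the factor $2^{-(2s-1+\eps)}$ per dyadic step. When $m(p)\le n-2$, $L(p)$ again has zero capacity and the stronger case (a) barrier applies, giving a better bound than claimed. The main obstacle is the explicit construction and verification of the Hopf-type barrier in case (b) when $m(p)=n-1$: one must check $L_a$-subharmonicity after cutting off near the thickened spine and ensure the rate $2^{-(2s-1+\eps)}$ persists uniformly across scales despite the logarithmic error $\omega(r)$ in the position of the contact set. This is handled by reducing to a model problem on the blow-up profile, where the spine is exactly linear, and absorbing the error into the parameter $\eps$.
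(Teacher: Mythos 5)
Your overall strategy is the right one and is essentially the paper's: use \cref{prop:classification-blowup-freq-2} to trap $\Lambda(u(\cdot,0))$ near the origin inside a small cone around the spine $L(p)$, get $h_t\ge c\,t$ at a fixed scale by Harnack, and then push this down to scale $r$ with a nearly homogeneous lower barrier vanishing near the spine, whose homogeneity is close to $0$ when $s\le 1/2$ and close to $2s-1$ when $s>1/2$. However, there are two genuine gaps in the execution.

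First, in case (a) you claim that the boundary condition on $\Lambda(u(\cdot,0))$ is ``vacuous in the Sobolev sense'' because $L(p)$ has zero $L_a$-capacity, and you then take a barrier ($\widetilde p_\eps$) that is strictly positive on the thin space. This does not work: $h_t$ is $L_a$-\emph{sub}harmonic on $\Lambda(u(\cdot,0))$ (there $-L_ah_t=L_au(\cdot,0)\le 0$), and $\Lambda(u(\cdot,0))$ is not the null set $L(p)$ but a full-dimensional tube around it carrying the nonzero measure $L_au(\cdot,0)$; on it you only know $h_t\ge 0$. So the comparison domain must exclude a cone $C_{2\delta}$ around $L(p)$ and the barrier must vanish on $\partial C_{2\delta}$ in case (a) as well. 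The zero-capacity fact enters elsewhere: it is what guarantees that the first Dirichlet eigenvalue of $L_a^{\mathbb S^n}$ on $\partial B_1\setminus C_\delta$ converges to that of the full sphere, i.e.\ that the homogeneity $\mu(\delta)$ of the correct barrier (the first eigenfunction on $\partial B_1\setminus C_\delta$, extended $\mu(\delta)$-homogeneously) tends to $0$ as $\delta\downarrow 0$; for $s>1/2$ and $m(p)=n-1$ it tends to $2s-1$ instead. This single homogeneous eigenfunction barrier also replaces your unverified product ansatz $(|X|/r)^\eps\dist(X,L(p))^{2s-1}$ in case (b).

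Second, the dyadic iteration cannot deliver the stated exponents as you have set it up. A per-step factor $c_{n,s}\,2^{-(2s-1+\eps)}$ with a dimensional constant $c_{n,s}<1$ accumulates to an extra factor $r^{-\log_2 c_{n,s}}$ after $\log_2(1/r)$ steps, destroying the rate; and to run the annulus comparison you need the lower bound on all of $\partial B_r\setminus C_{2\delta}$, not just on $D_r$, which forces a Harnack application (hence a constant loss) at every scale. The clean fix is the paper's: apply Harnack exactly once, at a fixed scale $\rho_\delta$, to get $h_t\ge c_\delta t$ on $\partial B_{\rho_\delta}\setminus C_{2\delta}$, and then perform a \emph{single} comparison in $B_{\rho_\delta}\setminus C_{2\delta}$ with $w_t=c_\delta t\,\psi_{2\delta}/\|\psi_{2\delta}\|_{L^\infty(\partial B_{\rho_\delta})}$, where $\psi_{2\delta}=r^{\mu(2\delta)}\phi_{2\delta}(\theta)$ is exactly homogeneous; this yields $h_t\ge c_\eps t\,r^{\mu(2\delta)}$ on $D_r$ for all $r<\rho_\delta$ with $\mu(2\delta)<\sigma+\eps$, with no iteration and no loss.
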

\begin{proof} 
    Let $v(\cdot, t)= u(\cdot, t)-\widetilde{\vf}^{0}$ where $\widetilde \vf ^0$ is as in \eqref{def:phitilde}. Let $p\in \mathcal{P}_{2}$ be the blow-up of $v(\cdot,0)$ at $0$. We set $$C_\delta:=\left\{X\in\R^{n+1}:\text{dist}\left(\frac {X}{|X|},L(p)\right)<\delta\right\},$$ where $L(p)\subset \R^{n+1}_{0}$ is given by \eqref{eq:L(p)}. Then $$B_1\cap \{v(\rho\cdot,0)=0\}'\subset C_\delta\quad\mbox{for every } \rho\in(0,2\rho_{\delta})$$ for some constant $\rho_\delta>0$ which depends on $\delta$, since the minimum of $p$ in $B_1'\setminus C_\delta$ is strictly positive. 

    Let $\psi_\delta(r,\theta):=r^{\mu(\delta)}\phi_{\delta}(\theta)$, where $\phi_{\delta}\ge0$ is the first eigenfunction of the spherical operator $L_a^{\mathbb{S}^n}$ (see \Cref{subsection:epiperimetric}) on $\partial B_1\setminus C_\delta$ and $\mu(\delta)$ is chosen so that $\psi_\delta$ is $L_a$-harmonic when positive.  
We have the following cases:
    
   \textit{Case a).} if $s\le 1/2$, then $L(p)$ is a linear space of dimension at most $n-1$ which has zero $L_a$-harmonic capacity. In particular $\mu(\delta)\downarrow 0$ as $\delta\downarrow 0$;
   
    \textit{Case b).} if $s>1/2$, in the worst case scenario $L(p)$ is $(n-1)$-dimensional, thus, since for $\delta=0$ the first eigenfunction up to rotation is $(x_1^2+x_{n+1}^2)^{-\frac a2}$, then $\mu(\delta)\downarrow -a=2s-1$ as $\delta\downarrow 0$.
     
\noindent We call
    $$
    \sigma:= \begin{cases}
        0\quad &\mbox{if } s\le 1/2,\\
        2s-1\quad &\mbox{if } s> 1/2.
    \end{cases}
    $$
    We can choose $\delta\in(0,1/4)$ such that $\mu(\delta)<\sigma+\eps$ and $D_{\rho_\delta}\cap C_{2\delta}=\emptyset$.
Moreover the function $h_t \ge 0$ is $L_a$-harmonic in $$(B_1\setminus \{y=0\})\cup( B_{2\rho_\delta}'\setminus C_\delta).$$ Then, by the Harnack inequality in \cref{lemma:harnack}, there is $c_\delta>0$ such that $h_t\ge c_\delta t$ on $\partial B_{\rho_\delta}\setminus C_{2\delta}$. Using $$w_t=c_\delta t\frac{\psi_{2\delta}}{\|\psi_{2\delta}\|_{L^\infty(\partial B_{\rho_\delta})}}$$ as a lower barrier for $h_t$ in $B_{\rho_\delta}\setminus C_{2\delta}$, we conclude.
\end{proof}
\begin{proof}[Proof of \cref{prop:gamma-ordinary}] We set   $$
\beta:=\begin{cases}
    3\quad &\mbox{if } s\le 1/2,\\
    4-2s\quad &\mbox{if } s> 1/2,
\end{cases}\qquad\mbox{and}\qquad \sigma:= \begin{cases}
        0\quad &\mbox{if } s\le 1/2,\\
        2s-1\quad &\mbox{if } s> 1/2.
    \end{cases}
$$ We need to prove that for every $\epsilon>0$, and every $X_{0}\in \Gamma_{2}^{\rm o}(u(\cdot,t_0))$, there is $\rho>0$ such that
$$\{X\in B_{\rho}(X_{0}):t>t_{0}+|X-X_{0}|^{\beta-2\eps}\}\cap \Gamma_{2}^{\rm o}(u(\cdot, t))=\emptyset\quad\mbox{for every } t\in[0,1].$$
Without loss of generality, we may assume that $X_{0}=0$ and $t_0=0$. We suppose that for some $X_{1}\in B_{\rho}$ with $r:=|X_{1}|<\rho$, there is $t_{1}>r^{\beta-2\eps}$ such that $X_{1}\in \Gamma_{2}^{\rm o}(u(\cdot, t_{1}))$ and we search for a contradiction for $\rho>0$ sufficiently small. We define $$w(X):= r^{-2}\left(u(rX, t_{1})-\widetilde{\vf}^{0}(rX)\right).$$ 
Let $p\in \mathcal{P}_{2}$ be the first blow-up of $\widetilde{u}^{0}$ at $0$. Using the definition of $\Gamma_2^{o}(u(\cdot,0))$ and \cref{lemma:detachement} we get, for $X\in \partial B_{2}$,
\begin{align*}
    w(X)&= r^{-2}\left(u(rX,t_{1})-u(rX,0)\right)+r^{-2}\widetilde{u}^{0}(rX,0)\\
    &\ge c_{\eps}r^{\sigma-2+\eps}t_{1}\chi_{\{|y|\ge 1\}}(X)+p(X)-Cr\\
    &\ge c_{\eps}r^{\sigma+\beta-2-\eps}\chi_{\{|y|\ge 1\}}(X)+p(X)-Cr.
\end{align*}
Next observe that by \eqref{def:soluzione-estesa-tilde} and \eqref{eq:stima-g},
$$-L_{a}w\ge -Cr^{k+\gamma-1-2s}\quad \mbox{on } B_{2}.$$
Then, calling $\psi:B_{2}\to \R$ the $L_{a}$-harmonic function with boundary datum $\chi_{\{|y|\ge1/2\}}$ on $\partial B_2$, since $p$ is $L_{a}$-harmonic, by the maximum principle we get
$$w\ge c_{\eps}r^{\sigma+\beta-2-\eps}\psi+p-Cr-Cr^{k+\gamma-1-2s}\quad \mbox{in } B_{2}.$$ 
Now, by the Harnack inequality, $\psi\ge c>0$ in $B_{3/2}$. Moreover, recall that $p\ge 0$ on $\R^{n+1}_{0}$. Finally, since $\sigma+\beta-2-\eps<1$, $k+\gamma-1-2s>1$, and $r<\rho$, we can find $\rho>0$ sufficiently small so that
$$w>0\quad \mbox{in } B_{3/2}'.$$
From this we deduce that $B_{3r/2}'$ does not contain free boundary points for $u(\cdot,t_{1})$. This, however, contradicts the choice of the point $X_{1}$, and the proof is concluded.
\end{proof}

\subsection{Cleaning of the set $\boldsymbol{\mathbf{\Gammab}_{2+2s}}$}
In this subsection we prove a cleaning result for the set $\Gammab_{2+2s}$. This is obtained similarly to  \cite[Proposition 5.1]{ft23}, using
the polynomial rate of convergence to the blow-up limit given by \cref{prop:rate}.
\begin{proposition}\label{prop:cleaning-(2+2s)}
         Let $u:B_{1}\times [0,1]\to \R$ be a family of solution of \eqref{def:soluzione-estesa}, \eqref{def:soluzione-famiglia} with obstacle $\vf $ satisfying \eqref{e:hypo-phi}, with $k\ge 4$. Then $$\{\Gamma_{2+2s}(u(\cdot,t))\}_{t\in[0,1]}\in\text{Clean}(2+\eta),$$ for some $\eta>0$, which depends on $n$ and $s$.
\end{proposition}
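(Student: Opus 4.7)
The plan is to follow closely the proof strategy of \cref{prop:gamma-ordinary}, with the polynomial rate of convergence from \cref{prop:rate} replacing the merely logarithmic rate from \cref{prop:classification-blowup-freq-2}. After the standard reduction (WLOG $X_0=0$, $t_0=0$), I would argue by contradiction: assume there is $X_1\in B_\rho$ with $r:=|X_1|<\rho$, $t_1>r^{2+\eta-\epsilon}$, and $X_1\in\Gamma_{2+2s}(u(\cdot,t_1))$, and aim to show that $\widetilde u^0(X_1,t_1)>0$ for $\rho$ small enough, contradicting $X_1\in\Lambda(u(\cdot,t_1))$.

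The first step is to establish a $(2+2s)$-analog of \cref{lemma:detachement}:
\[
\min_{D_r}h_t\ge c_{\epsilon_0}\,r^{2s+\epsilon_0}\,t \qquad\mbox{for every }r\in(0,\rho_{\epsilon_0}),\ t\in[0,1],
\]
where $h_t:=u(\cdot,t)-u(\cdot,0)$. The proof mirrors that of \cref{lemma:detachement}, with the key modification that, since the blow-up $p\in\mathcal P_{2+2s}$ at $X_0$ vanishes identically on the thin space, the cone $C_\delta$ must now be chosen as a thin conical neighborhood of the full thin space $\R^{n+1}_0$. Accordingly, the first spherical eigenfunction on $\partial B_1\setminus C_\delta$ converges, as $\delta\downarrow 0$, to (a multiple of) the trace of $|y|^{2s}$, so that the corresponding homogeneity exponent $\mu(\delta)\to 2s$, replacing the values $\sigma\in\{0,2s-1\}$ of \cref{lemma:detachement}.

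Next, define $w(X):=r^{-(2+2s)}\widetilde u^0(rX,t_1)$ on $B_2$ and split it as $r^{-(2+2s)}\widetilde u^0(rX,0)+r^{-(2+2s)}h_{t_1}(rX)$. Combining \cref{prop:rate} (which controls the first summand up to an error $O(r^\alpha)$, where $\alpha>0$) with the detachment estimate above (taking $\epsilon_0\in(0,\epsilon)$, and noting that $rX\in D_{2r}$ whenever $X\in\partial B_2\cap\{|y|\ge 1\}$), one obtains, on $\partial B_2$,
\[
w(X)\ge p(X)-Cr^\alpha + c\,r^\eta\,\chi_{\{|y|\ge 1\}}(X).
\]
Let $\Psi$ be the $L_a$-harmonic extension of $\chi_{\{|y|\ge 1\}}$ to $B_2$; by the strong maximum principle, $\Psi\ge c_0>0$ on $B_{3/2}$. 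Since $-L_aw\ge -Cr^{k+\gamma-2-2s}|y|^a$ on $B_2$ (a negligible error, as the exponent is strictly positive under the assumptions $k=4$ and $\gamma\in(a^-,1]$), comparison with the $L_a$-harmonic barrier $p+c\,r^\eta\Psi-Cr^\alpha$ yields
\[
w\ge p+c\,r^\eta\Psi-Cr^\alpha\qquad\text{in }B_2.
\]
On $B_{3/2}'$, where $p\equiv 0$ (as $p\in\mathcal P_{2+2s}$), this reads $w\ge c_0c\,r^\eta-Cr^\alpha$, which is strictly positive for $r$ small whenever $\eta<\alpha$. Hence choosing $\eta:=\alpha/2$ yields the desired contradiction.

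The main obstacle is the derivation of the detachment estimate. Although formally analogous to \cref{lemma:detachement}, identifying the correct exponent $2s$ requires the observation that the effective ``spine'' at a $(2+2s)$-frequency point is the whole thin space, so that the controlling $L_a$-harmonic profile is $|y|^{2s}$ rather than a function associated with a lower-dimensional affine subspace. Beyond this adaptation, the crucial numerical improvement over the quadratic case comes from the approximation error being polynomially, rather than only logarithmically, small compared to the natural rescaling $r^{2+2s}$, which is precisely what enables the final clean comparison $c_0c\,r^\eta>Cr^\alpha$.
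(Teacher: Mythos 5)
Your overall architecture (rate of convergence from \cref{prop:rate} at a $(2+2s)$-point plus a detachment estimate of order $t|y|^{2s}$, compared at scale $r$ with $t_1-t_0>r^{2+\eta}$) is the right one, and your detachment estimate is indeed available — the paper proves the sharper statement $h_t\ge c\,t\,|y|^{2s}$ in $B_{1/2}$ directly with a single barrier and the Hopf lemma (\cref{lemma:lemma2.2}), so the eigenvalue-perturbation detour is unnecessary. However, there is a genuine gap in the final comparison step, and it is tied to the direction in which you set up the contradiction. You rescale around the \emph{earlier} point $X_0$ and try to show that the \emph{later} solution is strictly positive on $B_{3r/2}'$. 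The two lower bounds at your disposal — the blow-up $p\in\mathcal P_{2+2s}$ and the detachment gain $c\,r^{\eta}|y|^{2s}$ — both vanish identically on the thin space, so pointwise they only give $w\ge -Cr^{\alpha}$ on $\{y=0\}$, which is useless. Your proposed fix, comparing $w$ with the barrier $p+c\,r^{\eta}\Psi-Cr^{\alpha}$ in $B_2$, does not go through: for the maximum principle you would need $w-p$ to be an (approximate) supersolution, but by \eqref{formula:y-derivative-on-thin-space} and \eqref{e:definition-of-T} one has $L_a p=-4s\,p_0\,\mathcal{H}^{n}\res\{y=0\}$ with $p_0\ge0$ of unit size away from the origin, so $-L_a(w-p)$ contains a nonpositive measure of order one on the thin space and has no sign. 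There is no small parameter to absorb this term, and restricting to $\{p_0<\delta\}$ destroys the boundary data you need. So the inequality $w\ge p+c\,r^{\eta}\Psi-Cr^{\alpha}$ in $B_2$ is unjustified, and with it the strict positivity of $w$ on $B_{3/2}'$.

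The paper resolves exactly this difficulty by running the argument in the opposite direction: it rescales around $X_1$ at the \emph{later} time and derives an \emph{upper} bound
$w\le -Cr^{\eta}|y|^{2s}+b\,y^{2+2s}+Cr^{\alpha}$
for the rescaling of $\widetilde u^{0}(\cdot,t_0)$, using \cref{lemma:lemma2.2} (which now pushes $w$ \emph{down} by $r^{\eta}|y|^{2s}$) together with \cref{prop:rate} and \cref{corollary:classification-(2+2s)} applied at $X_1$. Combined with the constraint $w\ge0$ on the thin space, a sliding-paraboloid barrier $\psi_{Z,\delta}(X)=|X-Z|^{2}-(\tfrac{n}{1+a}+2)y^{2}+\delta$ on balls of radius $\rho=(Cr^{\alpha})^{1/2}$ (with $\eta<\alpha(1-s)$) forces $w\equiv0$ on $B_{3/2}'$, i.e.\ $X_0$ lies in the interior of the contact set of $u(\cdot,t_0)$ — a contradiction. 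Note that this direction only needs the upper bound $p\le b\,y^{2+2s}$, which degenerates on the thin space in a harmless way, whereas your direction needs a strict lower bound there, which none of the available estimates provide. To repair your proof you would have to switch to this contact-side argument (or find a genuinely new mechanism producing strict positivity of $\widetilde u^{0}(\cdot,t_1)$ on the thin space).
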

First we prove the following lemma (see  \cite[Lemma 2.2]{ft23} for the case $s=1/2$).
\begin{lemma}\label{lemma:lemma2.2}
    Let $u:B_{1}\times [0,1]\to \R$ be a family of solutions to \eqref{def:soluzione-estesa}, \eqref{def:soluzione-famiglia}, with obstacle $\vf $ satisfying \eqref{e:hypo-phi}. Set $h_{t}:= u(\cdot, t)-u(\cdot, 0)$. Then there is a constant $c>0$ such that, for every $t\in[0,1]$,  $$ h_t\ge ct|y|^{2s} \quad\mbox{in } B_{1/2}.$$
\end{lemma}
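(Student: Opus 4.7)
The plan is to construct an $L_a$-harmonic lower barrier that vanishes on the thin space with the correct $|y|^{2s}$-rate, and compare $h_t$ to a positive multiple of it via the maximum principle. Setting things up: the monotonicity and boundary growth in \eqref{def:soluzione-famiglia} give $h_t \ge 0$ in $B_1$ and $h_t \ge t$ on $\partial B_1\cap\{|y|\ge 1/2\}$. Since both $u(\cdot,t)$ and $u(\cdot,0)$ are $L_a$-harmonic off their (thin) contact sets, $h_t$ is $L_a$-harmonic in $B_1\setminus\{y=0\}$, and it is continuous on $\overline{B_1}$ by the uniform $C^{2s}$-bound in \eqref{def:soluzione-famiglia}.

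Next, let $\phi:\overline{B_1}\to\R$ be the unique bounded function which is even in $y$, $L_a$-harmonic in $B_1\setminus\{y=0\}$, vanishes on $B_1'$, and equals a fixed continuous cutoff $\psi\le \chi_{\{|y|\ge 1/2\}}$ on $\partial B_1$ (with $\psi\equiv 1$ on $\partial B_1\cap\{|y|\ge 3/4\}$). Inspecting the three zones $|y|\ge 1/2$, $0<|y|<1/2$, $y=0$ of $\partial B_1^+\cup B_1'$ separately, we see that $h_t \ge t\phi$ on the whole boundary of $B_1^+$. The weak maximum principle for $L_a$ in $B_1^+$, together with the symmetric argument in $B_1^-$, then yields $h_t\ge t\phi$ throughout $B_1$.

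It remains to prove the Hopf-type bound $\phi(X)\ge c|y|^{2s}$ in $B_{1/2}$. Both $\phi$ and $|y|^{2s}$ are non-negative and $L_a$-harmonic in $B_1^+$ with zero Dirichlet trace on $B_1'$, so by the boundary Harnack principle for $L_a$ (cf.~the $A_2$-weight theory of Fabes–Kenig–Serapioni, together with the Hopf-type statement used throughout the paper) the ratio $\phi/|y|^{2s}$ extends continuously up to $\overline{B_{3/4}'}$ and is strictly positive there by the strong maximum principle and Hopf. A compactness argument on $\overline{B_{1/2}'}$ produces the claimed uniform lower bound, which combined with the previous step gives $h_t\ge ct|y|^{2s}$ in $B_{1/2}$. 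The only non-routine step is this final boundary Harnack estimate; if desired, it can be replaced by the direct computation that $v(X):=|y|^{2s}\bigl(1-C_1|x|^2-C_2|y|^2\bigr)$ is a non-negative $L_a$-subsolution in $B_{1/2}^+$ vanishing on $B_{1/2}'$ for suitable $C_1,C_2>0$, so comparison with $\phi$ on $\partial B_{1/2}^+$ (where $\phi$ is bounded below by interior Harnack) yields the pointwise lower bound.
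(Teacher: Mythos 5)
Your main argument is exactly the paper's: construct the auxiliary function that is $L_a$-harmonic off the thin space, vanishes on $\{y=0\}$, and equals (a cutoff of) $\chi_{\{|y|\ge 1/2\}}$ on $\partial B_1$; compare $h_t$ with $t$ times it via the maximum principle; then use a Hopf-type lower bound to get the $|y|^{2s}$ rate. That part is correct and is what the paper does (it invokes \cref{prop:maximum-principle} and \cref{lemma:Hopf}, with the uniformity over $B_{1/2}'$ obtained as in your compactness step).

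One caveat: the ``direct computation'' you offer as a replacement for the boundary Harnack step has the wrong sign. For $v=|y|^{2s}\bigl(1-C_1|x|^2-C_2y^2\bigr)$ one computes, using that $|y|^{2s}$ is $L_a$-harmonic off the thin space,
$$L_a v=-|y|^{a+2s}\bigl(2nC_1+2(3-a)C_2\bigr)<0,$$
so $v$ is an $L_a$-\emph{supersolution}, not a subsolution; the comparison principle would then give $v\ge\phi$ under boundary ordering, which is the opposite of what you need. Flipping the signs of $C_1,C_2$ makes $v$ a subsolution but destroys the property $v\le 0$ on $\partial B_{1/2}$, so the boundary comparison near $\{y=0\}$ becomes circular. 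Stick with the Hopf/boundary-Harnack route: $y^a\partial_y\phi$ extends continuously up to $\overline{B_{1/2}'}$ and is strictly positive there by \cref{lemma:Hopf}, hence bounded below by compactness, and integrating in $y$ gives $\phi\ge c|y|^{2s}$.
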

\begin{proof}     
    By definition of $h_t$ and the monotonicity assumption in \eqref{def:soluzione-famiglia}, we have that
        $h_t\ge0$ in $ B_1$ and $ h_t\ge t$ in $\partial B_1\cap \{|y|>\frac12\}. $
        Take a function $\psi$ such that 
        $$
    \left\{
    \begin{array}{rclll}
         L_a\psi&=&0 &\quad \mbox{in } B_1\setminus\{y=0\},\\
        \psi&=&1&\quad \mbox{on } \partial B_1\cap \{|y|\ge\frac12\},\\
        \psi&=&0&\quad \mbox{on } \partial B_1\cap \{|y|<\frac12\},\\
        \psi&=&0&\quad \mbox{on} \{y=0\},
    \end{array}
    \right.
    $$
    By the maximum principle in \cref{prop:maximum-principle},  $\psi\le h_{t}/t$ in $B_1$. Moreover, by the Hopf lemma  (\cref{lemma:Hopf}), we know that $|y|^{a}\psi\ge c|y|$ in $B_{1/2}$ for some $c>0$, which concludes the proof.
\end{proof}

Now we are ready to prove \cref{prop:cleaning-(2+2s)}.
\begin{proof}[Proof of \cref{prop:cleaning-(2+2s)}] 
    We need to show that there is $\eta>0$ such that, given $X_{0}\in \Gamma_{2+2s}(u(\cdot, t_{0}))$, we can find $r_{0}>0$ sufficiently small so that
    $$\{X\in B_{r_{0}}(X_{0}):t>t_{0}+|X-X_{0}|^{2+\eta}\}\cap \Gamma_{2+2s}(u(\cdot, t))=\emptyset \quad \mbox{for every } t\in [0,1].$$
    We suppose that for some $X_{0}\in \Gamma_{2+2s}(u(\cdot, t_{0}))$ there is $X_{1}\in \Gamma_{2+2s}(u(\cdot, t_{1}))$ for which $r:=|X_{1}-X_{0}|<r_{0}$ and $t_{1}>t_{0}+r^{2+\eta}$, and we search for a contradiction up to a suitable choice of $\eta, r_{0}>0$. 

    Without loss of generality we may assume that $X_{1}=0$ and $t_{1}=1$. We define $$w(X):= r^{-2-2s}\left(u(rX, t_{0})-\widetilde{\vf}^{0}(rX)\right)$$
    and we observe that, by the scaling law of $L_{a}$, and the definition of the extended obstacle $\widetilde{\vf}^{0}$ in \eqref{def:phitilde},
    $$|L_{a}w|\le Cr^{k+\gamma-2-2s}|y|^{a}\quad \mbox{in } B_{2}\setminus \{w=0\}'.$$
    Using \cref{lemma:lemma2.2}, \cref{prop:rate} and \cref{corollary:classification-(2+2s)}, we obtain the following upper bound for $w$ on $B_{2}$:
    \begin{align*}
        w(X)&= r^{-2-2s}\left(u(rX,t_{0})-u(rX,1)\right)+r^{-2-2s}\left(u(rX,1)-\widetilde{\vf}^{0}\right)\\
        &\le -r^{-2-2s}C(1-t_{0})|ry|^{2s}+\left(by^{2}-x\cdot Ax\right)|y|^{2s}+Cr^{\alpha}\\
        &\le -Cr^{\eta}|y|^{2s}+by^{2+2s}+Cr^{\alpha}.
    \end{align*}
    Here $A$ is a symmetric non-negative definite $n\times n$ matrix and $b>0$ (see \cref{corollary:classification-(2+2s)}), $\alpha\in (0,1)$ is given by \cref{prop:rate}, and we have used that $(1-t_{0})>r^{2+\eta}$.  

    We claim that $w\equiv 0$ on $B'_{3/2}$, provided that $\eta$ and $r_{0}$ are chosen sufficiently small. This would imply that $X_{0}$ is in the interior of $\Lambda(u(\cdot, t_{0}))$, contradicting the fact that $X_{0}\in \Gamma (u(\cdot, t_{0}))$. We will prove the claim with a barrier argument. Let us define, for every $Z\in B_{3/2}'$ and $\delta \ge 0$,
    $$\psi_{Z,\delta}(X):= |X-Z|^{2}-\left(\frac{n}{1+a}+2\right)y^{2}+\delta.$$
    A simple computation shows that
    $$L_{a}\psi = -2(1+a)|y|^{a}\quad \mbox{in } \R^{n+1}_{0}.$$
    Moreover, a suitable choice of $\eta, r_{0}, \rho>0$, independent of $Z$, ensures that, for every $\delta \ge 0$, on $\partial B_{\rho}(Z)$,
    \begin{align*}
        w(X)&\le -Cr^{\eta}|y|^{2s}+by^{2+2s}+Cr^{\alpha}\\
        &\le \rho^{2}- \left(\frac{n}{1+a}+2\right)y^{2}+\delta \le \psi_{Z,\delta}(X),
    \end{align*}
    choosing $\rho=(Cr^{\alpha})^{1/2}$, $\eta<\alpha (1-s)$ and $r_{0}$ sufficiently small. This implies that $w$ and $\psi_{Z,\delta}$ cannot touch each other on $\partial B_{\rho}(Z)$ if $\delta>0$. We now show that they cannot even touch each other in the interior of $B_{\rho}(Z)$, which implies that $w(Z)\le \psi_{Z,\delta}(Z)=\delta$ for every $\delta>0$, thus $w(Z)=0$. In fact, since $\psi_{Z, \delta}\ge\delta>0$ on $\R^{n+1}_{0}$, then $\psi_{Z,\delta}$ and $w$ cannot touch at a point in $\{w=0\}'$. On the other hand, if $X\in B_{\rho}(Z)\setminus \{w=0\}'$ and $r_{0}$ is chosen sufficiently small, then
    $$L_{a}\psi_{Z,\delta}(X)=-2(1+a)|y|^{a}< -Cr^{k+\gamma-2-2s}|y|^{a}\le L_{a}w(X),$$
    which is impossible since $w$ touches $\psi_{Z,\delta}$ from below at $X$.
\end{proof}
\subsection{Other cleaning results} Here we collect some cleaning results for other subsets of the free boundary following from  \cref{prop:cleaning}. 
\begin{proposition}\label{prop:other-cleaning-results} Let $u:B_{1}\times [0,1]\to \R$ be a family of solution to \eqref{def:soluzione-estesa}, \eqref{def:soluzione-famiglia} with obstacle $\vf $ satisfying \eqref{e:hypo-phi}, with $k\ge4$. Then:
\begin{itemize}
    \item [i)] we have 
\be\label{eq:stima1}\{\Gamma_{\ge 3+s}(u(\cdot, t))\setminus \Gamma_{\ge k+\gamma}(u(\cdot, t))\}_{t\in [0,1]}\in\text{Clean}(3-s);\ee
\item [ii)] there exists a constant $\sigma>0$, which depends on $n$ and $s$, such that
\begin{itemize}
\smallskip
    \item [a)] if $s\le1/2$, then \be\label{eq:stima2}\{\Gamma_{*}(u(\cdot,t))\}_{t\in[0,1]}\in\text{Clean}(3-2s+\sigma);\ee
\item [b)] if $s>1/2$, then \be\label{eq:stima2.5}\{\Gamma_{*}(u(\cdot,t))\}_{t\in[0,1]}\in\text{Clean}(2+\sigma);\ee
\end{itemize}
\item [iii)]
finally:
\begin{itemize}
\smallskip
    \item [a)] if $s\le1/2$, then \be\label{eq:stimafinale1}\{\Gamma_{\ge k+\gamma}(u(\cdot,t))\}_{t\in[0,1]}\in\text{Clean}(k+\gamma-1);\ee
\item [b)] if $s>1/2$, then \be\label{eq:stimafinale2}\{\Gamma_{\ge k+\gamma}(u(\cdot,t))\}_{t\in[0,1]}\in\text{Clean}(k+\gamma-2s).\ee
\end{itemize} 
\end{itemize}
\end{proposition}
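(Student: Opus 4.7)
The plan is to derive all three cleaning rates from \cref{prop:cleaning} i) (applied at a well-chosen frequency $\lambda$) combined with the elementary observation that if $\{E_{t}\}_{t\in[0,1]}\in\text{Clean}(\beta)$ and $F_{t}\subset E_{t}$ for every $t\in[0,1]$, then $\{F_{t}\}_{t\in[0,1]}\in\text{Clean}(\beta)$ as well. This monotonicity is immediate from \cref{def:cleaned}.

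For (i), I would use that $\Gamma_{\ge 3+s}(u(\cdot,t))\setminus\Gamma_{\ge k+\gamma}(u(\cdot,t))\subset\Gamma_{\ge 3+s}(u(\cdot,t))$ and apply \cref{prop:cleaning} i) directly at the frequency $\lambda=3+s$. The assumptions of \cref{prop:cleaning} i) are easily verified: $3+s\ge 1+s$, while $k\ge 4$ and $\gamma>0$ imply $3+s<k+\gamma$ and $3+s<k+\gamma-a=k+\gamma-1+2s$. The resulting cleaning rate $\lambda-2s=3-s$ is exactly \eqref{eq:stima1}.

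For (ii), the key ingredient is \cref{teo:gaps} with $m=1$: setting $\sigma:=c^{+}_{n,s,1}>0$, one has $\mathcal{A}_{n,s}^{*}\cap(1+2s-c^{-}_{n,s,1},3+\sigma)=\emptyset$ when $s\le 1/2$, and $\mathcal{A}_{n,s}^{*}\cap(2-c^{-}_{n,s,1},2+2s+\sigma)=\emptyset$ when $s>1/2$. Since by definition $\Gamma_{*}(u(\cdot,t))$ collects points whose frequency lies in $\mathcal{A}_{n,s}^{*}\cap(2,+\infty)$, these gaps force the inclusions
\begin{equation*}
\Gamma_{*}(u(\cdot,t))\subset\Gamma_{\ge 3+\sigma}(u(\cdot,t))\text{ if }s\le 1/2,\qquad \Gamma_{*}(u(\cdot,t))\subset\Gamma_{\ge 2+2s+\sigma}(u(\cdot,t))\text{ if }s>1/2.
\end{equation*}
Applying \cref{prop:cleaning} i) at these frequencies produces the cleaning rates $3+\sigma-2s=3-2s+\sigma$ and $2+2s+\sigma-2s=2+\sigma$, respectively. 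The only (minor) issue is checking the admissibility constraint $\lambda\le\min\{k+\gamma,k+\gamma-a\}$ in \cref{prop:cleaning} i): using $k\ge 4$ and $\gamma\in(a^{-},1]$, this holds provided $\sigma$ is chosen sufficiently small in terms of $n$, $s$ and $\gamma$, which one may always do by shrinking $c^{+}_{n,s,1}$ if necessary.

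Finally, (iii) is obtained by selecting the largest admissible $\lambda$ in \cref{prop:cleaning} i) in each case. When $s\le 1/2$ one has $a=1-2s\ge 0$, so $\min\{k+\gamma,k+\gamma-a\}=k+\gamma-a$; applying \cref{prop:cleaning} i) at $\lambda=k+\gamma-a=k+\gamma-1+2s$ to the larger set $\Gamma_{\ge k+\gamma-a}(u(\cdot,t))\supset\Gamma_{\ge k+\gamma}(u(\cdot,t))$ yields the cleaning rate $\lambda-2s=k+\gamma-1$, establishing \eqref{eq:stimafinale1}. When $s>1/2$ instead $a<0$, so $\min\{k+\gamma,k+\gamma-a\}=k+\gamma$ and we apply the proposition directly at $\lambda=k+\gamma$, obtaining the rate $k+\gamma-2s$ and \eqref{eq:stimafinale2}. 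Since the entire argument reduces to invoking \cref{prop:cleaning} i) and \cref{teo:gaps}, there is no genuine obstacle here; the only part that requires any care is the bookkeeping of frequency choices relative to the admissibility range in (iii), which splits naturally along the two regimes $s\le 1/2$ and $s>1/2$.
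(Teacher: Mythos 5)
Your proof is correct and follows essentially the same route as the paper: parts i) and iii) are direct applications of \cref{prop:cleaning} i) (together with the obvious monotonicity of the Clean property under inclusions of the sets $E_t$), and part ii) uses the frequency gaps of \cref{teo:gaps} to obtain the inclusions $\Gammab_{*}\subset \Gammab_{\ge 3+\sigma}$ (resp. $\Gammab_{*}\subset \Gammab_{\ge 2+2s+\sigma}$) before invoking \cref{prop:cleaning} again. Your bookkeeping of the admissibility constraint $\lambda\le\min\{k+\gamma,k+\gamma-a\}$, and the resulting choice $\lambda=k+\gamma-a$ versus $\lambda=k+\gamma$ in the two regimes of iii), matches exactly what the paper's (much terser) proof implicitly relies on.
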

\begin{proof} The cleaning results \eqref{eq:stima1}, \eqref{eq:stimafinale1} and \eqref{eq:stimafinale2} follow directly from \cref{prop:cleaning}.
To get \eqref{eq:stima2} and \eqref{eq:stima2.5}, we first observe that there is a constant $\sigma>0$ such that $\Gammab_{*}\subset \Gammab_{\ge 3+\sigma}$, for $s\le 1/2$ and $\Gammab_{*}\subset \Gammab_{\ge 2+2s+\sigma}$, for $s> 1/2$, as a consequence of the frequency gaps in \cref{teo:gaps}. Then we conclude by \cref{prop:cleaning} again.
\end{proof}

\section{Proof of generic regularity}\label{section-proofs_main}
In this final section we combine the dimensional bounds of \cref{section-dim_reduc} with the cleaning results of \cref{section-clean} to prove our main results on generic regularity, \cref{thm:mainthm} and \cref{thm:main_esteso}. First,  we prove the following lemma, which gives the implication from \cref{thm:main_esteso} to \cref{thm:mainthm}.
\begin{lemma}\label{lemma:link_thm_main}
     Let $v:\mathbb{R}^{n}\times[0,1]\to \R$ be a solution of \eqref{def:soluzione-frac-famiglia} with obstacle $\varphi$ satisfying \eqref{e:hypo-phi}, and let us assume that $\{\varphi>0\}\ssubset \R^{n}$. Let $u:\R^{n+1}\times[0,1]\to \R$ be such that, for every $t\in [0,1]$, $u(\cdot,t)$ is the standard even $L_a$-harmonic extension of $v(\cdot,t)+t$ in $\R^{n+1}$. Then, up to a positive multiplicative constant, $u$ is a family of solutions to \eqref{def:soluzione-estesa}, \eqref{def:soluzione-famiglia}.
\end{lemma}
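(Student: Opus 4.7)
The plan is to verify, one by one, the four conditions defining a family of solutions to \eqref{def:soluzione-estesa}, \eqref{def:soluzione-famiglia} for the extension $u(\cdot,t)$, and to absorb the various regularity and detachment constants into the single positive multiplicative constant in the conclusion. First I would check that, for every fixed $t\in [0,1]$, the restriction of $u(\cdot,t)$ to $B_1$ solves the thin obstacle problem \eqref{def:soluzione-estesa}. The even symmetry in $y$ and the $L_a$-harmonicity in $\R^{n+1}\setminus(\R^n\times\{0\})$ are automatic from the definition of the even $L_a$-harmonic extension in \eqref{eq:La-harmonic-extension}. On the thin space, $u(x,0,t)=v(x,t)+t\ge \vf(x)-t+t=\vf(x)$, which gives the obstacle constraint. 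The distributional identity \eqref{formula:y-derivative-on-thin-space}, combined with \eqref{formula:representation_fractional_lap_extension}, then translates the sign conditions $(-\Delta)^s v(\cdot,t)\ge 0$ and $(-\Delta)^s v(\cdot,t)=0$ outside $\{v(\cdot,t)=\vf-t\}$ into $-L_au(\cdot,t)\ge 0$ and $-L_au(\cdot,t)=0$ outside the corresponding contact set.

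Next I would establish the uniform $C^{2s}$ bound and the monotonicity of $u(\cdot,t)$ in $B_1$. Using the optimal $C^{1,s}$-regularity of solutions to the fractional obstacle problem, the compactness of $\{\vf>0\}$, and the decay of $v(\cdot,t)$ at infinity, one gets a $C^{1,s}$ (and in particular an $L^{\infty}$) bound on $v(\cdot,t)+t$ that is uniform in $t\in [0,1]$. The standard smoothing properties of the Poisson kernel $P_a$ from \eqref{def:Poissonkernel} then yield $\|u(\cdot,t)\|_{C^{2s}(B_1)}\le C$ with $C$ independent of $t$. For monotonicity, a classical comparison shows that $t\mapsto v(\cdot,t)$ is pointwise non-increasing (a lower obstacle produces a lower minimal supersolution), and in addition $v(\cdot,t)-v(\cdot,t')\le t'-t$ for $t<t'$ (by testing the $t$-problem with the admissible competitor obtained from $v(\cdot,t')+(t'-t)$). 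Thus $\widetilde v(\cdot,t):=v(\cdot,t)+t$ is pointwise non-decreasing in $t$ with increments bounded above by $t'-t$, and the positivity of the Poisson kernel transfers the monotonicity from $\widetilde v(\cdot,t)$ to $u(\cdot,t)$ throughout $B_1$.

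The delicate step is the outer boundary growth $u(\cdot,t')-u(\cdot,t)\ge t'-t$ on $\partial B_1\cap\{|y|\ge 1/2\}$. The identity
$$u(\cdot,t')-u(\cdot,t)=(t'-t)-P_a(\cdot,y)\ast\bigl(v(\cdot,t)-v(\cdot,t')\bigr)$$
shows that this difference lies in $[0,t'-t]$, so the required inequality can only be obtained after a rescaling. The plan is to first prove the quantitative uniform bound $u(\cdot,t')-u(\cdot,t)\ge \delta(t'-t)$ on $\partial B_1\cap\{|y|\ge 1/2\}$, with $\delta\in(0,1)$ depending only on $n$, $s$ and $\vf$. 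Since $\{\vf>0\}\ssubset\R^n$ is compact and each $v(\cdot,t)$ obeys a uniform pointwise decay of Riesz-potential type, the convolution $P_a(\cdot,y)\ast(v(\cdot,t)-v(\cdot,t'))$ can be bounded above by a fraction strictly less than $t'-t$: the Poisson mass assigned by $P_a(x-\cdot,y)$ to a neighborhood of $\{\vf>0\}$ stays uniformly below $1$ for $|x|\le 1$ and $|y|\ge 1/2$, while outside that neighborhood $v(\cdot,t)-v(\cdot,t')$ is negligible. Once the gap $\delta$ is produced, the appropriate positive rescaling $u\mapsto c\,u$, with $c>0$ chosen in terms of $\delta$ and the constant $C$ of the previous paragraph, simultaneously upgrades $c\,u(\cdot,t')-c\,u(\cdot,t)\ge t'-t$ on the outer sphere and keeps $\|c\,u(\cdot,t)\|_{C^{2s}(B_1)}\le 1$.

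The main obstacle is precisely the uniform spectral gap $\delta$ in the third paragraph, since one has to control the convolution uniformly in $t,t'\in[0,1]$ and in the possibly $t$-dependent shape of the difference $v(\cdot,t)-v(\cdot,t')$. The saving grace is that the uniform decay of $v(\cdot,t)$ (via comparison with a Riesz potential generated by the compactly supported obstacle) combined with the explicit decay of the Poisson kernel $P_a$ in \eqref{def:Poissonkernel} yields exactly such a gap. Once this step is settled, the remaining verifications are routine, and the positive multiplicative constant in the conclusion of the lemma is the overall normalization factor that jointly takes care of the $C^{2s}$-bound and of the outer-boundary inequality in \eqref{def:soluzione-famiglia}.
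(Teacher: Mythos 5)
Your overall route is the same as the paper's: verify the extension solves \eqref{def:soluzione-estesa}, get monotonicity of $w(\cdot,t):=v(\cdot,t)+t$ by comparison, transfer it through the positive Poisson kernel, and obtain the outer-boundary inequality from a uniform gap $\delta$ followed by a global rescaling. The verification of \eqref{def:soluzione-estesa}, the $C^{2s}$ bound, and the monotonicity are fine (with the minor caveat that $v(\cdot,t')+(t'-t)$ is not an admissible competitor for the $t$-problem since it does not vanish at infinity; it is a supersolution, and the comparison should be phrased that way, as the paper does via the maximum principle).

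There is, however, a genuine gap in the step you yourself flag as the crux. To get $P_a(\cdot,y)\ast h\le(1-\delta)(t'-t)$ with $h:=v(\cdot,t)-v(\cdot,t')\in[0,t'-t]$, you split into a region near $\{\vf>0\}$ (where the Poisson mass is uniformly below $1$) and a far region (where you claim $h$ is ``negligible'' by the Riesz-potential decay of $v(\cdot,t)$ itself). But the decay of $v(\cdot,t)$ is independent of $t'-t$: it only gives $h(z)\le\min\{t'-t,\,C|z|^{2s-n}\}$, so the far-field contribution is bounded by a constant times $R^{2s-n}$, not by a small multiple of $t'-t$. For $t'-t$ small this does not produce a uniform $\delta$, and the argument fails precisely in the regime that matters. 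What is needed — and what the paper proves — is decay of the \emph{difference} proportional to $t'-t$: since $\Lambda(v(\cdot,t))\subset\{\vf>0\}\ssubset B'_{1/2}$ for every $t$, the function $h$ is $s$-harmonic in $\{|x|>1\}$, satisfies $0\le h\le t'-t$ there, and tends to $0$ at infinity, so comparison with the fundamental solution of $(-\Delta)^s$ gives $h(x)\le(t'-t)|x|^{2s-n}$. Plugging this into the lower bound $P_a(x-z,y)\ge c(1+|z|)^{-n-2s}$ on $\R^n\setminus B_1'$ for $|y|\ge1/2$ then yields the gap $\delta(t'-t)$ you need; the rest of your argument goes through unchanged.
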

\begin{proof}
    First of all, up to re-scaling, we may assume that $\{\vf>0\}\ssubset \{x\in \R^{n}:|x|<1/2\}$. since $v(\cdot, t)$ is a solution of \eqref{def:soluzione-frac-famiglia} with obstacle $\varphi-t$ and vanishing at $\infty$, then $w(\cdot, t):= v(\cdot, t)+t$ is a solution with obstacle $\varphi$ and limit $t$ at $\infty$. In particular $w(\cdot, t)\ge w(\cdot, t')$ if $t\ge t'$, by the maximum principle. Now, $u$ is given by
    $$u(x,y,t)=c_{n,s}\int_{\R^{n}}\frac{|y|^{2s}w(z,t)}{(|x-z|^{2}+|y|^{2})^{\frac{n+2s}{2}}}\,dz.$$
    Hence $u(\cdot, t)\ge u(\cdot, t')$ if $t\ge t'$. In addition, for $(x,y)\in \partial B_{1}\cap \left\{|y|\ge 1/2\right\}$, we have
    \begin{align*}
        u(x,y,t)-u(x,y,t')&=c_{n,s}\int_{\R^{n}}\frac{|y|^{2s}(w(z,t)-w(z,t'))}{(|x-z|^{2}+|y|^{2})^{\frac{n+2s}{2}}}\,dz\\
                &\ge c\int_{\R^{n}\setminus B'_{1}}\frac{(t-t')+(v(z,t)-v(z,t'))}{(1+|z|)^{n+2s}}\,dz.
    \end{align*}
    Observe that $v(\cdot, t)>0$ in $\R^{n}$, provided that $\{\vf>t\}\neq \emptyset$, since $(-\Delta)^{s}v(\cdot, t)\ge 0$,  $v(\cdot, t)\ge \vf-t$ and $v(\cdot, t)$ decays at $\infty$. In particular, in this case  $$\Lambda(v(\cdot, t))\subset \{\varphi>t\} \subset \{\vf>0\}\ssubset \{x\in \R^{n}:|x|<1/2\}.$$
    If instead $\{\vf>t\}=\emptyset$, then clearly $v(\cdot, t)\equiv 0$. 
    As a consequence, the function $h:=v(\cdot,t')-v(\cdot,t)$ satisfies
    $$(-\Delta)^{s}h=0,\quad 0\le h\le t-t'\qquad \mbox{in } \{x\in \R^{n}:|x|>1\},$$
    and also $h(x)\to 0$ as $|x|\to \infty$.
    In particular, we can use the fundamental solution of the $s$-Laplacian as a barrier to obtain a decay rate for $h$, namely $h(x)\le (t-t')|x|^{2s-n}$. Thus $u(x,y,t)-u(x,y,t')\ge c(t-t')$ on $\partial B_{1}\cap \{|y|\ge 1/2\}$, as desired. The uniform bound in $C^{2s}(B_{1})$ for the family $u:B_{1}\times [0,1]\to \R$ comes from the $C^{1,s}$-interior regularity of solutions to \eqref{def:soluzione-estesa}, together with the uniform bounds in $L^{\infty}(B_{2})$ coming from the Poisson formula for $u$. 
\end{proof}
 \begin{proposition}\label{prop:final}
     Let $u$ be a solution to \eqref{def:soluzione-estesa}, \eqref{def:soluzione-famiglia} with obstacle $\vf$ satisfying \eqref{e:hypo-phi}, with $k\ge4$ and $\gamma \in (a^{-},1)$. Let $\pi_2:(X,t)\mapsto t$ be the standard projection. Then there are constants $\eta>0$ and $\sigma>0$, which depend on $n$ and $s$, such that the following holds.
     \begin{enumerate}
         \item If $n=1$, then: 
         \begin{enumerate}[label=(\roman*)]
            \smallskip
            \item $\Gammab_{2}^{\rm o}$ is discrete;
            \smallskip
            \item $\Gammab_{2}^{\rm a}=\emptyset$;
            \smallskip
            \item $\Gammab_{2+2s}=\emptyset$;
            \smallskip
            \item $\Gammab_{\ge 3+s}\setminus \Gammab_{\ge k+\gamma}$ is discrete;
            \smallskip
            \item $\left\{\begin{array}{rclll}
                 \dim_\HH(\pi_2(\Gammab_{\ge k+\gamma}))&\le& 1/(k+\gamma-1)&\quad \mbox{if } s\le1/2;\\
                 \dim_\HH(\pi_2(\Gammab_{\ge k+\gamma}))&\le&1/(k+\gamma-2s) &\quad \mbox{if } s>1/2; 
            \end{array}\right.$
            \smallskip
            \item $\Gammab_{*}=\emptyset$.
        \end{enumerate}
        
        \medskip
        \item If $n=2$, then: 
         \begin{enumerate}[label=(\roman*)]
            \smallskip
            \item $\left\{\begin{array}{rclll}
            \text{dim}_\HH(\pi_2(\Gammab_{2}^{\rm o}))&\le& 1/3&\quad \mbox{if } s\le1/2;\\
            \text{dim}_\HH(\pi_2(\Gammab_{2}^{\rm o}))&\le&1/(4-2s) &\quad \mbox{if } s>1/2; 
            \end{array}\right.$

            \smallskip
            \item $\Gammab_{2}^{\rm a}$ is discrete;
            \smallskip
            \item $\text{dim}_\HH(\pi_2(\Gammab_{2+2s}))\le 1/(2+\eta)$;
           \smallskip
            \item $\text{dim}_\HH(\pi_2(\Gammab_{\ge 3+s}\setminus \Gammab_{\ge k+\gamma}))\le 1/(3-s)$;
            \smallskip
            \item $\left\{\begin{array}{rclll}
            \text{dim}_\HH(\pi_2(\Gammab_{\ge k+\gamma}))&\le& 2/(k+\gamma-1)&\quad \mbox{if } s\le1/2;\\
            \text{dim}_\HH(\pi_2(\Gammab_{\ge k+\gamma}))&\le&2/(k+\gamma-2s) &\quad \mbox{if } s>1/2; 
            \end{array}\right.$
            \smallskip
            \item $\Gammab_{*}$ is discrete.
        \end{enumerate}
        \medskip
         \item If $n=3$, then:
         \begin{enumerate}[label=(\roman*)]
            \smallskip
            \item $\left\{\begin{array}{rclll}
            \text{dim}_\HH(\pi_2(\Gammab_{2}^{\rm o}))&\le& 2/3&\quad \mbox{if } s\le1/2;\\
            \text{dim}_\HH(\pi_2(\Gammab_{2}^{\rm o}))&\le&2/(4-2s) &\quad \mbox{if } s>1/2; 
            \end{array}\right.$
            \smallskip
            \item $ \left\{\begin{array}{rclll}\text{dim}_\HH(\pi_2(\Gammab_{2}^{\rm a}))&\le&1/2 &\quad\mbox{if } s\le 1/2;\\
        \text{dim}_\HH(\pi_2(\Gammab_{2}^{\rm a}))&\le&(1+s)/(4-2s) &\quad \mbox{if } s> 1/2;
            \end{array}\right.$
            \smallskip
            \item $\text{dim}_\HH(\pi_2(\Gammab_{2+2s}))\le 2/(2+\eta)$;
            \smallskip
            \item $\text{dim}_\HH(\pi_2(\Gammab_{\ge 3+s}\setminus \Gammab_{\ge k+\gamma}))\le 2/(3-s)$;
            \smallskip
            \item $ \left\{\begin{array}{rclll} \text{dim}_\HH(\pi_2(\Gammab_{\ge k+\gamma}))&\le& 3/(k+\gamma-1) &\quad\mbox{if } s\le 1/2;\\
            \text{dim}_\HH(\pi_2(\Gammab_{\ge k+\gamma}))&\le& 3/(k+\gamma-2s) & \quad\mbox{if } s> 1/2;
            \end{array}\right.$
            \smallskip
            \item $ \left\{\begin{array}{rclll} \text{dim}_\HH(\pi_2(\Gammab_{*}))&\le& 1/(3-2s+\sigma)&\quad\mbox{if } s\le 1/2;\\
            \text{dim}_\HH(\pi_2(\Gammab_{*}))&\le& 1/(2+\sigma)&\quad\mbox{if } s> 1/2;\\
            \end{array}\right.$
        \end{enumerate}

        \medskip
        \item If $n\ge4$, then, for almost every $t\in[0,1]$:
         \begin{enumerate}[label=(\roman*)]
            \smallskip
            \item $ \left\{\begin{array}{rclll} \text{dim}_\HH(\Gamma_{2}^{\rm o}(u(\cdot,t)))&\le& n-4&\quad\mbox{if } s\le 1/2;\\
            \text{dim}_\HH(\Gamma_{2}^{\rm o}(u(\cdot,t)))&\le& n-5+2s &\quad \mbox{if } s> 1/2;
            \end{array}\right.$
            \smallskip
            \item $ \left\{\begin{array}{rclll} \text{dim}_\HH(\Gamma_{2}^{\rm a}(u(\cdot,t)))&\le& n-4&\quad\mbox{if } s\le 1/2;\\
        \text{dim}_\HH(\Gamma_{2}^{\rm a}(u(\cdot,t)))&\le& n-6/(1+s) &\quad \mbox{if } s> 1/2;
            \end{array}\right.$
            \smallskip
            \item $\text{dim}_\HH(\Gamma_{2+2s}(u(\cdot,t)))\le n-3-\eta$;
            \smallskip
            \item $\text{dim}_\HH(\Gamma_{\ge 3+s}(u(\cdot,t))\setminus\Gamma_{\ge k+\gamma}(u(\cdot,t)))\le n-4+s$;
            \smallskip
            \item $ \left\{\begin{array}{rclll} \text{dim}_\HH(\Gamma_{\ge k+\gamma}(u(\cdot,t)))&\le& n-k-\gamma+1&\quad\mbox{if } s\le 1/2,\ k+\gamma-1\le n;\\
            \text{dim}_\HH(\pi_2(\Gammab_{\ge k+\gamma}))&\le& n/(k+\gamma-1)&\quad\mbox{if } s\le 1/2,\ k+\gamma-1> n;\\
            \text{dim}_\HH(\Gamma_{\ge k+\gamma}(u(\cdot,t)))&\le& n-k-\gamma+2s &\quad \mbox{if } s> 1/2, \ k+\gamma-2s\le n;\\
            \text{dim}_\HH(\pi_2(\Gammab_{\ge k+\gamma}))&\le& n/(k+\gamma-2s) &\quad \mbox{if } s> 1/2, \ k+\gamma-2s>n;
            \end{array}\right.$
            \smallskip
            \item $ \left\{\begin{array}{rclll} 
        \text{dim}_\HH(\pi_2(\Gammab_*))&\le& 2/(3-2s+\sigma)&\quad\mbox{if } s\le 1/2, \ n=4,\\
            \text{dim}_\HH(\Gamma_{*}(u(\cdot,t)))&\le& n-5+2s-\sigma&\quad\mbox{if } s\le 1/2, \ n\ge5,\\
        \text{dim}_\HH(\pi_2(\Gammab_*))&\le& 2/(2+\sigma)&\quad\mbox{if } s>1/2, \ n=4,\\
            \text{dim}_\HH(\Gamma_{*}(u(\cdot,t)))&\le& n-4-\sigma&\quad\mbox{if } s>1/2, \ n\ge5,
            \end{array}\right.$
        \end{enumerate}
     \end{enumerate}
 \end{proposition}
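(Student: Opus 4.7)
The plan is to apply the GMT \cref{lemma:gmt} separately to each of the six subsets appearing in the splitting \eqref{splitting-degenerate-set}, feeding in the ambient dimensional bound from \cref{section-dim_reduc} as the parameter $\alpha$ and the cleaning exponent from \cref{section-clean} as the parameter $\beta$. Concretely, for $\Gammab_2^{\rm o}$, $\Gammab_{2+2s}$ and $\Gammab_{\ge 3+s}\setminus \Gammab_{\ge k+\gamma}$ I would take $\alpha = n-1$ (via the inclusion into $\Gammab_{\ge 2}\setminus \Gammab_{\ge k+\gamma}$ and \cref{prop:dimension-reduction-ge2e*}); for $\Gammab_2^{\rm a}$ and $\Gammab_*$ I would take $\alpha = n-2$ (from \cref{prop:gamma-anomalous} and \cref{prop:dimension-reduction-ge2e*} respectively); and for $\Gammab_{\ge k+\gamma}$ the trivial $\alpha = n$. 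The matching cleaning exponents $\beta$ are exactly those collected in Tables~\ref{tab1}--\ref{tab2}, coming from \cref{prop:gamma-ordinary}, \cref{prop:cleaning}(ii), \cref{prop:cleaning-(2+2s)} and \cref{prop:other-cleaning-results}, with the dichotomy $s\le 1/2$ versus $s>1/2$ respected throughout.

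For each pair $(\alpha,\beta)$ one then invokes the correct branch of \cref{lemma:gmt}. Case (a), $\beta>\alpha$, gives $\dim_\HH(\pi_2(\mathbf{E}))\le \alpha/\beta$; case (b), $\beta\le\alpha$, gives $\dim_\HH(E_t)\le \alpha-\beta$ for a.e.\ $t\in[0,1]$. For $n\ge 4$ the inequality $\beta\le\alpha$ holds for the first four subsets and case (b) applies, producing the generic slice bounds listed in item (4); the sets $\Gammab_{\ge k+\gamma}$ and $\Gammab_*$ may fall in either regime depending on the relative sizes of $n$ and $k+\gamma$, which explains the case distinctions in (v) and (vi). For $n=2,3$ the opposite inequality typically holds and case (a) applies, yielding the projection bounds in items (2) and (3). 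With the convention of Tables~\ref{tab1}--\ref{tab2}, whenever the resulting generic slice dimension is strictly negative one interprets it as generic emptiness of the slice. The remaining work is a finite list of elementary inequalities among $n$, $k$, $s$, $\gamma$, $\eta$ and $\sigma$.

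The only genuine complication is the low-dimensional regime, in particular $n=1$, where several of the $\alpha$'s collapse to $0$ and \cref{lemma:gmt} (which requires $\alpha\ge 1$) does not apply directly. In that regime one exploits the stronger conclusions already available: $\Gammab_2^{\rm a}$ and $\Gammab_*$ are literally empty by \cref{prop:gamma-anomalous} and \cref{prop:dimension-reduction-ge2e*}, while the remaining candidate sets are contained in the discrete set $\Gammab_{\ge 2}\setminus \Gammab_{\ge k+\gamma}$ of \cref{prop:dimension-reduction-ge2e*}, and the strict positivity of the corresponding cleaning exponent forces the claimed discreteness/emptiness by inspection. The hypothesis $\gamma\in(a^-,1]$ enters solely to guarantee the cleaning exponent $k+\gamma-2s$ for $\Gammab_{\ge k+\gamma}$ in the regime $s>1/2$, via \cref{prop:other-cleaning-results}. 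Once \cref{prop:final} is in place, \cref{thm:main_esteso} follows by summing the generic slice bounds across the six pieces of \eqref{splitting-degenerate-set}, and \cref{thm:mainthm} is then a direct consequence of \cref{lemma:link_thm_main}.
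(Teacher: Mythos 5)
Your proposal follows exactly the route of the paper's (very terse) proof: feed the dimensional bounds of \cref{prop:dimension-reduction-ge2e*} and \cref{prop:gamma-anomalous} as $\alpha$, the cleaning exponents of \cref{prop:gamma-ordinary}, \cref{prop:cleaning}, \cref{prop:cleaning-(2+2s)} and \cref{prop:other-cleaning-results} as $\beta$, and run \cref{lemma:gmt} in the appropriate branch for each of the six pieces of \eqref{splitting-degenerate-set}. I checked the resulting arithmetic against all the listed exponents (including $n-6/(1+s)=n-2-(4-2s)/(1+s)$ and the case distinctions in items (v) and (vi)) and it is consistent, and your observation that \cref{lemma:gmt} is inapplicable when $\alpha<1$, so that the low-dimensional items must instead be read off from the discreteness/emptiness statements in \cref{prop:dimension-reduction-ge2e*} and \cref{prop:gamma-anomalous}, is correct.

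The one step that does not go through as written is item (1)(iii), the claim $\Gammab_{2+2s}=\emptyset$ for $n=1$. Containment in the discrete set $\Gammab_{\ge 2}\setminus\Gammab_{\ge k+\gamma}$ plus a positive cleaning exponent gives discreteness, not emptiness: cleaning constrains how $\Gamma_{2+2s}(u(\cdot,t))$ moves with $t$, and says nothing about whether a single slice is empty. The correct justification is the classification of blow-ups in \cref{corollary:classification-(2+2s)}: for $n=1$ a non-zero $(2+2s)$-homogeneous blow-up is $p=|y|^{2s}(ax^2-by^2)$ with $b=a/(2+2s)>0$, hence $a>0$ and $T[p](x)=ax^2>0$ for $x\neq0$. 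The barrier argument of Step~1 in the proof of \cref{lemma:every-rescaled-fundamental} then forces the contact set to contain a full neighborhood of the point in the thin space, so a point of frequency $2+2s$ cannot lie on $\Gamma(u)=\partial'\Lambda(u)$ at all. (This phenomenon is special to $n=1$; for $n\ge2$ the matrix $A$ may be degenerate.) With this substitution the proof is complete and identical in structure to the paper's.
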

 \begin{proof}
     It is enough to apply \cref{lemma:gmt} on each subset of the free boundary combined with the results in \cref{prop:dimension-reduction-ge2e*}, \cref{prop:gamma-anomalous}, \cref{prop:gamma-ordinary} \cref{prop:cleaning-(2+2s)} and \cref{prop:other-cleaning-results}.
 \end{proof}
 \begin{proof}[Proof of \cref{thm:main_esteso}]
     It is a direct consequence of \cref{prop:final}, after splitting the degenerate set as in \eqref{splitting-degenerate-set}.
 \end{proof}
 \begin{proof}[Proof of \cref{thm:mainthm}]
     It descends immediately from \cref{thm:main_esteso} via \cref{lemma:link_thm_main}.
 \end{proof}

\addappendix
\subsection{Some useful facts about $\boldsymbol{L_a}$}
Here we collect some useful properties of the operator $L_{a}$.
We start from the following interior regularity estimates which were obtained in \cite{fks82} (see also \cite{css08}).

\begin{proposition}[Harnack Inequality, \protect{\cite[Proposition 2.2]{css08}}]\label{lemma:harnack}
    Let $u:B_{r}\to \R$ be a solution of $L_{a}u=0$ in $B_{r}$. If $u\ge 0$ in $B_{r}$, then
    $$\underset{B_{r/2}}{\sup}u \le C\underset{B_{r/2}}{\inf}u.$$
\end{proposition}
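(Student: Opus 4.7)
The plan is to derive this Harnack inequality from the general theory of De Giorgi--Nash--Moser for divergence-form degenerate elliptic equations with Muckenhoupt $A_2$ weights, developed by Fabes, Kenig, and Serapioni \cite{fks82}. Since $L_a u = \diver(|y|^a \nabla u)$ can be read as an operator in divergence form with matrix of coefficients $A(X) = |y|^a\, I_{n+1}$, satisfying the two-sided ellipticity condition
$$|y|^a|\xi|^2 \le A(X)\xi\cdot\xi \le |y|^a|\xi|^2 \qquad \text{for every } \xi \in \R^{n+1},$$
the only prerequisite to invoking that framework is that the weight $w(X) := |y|^a$ lie in the Muckenhoupt class $A_2(\R^{n+1})$.

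First, I would verify the $A_2$ condition: for every Euclidean ball $B \subset \R^{n+1}$ of radius $r_B$,
$$\Bigl(\frac{1}{|B|}\int_B |y|^a\,dX\Bigr)\Bigl(\frac{1}{|B|}\int_B |y|^{-a}\,dX\Bigr) \le C,$$
with $C$ depending only on $n$ and $a \in (-1,1)$. This is a classical computation: splitting according to whether $B$ meets $\{y=0\}$ or not, and comparing both weighted averages with $r_B^{\pm a}$ times the distance of $B$ from $\{y=0\}$ raised to appropriate powers, the bound is elementary and the integrability condition $|a|<1$ enters exactly here.

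Second, once $|y|^a \in A_2$ is known, the weighted Sobolev and Poincar\'e inequalities of \cite{fks82} hold on balls, with constants depending only on $n$ and $a$. Applying Moser iteration to $L_a u = 0$ yields, on the one hand, local boundedness of non-negative subsolutions (hence $\sup_{B_{r/2}} u \le C \bigl(\,-\!\!\!\!\int_{B_{3r/4}} u^2\, |y|^a\,dX\bigr)^{1/2}$), and, on the other hand, the weak Harnack inequality for non-negative supersolutions, obtained via the John--Nirenberg lemma applied to $\log u$. Combining the two estimates produces the full Harnack inequality $\sup_{B_{r/2}} u \le C \inf_{B_{r/2}} u$, with $C = C(n,a)$.

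The main obstacle is purely technical rather than conceptual: the verification of the weighted Sobolev--Poincar\'e inequalities on balls crossing the singular set $\{y=0\}$ and the careful bookkeeping in the Moser iteration in the weighted setting. Both steps are, however, entirely standard once the $A_2$ property is in place, and no additional difficulty is created by the special structure of $L_a$ (which is in fact the cleanest instance covered by \cite{fks82}, being symmetric and diagonal). An alternative, more direct route when $a = 1-2s$ with $s\in (0,1)$ would be to exploit the reflection/lifting trick relating $L_a$-harmonic functions in $\R^{n+1}$ to harmonic functions in $\R^{n+1+k}$ for suitable integer $k$ (Caffarelli--Silvestre), but this only handles special values of $a$ and hence is not preferable to the FKS approach for general $a \in (-1,1)$.
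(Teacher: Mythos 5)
Your proposal is correct and coincides with how the result is actually justified in the literature the paper cites: \cite[Proposition 2.2]{css08} obtains this Harnack inequality precisely by observing that $|y|^a\in A_2(\R^{n+1})$ for $a\in(-1,1)$ and invoking the Fabes--Kenig--Serapioni theory \cite{fks82}. The paper itself gives no proof (it simply quotes the result), so there is nothing further to compare.
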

\begin{proposition}[Interior estimates, \protect{\cite[Section 2]{css08}}]
\label{lemma:internal-estimates}
Let $u:B_{r}\to \R$ be a solution of $L_{a}u=0$ in $B_{r}$. Then, the following estimates hold:
\begin{gather*}
    \underset{B_{r/2}}{\sup} \big|\nabla_{x}^{k}u\big|\le \frac{C}{r^{k}}\underset{B_{r}}{\osc}u\quad \mbox{for every } k\in \mathbb{N};\\
    [ \nabla_{x}^{k}u]_{C^{\alpha}(B_{r/2})}\le \frac{C}{r^{k+\alpha}}\underset{B_{r}}{\osc}u\quad \mbox{for some } \alpha \in (0,1) \mbox{ and every } k\in \mathbb{N};\\
    \underset{B_{r/2}}{\sup} \big|\partial^{2}_{yy}u+\frac{a}{y}\partial_{y}u\big|\le \frac{C}{r^{2}}\underset{B_{r}}{\osc}u. 
\end{gather*}
\end{proposition}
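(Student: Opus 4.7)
The plan is to rely on two main tools: the Harnack inequality already recorded in \cref{lemma:harnack}, and the crucial observation that the operator $L_a$ has coefficients depending only on $y$, so it commutes with any differentiation in the tangential variables $x$. This allows a bootstrap for $x$-derivatives, after which the equation itself is inverted to recover the normal-direction information.

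First I would establish the $L^{\infty}$ gradient bound in the case $k=1$ and in the direction $x_i$, for $i\in\{1,\dots,n\}$. Since $L_a=|y|^a\Delta_x+\partial_y(|y|^a\partial_y)$ has no $x$-dependence in its coefficients, the function $v:=\partial_{x_i}u$ solves $L_a v=0$ in $B_r$ as well. Applying the Harnack inequality from \cref{lemma:harnack} (or equivalently the De Giorgi--Nash--Moser theory of Fabes--Kenig--Serapioni, which is available since $|y|^a$ is a Muckenhoupt $A_2$ weight for $a\in(-1,1)$) to the non-negative solutions $v\pm\sup_{B_r}|v|$ and performing the standard Campanato-type interpolation between concentric balls, one obtains the estimate
\begin{equation*}
\sup_{B_{r/2}}|\nabla_x u|\le \frac{C}{r}\,\osc_{B_r}u.
\end{equation*}
Iterating this on the derivatives $\partial_{x_{i_1}}\cdots\partial_{x_{i_k}}u$, which again solve $L_a(\cdot)=0$, yields the first displayed inequality for every $k\in\mathbb{N}$.

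Next, the H\"older estimate follows by applying the Fabes--Kenig--Serapioni interior H\"older regularity (an immediate consequence of the weighted Harnack inequality via a standard oscillation-decay argument) to $\partial_x^k u$, again legitimate because $L_a$ commutes with tangential derivatives. The scaling $r^{-k-\alpha}\osc_{B_r}u$ is dictated by dimensional analysis of the homogeneous equation.

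Finally, for the last estimate I would simply rewrite the equation. In the open set $\{y\neq 0\}$, $L_a u=0$ is equivalent to
\begin{equation*}
\partial_{yy}u+\frac{a}{y}\partial_y u=-\Delta_x u,
\end{equation*}
so the $L^{\infty}$ bound for the left-hand side on $B_{r/2}$ reduces to the bound for $\Delta_x u$, which is the already-proven first estimate with $k=2$. Across $\{y=0\}$ the identity extends by continuity since $u\in C^{1,s}_{a,\mathrm{loc}}$. The main (and only genuine) technical point in the whole scheme is the H\"older step, which strictly speaking requires the full weighted De Giorgi--Nash--Moser machinery for $A_2$ weights rather than just Harnack; everything else is differentiation-and-Harnack bootstrap together with an algebraic manipulation of the equation.
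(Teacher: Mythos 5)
The paper does not prove this proposition at all: it is quoted verbatim from the literature (Fabes--Kenig--Serapioni and \cite[Section 2]{css08}), and your argument is precisely the standard proof given there --- tangential differentiation (legitimate because the coefficients of $L_a$ are independent of $x$) combined with the weighted De Giorgi--Nash--Moser/Harnack theory for the $A_2$ weight $|y|^a$, followed by reading off $\partial_{yy}u+\tfrac{a}{y}\partial_yu=-\Delta_xu$ from the equation. The only presentational shortcut is that applying Harnack to $v\pm\sup_{B_r}|v|$ presupposes the local boundedness of $v=\partial_{x_i}u$, which one should first obtain via difference quotients, Caccioppoli, and the $L^2\!\to\!L^\infty$ estimate of Fabes--Kenig--Serapioni (which you do invoke), so the proof is essentially correct and matches the cited source.
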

Next, we recall the following property of odd extensions, which can be easily obtained integrating by parts.
\begin{proposition}\label{prop:odd-extension}
    Let $u\in W^{1,2}(B_{1}^+,|y|^{a})\cap C\left(\overline{B_{1}^{+}}\right)$ be a solution of 
    \begin{equation*}
    \left\{\begin{array}{rclll}
        L_{a}u&=&0&\quad \mbox{in } B_{1}^+,\\
        u&=&0&\quad \mbox{on } B_{1}'.\\
    \end{array}\right.
    \end{equation*}
    Then, the odd extension $\widetilde{u}$ in $B_{1}$, defined as $\widetilde{u}(x,y):= (\sgn{y})u(x,(\sgn{y})y)$, belongs to $W^{1,2}(B_{1},|y|^{a})$ and solves $L_{a}\widetilde{u}=0$ in $B_{1}$.
\end{proposition}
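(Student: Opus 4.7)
The plan is to proceed in three short steps: identify the weak gradient of $\widetilde{u}$ on $B_1$, then establish the weak equation $L_a\widetilde{u}=0$ by splitting test functions into even and odd parts in $y$ and running a cutoff approximation near the thin space $B_1'$.

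First I would verify that $\widetilde{u}\in W^{1,2}(B_1,|y|^a)$. Since $u\in C(\overline{B_1^+})$ and $u=0$ on $B_1'$, the odd extension is continuous across $\{y=0\}$. Testing against $\varphi\in C_c^\infty(B_1)$ and splitting the integral over $B_1^+$ and $B_1^-$, an integration by parts on each half together with $u\equiv 0$ on $B_1'$ shows that there is no singular distributional contribution along the thin space. Consequently, the distributional derivatives of $\widetilde{u}$ agree a.e.~with $\partial_{x_i}\widetilde u(x,y)=\sgn(y)\,\partial_{x_i}u(x,|y|)$ (odd in $y$) and $\partial_y\widetilde u(x,y)=\partial_y u(x,|y|)$ (even in $y$), and
\[
\int_{B_1}|\nabla\widetilde u|^2|y|^a\,dX=2\int_{B_1^+}|\nabla u|^2|y|^a\,dX<\infty.
\]

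Next I would prove $L_a\widetilde{u}=0$ weakly. Given any $\varphi\in C_c^\infty(B_1)$, decompose $\varphi=\varphi_e+\varphi_o$ with $\varphi_e(x,y)=\tfrac12(\varphi(x,y)+\varphi(x,-y))$ and $\varphi_o(x,y)=\tfrac12(\varphi(x,y)-\varphi(x,-y))$. Since $\widetilde{u}$ is odd in $y$, the integrand $\nabla\widetilde u\cdot\nabla\varphi_e\,|y|^a$ is odd in $y$, hence its integral over the symmetric domain $B_1$ vanishes. So it suffices to handle $\varphi_o$, for which $\varphi_o(x,0)=0$ and thus $|\varphi_o(x,y)|\le C|y|$ near the thin space.

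For the odd part, I would introduce a cutoff $\eta_\epsilon\in C^\infty(\R)$ with $\eta_\epsilon\equiv 0$ on $\{|y|<\epsilon\}$, $\eta_\epsilon\equiv 1$ on $\{|y|>2\epsilon\}$, and $|\eta_\epsilon'|\le C/\epsilon$. Then $\varphi_o\eta_\epsilon$ restricted to $B_1^+$ is compactly supported in $B_1^+$, so by the weak formulation of $L_au=0$,
\[
0=\int_{B_1^+}\nabla u\cdot\nabla(\varphi_o\eta_\epsilon)\,|y|^a\,dX=\int_{B_1^+}\eta_\epsilon\,\nabla u\cdot\nabla\varphi_o\,|y|^a\,dX+\int_{B_1^+}\varphi_o\,\eta_\epsilon'\,\partial_y u\,|y|^a\,dX.
\]
The first term converges by dominated convergence to $\int_{B_1^+}\nabla u\cdot\nabla\varphi_o\,|y|^a\,dX$. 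For the error term, using $|\varphi_o|\le C|y|$, $|\eta_\epsilon'|\le C/\epsilon$ and Cauchy--Schwarz,
\[
\Bigl|\int_{B_1^+}\varphi_o\,\eta_\epsilon'\,\partial_y u\,|y|^a\,dX\Bigr|\le C\Bigl(\int_{\{\epsilon<y<2\epsilon\}}|\partial_y u|^2 y^a\,dX\Bigr)^{1/2}\Bigl(\int_{\{\epsilon<y<2\epsilon\}}y^a\,dX\Bigr)^{1/2},
\]
which tends to $0$ as $\epsilon\downarrow 0$ because $1+a>0$ (so $y^a$ is integrable near $\{y=0\}$) and $|\partial_y u|^2 y^a\in L^1(B_1^+)$. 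Doubling via the odd symmetry and combining with the vanishing of the even part completes the proof. The only delicate point, and the main technical obstacle, is precisely the vanishing of this cutoff error, which pivots on the Muckenhoupt condition $a\in(-1,1)$ and on $\varphi_o$ being Lipschitz with zero trace on $B_1'$.
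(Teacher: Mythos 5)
Your argument is correct and is exactly the integration-by-parts proof that the paper leaves as a one-line remark before the statement: the even/odd decomposition of the test function kills the even part by symmetry, and the cutoff $\eta_\epsilon$ together with the bound $|\varphi_o|\le C|y|$ and the integrability of $|y|^a$ (i.e.~$a>-1$) shows the boundary layer contributes nothing, so both the membership in $W^{1,2}(B_1,|y|^a)$ and the weak equation follow. Nothing is missing.
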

\begin{proposition}[Liouville-type results]\label{prop:Liouville}
    Let $u:\R^{n+1}\to \R$ be a solution of $L_{a}u=0$ in $\R^{n+1}$, and suppose that
    \begin{equation*}
        |u(X)|\le C(1+|X|^{k})\quad \mbox{for some } k\in \mathbb{N}.
    \end{equation*}
    Then the following holds.
    \begin{itemize}
        \item [i)] If $u$ is even in $y$, then $u$ is a polynomial.
        \smallskip
        \item [ii)] If $u$ is odd in $y$, then there exists a polynomial $p$, even in $y$, for which $u(x,y)=(\sgn y) |y|^{2s}p(x,y)$.
    \end{itemize}
\end{proposition}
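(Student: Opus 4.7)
The overall plan is to prove (i) directly via interior estimates combined with an ODE analysis in $y$, and then to deduce (ii) from (i) by a duality between $L_a$ and $L_{-a}$.

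For (i), I will first reduce $u$ to a polynomial in $x$ of degree at most $k$ for each fixed $y$. Applying the $C^{k+1}_x$-interior estimate of \cref{lemma:internal-estimates} at an arbitrary point $X_0$ and at scale $R>0$,
\[ |\nabla_x^{k+1} u(X_0)| \le \frac{C}{R^{k+1}}\osc_{B_R(X_0)} u \le \frac{C(1+|X_0|+R)^k}{R^{k+1}} \to 0 \quad \mbox{as } R\to\infty, \]
so $\nabla_x^{k+1} u \equiv 0$ and we may expand $u(x,y) = \sum_{|\alpha|\le k} c_\alpha(y)\,x^\alpha$, with $c_\alpha$ even in $y$. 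Substituting into $L_a u = 0$ and equating coefficients of like monomials yields the hierarchy of ODEs
\[ c_\alpha''(y) + \frac{a}{y}\, c_\alpha'(y) = -\sum_i (\alpha_i+1)(\alpha_i+2)\, c_{\alpha+2e_i}(y), \quad y\neq 0. \]
I then solve this system top-down: for $|\alpha| \ge k-1$ the right-hand side vanishes and the even solutions of $c''+\frac{a}{y}c'=0$ are $A + B|y|^{2s}$; the condition that $L_a u = 0$ holds globally in $\R^{n+1}$ amounts to $\lim_{y\downarrow 0} y^a\partial_y u = 0$ on the thin space (cf.~\eqref{formula:y-derivative-on-thin-space}), which forces $B=0$ and hence $c_\alpha$ is constant. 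Inductively at each lower level the inhomogeneous ODE admits an explicit polynomial-in-$y^2$ particular solution obtained by matching coefficients, and the matching condition on $\{y=0\}$ again kills the $|y|^{2s}$ homogeneous piece. Polynomial growth of $u$ finally bounds the degrees of the $c_\alpha$, so $u$ is a polynomial.

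For (ii), I set $v(x,y) := |y|^a\,\partial_y u(x,y)$, which by the $C^{1,\alpha}_a$-interior regularity for $L_a$-harmonic functions is continuous across $\{y=0\}$, and which is naturally even in $y$ since $u$ is odd. A direct computation using $L_a u = 0$ gives $|y|^{-a}\partial_y v = -\Delta_x u$, whence $L_{-a} v = 0$ in $\R^{n+1}\setminus\{y=0\}$. Furthermore, $u(\cdot,0) \equiv 0$ by oddness and continuity, so $\Delta_x u(\cdot,y)\to 0$ as $y\to 0$, and $\lim_{y\downarrow 0} y^{-a}\partial_y v = 0$ on the thin space, so $v$ is globally $L_{-a}$-harmonic. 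The same regularity theory yields the scaling-invariant estimate $\||y|^a\partial_y u\|_{L^\infty(B_{R/2}(X_0))} \le CR^{a-1}\osc_{B_R(X_0)} u$, and choosing $R = 1+|X_0|$ we obtain $|v(X_0)| \le C(1+|X_0|)^{k-2s}$. Since $-a \in (-1,1)$, part (i) applies to $v$ and gives $v = V(x,y)$ for some polynomial $V$, even in $y$.

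Finally, $u$ is recovered from $v$ by integration: since $u(x,0) = 0$ and $\partial_y u = |y|^{-a} v$ for $y\neq 0$, for $y > 0$ one has $u(x,y) = \int_0^y t^{-a} V(x,t)\, dt$, and the case $y < 0$ follows by oddness. Decomposing $V(x,t) = \sum_{j\text{ even}} V_j(x)\, t^j$ and integrating term-by-term (using $-a = 2s-1$) gives, for $y>0$,
\[ u(x,y) = \sum_{j\text{ even}} \frac{V_j(x)}{j+2s}\, y^{j+2s} = y^{2s}\, p(x,y), \]
with $p(x,y) = \sum_{j\text{ even}} \frac{V_j(x)}{j+2s} y^j$ polynomial and even in $y$; extending oddly yields $u(x,y) = \sgn(y)|y|^{2s}\, p(x,y)$, as required. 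I expect the most delicate step throughout to be the careful handling of the matching condition across $\{y=0\}$: in part (i) it is what eliminates the non-polynomial homogeneous solution $|y|^{2s}$ and forces the polynomial structure of each $c_\alpha$, and in part (ii) it is precisely what promotes $v$ from $L_{-a}$-harmonicity off the thin space to global $L_{-a}$-harmonicity, thereby enabling the application of part (i).
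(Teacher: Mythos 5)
Your argument is correct, but it follows a genuinely different route from the paper's. For part i) the paper does not give a proof at all: it simply cites \cite[Lemma 2.7]{css08}, whereas you supply a self-contained argument (kill $\nabla_x^{k+1}u$ by the scaled interior estimates of \cref{lemma:internal-estimates}, then solve the resulting ODE hierarchy in $y$, using the flux condition $\lim_{y\downarrow 0}y^a\partial_y u=0$ from \eqref{formula:y-derivative-on-thin-space} to eliminate the $|y|^{2s}$ homogeneous modes); this is a valid reconstruction of the cited result. For part ii) the paper argues by induction on the growth exponent $k$: it sets $v:=\partial^2_{yy}u+(a/y)\partial_y u=-\Delta_x u$, which is again odd, $L_a$-harmonic, and has growth of order $k-2$, applies the inductive hypothesis to write $v=y|y|^{-a}q$, and integrates $\partial_y(|y|^a\partial_y u)=yq$ twice. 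You instead pass to the Caffarelli--Silvestre conjugate $v:=|y|^a\partial_y u$, which is even and globally $L_{-a}$-harmonic with polynomial growth, reduce ii) to i) applied with $-a$ in place of $a$, and integrate once. Your reduction buys a shorter, non-inductive argument and makes the structural reason for the factor $\sgn(y)|y|^{2s}$ transparent (it is exactly the antiderivative of $|y|^{-a}$ times a polynomial), at the cost of some routine verifications that you correctly flag: the continuity of $|y|^a\partial_y u$ across $\{y=0\}$, the fact that $\lim_{y\downarrow 0}y^{-a}\partial_y v=-\Delta_x u(\cdot,0)=0$ so that no singular part of $L_{-a}v$ survives on the thin space, and the scaling-invariant bound $|v(X_0)|\le C(1+|X_0|)^{k-2s}$ needed to invoke i). The paper's induction stays within the single operator $L_a$ and avoids discussing the conjugate equation. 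Both proofs are sound.
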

\begin{proof}
    The first point is \cite[Lemma 2.7]{css08}. Let us prove the second point by induction on $k$. If $k=1$, then by \cref{lemma:internal-estimates}, $\partial^{2}_{yy}u+(a/y)\partial_{y}u \equiv 0$ in $\R^{n+1}$. Integrating this ODE in $y$ at each fixed $x\in \R^{n}$, we deduce that there exist two functions $b, c: \R^{n} \to \R$ for which $u(x,y)= b(x)y|y|^{-a}+c(x)$. Since $u$ is odd in $y$, we must have $c\equiv 0$. Moreover, again by \cref{lemma:internal-estimates}, $\nabla^{2}b\equiv 0$, which means that $b$ is a linear function, and we are done. Let us now assume that $k\ge 2$ and that the thesis is true for every natural number less than $k$. Since, 
    $$\partial^{2}_{yy}u+\frac{a}{y}\partial_{y}u=-\Delta_{x}u,$$
    and $\Delta_{x}$ commutes with $L_{a}$, the function $v:=\partial^{2}_{yy}u+(a/y)\partial_{y}u$ solves $L_{a}v=0$ in $\R^{n+1}$ and its growth is controlled by  
    $$|v(X)|\le C(1+|X|^{k-2}).$$
    from the inductive hypothesis, we deduce that $v(x,y)=y|y|^{-a}q(x,y)$, for some polynomial $q$, even in $y$. Multiplying both sides with $|y|^{a}$ we get
    $$\partial_{y}(|y|^{a}\partial_{y}u)=yq(x,y),$$
    and integrating in $y$ we obtain $\partial_{y}u=|y|^{-a}\bar{q}(x,y)$ for some polynomial $\bar{q}$, even in $y$. Integrating once more we get the thesis.
\end{proof}
\begin{lemma}[Extension of polynomials, \protect{\cite[Lemma 5.2]{gr19}}]
\label{lemma:unique-extension-polynomial-aHarmonic}
    Let $q:\R^{n} \to \R$ be an homogeneous polynomial of degree $k$. Then there is a unique homogeneous polynomial $\widetilde{q}:\R^{n+1}\to \R$ of degree $k$ such that
    \begin{equation*}
    \left\{
        \begin{array}{rclll}
            L_{a}\widetilde{q}&=&0&\quad \mbox{in } \R^{n+1},\\
            \widetilde{q}(\cdot,0)&=&q&\quad \mbox{in } \R^{n},\\
            \widetilde{q}(x,-y)&=&\widetilde{q}(x,y)&\quad \mbox{for } (x,y)\in \R^{n+1}.
        \end{array}\right.
    \end{equation*}
    In particular, an homogeneous polynomial of degree $k$, which is $L_a$-harmonic, even in $y$, and vanishes on the thin space, must vanish identically.
\end{lemma}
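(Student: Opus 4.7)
The plan is to establish both existence and uniqueness (including the final ``in particular'' statement) through a single direct computation based on the ansatz
\[
\widetilde q(x,y)=\sum_{j=0}^{\lfloor k/2 \rfloor} y^{2j}p_j(x),
\]
where each $p_j:\R^n\to\R$ is a homogeneous polynomial of degree $k-2j$. By construction, any $\widetilde q$ of this form is automatically homogeneous of degree $k$ and even in $y$. I would first argue that every candidate extension must take this form: if $\widetilde q$ is a homogeneous polynomial of degree $k$ in $(x,y)$ that is even in $y$, then collecting terms in $y^{2j}$ produces exactly this decomposition, with the constraint $\widetilde q(x,0)=p_0(x)$.

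Next I would compute the action of $L_a$ on the ansatz. Writing $L_a=|y|^a\bigl(\Delta_x+\partial^2_{yy}+\tfrac{a}{y}\partial_y\bigr)$, the differentiations give
\[
\Delta_x \widetilde q=\sum_{j\ge 0}y^{2j}\Delta_x p_j,\qquad
\partial^2_{yy}\widetilde q+\tfrac{a}{y}\partial_y\widetilde q=\sum_{j\ge 1}2j(2j-1+a)\,y^{2j-2}p_j.
\]
Re-indexing the second sum and combining,
\[
L_a\widetilde q=|y|^a\sum_{j\ge 0}y^{2j}\bigl[\Delta_x p_j+2(j+1)(2j+1+a)\,p_{j+1}\bigr].
\]
Since $a=1-2s\in(-1,1)$, the coefficients $2(j+1)(2j+1+a)$ are strictly positive for every $j\ge 0$. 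Therefore $L_a\widetilde q=0$ in $\R^{n+1}$ if and only if the polynomials $p_j$ satisfy the recursion
\[
p_{j+1}=-\frac{\Delta_x p_j}{2(j+1)(2j+1+a)}\qquad\text{for every } j\ge 0.
\]

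For \emph{existence}, I set $p_0:=q$ and define $p_1,p_2,\dots$ by the recursion above; since $\deg p_j=k-2j$, the sequence terminates once $2j>k$, so the sum is finite and defines a homogeneous polynomial $\widetilde q$ of degree $k$, even in $y$ and $L_a$-harmonic by construction, with trace $q$ on $\R^n$. For \emph{uniqueness}, if $\widetilde q_1,\widetilde q_2$ are two extensions satisfying the required properties, their difference $w:=\widetilde q_1-\widetilde q_2$ is a homogeneous polynomial of degree $k$, even in $y$, $L_a$-harmonic, and with $w(\cdot,0)\equiv 0$; expanding $w=\sum_j y^{2j}r_j(x)$ gives $r_0\equiv 0$, and the recursion then forces $r_{j}\equiv 0$ for every $j\ge 1$ by induction, so $w\equiv 0$. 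This same argument proves the ``in particular'' statement verbatim: a homogeneous $L_a$-harmonic polynomial of degree $k$, even in $y$ and vanishing on $\{y=0\}$, must vanish identically.

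There is no genuine obstacle here: the entire proof is a one-step direct computation. The only thing worth flagging is that the recursion is well-posed precisely because $2j+1+a>0$ for all $j\ge 0$, which is exactly the hypothesis $s\in(0,1)$, i.e.\ $a\in(-1,1)$. Should one ever consider the degenerate endpoints, the coefficient could vanish and a separate argument would be needed, but within the regime of the paper this never occurs.
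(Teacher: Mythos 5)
Your proof is correct. The paper does not actually prove this lemma --- it is quoted from \cite[Lemma 5.2]{gr19} --- but your direct computation is the standard argument, and your recursion $p_{j+1}=-\Delta_x p_j/\bigl(2(j+1)(2j+1+a)\bigr)$ coincides (after substituting $a=1-2s$) with the relation $-\Delta_x p_\ell=4(\ell+1)(\ell+1-s)p_{\ell+1}$ that the authors themselves write down in the proof of \cref{lem:eigenspaces-empty-full}. The only point worth making explicit: your identity for $L_a\widetilde q$ is computed on $\{y\neq0\}$, whereas the lemma asserts $L_a\widetilde q=0$ in all of $\R^{n+1}$ (distributionally), so one should also check via \eqref{formula:y-derivative-on-thin-space} that no singular part appears on the thin space; this is immediate, since $y^a\partial_y\widetilde q=\sum_{j\ge1}2j\,y^{2j-1+a}p_j$ and $2j-1+a\ge 1+a>0$ forces the limit as $y\downarrow0$ to vanish.
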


\begin{proposition}[Maximum principle, \protect{\cite[Remark 4.2]{cs14}}] \label{prop:maximum-principle}
    Let $u\in W^{1,2}(B_{r},|y|^{a})\cap C\left(\overline{B_{r}}\right)$ be such that
    $$\left\{\begin{array}{rclll}
        -L_au&\ge& 0 &\quad\mbox{in } B_{r}\setminus\{y=0\},\\
        -\lim_{y\downarrow 0}\y\partial_y u(\cdot,y)&\ge&0 &\quad\mbox{in } \{x\in \R^{n}: |x|<r\},\\
        u&\ge&0 &\quad\mbox{on } \partial B_{r}.
    \end{array}\right.$$ Then $u\ge 0$ in $B_{r}$. Moreover, either $u\equiv 0$ or $u>0$ in $B_{r}\setminus\{y=0\}$.
\end{proposition}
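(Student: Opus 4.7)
The plan is to recast the hypotheses as a single weighted weak differential inequality on all of $B_r$ and then test with the negative part. First, integration by parts separately in the two half-balls $B_r \cap \{\pm y > 0\}$ combines the volume inequality $-L_a u \ge 0$ in $B_r \setminus \{y=0\}$ with the flux inequality $-\lim_{y \downarrow 0} \y \partial_y u \ge 0$ on $B_r'$ (together with its counterpart from below, using the $y$-symmetry that is standing in all applications of this proposition in the paper) into
\[
\int_{B_r} \y \nabla u \cdot \nabla \varphi \, dX \ge 0 \qquad \text{for every } \varphi \in W^{1,2}_0(B_r, \y),\ \varphi \ge 0.
\]
This is the natural weak formulation of an $L_a$-supersolution with a Neumann-type sign condition on the thin space, in the same spirit as the setup in \cite{css08,cs07}.

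For the weak part $u \ge 0$, I would pick $\varphi := u^-$. The continuity of $u$ up to $\partial B_r$ together with the boundary sign condition $u \ge 0$ on $\partial B_r$ guarantee $u^- \in W^{1,2}_0(B_r, \y)$. Since $\nabla u \cdot \nabla u^- = -|\nabla u^-|^2$ almost everywhere, the displayed inequality forces $\int_{B_r} \y |\nabla u^-|^2 \, dX \le 0$, and the weighted Poincar\'e inequality, valid in the $A_2$-class theory of Fabes--Kenig--Serapioni because $a \in (-1,1)$, then yields $u^- \equiv 0$, i.e. $u \ge 0$ throughout $B_r$.

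For the dichotomy in the second part, suppose $u(X_0) = 0$ for some $X_0 \in B_r \setminus \{y=0\}$. In a small ball around $X_0$ the operator $L_a$ is uniformly elliptic with smooth bounded coefficients, since the weight $\y$ is bounded above and away from zero there; and $u$ is a nonnegative supersolution attaining its minimum in the interior. The classical strong maximum principle then gives $u \equiv 0$ on the connected component of $B_r \setminus \{y=0\}$ containing $X_0$; continuity up to $\{y=0\}$ extends this vanishing to $B_r'$, and the $y$-evenness propagates it to the opposite half-ball, so $u \equiv 0$ on all of $B_r$.

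The main technical point is the first step: making rigorous the passage from the pointwise/distributional hypotheses on $B_r \setminus \{y=0\}$ and $B_r'$ to the global weighted weak inequality on $B_r$. This is a trace-type argument in $W^{1,2}(B_r, \y)$ that I would borrow directly from \cite[Remark 4.2]{cs14}; after that, the remaining steps are routine consequences of standard Muckenhoupt-weighted elliptic theory.
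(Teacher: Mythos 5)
The paper does not prove this proposition; it is quoted verbatim from \cite[Remark 4.2]{cs14}, so there is no in-paper argument to compare against. Your proposal is the standard and correct one: pass to the global weak inequality $\int_{B_r}\y\,\nabla u\cdot\nabla\varphi\,dX\ge 0$ for nonnegative $\varphi\in W^{1,2}_0(B_r,\y)$, test with $u^-$ to get $u\ge 0$, and then run the classical strong maximum principle locally in $B_r\setminus\{y=0\}$ (where the weight is locally bounded above and below) to propagate an interior zero to a whole half-ball and, by continuity and evenness, to all of $B_r$. You correctly identify the two delicate points: (i) as stated, the hypothesis only controls the conormal derivative from $\{y>0\}$, so the reduction to the weak inequality on the lower half-ball needs the evenness of $u$ in $y$ (or a two-sided interpretation of the limit), which indeed holds in every application in the paper; and (ii) the justification that the flux term $\int_{\{y=\eps\}}\eps^a\partial_y u\,\varphi$ converges to $\int_{B_r'}\bigl(\lim_{y\downarrow 0}y^a\partial_y u\bigr)\varphi$, which is the standard Caffarelli--Silvestre trace argument and is legitimately deferred to \cite{cs14}. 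With those two points acknowledged, the argument is complete; note also that after testing with $u^-$ you do not even need the weighted Poincar\'e inequality, since $\nabla u^-\equiv 0$ together with the zero boundary values of $u^-$ already forces $u^-\equiv 0$.
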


\begin{lemma}[Hopf Lemma, \protect{\cite[Proposition 4.11]{cs14}}]\label{lemma:Hopf}
    Let $u\in W^{1,2}(B_{r}^{+},|y|^{a})\cap C\left(\overline{B_{r}^{+}}\right)$ be such that
    \begin{equation*}
    \left\{
        \begin{array}{rclll}
            -L_{a}u&\ge& 0&\quad \mbox{in } B_{r}^{+},\\
            u&>&0&\quad \mbox{in } B_{r}^{+},\\
            u(0)&=&0.
        \end{array}\right.
    \end{equation*}
    Then $\liminf_{y\downarrow 0}y^{a}\frac{u(0,y)}{y}>0$. In addition, if $y^{a}\partial_{y}u \in C\left(\overline{B_{r}^{+}}\right)$, then $\lim_{y\downarrow 0}y^{a}\partial_{y}u(0,y)>0.$  
\end{lemma}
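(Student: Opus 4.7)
The result is a classical Hopf boundary-point lemma adapted to the degenerate/singular operator $L_a$ at a point of the degeneracy locus $\{y=0\}$. The key conceptual observation is that, since $a+2s=1$, the function $y^{2s}$ satisfies $L_a(y^{2s})=0$ in $\{y>0\}$ and vanishes on $\{y=0\}$; it thus plays the role that the affine normal coordinate plays in the classical Hopf lemma for the Laplacian. The whole argument is a barrier argument, and the first statement immediately implies the second, as explained below.

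\textbf{Step 1 (setup).} After rescaling I may take $r=1$. I will produce a subsolution $\psi \in W^{1,2}(\Omega,|y|^a) \cap C(\overline{\Omega})$ on a small region $\Omega \subset B_1^+$ satisfying $-L_a\psi \le 0$ in $\Omega$, $\psi \le u$ on $\partial \Omega$, $\psi(0)=0$, and $\psi(0,y) \ge c\, y^{2s}$ for small $y>0$ and some $c>0$. By the weak maximum principle \cref{prop:maximum-principle} applied to $u-\psi$, this gives $u\ge \psi$ in $\Omega$, so
$\liminf_{y\downarrow 0} y^a u(0,y)/y \ge c \liminf_{y\downarrow 0} y^{a+2s-1} = c > 0$,
since $a+2s-1=0$.

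\textbf{Step 2 (barrier construction).} I would take $\Omega=B_\rho^+$ for $\rho\in(0,1)$ small and consider the explicit competitor $\psi(x,y) := c\, y^{2s}\bigl(1+\alpha|x|^2-\beta y^2\bigr)$. Using $L_a(y^{2s})=0$ together with the elementary computations $L_a(y^{2s}|x|^2)=2n y^{a+2s}=2ny$ and $L_a(y^{2+2s})=2(2+2s)y$, I obtain $L_a\psi = 2cy\bigl[\alpha n - \beta(2+2s)\bigr]\ge 0$ whenever $\alpha n \ge \beta(2+2s)$. The boundary domination $\psi\le u$ on $\partial B_\rho^+$ is the delicate part. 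On the flat part $B_\rho'$ the inequality is trivial, since $\psi=0$ and $u\ge 0$ by continuity. On the upper hemisphere I invoke the strict positivity $u>0$ in $B_1^+$ together with continuity on $\overline{B_1^+}$ to obtain $\eta>0$ with $u\ge\eta$ on the compact set $\partial B_\rho \cap \{y\ge\rho/2\}$. To control $\psi$ on the piece of the hemisphere near the equator, where $u$ itself can be small, the flexibility in $\alpha,\beta$ and $c$ (and, if needed, the possibility of replacing $B_\rho^+$ by a domain tangent to $\{y=0\}$ only at the origin, such as $B_\epsilon((0,\epsilon))$ or a paraboloid $\{y>|x|^2/\rho\}$) allows one to close the comparison by making $\psi$ vanish fast enough there while preserving the leading $y^{2s}$ behaviour at the apex.

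\textbf{Step 3 (second statement).} If in addition $y^a\partial_y u \in C(\overline{B_r^+})$, let $L:=\lim_{y\downarrow 0} y^a\partial_y u(0,y)$. From $\partial_y u(0,t)=t^{-a}\bigl(t^a\partial_y u(0,t)\bigr)$ and $u(0,0)=0$ one integrates to obtain $u(0,y) = (L/(2s))\,y^{2s}+o(y^{2s})$, so $y^a u(0,y)/y\to L/(2s)$. The first statement, already proved, then forces $L>0$. The main obstacle in the whole argument lies in Step 2: any region tangent to $\{y=0\}$ at the origin has boundary approaching the degeneracy locus, and since the hypothesis only ensures $u \ge 0$ (not $u>0$) on the thin part of the boundary, one must carefully match the $y^{2s}$-rate of vanishing of $\psi$ near the equator to the a priori behaviour of $u$ there. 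This is precisely the purpose of the corrective terms $\alpha|x|^2-\beta y^2$, which shrink $\psi$ near the equator without spoiling the leading $y^{2s}$ order at the origin.
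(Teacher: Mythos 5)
Your overall strategy (a comparison argument against an explicit $L_a$-subsolution behaving like $y^{2s}$ on the axis, using $L_a(y^{2s})=0$, plus deducing the second claim from the first by integrating $\partial_y u(0,t)=t^{-a}\bigl(t^a\partial_y u(0,t)\bigr)$) is the right one, and your Step 3 is correct. The paper does not prove this lemma (it cites \cite[Proposition 4.11]{cs14}), but it does prove the analogous statement in the very thin setting, \cref{hopf-verythincase}, by exactly this kind of barrier argument.

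The gap is in Step 2, which is the heart of the proof and which you do not actually close. With the domain $B_\rho^+$ and the barrier $\psi=c\,y^{2s}(1+\alpha|x|^2-\beta y^2)$, the comparison $\psi\le u$ fails on the equatorial part of $\partial B_\rho^+$: there $1+\alpha|x|^2-\beta y^2$ is bounded below by a positive constant for any admissible $\alpha,\beta$, so $\psi\sim c\,y^{2s}$ as $y\downarrow 0$ along $\partial B_\rho$, while the hypotheses give no quantitative lower bound on $u$ there ($u$ may vanish on all of $B_r'$ and decay arbitrarily fast as one approaches the equator from above). No choice of $\alpha,\beta,c$ fixes this, and your fallback suggestions are not carried out and have their own problems: the classical tangent-ball barrier $e^{-\lambda|X-X_0|^2}-e^{-\lambda\varepsilon^2}$ is not an $L_a$-subsolution near the tangency point when $a<0$ (the drift term $\tfrac{a}{y}\partial_y$ blows up with the wrong sign), and even when it works it only yields the linear rate $u(0,y)\gtrsim y$, which is insufficient when $s<1/2$ since one needs $u(0,y)\gtrsim y^{2s}$ with $2s<1$. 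The standard way to close the argument — and the one the paper uses for \cref{hopf-verythincase} — is to replace the half-ball by a thin cylinder $\{|x|<1/2\}\times(0,\rho)$ and take the barrier $w(x,y)=y^{2s}(1+Ay^{2})\phi(x)$, where $\phi>0$ is the first Dirichlet eigenfunction of $-\Delta_x$ on the cross-section; then $L_aw=y\phi\bigl[2A(2+2s)-\lambda_1(1+Ay^{2})\bigr]\ge0$ for $A$ large and $\rho$ small, $w$ vanishes identically on the lateral and bottom boundaries, and the only place where a positive lower bound on $u$ is needed is the top of the cylinder, a compact subset of $B_r^+$ where $u\ge\eta>0$. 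With that replacement your argument goes through.
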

\subsection{The very thin obstacle problem}
Next, we recall some results related to the very thin obstacle problem \eqref{def:very-thin-obstacle-problem} (see \cite[Section 8]{fj21} for more details).
\begin{proposition}[\protect{\cite[Lemma 8.1]{fj21}}]\label{prop:operator-La-forverythinobstacle}
    Let $L=\{x_{n}=y=0\}$ and $u\in W^{1,2}(B_{1},|y|^{a})\cap C(B_{1})$ be such that $L_{a}u=0$ in $\R^{n+1}\setminus L$. Then 
    \begin{equation*}
        L_{a}u=f_{a}[u](x')\mathcal{H}^{n-1}\res L,
    \end{equation*}
    where $f_{a}[u]:\R^{n-1}\to \R$ is defined as
    \begin{equation}\label{def:fa}
        f_{a}[u](x'):=\underset{\eps \downarrow 0}{\lim}\int_{\partial D_{\eps}}u_{\nu}(x',x_{n},y)|y|^{a}\,d\mathcal{H}^{1}.
    \end{equation}
    Here, $$D_{\eps}:= \{(x_{n},y):x_{n}^{2}+y^{2}<\eps^{2}\},\qquad u_{\nu}(x',x_{n},y):=\nabla_{x_n,y}u(x',x_{n},y)\cdot \nu\quad\mbox{on } \partial D_{\eps},$$ and $\nu=(x_n,y)/\eps$ is the unit normal to $\partial D_{\eps}$.
\end{proposition}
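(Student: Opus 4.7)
The plan is to realize $L_a u$ as a distribution and compute its action on smooth test functions by integrating by parts on the complement of a shrinking tubular neighborhood of the codimension-two set $L$, exploiting that $L_a u$ vanishes pointwise off $L$.

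Fix $\phi \in C_c^\infty(B_1)$. By the definition of $L_a = \diver_{x,y}(\y \nabla_{x,y}\,\cdot\,)$ in the distributional sense and the $W^{1,2}(B_1, \y)$-regularity of $u$,
\[
\langle L_a u, \phi \rangle = -\int_{B_1} \y\, \nabla u \cdot \nabla \phi \, dX.
\]
For $\eps > 0$, set $L_\eps := \R^{n-1} \times D_\eps$, the tubular $\eps$-neighborhood of $L$. Since $u$ is $L_a$-harmonic (hence smooth, by the interior estimates of \cref{lemma:internal-estimates}) away from $L$, the classical divergence theorem applies on $B_1 \setminus \overline{L_\eps}$, and the hypothesis $L_a u = 0$ pointwise outside $L$ kills the volume term, leaving
\[
-\int_{B_1 \setminus L_\eps} \y\, \nabla u \cdot \nabla \phi \, dX = \int_{\partial L_\eps \cap B_1} \y\, \phi\, u_\nu \, d\HH^n,
\]
where $\nu = (x_n, y)/\eps$ is the unit normal to $\partial L_\eps$ pointing away from $L$.

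I would then pass to the limit $\eps \downarrow 0$. The left-hand side converges to $\langle L_a u, \phi\rangle$ by dominated convergence, using $\y|\nabla u|^2 \in L^1(B_1)$. For the right-hand side, a Fubini slicing gives
\[
\int_{\partial L_\eps \cap B_1} \y\, \phi\, u_\nu \, d\HH^n = \int_{\R^{n-1}} \Bigl(\int_{\partial D_\eps} \y\, \phi(x', x_n, y)\, u_\nu(x', x_n, y) \, d\HH^1\Bigr)\, dx'.
\]
Writing $\phi(x', x_n, y) = \phi(x', 0, 0) + O(\eps\,\|\nabla \phi\|_\infty)$ on $\partial D_\eps$, the main term converges to $\int_{\R^{n-1}} \phi(x', 0, 0)\, f_a[u](x')\, dx'$ by the very definition \eqref{def:fa} of $f_a[u]$, while the error is bounded by $C\eps \int_{\partial L_\eps \cap B_1} \y |u_\nu|\, d\HH^n$. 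Combining these steps yields
\[
\langle L_a u, \phi\rangle = \int_{\R^{n-1}} f_a[u](x')\, \phi(x', 0, 0) \, dx',
\]
which is precisely the desired identity $L_a u = f_a[u]\, \HH^{n-1} \res L$.

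The main obstacle is the rigorous passage to the limit in the sliced boundary term. One must verify simultaneously that: (i) the pointwise limit $f_a[u](x')$ exists for $\HH^{n-1}$-a.e.\ $x'$; (ii) this pointwise limit can be interchanged with the $x'$-integration via a dominating function derived from the weighted Sobolev bound $\int_{B_1} \y|\nabla u|^2\,dX < \infty$; and (iii) the error term $\eps\int_{\partial L_\eps \cap B_1} \y|u_\nu|\, d\HH^n$ vanishes as $\eps \downarrow 0$. Points (ii) and (iii) can be handled via a Cauchy--Schwarz estimate on the circular slices $\{x'\} \times \partial D_\eps$ combined with the weighted coarea-type identity for the function $\rho(X) = (x_n^2+y^2)^{1/2}$, showing that the family of slice integrals is bounded uniformly in $\eps$ and that the boundary flux concentrates in the limit as a Radon measure supported on $L$ with density $f_a[u]$.
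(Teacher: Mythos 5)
The paper does not actually prove this proposition; it is quoted verbatim from \cite[Lemma 8.1]{fj21}, so your attempt can only be measured against the standard argument that reference carries out. Your skeleton is the right one and surely coincides with it in spirit: realize $L_au$ distributionally, test against $\phi$, integrate by parts on the complement of the shrinking tube $L_\eps=\R^{n-1}\times D_\eps$, kill the volume term using $L_au=0$ off $L$, and identify the limiting boundary flux by Fubini slicing. Your treatment of the left-hand side and of the error term (iii) via Cauchy--Schwarz on dyadic annuli plus coarea (choosing good radii $\rho_j\downarrow 0$) is correct and standard.

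There is, however, a genuine gap: point (i), the existence of the limit in \eqref{def:fa}, is listed by you as one of the three obstacles but then explicitly left out of your closing sentence, which only claims to handle (ii) and (iii). This is not a formality. For $s\le 1/2$ (i.e.\ $a\ge 0$) the set $L$ has zero $L_a$-capacity and the whole statement degenerates to $f_a[u]\equiv 0$; the only substantive case is $a<0$, where genuinely singular solutions such as $(x_n^2+y^2)^{-a/2}$ exist, and there the existence of the pointwise limit is the heart of the lemma. The standard way to obtain it is a flux-comparison on the two-dimensional annuli $D_{\eps_2}\setminus D_{\eps_1}$ in each $(x_n,y)$-slice: the divergence theorem gives that the difference of the fluxes at two radii equals $-\int_{D_{\eps_2}\setminus D_{\eps_1}}|y|^a\Delta_{x'}u\,dx_n\,dy$, and one must show this is $o(1)$, which requires quantitative bounds on the tangential derivatives of $u$ near $L$ (the naive interior estimate $|\Delta_{x'}u|\lesssim \rho^{-2}$ is not summable against $|y|^a$ when $a<0$). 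Without this step, $f_a[u]$ is not even well defined and the identification of the limit measure collapses. A second, minor, issue: you invoke the ``classical divergence theorem'' on $B_1\setminus\overline{L_\eps}$, but $u$ need not be classically $C^1$ across $\{y=0\}\setminus L$ (it generically contains $|y|^{2s}$-type terms); the correct justification is to test the weak formulation of $L_au=0$ in $B_1\setminus L$ against $\phi(1-\chi_\eps)$ for a cutoff $\chi_\eps$ supported near $L$, or to integrate by parts in $\{|y|>\delta\}$ and use that the conormal traces $y^a\partial_yu$ match across $\{y=0\}\setminus L$. This is routine to repair, but as written the step is not valid.
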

\begin{proposition}[Hopf lemma in the very thin case]\label{hopf-verythincase}
    Let $L=\{x_{n}=y=0\}$ and $u\in W^{1,2}(B_{1},|y|^{a})\cap C(B_{1})$ be such that 
    \begin{equation*}
    \left\{
        \begin{array}{rclll}
            -L_{a}u&\ge& 0&\quad \mbox{in } B_{1}\setminus L,\\
            u&>&0&\quad \mbox{in } B_{1}\setminus L,\\
            u(0)&=&0.
        \end{array}\right.
    \end{equation*}
    Then
    $\liminf_{\eps \downarrow 0}\int_{\partial D_{\eps}}\frac{u(0,x_{n},y)}{\eps}|y|^{a}\,d\mathcal{H}^{1}>0.$
    In addition, if $\eps\mapsto \int_{\partial D_{\eps}}u_{\nu}(0,x_{n},y)|y|^{a}\,d\mathcal{H}^{1}$ is well-defined and has a limit as $\eps \downarrow 0$, then $f_{a}[u](0)>0$, where $f_{a}[u]$ is defined in \eqref{def:fa}.
\end{proposition}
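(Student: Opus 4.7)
The plan is to split the argument according to the sign of $a=1-2s$. If $s\le 1/2$ the hypotheses are in fact vacuous: the set $L$ has zero $L_a$-capacity in this regime (as recalled in \Cref{section-quadratic}), so the distributional inequality $-L_au\ge 0$ in $B_1\setminus L$ extends to all of $B_1$. Combined with the continuity of $u$, the pointwise bound $u\ge 0$ on $B_1\setminus L$ (hence on $B_1$ by continuity), and $u(0)=0$, the strong maximum principle associated with \cref{prop:maximum-principle} forces $u\equiv 0$, contradicting the hypothesis $u>0$ in $B_1\setminus L$. Hence one may (and I will) assume $s>1/2$ from now on.

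In this regime ($a<0$) my plan is to build an explicit $L_a$-harmonic barrier that vanishes on $L$ with the correct homogeneity, namely $\phi(X):=(x_n^2+y^2)^{(2s-1)/2}=d(X,L)^{-a}$. Since $\phi$ only depends on $(x_n,y)$, a straightforward calculation gives the identity $L_a d^\beta = \beta(\beta+a)|y|^a d^{\beta-2}$ in $\R^{n+1}\setminus L$, so the choice $\beta=-a=2s-1$ yields $L_a\phi\equiv 0$ there. The function $\phi$ vanishes exactly on $L$, is strictly positive off $L$, and belongs to $W^{1,2}_{\loc}(\R^{n+1},|y|^a)$ precisely because $s>1/2$, so it plays the role of a Martin-type kernel for $L_a$ at the codimension-$2$ set $L$.

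The core of the proof is then the pointwise comparison $u\ge c\phi$ in a small ball $B_{r_0}$, for suitable constants $c,r_0>0$. Setting $w:=u-c\phi$, one has $-L_aw\ge 0$ in $B_{r_0}\setminus L$ and $w\ge 0$ on $L$ (by continuity of $u$, the positivity $u>0$ on $B_1\setminus L$, and $\phi|_L=0$). Since $L$ has positive $L_a$-capacity for $s>1/2$, the weak maximum principle for $L_a$ in $B_{r_0}\setminus L$ (treating $L$ as an internal boundary) then reduces matters to $w\ge 0$ on $\partial B_{r_0}$, i.e.\ $u\ge c\phi$ there. This is the main obstacle: both $u$ and $\phi$ vanish on $\partial B_{r_0}\cap L$, so a naive compactness argument away from $L$ only yields a constant $c$ that degenerates to $0$ as one approaches $L$. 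To overcome it I plan to invoke a boundary Harnack principle for $L_a$ at the thin set $L$, comparing the non-negative $L_a$-superharmonic function $u$ (vanishing at $0\in L$) with the positive $L_a$-harmonic function $\phi$ (vanishing on $L$); this yields $u/\phi\ge c$ up to $L$, and in particular on $\partial B_{r_0}$. An alternative route is to run the comparison in truncated domains $B_{r_0}\setminus \overline{N_\rho(L)}$, propagate a lower bound of order $\rho^{-a}$ on $\partial N_\rho(L)$ via the classical Hopf lemma \cref{lemma:Hopf} and the interior Harnack inequality \cref{lemma:harnack}, and then pass to the limit $\rho\downarrow 0$.

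Once the pointwise bound $u\ge c\phi$ is established in $B_{r_0}$, the first conclusion follows by direct computation: parametrizing $\partial D_\eps$ by $(x_n,y)=\eps(\cos\theta,\sin\theta)$, one has $\phi(0,x_n,y)=\eps^{2s-1}$, $|y|^a=\eps^{1-2s}|\sin\theta|^a$ and $d\mathcal H^1=\eps\,d\theta$, whence
\[
\frac{1}{\eps}\int_{\partial D_\eps}u(0,x_n,y)|y|^a\,d\mathcal H^1 \;\ge\; \frac{c}{\eps}\int_{\partial D_\eps}\phi(0,\cdot)|y|^a\,d\mathcal H^1 \;=\; c\int_0^{2\pi}|\sin\theta|^a\,d\theta \;>\; 0,
\]
yielding the strict positivity of the liminf. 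For the additional claim, set $G(\eps):=\int_0^{2\pi}u(0,\eps\cos\theta,\eps\sin\theta)|\sin\theta|^a\,d\theta$ and observe the identities $F(\eps):=\int_{\partial D_\eps}u_\nu(0,\cdot)|y|^a\,d\mathcal H^1=\eps^{a+1}G'(\eps)$ and $\eps^{-1}\int_{\partial D_\eps}u(0,\cdot)|y|^a\,d\mathcal H^1=\eps^a G(\eps)$. When the limit $f_a[u](0)=\lim_\eps F(\eps)$ exists, integrating $G'(\eps)=F(\eps)\eps^{-a-1}$ from $0$ with $G(0)=0$ (since $u(0)=0$) produces $\lim_\eps \eps^a G(\eps)=f_a[u](0)/(2s-1)$, so the strict positivity of this limit, already obtained from the first part, forces $f_a[u](0)>0$.
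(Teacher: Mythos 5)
Your reduction to the case $s>1/2$, your choice of the radial profile $\phi=d(X,L)^{-a}$ (which is indeed $L_a$-harmonic off $L$ and lies in $W^{1,2}_{\loc}(\R^{n+1},\y)$ precisely when $a<0$), and your final computations on $\partial D_\eps$ — including the ODE identities $F(\eps)=\eps^{1+a}G'(\eps)$ and $\eps^{-1}\int_{\partial D_\eps}u\,|y|^a\,d\HH^1=\eps^aG(\eps)$ for the ``in addition'' part, which are more explicit than what the paper writes — are all sound. The gap is exactly where you flag it: the comparison $u\ge c\phi$ on $\partial B_{r_0}$ down to $\partial B_{r_0}\cap L$. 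A boundary Harnack principle for $L_a$ at the codimension-two set $L$, for a supersolution $u$ only assumed positive off $L$, is not available in the paper and is itself a substantial result, essentially equivalent in strength to the nondegeneracy you are trying to prove; so as structured the argument is circular. The truncated-domain alternative does not close it either: a Harnack chain from $\partial N_\rho(L)$ out to distance $r_0$ has length of order $\log(r_0/\rho)$ and hence only yields $u\gtrsim\rho^{M}$ for some large $M$ depending on the Harnack constant, not the sharp rate $\rho^{-a}$ needed to dominate $\phi$; and the flat-boundary Hopf lemma \cref{lemma:Hopf} concerns vanishing on $\{y=0\}$ and gives no decay rate at $L$.

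The paper's proof sidesteps this entirely by a different choice of barrier and domain. It works in the cylinder $\mathcal{C}_r=\{|x'|<1/2,\ 0<x_n^2+y^2<r^2\}$ with the barrier $w=|(x_n,y)|^{-a}(1+Ay^2)\phi(x')$, where $\phi$ is the first Dirichlet eigenfunction of $\Delta_{x'}$ in $\{|x'|<1/2\}$; the factor $(1+Ay^2)$ with $A$ large absorbs the eigenvalue term $-\lambda_1$ and keeps $w$ $L_a$-subharmonic in $\mathcal{C}_r$. The point is that $w$ vanishes on the lateral boundary $\{|x'|=1/2\}$ \emph{and} on $L$, so the only portion of $\partial\mathcal{C}_r$ where one needs $u\ge\delta w$ with $\delta>0$ is the outer piece $\{|x'|\le1/2,\ x_n^2+y^2=r^2\}$, a compact subset of $B_1\setminus L$ on which $u$ has a strictly positive minimum; the constant $\delta$ then comes from pure compactness, with no boundary Harnack input. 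To repair your argument you should similarly multiply your radial profile by a cutoff in $x'$ vanishing on the lateral boundary that is compatible with $L_a$-subharmonicity — which is precisely the eigenfunction construction above.
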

\begin{proof}
    We will use a barrier argument, similar to the one in \cite[Proposition 4.11]{cs14}. Consider the open set $$\mathcal{C}_{r}:= \{(x',x_{n},y)\in \R^{n+1}:\  |x'|<1/2,\  0<x_{n}^{2}+y^{2}<r^{2}\}.$$
    Given a constant $A>0$ to be chosen later, we define the barrier
    $$w(x',x_{n},y):= |(x_{n},y)|^{-a}(1+Ay^{2})\phi(x'),$$
    where $\phi:\{x'\in \R^{n-1}: |x'|<1/2\}\to \R$ is the first eigenfunction of the Laplacian with zero Dirichlet boundary conditions in $\{x'\in \R^{n-1}: |x'|<1/2\}$, that is to say
    $$
    \left\{
    \begin{array}{rclll}
          -\Delta_{x'}\phi&=&\lambda_{1}\phi\quad &\mbox{in } \{x'\in \R^{n-1}: |x'|<1/2\},\\
        \phi&=&0\quad &\mbox{on } \{x'\in \R^{n-1}: |x'|=1/2\},\\
        \phi(0)&>&0.
    \end{array}
    \right.
    $$
    Here $\lambda_{1}>0$ is the first eigenvalue, and $\phi>0$ on $\{x'\in \R^{n-1}: |x'|<1/2\}$. We may compute, in $\mathcal{C}_{r}$,
    \begin{align*}
        L_{a}w&= |y|^{a}|(x_{n}, y)|^{-a}\left\{A(2+2a)-\lambda_{1}(1+Ay^{2})-4aA\frac{y^{2}}{x_{n}^{2}+y^{2}}\right\}\phi(x')\\
        &\ge |y|^{a}|(x_{n}, y)|^{-a}\left\{A(2-2a)-\lambda_{1}(1+Ay^{2})\right\}\phi(x').
    \end{align*}
    Hence, up to choosing $A$ large and $r$ small, we get that $w$ is $L_{a}$-subharmonic in $\mathcal{C}_{r}$. Notice also that $$w=0 \quad \mbox{on }\{|x'|=1/2, \ x_{n}^{2}+y^{2}<r^{2}\}\cup\{x_{n}=y=0\}.$$
    In particular, by the positivity and continuity of $u$ out of $\{x_n=y=0\}$, there exists $\delta>0$ small enough so that $$u\ge \delta w \quad \mbox{on } \partial \mathcal{C}_{r}.$$
    Then, by the comparison principle, $u\ge \delta w$ in $\mathcal{C}_{r}$, thus
    \begin{align*}
        \underset{\eps \downarrow 0}{\liminf}\int_{\partial D_\eps}\frac{u(0,x_{n},y)}{\eps}|y|^{a}\,d\mathcal{H}^{1}&\ge \underset{\eps \downarrow 0}{\liminf}\int_{\partial D_\eps}\delta\frac{w(0,x_{n},y)}{\eps}|y|^{a}\,d\mathcal{H}^{1}\\
    &\ge\underset{\eps \downarrow 0}{\liminf}\fint_{\partial D_\eps}c\delta\phi(0)\frac{|y|^{a}}{|(x_{n},y)|^{a}}\,d\mathcal{H}^{1}>0,
    \end{align*} which concludes the proof.
\end{proof}
\begin{proposition}[\protect{\cite[Lemma 8.18]{fj21}}]\label{prop:classification-verythinobstacle-dim2} Let $u:\R^{2}\to \R$ be a $\kappa$-homogeneous non-zero solution of the very thin obstacle problem \eqref{def:very-thin-obstacle-problem} in $\R^2$. Then $\kappa \in \{-a\}\cup \mathbb{N}_{\ge 1}$.
\end{proposition}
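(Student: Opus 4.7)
The plan is to combine the distributional form of the complementarity $qL_aq=0$ with a homogeneity matching argument. Since $n=1$, the set $L(p)$ in \eqref{def:very-thin-obstacle-problem} reduces to the single point $\{0\}\subset\R^2$, which is what makes the classification tight.

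The first step is to analyze the distribution $L_a u$ on all of $\R^2$. The first two lines of \eqref{def:very-thin-obstacle-problem} give that $-L_a u$ is a non-negative distribution that vanishes on $\R^2\setminus\{0\}$. Hence $-L_a u$ is a non-negative Radon measure supported at the origin, so there exists $c\ge 0$ such that
\[
-L_a u = c\,\delta_0\quad\text{in }\mathscr{D}'(\R^2).
\]

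The second, central step is homogeneity matching. Since $u$ is $\kappa$-homogeneous and $L_a=\mathrm{div}(\y\nabla)$ changes homogeneity by $a-2$, the distribution $L_a u$ is $(\kappa+a-2)$-homogeneous on $\R^2$. On the other hand, by the classical structure theorem for homogeneous distributions supported at a point (see e.g.~H\"ormander), any $\beta$-homogeneous distribution on $\R^2$ supported at $\{0\}$ is a linear combination of derivatives $\partial^\alpha\delta_0$ of total order $|\alpha|=-\beta-2\in\mathbb{N}$. Among these, only $\delta_0$ itself (order zero) is a signed Radon measure. Therefore, assuming $c>0$, we must have $\kappa+a-2=-2$, i.e.~$\kappa=-a$. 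Conversely, if $c=0$, then $L_au\equiv 0$ on $\R^2$.

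In the case $c=0$, $u$ is globally $L_a$-harmonic and even in $y$. If $\kappa\ge 0$, its $\kappa$-homogeneity gives polynomial growth, so \cref{prop:Liouville} applies and $u$ is a homogeneous $L_a$-harmonic polynomial, forcing $\kappa\in\mathbb{N}$; the non-triviality and the complementarity/sign conditions eliminate $\kappa=0$ in the regimes where the statement is applied. If $\kappa<0$, a removable-singularity argument (based on the $W^{1,2}_{\loc}(\R^2,\y)$ regularity implicit in the formulation of \eqref{def:very-thin-obstacle-problem} and on the fundamental solution $\Gamma_a$ of $L_a$) forces $u\equiv 0$, contradicting non-triviality. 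Combined with the case $c>0$, this yields $\kappa\in\{-a\}\cup\mathbb{N}_{\ge 1}$.

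The main obstacle is the precise distributional analysis in Step 2: one must rigorously rule out that $L_a u$ is a genuine derivative of $\delta_0$ (which could be compatible with the homogeneity $\kappa+a-2\in\{-3,-4,\ldots\}$) by invoking the sign condition $-L_a u\ge 0$, since derivatives of $\delta_0$ are not signed measures. A secondary technical point is the removable-singularity step for $\kappa<0$, which is needed to discard spurious negatively-homogeneous $L_a$-harmonic candidates (essentially multiples of $\Gamma_a$) once $c=0$ is imposed.
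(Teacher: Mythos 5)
The paper never proves this proposition: it is imported directly from \cite{fj21} (Lemma 8.18), so there is no in-paper argument to compare against, and your attempt has to be judged as a free-standing proof. On that basis it is essentially correct, and the route is clean. Since $n=1$, the very thin set is the single point $\{0\}$, and Step~1 is right: $-L_a u$ is a non-negative distribution supported at the origin, hence $c\,\delta_0$ with $c\ge 0$. The homogeneity count is also right: $L_au$ has degree $\kappa+a-2$ while $\delta_0$ has degree $-2$ in $\R^2$, so $c>0$ forces $\kappa=-a$ (and indeed $-C|X|^{-a}$, $C>0$, realizes this homogeneity). The invocation of the structure theorem for distributions supported at a point is redundant once you already know $-L_au=c\delta_0$. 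For $c=0$, evenness plus \cref{prop:Liouville} gives a homogeneous polynomial, hence $\kappa\in\N$; and the case $\kappa<0$ needs no removable-singularity machinery: a non-trivial $\kappa$-homogeneous function with $\kappa<0$ cannot lie in $W^{1,2}_{\loc}(\R^{2},\y)$ (its Dirichlet energy on $B_1$ scales like $\int_0^1 r^{2\kappa+a-1}\,dr$, which diverges), nor can it be continuous at the origin, so it must vanish.

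The one soft spot is the endpoint $\kappa=0$, which you dismiss by appealing to ``the regimes where the statement is applied''; that is not a proof of the proposition as stated. Note that a positive constant satisfies every condition displayed in \eqref{def:very-thin-obstacle-problem}: it is $L_a$-harmonic, even in $y$, non-negative at the origin, and $qL_aq=0$ holds trivially. So $\kappa=0$ cannot be excluded from those hypotheses alone; its exclusion relies on an extra normalization that is implicit in the source and automatic in every application here (the blow-ups to which the proposition is applied vanish at the origin, having homogeneity close to a value in $(2,3)$). You should either add such a hypothesis (e.g.\ $q(0)=0$, or non-constancy) and use it explicitly to kill the constant mode, or state your conclusion as $\kappa\in\{-a\}\cup\N$ and observe that this suffices for the way the lemma is used. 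With that point made explicit, your argument is a complete and arguably shorter alternative to citing \cite{fj21}.
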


\bigskip
\subsection*{Acknowledgements} 
  We warmly thank Xavier Fern\'andez-Real for proposing this problem and for all the useful subsequent discussions. We also thank him, together with Bozhidar Velichkov, for the many valuable comments on a preliminary version of the paper. 
  Finally, we would like to thank Giorgio Tortone for the useful discussions on the existence literature about the degenerate/singular elliptic operators.

 M.C. is supported by the European Research Council (ERC), through the European Union's Horizon 2020 project ERC VAREG - \it Variational approach to the regularity of the free boundaries \rm (grant agreement No. 853404). M.C. also acknowledge the MIUR Excellence Department Project awarded to the Department of Mathematics, University of Pisa, CUP I57G22000700001. 

R.C. is supported by the Swiss National Science Foundation (SNF grant
PZ00P2\_208930), by the Swiss State Secretariat for Education, Research and Innovation (SERI) under
contract number MB22.00034. 

\printbibliography
\end{document}